\documentclass[11pt]{amsart}
\usepackage{amsthm}
\usepackage{amsmath}
\usepackage{amssymb}
\usepackage{amscd}
\usepackage{stmaryrd}
\usepackage{tikz}
\usepackage{tikz-cd}
\usepackage{lipsum}
\usepackage{adjustbox}
\usepackage{url}
\usepackage{graphicx}
\usetikzlibrary{matrix}
\usetikzlibrary{cd}
\usepackage{siunitx}
\usepackage{mathrsfs}
\usepackage{pgf,tikz}
\usepackage{mathrsfs}
\usetikzlibrary{arrows}
\usepackage{graphicx}
\usetikzlibrary{matrix}
\usetikzlibrary{cd}
\usepackage{siunitx}
\usepackage{mathrsfs}
\usepackage{pgf,tikz,pgfplots}

\usepackage[hidelinks]{hyperref}
\hypersetup{allcolors=black}

\newcommand\myurl[1]{\url{#1}}
\allowdisplaybreaks
\raggedbottom
\setlength{\textwidth}{15cm}                
\setlength{\textheight}{21.5cm}              
\setlength{\topmargin}{0.4cm}               
\setlength{\headheight}{0.4cm}             
\setlength{\headsep}{0.75cm}
\setlength{\oddsidemargin}{0.7cm}
\setlength{\evensidemargin}{0.7cm}

\usepackage[leqno]{amsmath}

\makeatletter
\newcommand{\leqnomode}{\tagsleft@true}
\newcommand{\reqnomode}{\tagsleft@false}
\makeatother
\setcounter{tocdepth}{2}
\makeatletter
\def\l@subsection{\@tocline{2}{0pt}{2.5pc}{5pc}{}}
\makeatother

\DeclareRobustCommand{\SkipTocEntry}[4]{}
\theoremstyle{plain}
\title{Patching over Analytic Fibers and the Local-Global Principle}
\author{Vler\"e Mehmeti} \thanks{The author was supported by the ERC Starting Grant ``TOSSIBERG": 637027.}
\thanks{2010 Mathematics Subject Classification 14G22, 11E08}
\thanks{Keywords: Berkovich analytic curves, relative analytic curves, local-global principle, quadratic forms, valuations}
\begin{document}
\newtheorem{thm}{Theorem}[section]
\newtheorem{lm}[thm]{Lemma}
\newtheorem{prop}[thm]{Proposition}
\newtheorem{cor}[thm]{Corollary}
\newtheorem*{cor*}{Corollary}
\newtheorem*{thm*}{Theorem}
\theoremstyle{definition}
\newtheorem{defn}[thm]{Definition}
\newtheorem*{defn*}{Definition}
\newtheorem{conv}[thm]{Convention}
\newtheorem{rem}[thm]{Remark}
\newtheorem*{rem*}{Remark}
\newtheorem{ex}{Example}
\newtheorem*{ex*}{Example}
\newtheorem{set}[thm]{Setting}
\newtheorem{nota}[thm]{Notation}
\newtheorem{fact}{Statement}
\newtheorem{hy}{Hypothesis}[section]
\newtheorem{cond}{Parameter}  
\newtheorem{summary}{Summary}
\newtheorem*{ack*}{Acknowledgements}
\maketitle
\begin{abstract}
As a starting point for higher-dimensional patching in the Berkovich setting, we show that this technique is applicable around certain fibers of a relative Berkovich analytic curve. As a consequence, we prove a local-global principle over the field of overconvergent meromorphic functions on said fibers. By showing that these germs of meromorphic functions are algebraic, we also obtain local-global principles over function fields of algebraic curves defined over a class of (not necessarily complete) ultrametric fields, thus generalizing the results of ~\cite{une}.   
\end{abstract}

\tableofcontents

\section*{Introduction}
\textit{Field patching}, introduced by Harbater and Hartmann in \cite{HH}, and extended by these authors and Krashen in \cite{HHK}, has recently seen numerous applications and is the crucial ingredient in an ongoing series of papers (see \textit{e.g.} \cite{HHK}, \cite{HHK1}, \cite{ctps}, \cite{galhhkps}, \cite{galhhkps12}). 
Amongst the main points of focus of these works are local-global principles over function fields of algebraic curves defined over complete discretely valued fields. Namely, field patching has provided a new approach to local-global principles for homogeneous varieties over certain linear algebraic groups (for example see \cite{HHK} and \cite{ctps}). In particular, in \cite{HHK}, Harbater, Hartmann, and Krashen (from now on referred to as HHK) obtained local-global principles for quadratic forms and results on
the \mbox{$u$-invariant}.

In \cite{une}, the author adapted field patching to the setting of Berkovich analytic curves (from now on also referred to as \textit{Berkovich patching}). With this point of view the technique becomes very geometric and can be interpreted as the gluing of meromorphic functions. As a consequence, local-global principles that are applicable to quadratic forms are obtained. This, combined with the nice algebraic properties of Berkovich curves, gives rise to applications on the $u$-invariant. The results obtained in ~\cite{une} generalize those of the founding paper \cite{HHK}. In particular, it is no longer required that the base field be discretely valued, but merely that it be a complete ultrametric field. 

\subsection*{Overview and main results}
The goal of this paper is twofold:
\begin{enumerate}
\item  to establish the very first steps of a strategy for higher-dimensional Berkovich patching and the corresponding applications to the local-global principle;
\item to generalize the results we obtained in \cite{une}; more precisely, to show a local-global principle for algebraic curves (\textit{i.e.} their function fields) defined over a class of ultrametric fields which are not necessarily complete.  
\end{enumerate}

In other words, in this text we show that patching is possible ``around" certain fibers of relative Berkovich analytic curves. This is then applied to obtain a local-global principle over the field of overconvergent meromorphic functions on said fibers. We also show that the latter can be interpreted as the function field of a particular algebraic curve. As in ~\cite{une}, the local-global principles obtained are applicable to quadratic forms.

Before presenting our main results, let us recall some terminology.

\begin{defn*} [HHK \cite{HHK}] Let $K$ be a field. Let $X$ be a $K$-variety and $G$ a linear algebraic group over $K.$ We say that $G$ acts \emph{strongly transitively} on $X$ if $G$ acts on $X,$ and for any field extension $L/K,$ either $X(L)=\emptyset$ or $G(L)$ acts transitively on $X(L).$
\end{defn*}

Formally, asking that $G$ act strongly transitively on $X$ is more restrictive than asking that $X$ be homogeneous over $G.$ However, it is shown in \mbox{\cite[Remark ~3.9]{HHK}} that if $G$ is a reductive linear algebraic group over $K$ and $X/K$ is a projective variety, then the two notions are equivalent.

We also recall that Berkovich spaces are constructed through building blocks, the so called \textit{affinoid domains}. Moreover, there is a good theory of dimension for Berkovich spaces (see ~\cite{Duc2}). Finally, to each point $x$ of a Berkovich space is associated a complete ultrametric field $\mathcal{H}(x)$, called the \textit{completed residue field} of $x$.

Let $k$ be a complete non-trivially valued ultrametric field. Let $S$ be a good\footnote{this means that every point has an affinoid neighborhood} normal Berkovich $k$-analytic space. 
Let $\pi: C \rightarrow S$ be a proper flat relative $S$-analytic curve. For any affinoid domain $Z$ of ~$S$, set $C_{Z}:=\pi^{-1}(Z)$ and $F_{Z}:=\mathscr{M}(C_{Z}),$ where $\mathscr{M}$ denotes the sheaf of meromorphic functions on $C.$ 
A special case of one of the main results we show is the following (see Theorem~\ref{katastrofe}(1) for the statement in all its generality):
\sloppypar
\begin{thm*}[Corollary \ref{ahhh1}, Theorem \ref{katastrofe}(1)] Suppose that $S$ is strict, regular and ${\dim{S}<\dim_{\mathbb{Q}} (\mathbb{R}_{>0}/|k^{\times}|) \otimes_{\mathbb{Z}} \mathbb{Q}}$.
Let $x \in S$ be such that $\mathcal{O}_{S,x}$ is a field and $\pi^{-1}(x) \neq \emptyset$. Let $C_x$ denote the $\mathcal{H}(x)$-analytic curve induced by the fiber of $x$ in $C$. Suppose that $C_x$ is smooth and geometrically connected. 
Then there exists a connected affinoid neighborhood ~$Z_0$ of ~$x$ such that:

\raggedright for any $F_{Z_0}$-variety $H$ 
on which a connected rational linear algebraic group $G/F_{Z_0}$ acts strongly transitively, one has the following local-global principle:
$$H(\varinjlim_{x \in Z} F_Z) \neq \emptyset \iff H(\mathscr{M}_{C,u}) \neq \emptyset \ \text{for all} \ u \in C_x,$$
where the direct limit is taken with respect to connected affinoid neighborhoods $Z \subseteq Z_0$ of~$x.$
\end{thm*}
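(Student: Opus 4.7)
The overall strategy is the two-step template common to \cite{HHK} and \cite{une}: first establish a \emph{matrix factorization} (patching) property for a suitable finite cover of the fiber $C_x$ by affinoid domains of $C$; second, deduce the local-global principle from this factorization by the standard argument exploiting the strong transitivity of $G$ on $H$. To set up the cover, I would use that $C_x$ is a smooth, geometrically connected $\mathcal{H}(x)$-analytic curve, so the structure theory of Berkovich curves yields a finite cover $\{V_i\}$ of $C_x$ by strict affinoid domains with well-behaved pairwise intersections (finite disjoint unions of annulus-like affinoid domains). The hypothesis that $\mathcal{O}_{S,x}$ is a field makes $x$ behave generically, while the inequality $\dim S < \dim_{\mathbb{Q}} \mathbb{R}_{>0}/|k^{\times}| \otimes_{\mathbb{Z}} \mathbb{Q}$, together with strictness and regularity of $S$, provides enough room in the value group of $\mathcal{H}(x)$ to realise the required affinoid presentations. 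Using properness and flatness of $\pi$, I would then spread each $V_i$ to an affinoid domain $U_i \subseteq C_{Z_0}$ for a sufficiently small connected affinoid neighborhood $Z_0$ of $x$, preserving the cover and the intersection pattern.

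The heart of the argument is the \emph{relative factorization}: for every $A \in \mathrm{GL}_m(\mathscr{M}(U_i \cap U_j))$ one needs, possibly after shrinking $Z \subseteq Z_0$, to write $A = A_i A_j$ with $A_i \in \mathrm{GL}_m(\mathscr{M}(U_i))$ and $A_j \in \mathrm{GL}_m(\mathscr{M}(U_j))$. On a single Berkovich curve this is the main analytic input of \cite{une}, proved via Runge-type approximation and Mittag--Leffler properties for coherent sheaves on Berkovich curves. Here the approximations must hold uniformly along a neighborhood of the fiber and must remain stable under shrinking $Z$; this is precisely where the dimension hypothesis enters in an essential way, since it is what guarantees the existence of residual parameters on which the required generic affinoid constructions can be based. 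I expect this relative factorization to be the main obstacle.

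Once the factorization is available, the local-global principle follows by the familiar gluing procedure. Assuming $H(\mathscr{M}_{C,u}) \neq \emptyset$ for every $u \in C_x$, one uses quasi-compactness of $C_x$ and the description of the stalks as direct limits over affinoid neighborhoods of $u$ to descend, after possibly refining $\{U_i\}$ and shrinking $Z_0$, to points $P_i \in H(\mathscr{M}(U_i))$. On each intersection, strong transitivity of $G$ yields a transition element $g_{ij} \in G(\mathscr{M}(U_i \cap U_j))$; embedding $G$ into some $\mathrm{GL}_m$ via its rationality, applying the factorization to each $g_{ij}$, and modifying the $P_i$ accordingly produces a compatible family that glues to a point of $H$ over $\mathscr{M}(\bigcup U_i) = F_Z$ for some small $Z \subseteq Z_0$, hence over $\varinjlim_{x \in Z} F_Z$. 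The reverse implication is immediate, and passage through the direct limit is harmless because only finitely many data are involved at each stage.
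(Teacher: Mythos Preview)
Your outline follows the correct HHK/\cite{une} template, and the final gluing step is right. But the proposal glosses over exactly the places where the relative setting demands new work, and in doing so it misses the structure the paper actually uses.

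First, you do not explain how to ``spread each $V_i$ to an affinoid domain $U_i \subseteq C_{Z_0}$'' while controlling intersections. On a general relative curve there are no coordinates with which to write down such thickenings, and generic properness/flatness arguments do not give you affinoid neighborhoods with the required intersection combinatorics. The paper circumvents this entirely: it first treats $\mathbb{P}^{1,\mathrm{an}}_S$, where a type~3 boundary point $\eta_{R,r}$ on the fiber (with $R\in\mathcal{O}_x[T]$) admits an explicit $Z$-\emph{thickening} $\{|R|=r\}\subset\mathbb{P}^{1,\mathrm{an}}_Z$, and proves these thickenings are connected and behave well under intersection and union (this is the content of Sections~\ref{4.1}--\ref{4.2}, including the delicate norm comparisons needed for the factorization). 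For the general curve $C$, the relative nice cover is then obtained as the pullback of a relative nice cover of $\mathbb{P}^{1,\mathrm{an}}_Z$ along a finite surjective morphism $f_Z\colon C_Z\to\mathbb{P}^{1,\mathrm{an}}_Z$, and the factorization for $G$ over $C_Z$ is reduced to the factorization for the Weil restriction $\mathcal{R}_{\mathscr{M}(C_Z)/\mathscr{M}(\mathbb{P}^{1,\mathrm{an}}_Z)}(G)$ over $\mathbb{P}^{1,\mathrm{an}}_Z$ (Theorem~\ref{226}). Your proposal contains neither the reduction to $\mathbb{P}^1$ nor the Weil restriction step, and without them the ``relative factorization'' you highlight as the main obstacle has no proof.

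Second, you do not account for the hypotheses ``$S$ strict and regular''. These are not used to ``realise affinoid presentations'' on the fiber; they are used (Theorem~\ref{addition1}) to prove that, after shrinking $Z_0$, the morphism $C_{Z_0}\to Z_0$ is \emph{algebraic}. Algebraicity is what produces the finite morphism $f_Z$ to $\mathbb{P}^{1,\mathrm{an}}_Z$ above and, via a relative GAGA argument (Theorem~\ref{231}, Corollary~\ref{213}), identifies $\varinjlim_Z F_Z$ with the function field $F_{\mathcal{O}_x}$ of an honest algebraic curve over $\mathcal{O}_x$. Without this identification you cannot even formulate Proposition~\ref{209}, which is what makes the Weil restriction step go through. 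Finally, a minor point: the rationality of $G$ is not used to embed $G$ in $\mathrm{GL}_m$; it is used (as in \cite{HHK}) to transport the group law to a rational map on affine space near the identity, so that the factorization problem becomes an analytic approximation problem for power series (Theorem~\ref{e keqe}).
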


Remark that the direct limit appearing on the left-hand side of the local-global principle above is the field of germs of meromorphic functions on the fiber of $x$ in $C.$

We work only over fibers of points for which the local ring is a field. The set of such points is dense. In fact, in the case of curves, if $x$ is any point that is not \textit{rigid} (rigid points are those that we see in rigid spaces), then $\mathcal{O}_x$ is a field. Although this might not appear explicitely in the paper, the reason behind this hypothesis is that to make one's way from ``a matrix decomposition result" (similar to \cite{HHK} and \cite{une})  to patching ``around" the fiber, we need the fiber to \textit{not} be a divisor. This means that non-zero analytic (resp. meromorphic) functions defined on a neighborhood of said fiber can not have zeros (resp. poles) everywhere on the fiber; this is a crucial property for the results of Subsection~\ref{parpl}. 
Said hypothesis is the only obstacle to proving the result around \textit{all} fibers of the relative analytic curve $C \rightarrow S.$

To show the result above, as fibers of an analytic relative curve are endowed with the structure of an analytic curve, we follow a similar line of reasoning as in \cite{une}. However, there are many additional technical difficulties that appear in this relative setting. Here is a brief outline of the proof.

We construct particular covers of a neighborhood of the fiber over which patching is possible (the so called \textit{relative nice covers}); this is a relative analogue of \textit{nice covers} as introduced in \cite[Definition 2.1]{une}. As in the one-dimensional case, \textit{type ~3} points (which are characterized by simple algebraic and topological properties; see Preliminaires for the definition) on the fiber play an important role. Their existence is guaranteed by the hypothesis on the dimension of $S$. 

We first treat the case of $\mathbb{P}_S^{1, \mathrm{an}}$ -- the relative projective analytic line over $S$. The construction of relative nice covers is easier in this particular setting: we use the notion of \textit{thickening} of an affinoid domain on the fiber $C_x$ to obtain ``well-behaved" affinoid domains on a neighborhood of $C_x$. The idea of thickenings in the case of $\mathbb{P}^{1, \mathrm{an}}$ appears in some unpublished notes of J\'er\^ome Poineau (see Remark \ref{poineau} and Lemma \ref{18aaa} for a detailed account).

We then show that the general setting of Section \ref{patching} can be applied to these relative nice covers of $\mathbb{P}^{1, \mathrm{an}}$, meaning that patching \emph{can} be applied to them, provided we restrict to a small enough neighborhood of the fiber.  

By using pullbacks of finite morphisms towards $\mathbb{P}^{1, \mathrm{an}},$ a notion of relative nice cover can be constructed more generally for the case of normal relative proper curves. By adding to this the Weil restriction of scalars, patching is shown to be possible over relative nice covers in this more general framework as well.   

Finally, once we know how to patch around the fiber $C_x$, the local-global principle of Theorem \ref{katastrofe}(1) can be obtained as a consequence, albeit not as directly as in the one-dimensional case in \cite{une}. 

\

There is a connection between the points of the fiber and the valuations that the field of its overconvergent meromorphic functions can be endowed with. We make this precise in Proposition \ref{234}. As in the one-dimensional case, combined with the Henselianity of the fields $\mathscr{M}_{C,y}, \pi(y)=x,$ this connection allows us to obtain a local-global principle with respect to completions. Before stating precisely a special case of this result, let us recall that the field $\mathcal{O}_{S,x}$ is naturally endowed with a valuation $|\cdot|_x.$ 

\begin{thm*}[Theorem \ref{katastrofe}(2)]
Using the same notation as in the statement of Theorem~\ref{katastrofe}(1) above, set $F_{\mathcal{O}_x}=\varinjlim_Z F_Z.$ Let $V(F_{\mathcal{O}_x})$ denote the set of non-trivial rank 1 valuations on $F_{\mathcal{O}_x}$ which induce either $|\cdot|_x$  or  the trivial valuation on ~$\mathcal{O}_x.$ For ${v \in V(F_{\mathcal{O}_x})},$ let $F_{\mathcal{O}_x, v}$ denote the completion of the field $F_{\mathcal{O}_x}$ with respect to $v.$

If $\mathrm{char} \ k=0$ or $H$ is smooth, then the following local-global principle holds:
$$H(F_{\mathcal{O}_x}) \neq \emptyset \iff H(F_{\mathcal{O}_x,v}) \neq \emptyset \ \text{for all} \ v \in V(F_{\mathcal{O}_x}).$$
\end{thm*}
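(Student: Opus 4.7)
The forward implication is immediate from the inclusion $F_{\mathcal{O}_x}\hookrightarrow F_{\mathcal{O}_x,v}$ available for every $v\in V(F_{\mathcal{O}_x})$. For the converse, my plan is to reduce to Theorem~\ref{katastrofe}(1) by translating the hypothesis $H(F_{\mathcal{O}_x,v})\neq\emptyset$ over each completion into the existence of $H$-points over the local field $\mathscr{M}_{C,u}$ at a suitable point $u\in C_x$. Thus it suffices to show that, assuming $H(F_{\mathcal{O}_x,v})\neq\emptyset$ for every $v\in V(F_{\mathcal{O}_x})$, one has $H(\mathscr{M}_{C,u})\neq\emptyset$ for every $u\in C_x$.

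Fix $u\in C_x$. The field $\mathscr{M}_{C,u}$ carries a natural rank-$1$ valuation $v_u$ --- induced by the absolute value on $\mathcal{H}(u)$ at non-rigid points, or by the DVR structure of $\mathcal{O}_{C,u}$ at rigid points --- and it is Henselian with respect to $v_u$. By Proposition~\ref{234}, the restriction $\tilde v_u$ of $v_u$ to $F_{\mathcal{O}_x}$ lies in $V(F_{\mathcal{O}_x})$, and the inclusion $F_{\mathcal{O}_x}\hookrightarrow\mathscr{M}_{C,u}$ extends to an isomorphism $F_{\mathcal{O}_x,\tilde v_u}\xrightarrow{\sim}\widehat{\mathscr{M}}_{C,u}$. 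The hypothesis on $H$ ($\mathrm{char}\,k=0$ or $H$ smooth) then permits invoking the classical transfer principle that, for a Henselian valued field $L$ with completion $\hat L$ and a smooth $L$-variety $X$, one has $X(L)\neq\emptyset\iff X(\hat L)\neq\emptyset$. Combining these,
\[
H(F_{\mathcal{O}_x,\tilde v_u})\neq\emptyset \;\Longrightarrow\; H(\widehat{\mathscr{M}}_{C,u})\neq\emptyset \;\Longrightarrow\; H(\mathscr{M}_{C,u})\neq\emptyset,
\]
and the conclusion follows from Theorem~\ref{katastrofe}(1).

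The main obstacle, as in the one-dimensional case of~\cite{une}, is the completion identification $\widehat{\mathscr{M}}_{C,u}\cong F_{\mathcal{O}_x,\tilde v_u}$, which is the substance of Proposition~\ref{234}. It amounts to showing that every rank-$1$ valuation in $V(F_{\mathcal{O}_x})$ arises (up to equivalence) from some point of $C_x$, and conversely that overconvergent meromorphic functions defined on a neighborhood of the whole fiber $C_x$ are dense in $\mathscr{M}_{C,u}$ for the $v_u$-adic topology at each point $u$. A secondary technical point is the $\mathrm{char}\,k=0$ case with $H$ possibly non-smooth in the Henselian transfer; I would handle this by exploiting the strong transitivity of the connected rational group $G$ on $H$ to move any given $\hat L$-point into the smooth locus of $H$, reducing to the smooth transfer.
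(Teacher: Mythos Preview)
Your approach is essentially the same as the paper's: reduce to Theorem~\ref{katastrofe}(1) via Proposition~\ref{234}, then descend from the completion $\widehat{\mathscr{M}}_{C,u}$ to $\mathscr{M}_{C,u}$ using Henselianity. Two small points are worth flagging.

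First, your claimed isomorphism $F_{\mathcal{O}_x,\tilde v_u}\xrightarrow{\sim}\widehat{\mathscr{M}}_{C,u}$ is an overstatement at rigid points $u$: Proposition~\ref{234} only gives an \emph{embedding} $F_{\mathcal{O}_x,v_u}\hookrightarrow\widehat{\mathscr{M}}_{C,u}$ there. This is harmless for the argument, since the embedding is all that is needed to pass from $H(F_{\mathcal{O}_x,v_u})\neq\emptyset$ to $H(\widehat{\mathscr{M}}_{C,u})\neq\emptyset$; the paper phrases it exactly this way. Relatedly, your description of the ``main obstacle'' is slightly off: what matters for this direction is the construction of the map $\mathrm{val}$ (each $u\in C_x$ yields some $v\in V(F_{\mathcal{O}_x})$ with the embedding), not its surjectivity.

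Second, for the Henselian transfer in the $\mathrm{char}\,k=0$ non-smooth case, the paper does not go through strong transitivity to reach the smooth locus. It simply observes that $\mathrm{char}\,k=0$ forces $\mathscr{M}_{C,u}$ to be perfect, and then invokes \cite[Lemma~3.16]{une}, which covers the transfer for any variety over a perfect Henselian valued field. This is more direct than your proposed reduction.
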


Remark that, with the same notation as in the theorem above, $\mathcal{O}_{S,x}=\varinjlim_{Z} \mathcal{O}_S(Z),$ where the direct limit is taken with respect to affinoid neighborhoods $Z$ of $x$ in $S.$ 
Using Grothendieck's work on projective limits of schemes to construct a relative algebraic curve over $\mathcal{O}(Z)$  from an algebraic curve over $\mathcal{O}_x$, as a consequence of the theorem above, we obtain the following generalization of \cite[Corollary 3.18]{une}.

\begin{thm*}[Theorem \ref{val2}]
Let $k$ be a complete non-trivially valued ultrametric field.
Let $S$ be a good normal $k$-analytic space such that $\dim{S}<\dim_{\mathbb{Q}} \mathbb{R}_{>0}/|k^{\times}| \otimes_{\mathbb{Z}} \mathbb{Q}$. Let $x \in S$ be such that $\mathcal{O}_{x}$ is a field. Let $C_{\mathcal{O}_{x}}$ be a smooth geometrically irreducible algebraic curve over the field $\mathcal{O}_{x}.$ Let $F_{\mathcal{O}_x}$ denote the function field of $C_{\mathcal{O}_{x}}.$ 

Let $G/F_{\mathcal{O}_{x}}$ be a connected rational linear algebraic group acting strongly transitively on a variety $H/F_{\mathcal{O}_{x}}.$ Then, if $\mathrm{char} \ k=0$ or $H$ is smooth: 
$$H(F_{\mathcal{O}_{x}}) \neq \emptyset \iff H(F_{\mathcal{O}_{x}, v}) \neq \emptyset \ \text{for all} \ v \in V(F_{\mathcal{O}_x}),$$
where $V(F_{\mathcal{O}_x})$ is given as in Theorem \ref{katastrofe}(2) above. 
\end{thm*}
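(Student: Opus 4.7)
The plan is to reduce Theorem~\ref{val2} to Theorem~\ref{katastrofe}(2) by spreading out the algebraic curve $C_{\mathcal{O}_x}$ over an affinoid neighborhood of $x$, analytifying, and identifying the resulting fields of overconvergent meromorphic functions with $F_{\mathcal{O}_x}$.

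First, I would replace $C_{\mathcal{O}_x}$ by a smooth projective geometrically irreducible compactification. Over a field, any smooth geometrically irreducible curve embeds as a dense open subscheme of a unique smooth projective geometrically irreducible model, and this operation preserves the function field $F_{\mathcal{O}_x}$. So we may assume $C_{\mathcal{O}_x}$ is proper. Next, write $\mathcal{O}_x=\varinjlim_Z \mathcal{O}_S(Z)$ over connected affinoid neighborhoods $Z$ of $x$ in $S$. Grothendieck's theory of limits of schemes (EGA IV\textsubscript{3}, \S 8) then produces, for some sufficiently small $Z_0$, a finitely presented scheme $\mathcal{C}_{Z_0}\to\mathrm{Spec}\,\mathcal{O}_S(Z_0)$ whose pullback to $\mathrm{Spec}\,\mathcal{O}_x$ is $C_{\mathcal{O}_x}$. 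Properties such as properness, flatness, smoothness and geometric connectedness of the fiber over $x$ are of finite presentation and hence, up to shrinking $Z_0$, transfer to $\mathcal{C}_{Z_0}\to\mathrm{Spec}\,\mathcal{O}_S(Z_0)$.

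Analytifying relative to $Z_0$ yields a proper flat relative analytic curve $\pi\colon C\to Z_0$ to which Theorem~\ref{katastrofe} applies: by construction, the fiber $C_x$ of $\pi$ over $x$ is the analytification of $C_{\mathcal{O}_x}\otimes_{\mathcal{O}_x}\mathcal{H}(x)$ and is hence smooth and geometrically connected. The dimension hypothesis on $S$ is inherited directly. For any affinoid neighborhood $Z\subseteq Z_0$ of $x$, relative GAGA identifies $\mathscr{M}(C_Z)$ with the function field of the pullback of $\mathcal{C}_{Z_0}$ to $\mathcal{O}_S(Z)$, so passing to the direct limit gives a canonical isomorphism
\[
\varinjlim_{x\in Z} \mathscr{M}(C_Z) \;\cong\; F_{\mathcal{O}_x},
\]
compatibly with the inclusion $\mathcal{O}_x\hookrightarrow F_{\mathcal{O}_x}$. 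Under this identification, the set $V(F_{\mathcal{O}_x})$ defined in Theorem~\ref{val2} coincides with the one appearing in Theorem~\ref{katastrofe}(2), since both are intrinsically described in terms of the subfield $\mathcal{O}_x$ and its valuation $|\cdot|_x$.

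Applying Theorem~\ref{katastrofe}(2) to the $F_{\mathcal{O}_x}$-variety $H$ and the group $G$ now yields the desired local-global principle. The main technical obstacle is the third step: carrying out the spreading-out argument so that all required geometric properties descend to a single affinoid neighborhood, and verifying via a relative GAGA/formal meromorphic functions argument that the direct limit of $\mathscr{M}(C_Z)$ really is the algebraic function field $F_{\mathcal{O}_x}$ and not a strictly larger field of analytic germs. The smoothness and properness of the relative curve, together with the fact that $\mathcal{O}_x$ is a field, are what make this identification clean; the secondary issue is the bookkeeping needed to match the valuation set $V(F_{\mathcal{O}_x})$ across the analytic-to-algebraic translation, but this is immediate once the field identification is in place.
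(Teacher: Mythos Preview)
Your proposal is correct and follows essentially the same route as the paper: the reduction to Theorem~\ref{katastrofe}(2) via Grothendieck's spreading-out of $C_{\mathcal{O}_x}$ to a proper flat relative curve over an affinoid neighborhood (this is Subsection~\ref{lame}), followed by the identification $\varinjlim_Z \mathscr{M}(C_Z)\cong F_{\mathcal{O}_x}$ through relative GAGA for meromorphic functions (Corollaries~\ref{212}--\ref{213} and Theorem~\ref{231}). The only addition you make is the preliminary passage to a smooth projective model, which is needed because the version of the statement you were given omits the word ``projective'' present in the paper's actual Theorem~\ref{val2}.
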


A crucial element for showing Theorem \ref{val2}, and more generally, to highlight the interest of this paper, is that, in the setting of Theorem \ref{katastrofe}, meromorphic functions around the fiber of $x$ are algebraic. More precisely, the field of overconvergent meromorphic functions on the fiber of $x$ is the function field of an algebraic curve over $\mathcal{O}_x$ (which is basically an ``algebraization" of a neighborhood of the fiber succeeded by a base change to $\mathcal{O}_x$; see Corollary ~\ref{212}). To show this non-trivial result, we prove the following GAGA-type theorem for the sheaf of meromorphic functions: 
\begin{thm*}[Theorem \ref{231}]
Let $k$ be a complete ultrametric field. Let $A$ be a $k$-affinoid algebra. Let $X$ be a proper scheme over $\text{Spec} \ A.$ Let $X^{\mathrm{an}}/\mathcal{M}(A)$ denote the Berkovich analytification of~$X.$ Then $\mathscr{M}_{X^{\mathrm{an}}}(X^{an})=\mathscr{M}_X(X)$, where $\mathscr{M}_{X^{\mathrm{an}}}$ (resp. $\mathscr{M}_X$) denotes the sheaf of meromorphic functions on $X^{\mathrm{an}}$ (resp. $X$).
\end{thm*}

As in \cite{HHK} and \cite{une}, seeing as the projective variety determined by a quadratic form satisfies the hypotheses of the results presented, the prime example to which the local--global principles of this text can be applied is the case of quadratic forms (under the assumption ${\mathrm{char} \ k \neq 2}$); see Corollaries \ref{ahhh'} and \ref{ahhh2}.

Here is an example of a local ring $\mathcal{O}_x$   of an analytic space which is a field and over which the results above can be applied. It corresponds to a type 3 point of the analytic affine line.

\vspace{0.1cm}

\noindent {\em Example.} Let $(k, |\cdot|)$ be a complete ultrametric field. Let $r \in \mathbb{R}_{>0} \backslash \sqrt{|k^\times|}.$ Let $x \in \mathbb{A}_k^{1, \mathrm{an}}$ be a multiplicative semi-norm on $k[T]$ such that $|T|_x=r$ 
(in fact, $x$ is the unique such point of $\mathbb{A}_k^{1, \mathrm{an}}$). 

For any $r_1, r_2 \in \mathbb{R}_{>0}$ such that $r_1< r<r_2,$ set $$A_{r_1, r_2}:=\left\lbrace \sum_{n \in \mathbb{Z}} a_n T^n: a_n \in k, \lim_{n \rightarrow +\infty} |a_n| r_2^n=0, \lim_{n \rightarrow -\infty} |a_n| r_1^n=0 \right\rbrace.$$

Then $\mathcal{O}_{\mathbb{A}_k^{1, \mathrm{an}}, x}= \varinjlim_{\substack{r_1<r<r_2}} A_{r_1, r_2}.$

\subsection*{Organization of the manuscript}
In Section \ref{patching} we develop the necessary tools for proving a ``matrix decomposition" statement (Theorem \ref{e keqe}) which is fundamental to the generalization of Berkovich patching we present here. We work over a general formal setup (Setting \ref{therealone}), which is partly why this section is of very technical nature. 

In Section \ref{4.1}, we construct the notion of \textit{relative nice covers} around a fiber of $\mathbb{P}^{1, \mathrm{an}}$, analoguous to (and a generalization of) nice covers for curves, and show that it possesses good properties, \textit{i.e.} properties that are necessary for patching. This is where the concept of thickening
of an affinoid domain appears. 

In order to be able to apply the results of Section \ref{patching} to fibers of the relative $\mathbb{P}^{1, \mathrm{an}}$, it is necessary to constantly ``shrink" to smaller neighborhoods of the fiber. Because of this, we need some uniform boundedness results and explicit norm comparisons, which is the topic of Section ~\ref{4.2}. As a consequence, this is one of the most technical sections of this paper. It also contains an explicit description of the Banach algebras of analytic functions on certain affinoid domains of the relative projective line. 
\begin{sloppypar}
In Section \ref{4.3}, we show that the results of Section \ref{patching} are indeed applicable to relative nice covers of fibers of the relative $\mathbb{P}^{1, \mathrm{an}},$ and that patching (in the sense of \mbox{\cite[Theorem 1.7]{une}}) can be obtained as a consequence thereof. The arguments used in this section are of very topological nature. 
\end{sloppypar}
In Section \ref{4.4}, we study the properties of the class of  relative analytic curves over which we know how to apply patching around certain fibers. The conditions that are required are not too restrictive: the relative proper curve needs to be normal and algebraic around the fiber, so this is satisfied for the Berkovich analytification of any normal proper relative \textit{algebraic} curve. For example, we show that smooth geometrically irreducible projective algebraic curves defined over certain fields give rise to such a situation. This makes it possible to generalize some results from \cite{une}.   

In Section \ref{4.5}, we extend the notion of relative nice cover to certain fibers of relative analytic \textit{curves} and show that patching is possible on them.

Section \ref{4.6} contains our main results: a local-global principle with respect to the stalks of the sheaf of meromorphic functions (Theorem \ref{katastrofe}) and one with respect to completions (Theorem \ref{val2}). To show the latter, we prove that the field of germs of meromorphic functions on the fiber can be realised as the function field of a certain algebraic curve. In this section, we include a subsection with a summary of the results we obtain. 

The fibers around which we apply patching are those over points for which their corresponding stalk is a field. In Section \ref{4.7}, we calculate some examples of such fields.

At the end of this paper, we provide a section of appendices. In Appendix I we introduce some basic properties of the sheaf of meromorphic functions on Berkovich analytic spaces. Among other things, we show that the meromorphic functions of the Berkovich analytification of certain schemes are \textit{algebraic} (Theorem \ref{231}). To do this, we use the ideas from a MathOverflow thread (see ~\cite{overflow}). This result is crucial for showing Theorem~\ref{val2} and connecting the results we obtain in the Berkovich setting to an algebraic one. In Appendices II and~III we show some additional results on Berkovich analytic curves which we need in this text. In Appendix III,  we deduce a particular writing for affinoid domains which allows us to contruct their thickenings in Section \ref{4.1}. 

\begin{ack*} I am most grateful to J\'er\^ome Poineau for the many invaluable discussions and remarks. I am also very thankful to him for sharing his unpublished notes which contain the idea of thickenings of affinoid domains of the projective line.  Many thanks also to Antoine Ducros for his insightful remarks and suggestions which made it possible to remove some important algebraicity hypotheses in one of the main statements. Finally, I am also grateful to the anonymous referee, whose remarks have improved the quality of this article. 
\end{ack*}

\section*{Preliminaries}
With the purpose of making the paper more self-contained, we remind here the definitions of some of the notions we use, which are originally due to Berkovich. Let $k$ denote a complete ultrametric field.

\emph{Good analytic space.} A Berkovich $k$-analytic space is said to be \emph{good} if any point has a neighborhood isomorphic to a $k$-affinoid space. Berkovich studies these spaces in \cite{Ber90}.

\emph{The completed residue field.} Let $X$ be a good $k$-analytic space. Recall that for $x \in X,$ the local ring $\mathcal{O}_{X,x}$ is endowed with a semi-norm with kernel $m_x$-the maximal ideal of~$\mathcal{O}_{X,x}$ (see \cite[Lemma 1.4.21]{doktoratura}).
The \textit{completed residue field of $x$}, denoted $\mathcal{H}(x),$ is the completion of the residue field $\kappa(x):=\mathcal{O}_{X,x}/m_x$ of ~$x$ with respect to the norm induced on $\kappa(x)$ from this semi-norm (see \cite[Definition 14.8]{coanno}). Remark that $\mathcal{H}(x)$ is a complete ultrametric field. 

\emph{Rigid points.} If $\mathcal{H}(x)/k$ is a finite field extension, then we say that $x$ is a \textit{rigid point} of ~$X$. These are the algebraic points of $X$ and those that we encounter when working with rigid spaces. 

\emph{Type 3 points.} Suppose $X$ is a curve. If $\dim_{\mathbb{Q}} |\mathcal{H}(x)^{\times}|/|k^\times| \otimes_{\mathbb{Z}} \mathbb{Q}=1$, we say that~$x$ is a \textit{type 3} point. There is a full classification of points of $X$ into 4 types (see \cite[Definition~1.8.1]{doktoratura} for more details).

\emph{Shilov boundary.} Suppose $(X, \mathcal{O}_X)$ is a $k$-affinoid space. Then there exists a finite subset $\Gamma(X)$ of $X$ such that each element $f \in \mathcal{O}_X(X)$ attains its maximum at a point of $\Gamma(X)$ (see \cite[Corollary 2.4.5]{Ber90}). We call $\Gamma(X)$ the \emph{Shilov boundary} of $X$.  

\section{Patching} \label{patching}

Following the same steps as in \cite{une}, we start by proving a ``matrix decomposition" result that generalizes \cite[Theorem 2.5]{HHK} and \cite[Lemma 1.9]{une}, and is applicable to a Berkovich framework. To do this, we follow along the lines of proof and reasoning of \cite[Section ~2.1]{HHK} making the necessary adjustements. 

We work over a general formal setup (Setting \ref{therealone}), which is partly why the content of this section is of very technical nature and may thus be skipped upon a first reading. It will be shown in the next parts of this paper that the hypotheses we adopt here are satisfied in a very natural way in Berkovich's geometry. The main statement, Theorem ~\ref{e keqe}, is fundamental to patching. 

We first show some auxiliary results.

\begin{set} \label{11aaa}
Let $k$ be a complete non-trivially valued ultrametric field. 
Let $R$ be an integral domain containing $k,$ endowed with a non-Archimedean (submultiplicative) norm ~$|\cdot|_{R}$. Suppose that for any $a \in R$ and $b \in k,$ $|ab|_R=|a|_R \cdot |b|.$ 
\end{set}

Remark that the last assumption implies the norm $|\cdot|_R$ extends $|\cdot|.$

For $p \in \mathbb{N}$ and indeterminates $X_1, \dots, X_p,$ let us use the notation $\underline{X}$ for the $p$-tuple $(X_1, \dots, X_p).$ Following \cite[Section 2]{HHK}, set $A:=R[\underline{X}]$ and $\widehat{A}:=R[[\underline{X}]].$ For any $M \geq 1,$ set $$\widehat{A_M}:=\left\{ \sum_{l \in \mathbb{N}^p} c_l \underline{X}^l \in \widehat{A} : \ \forall l \in \mathbb{N}^p, |c_l|_{R} \leqslant M^{|l|} \right\},$$
where for $l=(l_1, l_2, \dots, l_n) \in \mathbb{N}^p,$ $\underline{X}^l:=\prod_{i=1}^p X_i^{l_i}$ and $|l|:=l_1+l_2 +\cdots + l_p.$

This is a subring of $\widehat{A},$ and for any $M', M'' \geq 1,$ if $M' \leqslant M''$ then $\widehat{A_{M'}} \subseteq \widehat{A_{M''}}.$ Furthermore, $\widehat{A_M}$ is complete with respect to the $(\underline{X})$-adic topology: if $(f_n)_n$ is a Cauchy sequence in $\widehat{A_M},$ then for any $l \in \mathbb{N}^p$ and large enough $n,$ $f_{n+1}-f_n \in (\underline{X})^{|l|},$  implying that $f_n$ and $f_{n+1}$ have the same ``first few" coefficients (the larger $|l|,$ the more ``first few" coefficients that are the same).

Remark also that for any element $f=\frac{g}{h}$ of the local ring $R[\underline{X}]_{(\underline{X})}$,  where ${g, h \in R[\underline{X}]}, {h(0) \neq 0},$ if $h(0) \in R^{\times}$, then $f$ can be expanded into a formal power series over $R,$ meaning in this case $f \in \widehat{A}.$ 

The following two lemmas are generalizations of Lemmas 2.1 and 2.3 of \cite{HHK} (and their proofs follow the line of reasoning of the latter).
For any $n \in \mathbb{N},$ we keep the notation $|\cdot|_R$ for the max norm on $R^n$ induced by the norm of $R.$ For $a:=(a_1, a_2, \dots, a_n) \in R^n$ and $l:=(l_1, l_2, \dots, l_n) \in \mathbb{N}^n,$ we denote $a^l:=a_1^{l_1}\cdots a_{n}^{l_n}.$ Clearly, $a^l \in R.$

\begin{lm}\label{1}
\begin{enumerate}
\item Let $u=\sum_{l \in \mathbb{N}^p} c_l \underline{X}^l \in \widehat{A_M}.$
If $a \in R^p$ is such that ${|a|_{R}<M^{-1}},$ then the series $\sum_{l \in \mathbb{N}^p} c_l {a}^l$ is convergent in $R.$ Let us denote its sum by $u(a).$
\item For $M \geqslant 1,$ let $v, w \in \widehat{A_M}$ be such that $w$ and $vw$ are polynomials. If $a \in R^p$ is such that $|a|_R<M^{-1},$ then $vw(a)=v(a) w(a).$

\item Let $f=\frac{g}{h} \in R[\underline{X}]_{(\underline{X})}, g,h \in R[\underline{X}], h(0) \neq 0,$ be such that $g(0)=0$ and $h(0) \in R^{\times}.$ There exists $M\geq 1$ such that $f \in \widehat{A_M}$ and $h \in \widehat{A_M}^{\times}.$ 

Let $f=\sum_{l \in \mathbb{N}^p} c_l \underline{X}^l$ be the series representation of $f.$ Then for any $a \in R^p$ with $|a|_R < M^{-1}$, the series $\sum_{l \in \mathbb{N}^p} c_l a^l$ is convergent in $R$ and $f(a)=\frac{g(a)}{h(a)}.$ 
\end{enumerate}
\end{lm}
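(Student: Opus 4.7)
For part (1), I would combine the defining bound $|c_l|_R \leq M^{|l|}$ with submultiplicativity to get $|c_l a^l|_R \leq (M|a|_R)^{|l|} = \rho^{|l|}$, where $\rho := M|a|_R \in [0,1)$. Enumerating $\mathbb{N}^p$ by total degree and invoking the non-Archimedean triangle inequality, the partial sums $S_N = \sum_{|l| \leq N} c_l a^l$ satisfy $|S_{N'} - S_N|_R \leq \rho^{N+1}$ for $N' > N$, so $(S_N)$ is Cauchy and converges in $R$ (implicitly using completeness of $R$, which is the case in the intended applications).

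For part (2), I would reduce by bilinearity to the case of a single monomial $w = w_{l_0}\underline{X}^{l_0}$. The formal product is $vw = \sum_l w_{l_0} v_l \underline{X}^{l+l_0}$, and direct evaluation gives $(vw)(a) = w_{l_0} a^{l_0} \sum_l v_l a^l = w(a) v(a)$, with no reindexing issues since only one monomial of $w$ is involved. The general polynomial case then follows from $R$-linearity of evaluation of elements of $\widehat{A_M}$, which is immediate from (1) together with the non-Archimedean nature of convergence. The hypothesis that $vw$ be a polynomial is not used in this argument but matches how part (3) will invoke the statement.

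For part (3), the main task is constructing an $M$ with $f \in \widehat{A_M}$ and $h \in \widehat{A_M}^\times$. I would write $h = h(0) + h'$ with $h'$ a polynomial in $(\underline{X})$; since $h(0) \in R^\times$ and $h'$ has only finitely many coefficients, some $M_0 \geq 1$ makes $h(0)^{-1} h' \in \widehat{A_{M_0}}$. Because $h' \in (\underline{X})$, the powers $(-h(0)^{-1} h')^n$ lie in $(\underline{X})^n$, so the geometric series $s := \sum_{n \geq 0} (-h(0)^{-1} h')^n$ converges in the $(\underline{X})$-adic topology; by the $(\underline{X})$-adic completeness of $\widehat{A_{M_0}}$ recalled in the excerpt, $s$ lies in $\widehat{A_{M_0}}$, and the identity $h \cdot h(0)^{-1} s = 1$ shows $h^{-1} \in \widehat{A_{M_0}}$. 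Enlarging $M \geq M_0$ so that also $g \in \widehat{A_M}$ yields $f = g h^{-1} \in \widehat{A_M}$ and $h \in \widehat{A_M}^\times$. Finally, for $|a|_R < M^{-1}$ I would apply part (2) first to $v := f, w := h$ (where $w$ and $vw = g$ are polynomials) to obtain $g(a) = f(a) h(a)$, and then to $v := h^{-1}, w := h$ to obtain $1 = h^{-1}(a) h(a)$; the latter gives $h(a) \in R^\times$, whence $f(a) = g(a)/h(a)$.

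The main obstacle is the bookkeeping in part (3) needed to arrange for $f$, $h$, and $h^{-1}$ to share a common $\widehat{A_M}$; once this is set up, parts (1) and (2) combine directly to give the evaluation formula, and the rest is routine non-Archimedean manipulation.
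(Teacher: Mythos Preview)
Your argument is correct throughout. Parts (1) and (3) match the paper's proof essentially step for step: the same estimate $|c_la^l|_R\le (M|a|_R)^{|l|}$ for convergence, the same geometric-series inversion of $h$ via $e:=1-h(0)^{-1}h\in(\underline{X})$ and the $(\underline{X})$-adic completeness of $\widehat{A_M}$, and the same choice of $M$ as a maximum over the finitely many coefficient bounds involved. Your explicit deduction of $f(a)=g(a)/h(a)$ by two applications of part~(2) is exactly what the paper gestures at with ``the rest is a direct consequence of the first two parts.''

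The genuine difference is in part~(2). The paper argues by truncation: it sets $v_s=\sum_{|l|<s}b_l\underline{X}^l$, observes that $r_s:=v_sw-vw$ has only monomials of degree $\ge s$ once $s>\deg(vw)$, bounds $|r_s(a)|_R\le C(M|a|_R)^s$, and lets $s\to\infty$. Your reduction to a single monomial of $w$ is shorter and, as you note, does not use the hypothesis that $vw$ is a polynomial; only that $w$ is one. The paper's route uses both hypotheses and is closer to the original argument in \cite{HHK}, but yours gives a slightly stronger statement with less bookkeeping.
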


\begin{proof}
\begin{enumerate}

\item Set $m=|a|_R < M^{-1}.$ Then $|c_l a^l|_R\leqslant (Mm)^{|l|}.$ Since $Mm <1,$ $c_la^l$ tends to zero as $|l|$ tends to $+\infty,$ implying $\sum_{l \in \mathbb{N}^p}c_l a^l$ converges in $R.$

\item Let $d>\deg{vw},$ and $C:=\max_{l \in \mathbb{N}^p}(|vw_l|_R, |w_l|_R),$ where $vw_l, w_l, l \in \mathbb{N}^p,$ are the coefficients of the polynomials $vw, w,$ respectively. Let $v=\sum_{l \in \mathbb{N}^p} b_l \underline{X}^l$ be the series representation of $v.$ For any $s \in \mathbb{N},$ set $v_s=\sum_{|l|<s} b_l \underline{X}^l.$ By the first part, the sequence $(v_s(a))_{s \in \mathbb{N}}$ converges in ~$R,$ and we denote its limit by $v(a).$ For $s \geq d,$ $r_s:=v_sw-vw=(v_s-v)w$ is a polynomial whose monomials are of degree at least $s.$   
The coefficient $C_j$ corresponding to any degree $j \geqslant s$ monomial of $r_s$ is a finite sum of products of coefficients of $v_s-v $ and $w.$ Since $R$ is non-Archimedean, $M \geqslant 1,$ and $v_s-v \in \widehat{A_M},$ we obtain  
 $|C_j|_R \leqslant M^j C$ (recall the definition of $C$ at the beginning of this paragraph). 

Set $m=|a|_R.$ By the paragraph above, every degree $j$ monomial of $r_s$ evaluated at $a$ has absolute value at most $(mM)^jC.$ Since $j \geqslant s$ and $Mm<1,$  using the fact that $R$ is non-Archimedean, we obtain ${|r_s(a)|_R \leqslant (Mm)^s C}$, implying  ${r_s(a) \rightarrow 0}, s \rightarrow \infty.$ Consequently, $v_s(a)w(a) \rightarrow vw(a)$ when $s \rightarrow \infty,$ \textit{i.e.} $v(a)w(a)=vw(a).$ 

\item Set $b=h(0).$ Then $b-h \in (\underline{X}),$ and thus $1-b^{-1}h \in (\underline{X}).$ Set $e=1-b^{-1}h,$ so that $b^{-1}h=1-e$ with $e \in (\underline{X}).$ This implies $(b^{-1}h)^{-1}=b h^{-1}=\frac{1}{1-e}=\sum_{i \in \mathbb{N}}e^i \in \widehat{A},$ and so $h^{-1}=\sum_{i \in \mathbb{N}}b^{-1}e^i \in \widehat{A}.$
Consequently, $f=gh^{-1}=\sum_{i \in \mathbb{N}} b^{-1}ge^i \in \widehat{A}.$

Set $M=\max_{l \in \mathbb{N}^p}(1, |b^{-1}|_R, \sqrt[|l|]{|g_l|_R}, \sqrt[|l|]{|e_l|_R}, \sqrt[|l|]{|h_l|_R}),$ where $g_l$ (resp. $e_l,$ $h_l$), $l \in \mathbb{N}^p,$ are the coefficients of the polynomial $g$ (resp. $e, h$). Then $b^{-1}, g, e \in \widehat{A_M}$, and since $\widehat{A_M}$ is a ring, $b^{-1}e^i,$ $b^{-1}ge^i \in \widehat{A_M}$ for any $i \in \mathbb{N}.$ Finally, since $\widehat{A_M}$ is complete with respect to the $(\underline{X})-$adic norm, $h^{-1}, f \in \widehat{A_M},$ and so $h \in \widehat{A_M}^{\times}.$

The rest is a direct consequence of the first two parts of the statement.
\end{enumerate}
\end{proof}
Let $n \in \mathbb{N}$ and $S_i, T_i, i=1,2,\dots, n,$ be indeterminates. As before, we use the notation $\underline{S}$ (resp. $\underline{T}$) for the $n$-tuple $(S_1, \dots, S_n)$ (resp. $(T_1, \dots, T_n)$). For $l, m \in \mathbb{N}^n$, we denote by $|(l,m)|$ the sum $|l|+|m|,$ where $|l|$ (resp. $|m|$) is the sum of coordinates of $l$ (resp. $m$). Also, $\underline{S}^l:=\prod_{i=1}^n S_i^{l_i}$ and $\underline{T}^m:=\prod_{i=1}^n T_i^{m_i}.$ For any vector $a \in R^n,$ we denote by $a_i$ the $i$-th coordinate of $a,$ $i=1,2,\dots, p,$ meaning $a=(a_1, a_2, \dots, a_n), a_i \in R. $ As before, $a^l:=a_1^{l_1} \cdots a_n^{l_n}.$

\begin{lm} \label{2}
Let $f=\frac{h_1}{h_2} \in R[\underline{S}, \underline{T}]_{(\underline{S}, \underline{T})},$ $h_1, h_2 \in R[\underline{S}, \underline{T}], h_2(0) \neq 0,$ be such that ${h_2(0) \in R^{\times}}.$ Suppose there exists $i \in \{1,2,\dots, n\}$ such that $f(a,0)=f(0,a)=a_i$ for any $a \in R^n$ for which $f(a,0)$ and $f(0,a)$ converge in $R.$ 

Then there exists $M \geq 1$ such that $f \in \widehat{A_M}$ and its series representation  is: $$f=S_i+T_i+\sum_{|(l,m)|\geq 2}c_{l,m} \underline{S}^l \underline{T}^m.$$
\end{lm}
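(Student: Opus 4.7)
The plan is to apply Lemma~\ref{1}(3) to obtain the formal power-series expansion of $f$, and then use the hypothesis together with a polynomial vanishing argument to pin down the low-order coefficients.

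First, setting $a=0$ in the hypothesis yields $f(0,0)=0$, so $h_1(0)=0$ (using $h_2(0)\in R^{\times}$). Lemma~\ref{1}(3), applied in the $2n$ variables $\underline{S},\underline{T}$, then produces some $M\geq 1$ with $f\in\widehat{A_M}$ and an expansion $f=\sum_{l,m} c_{l,m}\underline{S}^l\underline{T}^m$ that converges to $f(a,b)=h_1(a,b)/h_2(a,b)$ whenever $|(a,b)|_R<M^{-1}$.

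The main step is to show $f(\underline{S},0)=S_i$ as rational functions. Set
\[
p(\underline{S}) := h_1(\underline{S},0) - S_i\cdot h_2(\underline{S},0) \in R[\underline{S}].
\]
For $a\in R^n$ with $|a|_R<M^{-1}$ sufficiently small, the strong triangle inequality gives $|h_2(a,0)|_R=|h_2(0)|_R\neq 0$, so $f(a,0)=h_1(a,0)/h_2(a,0)$ is a well-defined element of $R$ and, by the hypothesis, equals $a_i$; hence $p(a)=0$ on a set of the form $\{a\in R^n : |a|_R<\epsilon\}$, which contains $T^n$ for $T:=\{b\in k : |b|<\epsilon\}$. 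Since $k$ is non-trivially valued, $T$ is infinite; and since $R$ is an integral domain, a polynomial over $R$ that vanishes on a product of infinite subsets must itself be zero. Hence $p=0$ in $R[\underline{S}]$, i.e.\ $f(\underline{S},0)=S_i$ as rational functions. By uniqueness of the power-series expansion over $R$, this forces $c_{e_i,0}=1$ and $c_{l,0}=0$ for every other $l\in\mathbb{N}^n$ (including $l=0$).

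Applying the symmetric argument to $f(0,\underline{T})=T_i$ yields $c_{0,e_i}=1$ and $c_{0,m}=0$ for every other $m$. These identifications cover all multi-indices $(l,m)$ with $|(l,m)|\leq 1$, giving the claimed decomposition
\[
f = S_i + T_i + \sum_{|(l,m)|\geq 2} c_{l,m}\,\underline{S}^l\underline{T}^m.
\]
The only subtle point I expect is the passage from the pointwise identity on a non-Archimedean neighborhood of $0$ to a rational-function identity; reducing to polynomial vanishing on $T^n$ and exploiting that $k$ (hence $R$) contains enough small-norm elements circumvents any direct manipulation of formal series.
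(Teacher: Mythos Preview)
Your proof is correct and follows exactly the route the paper defers to (the proof of \cite[Lemma~2.3]{HHK}): expand $f$ via Lemma~\ref{1}(3), then use the non-trivial valuation on $k$ to produce infinitely many small-norm test points on which the polynomial $p(\underline{S})=h_1(\underline{S},0)-S_i\,h_2(\underline{S},0)$ vanishes, forcing $p=0$ over the integral domain $R$; this is precisely why Remark~\ref{sepse} singles out this lemma as the one place where non-triviality of $|\cdot|$ is needed.

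One small correction: the step ``$|h_2(a,0)|_R=|h_2(0)|_R\neq 0$, so $h_1(a,0)/h_2(a,0)$ is a well-defined element of $R$'' is not valid as stated, since nonzero norm in a general normed ring does not imply invertibility. You do not need this, however: Lemma~\ref{1}(2) applied to $v=f$, $w=h_2$ (and to $v=h_2^{-1}$, $w=h_2$) already gives $f(a,0)\,h_2(a,0)=h_1(a,0)$ in $R$ for $|a|_R<M^{-1}$, which together with the hypothesis $f(a,0)=a_i$ yields $p(a)=0$ directly.
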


The proof of \cite[Lemma 2.3]{HHK} is applicable to Lemma \ref{2} with only minor changes necessary. 

\begin{rem} \label{sepse}
In the proof of \cite[Lemma 2.3]{HHK}, and hence that of Lemma \ref{2}, one needs to use that $k$ is non-trivially valued. In fact, Lemma \ref{2} is the only reason why we have adopted this hypothesis. 
\end{rem}

Here is the general setting over which we show patching results. 

\begin{set} \label{therealone}
Let $(k, |\cdot|)$ be a complete non-trivially valued ultrametric field. 
\begin{enumerate}
\item \begin{sloppypar}
Let $(R_i, |\cdot|_{R_i}), i=0,1,2,$ be an integral domain containing $k$, endowed with a non-Archimedean (submultiplicative) $k$-linear\footnote{We recall that $k$-linear here means $|ab|_{R_i}=|a|\cdot |b|_{R_i}$, $\forall a \in k, \forall b \in R_i$; it implies that ${|\cdot|_{R_i|k}=|\cdot|}$.} norm with respect to which it is complete. Furthermore, we assume that $R_1, R_2 \hookrightarrow R_0$, 
where the two embeddings are bounded.
\end{sloppypar}

\item  Let us denote $F_i=\mathrm{Frac} \ R_i, i=0,1,2$. Let $F$ be a field containing $k$ and embedded in $F_1$ and $F_2$. 

\item For $j=1,2,$ let $A_j$ be a finite $R_j$-module such that $R_j \subseteq A_j \subseteq F_j$. Let us endow~$A_j$ with the quotient semi-norm induced from a surjective morphism ${\varphi_i:R_j^{n_i} \twoheadrightarrow A_j}, j=1,2$. We assume that this semi-norm is a norm with respect to which $A_j$ is complete.
\end{enumerate}
\begin{minipage}{0.3\textwidth}
\[
\begin{tikzcd}[column sep=small]
& k  \ar[dl, hook] \ar[dr, hook] & \\
R_1 \ar[dr, hook, "bounded"] & & R_2 \ar[dl, hook] \\
& R_0 &    
\end{tikzcd}
\]
\end{minipage}
\hfill
\begin{minipage}{0.3\textwidth}
\[
\begin{tikzcd}[column sep=small]
& F \ar[dl] \ar[dr] &\\
F_1 \ar[dr]  & & F_2 \ar[dl]\\
& F_0 & 
\end{tikzcd}
\]
\end{minipage}
\hfill
\begin{minipage}{0.3\textwidth}
\[
\begin{tikzcd}[column sep=small]
R_1 \ar[swap, d, hook, "finite"] & & R_2 \ar[d, hook, "finite"]\\
A_1 \ar[d, hook] \ar[dr, hook, "bounded"] & & A_2 \ar[dl, hook] \ar[d,hook]\\
F_1 & R_0 & F_2
\end{tikzcd}
\]
\end{minipage}
\begin{enumerate}
\item[(4)] Assume there is a bounded embedding $A_j \rightarrow R_0$, $j=1,2$, which induces a surjective morphism $\psi : A_1\oplus A_2 \twoheadrightarrow R_0.$ Moreover, assume that $|\cdot|_{R_0}$ is equivalent to the quotient norm induced by $\psi$, where $A_1 \oplus A_2$ is endowed with the max norm $|\cdot|_{\mathrm{max}}$, \textit{i.e.} that the morphism $\psi$ is \textit{admissible} (see \cite[1.1]{Ber90}).
\end{enumerate}
\end{set}

Let us recall the motivation behind the interest of Theorem \ref{e keqe} to us. 

\begin{defn} \label{12aaa}
Let $K$ be a field. A \emph{rational} variety over $K$ is a $K$-variety that has a Zariski open isomorphic to an open of some $\mathbb{A}_K^n.$ 
\end{defn}

\begin{rem}
The definition above does not coincide with the standard notion of rational variety. We adopt it here because we will only use it for linear algebraic groups, in which case a connected rational linear algebraic group is rational in the traditional sense (\textit{i.e.} birationally equivalent to some $\mathbb{P}^n$). We make this distinction because there are certain statements we will show that don't require connectedness and others that do.  
\end{rem}

Using the same notation as in Setting \ref{therealone}, let $G/F$ be a rational linear algebraic group (rational here means that $G$ is a rational variety over $F$ as per Definition \ref{12aaa}). Our main goal will be to show that under certain conditions (which we will interpret geometrically in the next sections), for any $g \in G(F_0),$ there exist $g_j \in G(F_j),j=1,2,$ such that $g=g_1 \cdot g_2$ in ~$G(F_0).$
\begin{rem}\label{thingy}
Let $K/F$ be any field extension.  Since $G$ has a non-empty Zariski open subset $S'$ isomorphic to an open subset $S$ of an affine space $\mathbb{A}_K^n$, by translation we may assume that the identity element of $G$ is contained in $S',$ that $0 \in S,$ and that the identity is sent to $0.$ Let us denote the isomorphism $S' \rightarrow S$ by $\varphi.$

Let $m$ be the multiplication in $G,$ and set $\widetilde{S'}=m^{-1}(S') \cap (S' \times S'),$ which is an open of $G \times G.$ It is isomorphic to an open $\widetilde{S}$ of $\mathbb{A}_K^{2n},$ and $m_{|\widetilde{S'}}$ gives rise to a map $\widetilde{S} \rightarrow S,$ \textit{i.e.} to a rational function $f: \mathbb{A}_K^{2n}\dashrightarrow \mathbb{A}_K^n$ (see the diagram below). Note that for any $(x,0), (0,x) \in \widetilde{S},$  this function sends them both to $x.$

\begin{center}
\begin{tikzpicture}
  \matrix (m) [matrix of math nodes,row sep=3em,column sep=4em,minimum width=2em]
  {
     \widetilde{S'} & S' \\
     \widetilde{S} & S \\};
  \path[-stealth]
    (m-1-1) edge node [left] {$(\varphi \times \varphi)_{|\widetilde{S'}}$} 
    (m-2-1) edge  node [above] {$m_{|\widetilde{S'}}$} 
    (m-1-2)
    (m-2-1.east|-m-2-2) edge node [below] {$f$}
      (m-2-2)
    (m-1-2) edge node [right] {$\varphi$} 
    (m-2-2) 
    (m-2-1);
\end{tikzpicture}
\end{center}

The result we are interested in can be interpreted in terms of the map $f$. Theorem~\ref{e keqe} below shows that (under certain conditions) said result is true on some neighborhood of the origin of an affine space. 
\end{rem}

Let us start with an auxiliary lemma. Referring to Setting \ref{therealone}, let $|\cdot|_{\inf}$ be the norm on $R_0$ obtained from the admissible morphism $\psi: A_1 \oplus A_2 \twoheadrightarrow R_0.$ Since it is equivalent to $|\cdot|_{R_0},$ there exist positive real numbers $C_1, C_2$ such that $C_1  |\cdot|_{R_0} \leqslant |\cdot|_{\inf} \leqslant C_2 |\cdot|_{R_0}.$ 

Since the morphisms $A_j \hookrightarrow R_0, j=1,2,$  are bounded, there exists $C>0$ such that for any $x_j \in A_j,$ one has $|x_j|_{R_0} \leqslant C|x_j|_{A_j}.$ By changing to an equivalent norm on $A_j$ if necessary, we may assume that $C=1$.

\begin{lm} \label{3}
 There exists $d \in (0,1) \subseteq \mathbb{R}$ such that for all  $c \in R_0,$ there exist ${a \in A_1}$, ${b\in A_2},$ for which $\psi(a+b)=c$ and $d \cdot \max(|a|_{A_1}, |b|_{A_2}) \leqslant |c|_{R_0}.$
\end{lm}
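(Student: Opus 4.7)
The plan is to directly unwind the admissibility hypothesis on the morphism $\psi$. Recall that the quotient norm $|\cdot|_{\inf}$ on $R_0$ induced by $\psi$ is by definition
\[
|c|_{\inf} = \inf\bigl\{\max(|a|_{A_1}, |b|_{A_2}) \,:\, a \in A_1,\, b \in A_2,\, \psi(a+b)=c\bigr\}
\]
for every $c \in R_0$. Admissibility asserts precisely that $|\cdot|_{\inf}$ is equivalent to $|\cdot|_{R_0}$, and in particular the constant $C_2 > 0$ introduced just before the lemma satisfies $|c|_{\inf} \leq C_2 |c|_{R_0}$ for all $c \in R_0$. Without loss of generality, I may enlarge $C_2$ so that $C_2 \geq 1$, as this only weakens the inequality.

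First I would dispose of the trivial case $c = 0$: take $a = b = 0$, which lies in $A_1 \oplus A_2$, maps to $0$ via $\psi$, and satisfies the inequality tautologically (using that the semi-norms on $A_j$ are assumed to be genuine norms). Then I would fix once and for all a real number $\eta > 1$ and set $d := (\eta C_2)^{-1}$; since $\eta C_2 > 1$, indeed $d \in (0,1)$, independent of $c$.

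For a given nonzero $c \in R_0$, the definition of the infimum supplies elements $a \in A_1$ and $b \in A_2$ with $\psi(a+b) = c$ and
\[
\max(|a|_{A_1}, |b|_{A_2}) \;\leq\; \eta\, |c|_{\inf} \;\leq\; \eta C_2\, |c|_{R_0},
\]
where the second inequality uses admissibility. Multiplying through by $d = (\eta C_2)^{-1}$ yields the desired bound $d \cdot \max(|a|_{A_1}, |b|_{A_2}) \leq |c|_{R_0}$.

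There is no serious obstacle here: the lemma is essentially a restatement of the admissibility of $\psi$ together with the definition of the quotient norm as an infimum. The only mild subtlety is that the infimum need not be attained, which forces the factor $\eta > 1$ and explains why $d$ must be taken strictly less than the reciprocal of $C_2$; this is harmless and in fact yields the required $d \in (0,1)$ by construction.
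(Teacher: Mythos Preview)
Your argument is correct and is precisely the unpacking of the paper's one-line proof, which simply invokes the admissibility of $\psi$; you have spelled out how the equivalence $|\cdot|_{\inf} \leq C_2 |\cdot|_{R_0}$ together with the definition of the quotient norm as an infimum yields the desired preimages. The two approaches are identical in substance.
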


\begin{proof}
This is a direct consequence of the admissibility of the map $\psi$.  
\end{proof}

From now on, instead of writing $\psi(x+y)=c$ for $x\in A_1, y\in A_2, c \in R_0,$ we will simply put $x+y=c$ when there is no risk of ambiguity. 

In what follows, for any positive integer $n,$ let us endow $R_0^n$ with the max norm induced from the norm on $R_0,$ and let us also denote it by $|\cdot|_{R_0}.$ For a normed ring $A$ and $\delta>0,$ we denote by $D_A(0,\delta)$ the open disc in $A$ centered at $0$ and of radius $\delta.$

\begin{thm} \label{e keqe}
For $n \in \mathbb{N},$ let $f: \mathbb{A}_{F_0}^n \times \mathbb{A}^n_{F_0} \dashrightarrow \mathbb{A}_{F_0}^n$ be a rational map defined on a Zariski open $\widetilde{S},$ such that $(0,0) \in \widetilde{S},$ and $f(x,0)=f(0,x)=x$ whenever $(x,0),(0,x) \in \widetilde{S}.$ 
Write $f=(f_1, f_2, \dots, f_n),$ where $f_i=\frac{g_i}{h_i}$ for some $g_i, h_i \in R_0[\underline{S}, \underline{T}],$ $i=1,2,\dots, n.$ Suppose $h_i(0) \in R_0^{\times}$ for all $i.$ 

Let $M \geqslant 1$ be such that $f_i \in \widehat{A_M}$ and $h_i \in \widehat{A_M}^\times$ for all $i$ (applying Lemma \ref{1} with $R=R_0$). Suppose there exists $\delta>0$ such  $D_{R_0^{2n}}(0, \delta) \subseteq \widetilde{S}(F_0).$ Let $d$ be as in Lemma \ref{3}. Let $\varepsilon >0$ be such that $\varepsilon<\min(\frac{d}{2M}, \frac{d^3}{M^4}, \frac{d\delta}{2})$. Then for any $a \in \mathbb{A}^n(F_0)$ with $a \in R_0^n$ and $|a|_{R_0} \leqslant \varepsilon,$ there exist $u \in A_1^n$ and $v \in A_2^n$ for which $(u,v) \in \widetilde{S}(F_0)$ and $f(u,v)=a.$
\end{thm}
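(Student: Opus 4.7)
The strategy is a Newton-style iteration built on Lemmas \ref{2} and \ref{3}. Since $f(x,0) = f(0,x) = x$, applying Lemma \ref{2} coordinatewise to $f_1, \dots, f_n$ (with $R = R_0$ and $p = 2n$) gives expansions
\[
f_i = S_i + T_i + \sum_{|(l,m)| \geq 2} c^{(i)}_{l,m}\, \underline{S}^l \underline{T}^m \in \widehat{A_M}.
\]
A termwise majoration, justified by Lemma \ref{1}(1)--(2), should then show that whenever $|u|_{R_0}, |v|_{R_0}, |p|_{R_0}, |q|_{R_0} < M^{-1}$, the remainder
\[
\Delta(u, v; p, q) := f(u+p, v+q) - f(u, v) - (p + q)
\]
is \emph{quadratically small}: its $R_0$-norm is bounded by a fixed polynomial in $M$ times $\max(|p|_{R_0}, |q|_{R_0}) \cdot \max(|u|_{R_0}, |v|_{R_0}, |p|_{R_0}, |q|_{R_0})$. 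This bilinear smallness is the analytic engine of the iteration.

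I would set $u_0 := 0$, $v_0 := 0$ and define inductively, given $u_n \in A_1^n, v_n \in A_2^n$, the residue $a_n := a - f(u_n, v_n) \in R_0^n$, then apply Lemma \ref{3} coordinate by coordinate to split
\[
a_n = p_n + q_n, \qquad p_n \in A_1^n,\ q_n \in A_2^n, \qquad d \cdot \max(|p_n|_{A_1}, |q_n|_{A_2}) \leq |a_n|_{R_0},
\]
and put $u_{n+1} := u_n + p_n$, $v_{n+1} := v_n + q_n$. A direct substitution yields the key identity $a_{n+1} = -\Delta(u_n, v_n; p_n, q_n)$. The three bounds $\varepsilon < d/(2M)$, $d^3/M^4$, $d\delta/2$ are precisely what a simultaneous induction needs, using the ultrametric inequality $|u_n|_{R_0} \leq \max_{k < n} |p_k|_{R_0}$, in order to produce: (i) $|u_n|_{R_0}, |v_n|_{R_0} < \delta$, keeping $(u_n, v_n) \in D_{R_0^{2n}}(0, \delta) \subseteq \widetilde{S}(F_0)$; (ii) the four norms $|u_n|_{R_0}, |v_n|_{R_0}, |p_n|_{R_0}, |q_n|_{R_0}$ all strictly below $M^{-1}$, so that Lemma \ref{1} continues to apply; and (iii) a geometric contraction $|a_{n+1}|_{R_0} \leq \tfrac{1}{2} |a_n|_{R_0}$, and hence $|a_n|_{R_0} \to 0$.

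From (iii) together with the Lemma \ref{3} bound, $|p_n|_{A_1}$ and $|q_n|_{A_2}$ will decay geometrically, so the series $u := \sum_n p_n$ and $v := \sum_n q_n$ converge in the Banach spaces $A_1^n$ and $A_2^n$. Their images in $R_0^n$, via the bounded embeddings (with constant $1$ after the rescaling already noted), inherit $|u|_{R_0}, |v|_{R_0} < \delta$, so $(u, v) \in \widetilde{S}(F_0)$. Finally, the continuity of each $f_i = g_i/h_i$ on the set $\{|\cdot|_{R_0} < M^{-1}\}$, which follows from Lemma \ref{1}(1)--(3), gives $f(u, v) = \lim_n f(u_n, v_n) = a$, as required.

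The main technical difficulty, and the reason for the tripartite hypothesis on $\varepsilon$, lies in coordinating three different norms throughout the induction: the Banach norms $|\cdot|_{A_j}$, which control Cauchyness of $\sum p_n$ and $\sum q_n$ in $A_j$; the norm $|\cdot|_{R_0}$, which controls both membership in the domain $\widetilde{S}$ and convergence of the power series defining each $f_i$; and the loss factor $1/d$ incurred each time Lemma \ref{3} is invoked. Carefully tracking the constants in the quadratic estimate for $\Delta$ is what forces exactly the three bounds $d/(2M)$, $d^3/M^4$, $d\delta/2$ on $\varepsilon$.
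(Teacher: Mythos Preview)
Your proposal is correct and follows essentially the same Newton-type iteration as the paper: expand each $f_i$ via Lemma~\ref{2}, initialize $u_0=v_0=0$, split the residue $a-f(u_s,v_s)$ through Lemma~\ref{3}, and use a quadratic estimate on the higher-order terms to propagate three simultaneous invariants guaranteeing convergence in $A_1^n$, $A_2^n$ and membership in $\widetilde{S}(F_0)$. The paper states exactly these invariants (with decay rate $\varepsilon'^{(s+1)/2}$ rather than your $(1/2)^s$, where $\varepsilon'=\varepsilon/d$) and then defers the inductive verification to \cite[Lemma~1.9]{une}; your sketch in fact supplies more of that detail than the paper does explicitly.
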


\begin{proof}
Since $f_i(0,0)=0$ for all $i,$ the functions $g_i$ belong to the maximal ideal $(\underline{S}, \underline{T})$ of $R_0[\underline{S}, \underline{T}]$. From Lemmas \ref{1} and \ref{2}:
\begin{enumerate}
\item{we can see these rational functions as elements of $R_0[[\underline{S}, \underline{T}]];$}
\item{the constant $M$ is such that  
$$f_i=S_i+T_i+ \sum_{|(l,m)|\geq 2}c_{l,m}^i \underline{T}^l \underline{S}^m \in R_0[[\underline{S}, \underline{T}]],$$ with $|c_{l,m}^i|_{R_0} \leqslant M^{|(l,m)|},$ for  $i=1,2,\dots, n$ and $(l,m) \in \mathbb{N}^{2n},$ where $|(l,m)|$ is the sum of the coordinates of $(l,m).$}
\end{enumerate}

By the choice of $\delta,$ for any $(x,y) \in R_0^{2n}$ satisfying $|(x_0, y_0)|_{R_0} < \delta,$ $(x,y) \in \widetilde{S}(F_0),$ so the function $f(x,y)$ is well-defined (meaning the functions $f_i$ are well-defined for all $i$).

Set $\varepsilon'=\frac{\varepsilon}{d}.$ Then $0 <\varepsilon' < \min \{1/2M, d^2/M^4, \delta/2\}.$  Since ${\varepsilon< \varepsilon' < \min(1/M, \delta/2),}$ for any $(x,y) \in \widetilde{S}(F_0)$ satisfying $(x, y) \in R_0^{2n}$ and $|(x,y)|_{R_0} \leqslant \varepsilon',$ $f(x,y)$ is well-defined, and by Lemma \ref{1}, the series $f_i$ is convergent in $R_0$ at $(x, y),$ ${i=1,2,\dots, n}.$

Let $a =(a_1, a_2, \dots, a_n) \in \mathbb{A}^n(F_0)$ be such that $a \in R_0^n$ and $|a|_{R_0} \leqslant \varepsilon.$ Let $u_0=0 \in A_1^n,$ and $v_0=0 \in A_2^n.$ Using induction, one constructs sequences $(u_s)_s$ in $A_1^n,$ and $(v_s)_s$ in $A_2^n,$ such that the following conditions are satisfied: 
\begin{enumerate}
\item{$|u_s|_{A_1}, |v_s|_{A_2} \leqslant \varepsilon'$ for all $s \geq 0;$}
\item{$|u_s-u_{s-1}|_{A_1}, |v_s-v_{s-1}|_{A_2} \leqslant \varepsilon'^{\frac{s+1}{2}} $ for all $s \geq 1$;}
\item{$|f(u_s,v_s)-a|_{R_0} \leqslant d\varepsilon'^{\frac{s+2}{2}} $ for all $s \geq 0.$}
\end{enumerate} 
This is done as in the proof of \cite[Lemma 1.9]{une}.
\end{proof}

Using the same notation, we have proven:

\begin{prop} \label{14aaa}
Suppose $h_i(0) \in R_0^{\times}$ and there exists an open disc of $R_0^{2n}$ centered at $0$ that is contained in $\widetilde{S}.$ Then there exists $\varepsilon >0$ such that for any $g \in S'(F_0)$ with $\varphi(g) \in R_0^n$ and $|\varphi(g)|_{R_0} \leqslant \varepsilon,$ there exist $g_i \in G(F_i), i=1,2,$ satisfying $g= g_1 \cdot g_2$ in $G(F_0).$
\end{prop}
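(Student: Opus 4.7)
The plan is to deduce this proposition directly from Theorem \ref{e keqe} by translating between a neighborhood of the identity of $G$ and a neighborhood of the origin in $\mathbb{A}^n$ via the isomorphism $\varphi$ introduced in Remark \ref{thingy}. The rational function $f:\mathbb{A}_F^n\times\mathbb{A}_F^n\dashrightarrow\mathbb{A}_F^n$ built there from the multiplication law of $G$ satisfies $f(x,0)=f(0,x)=x$ on its domain $\widetilde{S}$, and the two standing hypotheses of the proposition are exactly the nontrivial hypotheses of Theorem \ref{e keqe} (namely $h_i(0)\in R_0^\times$ and the existence of a disc of $R_0^{2n}$ about $0$ contained in $\widetilde{S}$). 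So Theorem \ref{e keqe} produces an $\varepsilon>0$ with the effectiveness property stated there.

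Given $g\in S'(F_0)$ with $a:=\varphi(g)\in R_0^n$ and $|a|_{R_0}\leqslant\varepsilon$, I apply the theorem to produce $u\in A_1^n$ and $v\in A_2^n$ satisfying $(u,v)\in\widetilde{S}(F_0)$ and $f(u,v)=a$. Because $\widetilde{S}$ is the image of $\widetilde{S'}\subseteq S'\times S'$ under $\varphi\times\varphi$, we have $\widetilde{S}\subseteq S\times S$, so both $u$ and $v$ lie in $S(F_0)$ as individual coordinates.

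To upgrade these to $u\in S(F_1)$ and $v\in S(F_2)$, I use that the Zariski closed complement of $S$ in $\mathbb{A}^n_F$ is cut out by polynomials with coefficients in $F\subseteq F_1\subseteq F_0$: since $u\in A_1^n\subseteq F_1^n$, whether $u$ lies in $S(F_1)$ is determined by whether these defining polynomials simultaneously vanish at $u$ in $F_1$, which by injectivity of the induced map $F_1\hookrightarrow F_0$ coincides with simultaneous vanishing in $F_0$; and that is already excluded by $u\in S(F_0)$. The same reasoning yields $v\in S(F_2)$. Setting $g_1:=\varphi^{-1}(u)\in S'(F_1)\subseteq G(F_1)$ and $g_2:=\varphi^{-1}(v)\in S'(F_2)\subseteq G(F_2)$, the product $g_1\cdot g_2$ read in $G(F_0)$ corresponds, through the identification of $m_{|\widetilde{S'}}$ with $f$, to $f(u,v)=a=\varphi(g)$. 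Since $\varphi$ is injective on $S'(F_0)$, we conclude that $g_1\cdot g_2=g$ in $G(F_0)$.

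The only genuine technical point is the passage from $u\in S(F_0)$ to $u\in S(F_1)$ (and symmetrically for $v$); but this is purely a question of the compatibility of the inclusions $F\subseteq F_1\hookrightarrow F_0$ together with the fact that $S$ is defined over $F$. Everything else is a straightforward dictionary between $G$ near its identity and a neighborhood of the origin of affine space, so the proposition follows from Theorem \ref{e keqe} with no further work.
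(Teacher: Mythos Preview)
Your proposal is correct and takes essentially the same approach as the paper: the paper simply writes ``Using the same notation, we have proven:'' before stating the proposition, treating it as an immediate translation of Theorem~\ref{e keqe} into the group-theoretic language via the isomorphism $\varphi$ of Remark~\ref{thingy}. You have merely made explicit the straightforward passage from $u\in S(F_0)\cap A_1^n$ to $u\in S(F_1)$ (using that $S$ is defined over $F\subseteq F_1\hookrightarrow F_0$), which the paper leaves implicit.
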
  
 
\section{Nice covers for the relative projective line} \label{4.1}

As in the case of curves in \cite{une}, we construct covers around fibers of the relative projective line over which a generalized form of patching as seen in \cite[Proposition 3.3]{une} will be possible. More precisely, we construct relative analogues of \textit{nice covers}. Let us recall the latter.

\begin{defn}[{\cite[Definition 2.1]{une}}]\label{nice}
Let $k$ be a complete ultrametric field. A finite cover $\mathcal{U}$ of a $k$-analytic curve is called \textit{nice} if:
\begin{enumerate}
\item the elements of $\mathcal{U}$ are connected affinoid domains with only type 3 points in their topological boundaries;
\item for any different $U, V \in \mathcal{U},$ $U \cap V=\partial{U} \cap \partial{V},$ or equivalently, $U \cap V$ is a finite set of type 3 points; 
\item for any two different elements of $\mathcal{U},$ neither is contained in the other. 
\end{enumerate}
\end{defn}

In Appendix III, we show and recall some results on the analytic projective line which we will use extensively in this section. 

\subsection{The general setting}
Let $k$ be a complete ultrametric field. We start by recalling the important notion of dimension for $k$-analytic spaces in the Berkovich sense.

\begin{rem} \label{hajdede1}
Let $Y$ be a $k$-analytic space. We recall that the \textit{dimension} of $Y$, denoted $
\dim{Y}$, is defined to be the $\sup_{y \in Y} 
d(\mathcal{H}(y)/k),$ where $$d(\mathcal{H}(y)/k):= \deg \mathrm{tr}
_{\widetilde{k}} \widetilde{\mathcal{H}
(y)} + \dim_{\mathbb{Q}} |\mathcal{H}(y)^
\times|/|k^\times| \otimes_{\mathbb{Z}} 
\mathbb{Q},$$
and $\widetilde{k}, \widetilde{\mathcal{H}(y)}$ are the residue fields of $k, \mathcal{H}(y),$ respectively (see \cite[1.14]{Duc2}).
\end{rem}

\begin{nota} \label{17aaa}
 Let $S$ be a normal \textit{good} $k$-analytic space (\textit{i.e.}  affinoid domains form a basis of the Berkovich topology on $S$). Suppose that $\dim{S}<\dim_{\mathbb{Q}} \mathbb{R}_{>0}/|k^\times| \otimes_{\mathbb{Z}} \mathbb{Q}$. 
Let us denote by $\pi$ the structural morphism $\mathbb{P}_{S}^{1,\mathrm{an}} \rightarrow S.$ Let $x \in S$ be such that $\mathcal{O}_{S,x}$ is a field. Let $F_x$ be the fiber of $x$ on ~$\mathbb{P}_S^{1,\mathrm{an}}$, which can be endowed with the analytic structure of~$\mathbb{P}_{\mathcal{H}(x)}^{1, \mathrm{an}}$.  
\end{nota}
Remark that a connected affinoid domain of $S$ is integral. 

Let us explain the hypothesis on the dimension of $S$ in Notation \ref{17aaa}.
As in \cite{une}, type ~3 points play a very important role for obtaining patching results \textit{around} the fiber $F_x.$ Hence, their existence \textit{on} the fiber is crucial and, as will be seen in the next lemma, this is guaranteed by the condition we imposed on the dimension of $S$. Recall that for a complete ultrametric field $K$, a $K$-analytic curve contains type 3 points if and only if $\sqrt{|K^\times|} \neq \mathbb{R}_{>0}$.

\begin{lm} \label{1000}
Let $Y$ be a normal $k$-analytic space such that $\dim{Y}<\dim_{\mathbb{Q}} \mathbb{R}_{>0}/|k^\times| \otimes_{\mathbb{Z}} \mathbb{Q}$. Then for any $y \in Y,$ ${\sqrt{|\mathcal{H}(y)^\times|} \neq \mathbb{R}_{>0}}.$
\end{lm}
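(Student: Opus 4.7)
My plan is to argue by contradiction. Suppose there exists $y \in Y$ with $\sqrt{|\mathcal{H}(y)^\times|} = \mathbb{R}_{>0}$. By the dimension formula recalled in Remark \ref{hajdede1},
$$\dim_{\mathbb{Q}} \bigl(|\mathcal{H}(y)^\times|/|k^\times|\bigr) \otimes_{\mathbb{Z}} \mathbb{Q} \;\le\; d(\mathcal{H}(y)/k) \;\le\; \dim Y,$$
so it suffices to show that under the above assumption the leftmost $\mathbb{Q}$-vector space has dimension at least $\dim_{\mathbb{Q}}(\mathbb{R}_{>0}/|k^\times|) \otimes_{\mathbb{Z}} \mathbb{Q}$, directly contradicting the hypothesis on $\dim Y$.

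The inclusion $|\mathcal{H}(y)^\times| \hookrightarrow \mathbb{R}_{>0}$ contains $|k^\times|$ on both sides, hence descends to an injection $|\mathcal{H}(y)^\times|/|k^\times| \hookrightarrow \mathbb{R}_{>0}/|k^\times|$. Since $\mathbb{Q}$ is a flat $\mathbb{Z}$-module, this injection survives after tensoring with $\mathbb{Q}$. I claim that under our assumption it becomes an isomorphism of $\mathbb{Q}$-vector spaces. Indeed, writing the groups multiplicatively and the tensor products additively, given $r \in \mathbb{R}_{>0}$, the hypothesis $\sqrt{|\mathcal{H}(y)^\times|} = \mathbb{R}_{>0}$ yields an integer $n \ge 1$ with $r^n \in |\mathcal{H}(y)^\times|$, and the class of $r$ in $(\mathbb{R}_{>0}/|k^\times|) \otimes \mathbb{Q}$ is then equal to $(r^n \bmod |k^\times|) \otimes \tfrac{1}{n}$, which visibly lies in the image of $(|\mathcal{H}(y)^\times|/|k^\times|) \otimes \mathbb{Q}$.

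Putting the two steps together,
$$d(\mathcal{H}(y)/k) \;\ge\; \dim_{\mathbb{Q}}\bigl(|\mathcal{H}(y)^\times|/|k^\times|\bigr) \otimes_{\mathbb{Z}} \mathbb{Q} \;=\; \dim_{\mathbb{Q}}(\mathbb{R}_{>0}/|k^\times|) \otimes_{\mathbb{Z}} \mathbb{Q} \;>\; \dim Y,$$
which contradicts $d(\mathcal{H}(y)/k) \le \dim Y$, finishing the proof. There is no real obstacle: the argument is essentially a direct unwinding of the definition of the analytic dimension, coupled with the fact that passing to the $\mathbb{Q}$-divisible hull of a value group does not alter $\mathbb{Q}$-vector space dimensions. (The normality hypothesis on $Y$ is not needed for this particular statement; it is inherited from the ambient Notation \ref{17aaa}.)
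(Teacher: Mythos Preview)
Your proof is correct and follows essentially the same approach as the paper: both use the chain $\dim_{\mathbb{Q}}\bigl(|\mathcal{H}(y)^\times|/|k^\times|\bigr)\otimes_{\mathbb{Z}}\mathbb{Q} \le d(\mathcal{H}(y)/k) \le \dim Y$ from Remark~\ref{hajdede1} and then compare with $\dim_{\mathbb{Q}}(\mathbb{R}_{>0}/|k^\times|)\otimes_{\mathbb{Z}}\mathbb{Q}$. The paper presents this as a single line and leaves implicit the step you spell out in your second paragraph (that $\sqrt{|\mathcal{H}(y)^\times|}=\mathbb{R}_{>0}$ forces equality of the two $\mathbb{Q}$-dimensions), so your version is simply a more detailed rendering of the same argument.
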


\begin{proof}
For any $y \in Y,$ we have $$
\dim_{\mathbb{Q}} |\mathcal{H}(y)^\times|/|k^\times| \otimes_{\mathbb{Z}} \mathbb{Q}\leqslant d(\mathcal{H}(y)/k) \leqslant \dim{Y} <\dim_{\mathbb{Q}} \mathbb{R}_{>0}/|k^\times| \otimes_{\mathbb{Z}} \mathbb{Q}.$$
Consequently, ${\sqrt{|\mathcal{H}(y)^\times|} \neq \mathbb{R}_{>0}}.$
\end{proof}
Hence, by Lemma \ref{1000}, in Notation \ref{17aaa}, $\mathbb{P}_{\mathcal{H}(x)}^{1, \mathrm{an}}$ contains type 3 points.

\enlargethispage*{0.5cm}
We recall that  if $K$ is a complete ultrametric field, for $a \in K$ and $r \in \mathbb{R}_{\geqslant 0},$  the map $$\eta_{a,r}:K[T] \rightarrow \mathbb{R}_{\geqslant 0}, $$$$ \hspace{0.5cm} \sum_{n} b_n (T-a)^n \mapsto \max_n \{|b_n|r^n\},$$ defines a multiplicative semi-norm on $K[T]$, meaning $\eta_{a,r}$ is a point of $\mathbb{P}_K^{1, \mathrm{an}}.$ See ~\mbox{\cite[1.1.2.3]{poilibri}} for more details. 
\begin{rem} \label{poineau}
The author is very thankful to J\'er\^ome Poineau for sharing his unpublished notes which contain the following idea related to thickenings in $\mathbb{P}^{1, \mathrm{an}}$: for a complete ultrametric field $K$, an affinoid domain $A$ in $\mathbb{P}_{K}^{1,\mathrm{an}}$ is determined through finitely many polynomials defined over $K.$ Moreover, this ``polynomial writing" of $A$ is not unique.  If $L \subseteq K$ is a dense subfield of $K$, then we can find a polynomial writing for $A$ via polynomials defined over the smaller field $L$. 

We apply this idea to $K=\mathcal{H}(x)$ and $L=\mathcal{O}_x$ (see Lemma \ref{18aaa}), in which case it means that $A$ is determined by polynomials that are defined over some $\mathcal{O}(Z),$ where $Z$ is an affinoid neighborhood of $x$. This way, the thickening of $A$ can be defined via the same polynomial writing in $\mathbb{P}_Z^{1, \mathrm{an}}$, and we proceed to show that it has reasonably nice properties. 
\end{rem}

\begin{lm} \label{18aaa}
Let $U$ be a connected affinoid domain of $\mathbb{P}_{\mathcal{H}(x)}^{1,\mathrm{an}}$ with only type 3 points in its boundary. Suppose $U$ is not a point. Let us fix a copy of $\mathbb{A}_{\mathcal{H}(x)}^{1, \mathrm{an}}$ in $\mathbb{P}_{\mathcal{H}(x)}^{1, \mathrm{an}}$ with coordinate $T$. Let $\partial{U}=\{\eta_{R_i, r_i}: i=1,2,\dots, n\},$ where $R_i \in \mathcal{H}(x)[T]$ are irreducible polynomials and $r_i \in \mathbb{R}_{>0} \backslash \sqrt{|\mathcal{H}(x)^{\times}|}$ for $i=1,2,\dots, n$, so that (by Proposition \ref{projectivedom}) $U=\{y \in \mathbb{P}_{\mathcal{H}(x)}^{1, \mathrm{an}}: |R_i|_y \bowtie_i r_i, i=1,2,\dots, n\}$ with $\bowtie_i \in \{\leqslant, \geqslant\}$ for all $i$. Then the polynomials $R_i$ can be chosen such that $R_i \in \mathcal{O}_x[T]$ for all $i.$
\end{lm}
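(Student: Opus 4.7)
The engine is the density of $\mathcal{O}_x$ in $\mathcal{H}(x)$: since $\mathcal{O}_x$ is a field by hypothesis, we have $m_x=0$ and $\kappa(x)=\mathcal{O}_x$, and $\mathcal{H}(x)$ is by construction its completion. Note also that $\infty\in\mathbb{P}^{1,\mathrm{an}}_{\mathcal{H}(x)}$ is a type $1$ point, hence excluded from $\partial U$, so every boundary point already sits in the affine part. After dividing by its leading coefficient (which scales $r_i$ by an element of $|\mathcal{H}(x)^\times|$ and thus preserves the type $3$ hypothesis), we may take each $R_i$ monic. I will treat the indices one at a time and fix a monic irreducible $R=R_i$ of degree $d$ together with $r=r_i\notin\sqrt{|\mathcal{H}(x)^\times|}$; the goal is to find a monic $R'\in\mathcal{O}_x[T]$ of degree $d$, irreducible over $\mathcal{H}(x)$, with $\eta_{R',r}=\eta_{R,r}$.

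Pass to a completion $\mathbb{C}$ of an algebraic closure of $\mathcal{H}(x)$, and let $\alpha_1,\dots,\alpha_d\in\mathbb{C}$ be the roots of $R$. The preimage of $\eta_{R,r}$ in $\mathbb{A}^{1,\mathrm{an}}_{\mathbb{C}}$ is the Galois orbit of Gauss points $\eta_{\alpha_j,\rho}$, where $\rho>0$ is uniquely determined by $\prod_{k}\max(|\alpha_j-\alpha_k|,\rho)=r$. Because each $|\alpha_j-\alpha_k|$ lies in $|\mathbb{C}^\times|=\sqrt{|\mathcal{H}(x)^\times|}$ while $r$ does not, $\rho$ lies outside $|\mathbb{C}^\times|$, and in particular $\rho\neq|\alpha_j-\alpha_k|$ for every $j,k$. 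By density, choose a monic $R'\in\mathcal{O}_x[T]$ of degree $d$ whose coefficients approximate those of $R$ so closely that, by the standard non-archimedean continuity of roots as symmetric functions of the coefficients, the roots $\alpha'_1,\dots,\alpha'_d$ of $R'$ in $\mathbb{C}$ can be labelled to satisfy $|\alpha_j-\alpha'_j|<\min\bigl(\rho,\,\min_{k\neq j}|\alpha_j-\alpha_k|\bigr)$ for every $j$. Krasner's lemma applied to the inner minimum gives $\mathcal{H}(x)(\alpha_j)\subseteq\mathcal{H}(x)(\alpha'_j)$, forcing $[\mathcal{H}(x)(\alpha'_j):\mathcal{H}(x)]=d$ and hence irreducibility of $R'$ over $\mathcal{H}(x)$.

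It remains to conclude $\eta_{R',r}=\eta_{R,r}$. The inequality $|\alpha_j-\alpha'_j|<\rho$ yields $D(\alpha_j,\rho)=D(\alpha'_j,\rho)$, hence $\eta_{\alpha_j,\rho}=\eta_{\alpha'_j,\rho}$; combined with $|\alpha_j-\alpha'_j|<\min_{k\neq j}|\alpha_j-\alpha_k|$ and the ultrametric inequality it also gives $\max(|\alpha'_j-\alpha'_k|,\rho)=\max(|\alpha_j-\alpha_k|,\rho)$ for every $k$ (splitting into the two cases $|\alpha_j-\alpha_k|>\rho$ and $|\alpha_j-\alpha_k|<\rho$). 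Multiplying over $k$, one reads off $|R'|_{\eta_{\alpha'_j,\rho}}=r$, so $\eta_{\alpha'_j,\rho}$ lies in the fibre of $\eta_{R',r}$ in $\mathbb{A}^{1,\mathrm{an}}_{\mathbb{C}}$; since both $\eta_{R,r}$ and $\eta_{R',r}$ are the Galois orbits of this common point, they coincide in $\mathbb{A}^{1,\mathrm{an}}_{\mathcal{H}(x)}$. Performing this replacement for each $i$ finishes the proof. The delicate point is the ultrametric bookkeeping just above: we must simultaneously preserve irreducibility of $R'$ and equality of Berkovich points under the perturbation, and the single hypothesis making this work is $r\notin\sqrt{|\mathcal{H}(x)^\times|}$, which strictly separates $\rho$ from every $|\alpha_j-\alpha_k|$ and thereby absorbs any sufficiently small approximation.
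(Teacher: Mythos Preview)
Your argument is correct and reaches the goal by a more explicit, computational route than the paper's. The paper simply observes that removing a type~3 point $\eta$ from $\mathbb{P}^{1,\mathrm{an}}_{\mathcal{H}(x)}$ leaves two connected components, fixes the virtual open disc not containing $\infty$, and asserts (by density of $\mathcal{O}_x$ in $\mathcal{H}(x)$) that this disc contains a rigid point $\eta_{P,0}$ with $P\in\mathcal{O}_x[T]$ irreducible over $\mathcal{H}(x)$; then $\eta=\eta_{P,p}$ for a possibly new radius $p$. You instead perturb the given $R$ directly after base change to the completed algebraic closure, track the Gauss points upstairs, and verify both irreducibility (via Krasner) and the equality $\eta_{R',r}=\eta_{R,r}$ by an explicit ultrametric computation, thereby preserving the original radius $r$. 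This extra precision is not needed for the lemma but costs nothing.

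One caveat worth flagging: your Krasner step requires $\min_{k\neq j}|\alpha_j-\alpha_k|>0$, i.e.\ that $R$ be separable over $\mathcal{H}(x)$; in positive characteristic with $R$ inseparable this minimum is zero and the bound you impose on $|\alpha_j-\alpha'_j|$ is vacuous. The paper's one-line appeal to density is equally silent on this point, and the fix is routine (write $R(T)=S(T^{p^e})$ with $S$ separable irreducible, perturb $S$ by your argument, and check that $S'(T^{p^e})$ stays irreducible using that $\mathcal{H}(x)^{p}$ is closed), but as written your proof covers only the separable case.
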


\begin{proof}
Let $\eta$ be any type 3 point of $\mathbb{P}_{\mathcal{H}(x)}^{1, \mathrm{an}}.$ It suffices to show that there exist $P \in \mathcal{O}_x[T]$ irreducible over $\mathcal{H}(x)$ and $p>0$ such that $\eta=\eta_{P,p}.$

The connected components of $\mathbb{P}_{\mathcal{H}(x)}^{1, \mathrm{an}} \backslash \{\eta\}$ are virtual discs. Let us fix one that does not contain the point $\infty.$ We need to show it contains a rigid point $\eta_{R,0}$ with $R \in \mathcal{O}_x[T]$ with ~$R$ irreducible over $\mathcal{H}(x).$ This follows immediately from the density of $\mathcal{O}_x$ in $\mathcal{H}(x).$
\end{proof}

\begin{rem} Since by Lemma \ref{18aaa}, $R_i \in \mathcal{O}_x[T]$ for all $i,$ there exists some connected affinoid neighborhood $Z$ of $x$ in $S,$ such that $R_i \in \mathcal{O}(Z)[T]$ for all $i.$ Hence, the affinoid domain $U$ can be \textit{thickened} to an affinoid domain $\{u \in \mathbb{P}_{Z}^{1, \mathrm{an}} : |R_i|_u \bowtie_i r_i, i=1,2,\dots, n\}$ of $\pi^{-1}(Z)=\mathbb{P}_{Z}^{1, \mathrm{an}}.$ The role of nice covers in this relative setting will be played by covers that are constructed by thickening affinoid domains of the fiber $\mathbb{P}_{\mathcal{H}(x)}^{1, \mathrm{an}}.$ We now study some properties of such domains which make patching possible. 
\end{rem}
\subsection{Thickenings of type 3 points} Following Notation \ref{17aaa},
the goal of this part is to show Theorem \ref{ajune} below, which states that the thickenings of type 3 points exist \emph{and} behave well, meaning they are connected. 

Here is an overview of the (somewhat long) proof: we take a thickening of the type 3 point and work over a finite base change where the corresponding polynomial splits; the affinoid domain we obtain is no longer necessarily fiberwise connected, but we understand it better thanks to Lemma \ref{para}; we decompose it naturally into a union of connected sets; this allows us to conclude seeing as the thickening we took can be written as a union of connected sets containing a common point.

Here, a \emph{unitary} polynomial is a one--variable polynomial where the leading coefficient (\emph{i.e.} the coefficient corresponding to the highest degree monomial) is equal to~$1$. 

\begin{thm} \label{ajune}
Let $\eta_{R,r}$ be a type 3 point of $\mathbb{P}_{\mathcal{H}(x)}^{1, \mathrm{an}}$, where $R \in \mathcal{O}_x[T]$ is irreducible over $\mathcal{H}(x)$ and $r \in \mathbb{R}_{>0} \backslash \sqrt{|\mathcal{H}(x)^\times|}$. There exists a connected affinoid neighborhood $Z_0$ of $x$ in ~$S$ such that 
\begin{itemize}
\item $R\in \mathcal{O}(Z_0)[T],$
\item  the set $\{u \in \mathbb{P}_{Z_0}^{1, \mathrm{an}}: |R|_u=r\}$ is a connected affinoid domain of $\mathbb{P}_{Z_0}^{1, \mathrm{an}}.$
\end{itemize}
Moreover, the statement remains true when replacing $Z_0$ by any connected affinoid neighborhood $Z \subseteq Z_0$ of $x$.
\end{thm}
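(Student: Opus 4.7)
The plan is to locate $Z_0$ so that $R$ is defined with unit leading coefficient, realise $W_Z := \{u \in \mathbb{P}_Z^{1,\mathrm{an}} : |R|_u = r\}$ as the pullback of the ``relative circle'' $C_Z := \{u \in \mathbb{A}_Z^{1,\mathrm{an}} : |S|_u = r\}$ along the finite polynomial map $T \mapsto R(T)$, and deduce connectedness by showing that $R(T)-S$ is irreducible in the relevant Tate algebra.

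Since $\mathcal{O}_x = \varinjlim_Z \mathcal{O}(Z)$ over affinoid neighborhoods of $x$, and $R$ has only finitely many coefficients in $\mathcal{O}_x$, there is an affinoid neighborhood $Z_0$ of $x$ with $R \in \mathcal{O}(Z_0)[T]$; as $\mathcal{O}_x$ is a field, the leading coefficient of $R$ is a unit in $\mathcal{O}_x$, hence in $\mathcal{O}(Z_0)$ after a further shrink. Set $d = \deg R \geq 1$. For any connected affinoid neighborhood $Z \subseteq Z_0$ of $x$, unit leading coefficient implies $|R|_u \to \infty$ as $|T|_u \to \infty$ uniformly on $Z$, so $W_Z$ is contained in a relative closed disk $\{|T| \leq \rho\}$ for $\rho$ large enough; inside this affinoid disk, $W_Z$ is cut out by the Laurent conditions $|R| \leq r$, $|R| \geq r$, making $W_Z$ an affinoid subdomain of $\mathbb{P}_Z^{1,\mathrm{an}}$. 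Viewed as the pullback of $C_Z$ along $T \mapsto R(T)$, it admits the explicit presentation
\[
\mathcal{O}(W_Z) \;=\; \mathcal{O}(Z)\langle S/r, r/S \rangle [T] \,/\, (R(T) - S),
\]
a free module of rank $d$ over the Tate algebra $\mathcal{O}(C_Z) := \mathcal{O}(Z)\langle S/r, r/S \rangle$.

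To prove $W_Z$ is connected, I will show that $\mathcal{O}(W_Z)$ is a domain. The base $\mathcal{O}(C_Z)$ is one already: $\mathcal{O}(Z)$ is integral since $Z$ is connected and normal, and Tate algebras over a domain remain domains by the standard Gauss-lemma argument. It then suffices to prove that $R(T) - S$ is irreducible in $\mathcal{O}(C_Z)[T]$. Suppose by contradiction that $R(T) - S = A(T) B(T)$ with both factors of positive $T$-degree; using the unit leading coefficient of $R$, normalise so that $A$ is monic of degree $e \in [1, d-1]$ and $B$ has leading coefficient equal to that of $R$, of degree $d - e$. Applying the coefficient-wise evaluation $\mathcal{O}(Z) \to \mathcal{H}(x)$ (well-defined since $x \in Z$) yields a non-trivial factorisation of $R(T) - S$ in $\mathcal{H}(x)\langle S/r, r/S \rangle [T]$. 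But $r \notin \sqrt{|\mathcal{H}(x)^\times|}$ implies $\mathcal{H}(x)\langle S/r, r/S \rangle = \mathcal{H}(\eta_{S,r})$ is a field, and the irreducibility of $R$ over $\mathcal{H}(x)$ together with this same type 3 hypothesis forces $\eta_{R,r}$ to be the unique preimage of $\eta_{S,r}$ on $F_x$ under $T \mapsto R(T)$, with residue degree exactly $d$; consequently $\mathcal{H}(\eta_{R,r}) = \mathcal{H}(\eta_{S,r})[T]/(R(T) - S)$ is a field and $R(T) - S$ is irreducible over $\mathcal{H}(\eta_{S,r})$---contradiction. The main obstacle is precisely this last geometric input, which uses both ingredients---$R$ irreducible over $\mathcal{H}(x)$ and $r$ type 3---in an essential way.
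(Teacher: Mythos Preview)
Your approach is correct and takes a genuinely different route from the paper's. The paper passes to a finite cover $Z'\to Z_1$ over which $R$ splits, writes $\{|R|=r\}$ upstairs as a union of relative circles $\{|T-\alpha|=s_\alpha^i(\pi(u))\}$ with continuously varying radii (Lemma~\ref{pas}), shows each piece is connected by a fiberwise/properness argument (Lemma~\ref{connectedness}), and concludes since all pieces map to sets containing $\eta_{R,r}$ downstairs. Your route---presenting $\mathcal{O}(W_Z)=\mathcal{O}(C_Z)[T]/(R(T)-S)$ and reducing irreducibility of $R(T)-S$ to the fiber over $x$---is more algebraic and sidesteps the splitting base change entirely; it is essentially the presentation of the paper's later Lemma~\ref{113} pressed into service here. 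Two points to tighten: (i) the ``Gauss-lemma'' justification that $\mathcal{O}(C_Z)$ is a domain is terse---either embed $\mathcal{O}(Z)\hookrightarrow\mathcal{H}(\gamma)$ for a Shilov boundary point $\gamma$ (where the norm is multiplicative) so that $\mathcal{O}(C_Z)\hookrightarrow\mathcal{H}(\gamma)\langle S/r,r/S\rangle$, or argue via connected fibers plus normality of $\mathbb{P}_Z^{1,\mathrm{an}}$; (ii) passing from ``$R(T)-S$ irreducible in $\mathcal{O}(C_Z)[T]$'' to ``the quotient is a domain'' uses that $\mathcal{O}(C_Z)$ is integrally closed (so a monic factorization over the fraction field descends), which holds because $C_Z$ is a connected affinoid domain of a normal space. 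With these details filled in, your argument is complete and arguably more direct than the paper's.
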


\begin{proof}

Without loss of generality, since $\mathcal{O}_x$ is a field, we may assume that $R(T)$ is a unitary polynomial.

Let $Z_1$ be some connected affinoid neighborhood of $x$ in $S$ such that $R \in \mathcal{O}(Z_1)[T].$ Let $E$ be a finite field extension of $\mathscr{M}(Z_1)$ over which $R(T)$ splits. Since $\mathcal{O}(Z_1)$ is Japanese (see \cite[Proposition 2.1.14]{Ber90}), its integral closure in $E$ is a finite $\mathcal{O}(Z_1)$-algebra, and in particular, an integral $k$-affinoid algebra (see \cite[0.8]{Duc2}). Let us denote by $Z'$ the corresponding integral $k$-affinoid space. 

By construction, we have a finite morphism $\varphi: Z' \rightarrow Z_1$ inducing a finite morphism $\psi: \mathbb{P}_{Z'}^{1, \mathrm{an}} \rightarrow \mathbb{P}_{Z_1}^{1, \mathrm{an}}$, and the polynomial $R(T)$ is split over $\mathcal{O}(Z').$ Set $\{x_1, x_2, \dots, x_t\}:=\varphi^{-1}(x).$ Let us study the affinoid domain $|R(T)|=\prod_{R(\alpha)=0}|T-\alpha|=r$ in $\mathbb{P}_{Z'}^{1,\mathrm{an}},$ \textit{i.e.} the affinoid $\{u \in \mathbb{P}_{Z'}^{1,\mathrm{an}}: \prod_{R(\alpha)=0} |T-\alpha|_u=r\}.$
\begin{sloppypar}
Since $\varphi$ is a finite morphism, $\sqrt{|\mathcal{H}(x)^\times|}=\sqrt{|\mathcal{H}(x_i)^\times|}$ for any $i=1,2,\dots, t,$ so ${r \not \in \sqrt{|\mathcal{H}(x_i)^\times|}}.$ By Lemma \ref{para}, for all $i \in \{1,2,\dots, t\}$ and all $\alpha$ such that $R(\alpha)=0$,
there exist positive real numbers $s_{\alpha, x_i}$  such that  $\{u \in \mathbb{P}_{\mathcal{H}(x_i)}^{1, \mathrm{an}}:|R|_u=r\}=\bigcup_{R(\alpha)=0}\{u \in \mathbb{P}_{\mathcal{H}(x_i)}^{1, \mathrm{an}}: |T-\alpha|_{u}=s_{\alpha, x_i}\}.$ Since $r \not \in \sqrt{|\mathcal{H}(x_i)^\times|},$  $\{u \in \mathbb{P}_{\mathcal{H}(x_i)}^{1, \mathrm{an}}:|R|_u=r\}$ cannot contain any type ~2 points, so ${s_{\alpha, x_i} \not \in \sqrt{|\mathcal{H}(x_i)^\times|} \cup \{0\}}$ and $\{u \in \mathbb{P}_{\mathcal{H}(x_i)}^{1, \mathrm{an}}:|R|_u=r\}=\{\eta_{\alpha, s_{\alpha, x_i}}: R(\alpha)=0\}$.  
\end{sloppypar}
\begin{lm} \label{pas}
For any $i \in \{1,2,\dots, t\},$ and any root $\alpha$ of $R(T),$ there exists a connected affinoid neighborhood $Z_i'$ of $x_i$ and a continuous function $s_{\alpha}^i: Z_i' \rightarrow \mathbb{R}_{>0}$ such that for any $y \in Z_i',$
$$\{u \in \mathbb{P}_{\mathcal{H}(y)}^{1, \mathrm{an}}:|R|_u=r\}=\bigcup_{R(\alpha)=0} \{u \in \mathbb{P}_{\mathcal{H}(y)}^{1,\mathrm{an}}: |T-\alpha|_u=s_{\alpha}^i(y)\}.$$
Furthermore, we may assume that for any $j \neq i, x_{j} \not \in Z_i'.$
\end{lm}

\begin{proof}

Let us fix an $i \in \{1,2,\dots, t\}$ and a root $\alpha$ of $R(T)$ of multiplicity $m$. Let $\alpha_1, \alpha_2, \dots, \alpha_n$ be the rest of the roots (with multiplicity) of $R(T),$ ordered in such a way that for any $j \leqslant l,$ $|\alpha-\alpha_{j}|_{x_i} \leqslant |\alpha-\alpha_l|_{x_i}.$ As remarked above, $s_{\alpha, x_i} \not \in \sqrt{|\mathcal{H}(x_i)^\times|} \cup \{0\},$ so $s_{\alpha, x_i} \neq |\alpha-\alpha_j|_{x_i}$ for all $j=1,2,\dots, n.$ Set $\alpha_0:=\alpha.$ Then there exists a unique $j_0 \in \{0,1,\dots, n\},$ such that $|\alpha-\alpha_{j}|_{x_i} < s_{\alpha, x_i} < |\alpha-\alpha_l|_{x_i}$ for all $j, l$ for which $j \leqslant j_0 < l$ (in particular, if $j_0=0,$ this means that $0<s_{\alpha, x_i} < |\alpha-\alpha_1|_{x_i},$ and if $j_0=n,$ that $|\alpha-\alpha_n|_{x_i}< s_{\alpha, x_i}$). Since in $\mathbb{P}_{\mathcal{H}(x_i)}^{1, \mathrm{an}}:$
$$r=|R|_{\eta_{\alpha, s_{\alpha, x_i}}}=|T-\alpha|_{\eta_{\alpha, s_{\alpha, x_i}}}^m \prod_{j=1}^n |T-\alpha_i|_{\eta_{\alpha, s_{\alpha, x_i}}}=s_{\alpha, x_i}^m \cdot \prod_{j=1}^n \max(s_{\alpha, x_i}, |\alpha-\alpha_j|_{x_i}),$$
we obtain that $s_{\alpha, x_i}=\sqrt[j_0+m]{\frac{r}{\prod_{j=j_0+1}^n |\alpha-\alpha_j|_{x_i}}}$ (this means that $s_{\alpha, x_i}=\sqrt[n+m]{r}$ if $j_0=n$.) Note that $|\alpha-\alpha_j|_{x_i} \neq 0$ for all $j > j_0$ seeing as $s_{\alpha, x_i}<|\alpha-\alpha_j|_{x_i}.$
 
Since the function $Z' \rightarrow \mathbb{R}_{>0}$, $y \mapsto |\alpha-\alpha_j|_{y}$ is continuous for all $j=1,2,\dots, n,$ there exists a connected affinoid neighborhood $Z_{i,1}$ of $x_i$ in $Z'$ such that $|\alpha-\alpha_j|_y \neq 0$ for all $j>j_0$ and all $y \in Z_{i,1}.$

 Let us define $s_{\alpha}^i : Z_{i,1} \rightarrow \mathbb{R}_{>0}$ by $y \mapsto \sqrt[j_0+m]{\frac{r}{\prod_{j=j_0+1}^n |\alpha-\alpha_j|_{y}}}.$ It is a continuous function and $s_{\alpha, x_i}=s_{\alpha}^i(x_i).$ Also, $|\alpha-\alpha_{j}|_{x_i} < s_{\alpha}^i(x_i) < |\alpha-\alpha_l|_{x_i}$ for all $j, l$ for which $j \leqslant j_0 < l.$ Since on all sides of these strict inequalities we have continuous functions, there exists a connected affinoid neighborhood $Z_i'$ of $x_i$ in $Z_{i,1}$ such that for all $y \in Z_i',$ $s_{\alpha}^i(y)$ is positive and $|\alpha-\alpha_{j}|_{y} < s_{\alpha}^i(y) < |\alpha-\alpha_l|_{y}$ for all $j, l$ for which $j \leqslant j_0 < l.$  

Consequently, for $y \in Z_i',$ in $\mathbb{P}_{\mathcal{H}(y)}^{1, \mathrm{an}},$  $|R(T)|_{\eta_{\alpha, s_{\alpha}^i(y)}}=s_{\alpha}^i(y)^{j_0+m} \cdot \prod_{j=j_0+1}^n |\alpha-\alpha_j|_y=r.$ We can now conclude by using Lemma \ref{para}. 

Finally, the last part of the statement is a direct consequence of the fact that $Z'$ is Hausdorff. 
\end{proof}

\begin{rem} \label{19aaa}
Lemma \ref{pas} is clearly true for any connected affinoid neighborhood of $x_i$ contained in $Z_i'.$  
\end{rem}
Let us now resume the proof of Theorem \ref{ajune}.

Let $Z_i$ be any connected affinoid neighborhood of $x_i$ such that $Z_i \subseteq Z_i'.$ 
In view of Lemma ~\ref{pas}, for any $i \in \{1,2,\dots, t\},$ $$\{u \in \mathbb{P}_{Z_i}^{1, \mathrm{an}}: |R(T)|_u=r\}=\bigcup_{R(\alpha)=0} \{u \in \mathbb{P}_{Z_i}^{1, \mathrm{an}}: |T-\alpha|_u=s_{\alpha}^i(\pi(u))\}.$$ For any root $\alpha$ of $R(T),$ set $S_{\alpha, Z_i}:=\{u \in \mathbb{P}_{Z_i}^{1, \mathrm{an}}: |T-\alpha|_u=s_{\alpha}^i(\pi(u))\}$.

\begin{lm} \label{connectedness}
For $i \in \{1,2,\dots, t\},$ the set $S_{\alpha, Z_i}$ is connected. 
\end{lm}

\begin{proof}
Seeing as $s_{\alpha}^i$ is a continuous function, $S_{\alpha, Z_i}$ is a closed and hence compact subset of $\mathbb{P}_{Z_i}^{1, \mathrm{an}}.$ Suppose that $S_{\alpha, Z_i}$ is not connected and assume it can be written as a
disjoint union of two closed subsets $S_{\alpha,Z_i}'$ and $S_{\alpha,Z_i}''.$ Since $S_{\alpha, Z_i}$ is compact in $\mathbb{P}_{Z_i}^{1,\mathrm{an}},$ so are $S_{\alpha,Z_i}'$ and $S_{\alpha,Z_i}''.$ Since the morphism $\pi$ is proper, $\pi(S_{\alpha,Z_i}')$ and $\pi(S_{\alpha,Z_i}'')$ are both compact subsets of $Z_i.$ Also, $\pi(S_{\alpha, Z_i})=Z_i,$ implying $Z_i=\pi(S_{\alpha,Z_i}') \cup \pi(S_{\alpha, Z_i}'').$ Assume that ${\pi(S_{\alpha,Z_i}') \cap \pi(S_{\alpha,Z_i}'')\neq \emptyset}.$ This means that there exists a point $y \in Z_i,$ such that both $\mathbb{P}_{\mathcal{H}(y)}^{1, \mathrm{an}}\cap S_{\alpha,Z_i}'$ and $\mathbb{P}_{\mathcal{H}(y)}^{1, \mathrm{an}}\cap S_{\alpha,Z_i}''$ are non-empty. But then the connected domain $\{u \in \mathbb{P}_{\mathcal{H}(y)}^{1, \mathrm{an}}:|T-\alpha|_u =s_{\alpha}^i(y)\}$ of $\mathbb{P}_{\mathcal{H}(y)}^{1, \mathrm{an}}$ can be written as the union of two disjoint closed subsets, which is impossible. 
Thus, $\pi(S_{\alpha,Z_i}') \cap \pi(S_{\alpha,Z_i}'')= \emptyset,$ so $Z_i$ can be written as a 
disjoint union of two closed subsets. This is impossible seeing as $Z_i$ is connected. 
Consequently, $S_{\alpha, Z_i}$ is connected.
\end{proof}

Recall that the finite morphism $Z' \rightarrow Z_1$ was denoted by $\varphi.$ Let $U_i \subseteq Z_i'$ be open neighborhoods of $x_i$ in $Z',$ $i=1,2,\dots, n.$ Then by \cite[Lemma I.1.2]{stein}, there exists a neighborhood $U$ of $x$ in $Z,$ such that $\varphi^{-1}(U) \subseteq \bigcup_{i=1}^t U_i  \subseteq \bigcup_{i=1}^t Z_i'.$ Let $Z_0 \subseteq U$ be any connected affinoid neighborhood of $x.$ Then $\varphi^{-1}(Z_0)$ (which is a subset of $\bigcup_{i=1}^t Z_i'$) is an affinoid domain of ~$Z'.$

Any connected component $C$ of $\varphi^{-1}(Z_0)$ is mapped surjectively onto $Z_0.$ To see this, remark that $\varphi$ is at the same time a closed and open morphism (see \cite[Lemma 3.2.4]{Ber90}). Consequently, $\varphi(C)$ is a closed and open subset of $Z_0.$ Since $Z_0$ is connected, $\varphi(C)=Z_0.$ Thus, for any $i,$ there exists exactly one connected component $Z_i$ of $\varphi^{-1}(Z_0)$ containing ~$x_i$ and $\varphi^{-1}(Z_0)=\bigcup_{i=1}^t Z_i.$ By construction, $Z_i \subseteq Z_i'.$ 

Let us look at the induced finite morphism $\psi: \mathbb{P}_{\varphi^{-1}(Z_0)}^{1,\mathrm{an}}= \bigsqcup_{i=1}^t \mathbb{P}_{Z_i}^{1, \mathrm{an}} \rightarrow \mathbb{P}_{Z_0}^{1, \mathrm{an}}.$ The preimage of $\{u \in \mathbb{P}_{Z_0}^{1,\mathrm{an}}: |R|_u=r\}$ by $\psi$ is the affinoid $\{u \in \mathbb{P}_{\varphi^{-1}(Z_0)}^{1,\mathrm{an}}: |R|_u=r\}$. Recall that for any $i,$ $\{u \in \mathbb{P}_{Z_i}^{1,\mathrm{an}}: |R|_u=r\}=\bigcup_{R(\alpha)=0} S_{\alpha, Z_i},$ so $$\{u \in \mathbb{P}_{\varphi^{-1}(Z_0)}^{1,\mathrm{an}}: |R|_u=r\}=\bigcup_{i=1}^t \bigcup_{R(\alpha)=0} S_{\alpha, Z_i}.$$

By Lemma \ref{connectedness}, each of the $S_{\alpha, Z_i}$ is connected, and thus so is $\psi(S_{\alpha, Z_i}).$ Since $S_{\alpha, Z_i} \cap \{u \in \mathbb{P}_{\mathcal{H}(x_i)}^{1,\mathrm{an}}: |R|_u=r\} \neq \emptyset,$ we also have $\psi(S_{\alpha, Z_i}) \cap \{u \in \mathbb{P}_{\mathcal{H}(x)}^{1,\mathrm{an}}: |R|_u=r\} \neq \emptyset$. Consequently, the type 3 point $\eta_{R, r} \in \mathbb{P}_{\mathcal{H}(x)}^{1,\mathrm{an}}$ is contained in all of the $S_{\alpha, Z_i}.$ 

Finally, seeing as $\{u \in \mathbb{P}_{\varphi^{-1}(Z_0)}^{1,\mathrm{an}}: |R|_u=r\}$ can be written as a finite union of connected sets, all of which contain a common point, it is connected. We have shown that $Z_0$ satisfies the two points of the statement. 

It is immediate from the constructions we made that the statement of Theorem \ref{ajune} remains true for any other connected affinoid neighborhood of $x$ contained in $Z_0.$
\end{proof}

\subsection{Towards relative nice covers}
We construct here a relative version of nice covers around the fiber. 

\begin{defn} \label{100}
Let $P_i \in \mathcal{O}_x[T]$ be irreducible over $\mathcal{H}(x)$ and $r_i \in \mathbb{R}_{\geqslant 0},$ ${i=1,2,\dots, n}.$ 
The set $A=\{u \in \mathbb{P}_{\mathcal{H}(x)}^{1, \mathrm{an}}: |P_i|_u \bowtie_i r_i, i=1,2,\dots, n\}$, where $\bowtie_i \in \{\leqslant, \geqslant\},$ is an affinoid domain of $\mathbb{P}_{\mathcal{H}(x)}^{1, \mathrm{an}}$. 
For any affinoid neighborhood $Z$ of $x$ for which $P_i \in \mathcal{O}(Z)[T]$ for all ${i=1,2,\dots, n},$ we will denote by $A_Z$ the affinoid domain ${\{u \in \mathbb{P}_Z^{1, \mathrm{an}}:  |P_i|_u \bowtie_i r_i, i=1,2,\dots, n\}}$ of $\mathbb{P}_Z^{1, \mathrm{an}}$ and call it the $Z$-\textit{thickening} of $A.$
\end{defn}

\begin{rem} \label{101}
The thickening of an affinoid domain of $\mathbb{P}_{\mathcal{H}(x)}^{1, \mathrm{an}}$ depends on the polynomials we choose to represent its boundary points. Hence, from now on, when speaking of the thickening of such an affinoid, we will, unless it plays a specific role (in which case we mention it explicitely), always assume that a writing of the boundary points was fixed \textit{a priori}.
\end{rem}

Recall Notation \ref{17aaa}: among other things, we take a $k$-analytic space $S$, $\pi: \mathbb{P}_S^{1, \mathrm{an}} \rightarrow S$ and $x \in S$ with fiber $F_x$ and such that $\mathcal{O}_x$ is a field.

\begin{lm} \label{nukkuptoj} Let $Z$ be a connected affinoid neighborhood of $x$ in $S$.
Let $A, B, C$ be closed subsets of $\mathbb{P}_{Z}^{1, \mathrm{an}}$ such that $A \cap B \cap F_x = C \cap F_x.$  Suppose there exists an open $W$ of $\mathbb{P}_{Z}^{1, \mathrm{an}}$ such that 
$A \cap B \cap W=C \cap W$ and ${C \cap F_x \subseteq W}.$ Then there exists a connected affinoid neighborhood $Z' \subseteq Z$ of $x$ such that for any connected affinoid neighborhood $Z'' \subseteq Z'$ of ~$x,$ $A \cap B \cap \pi^{-1}(Z'')=C \cap \pi^{-1}(Z'').$ 
\end{lm}

\begin{proof}
Set $F_1= A \cap B \cap W^c$ and $F_2=C \cap W^c,$ where $W^c$ is the complement of $W$ in $\mathbb{P}_{Z}^{1, \mathrm{an}}.$ Remark that $F_i$ is a closed, hence compact, set and that $F_i \cap F_x = \emptyset, i=1,2.$ Since $\pi$ is proper, $\pi(F_i)$ is a closed subset of $Z$, and it does not contain $x.$ Thus, there exists a connected affinoid neighborhood $Z' \subseteq Z$ of $x$ such that $Z' \cap \pi(F_i) =\emptyset, i=1,2.$ Consequently, $\pi^{-1}(Z') \cap F_i=\emptyset.$

Remark that $\pi^{-1}(Z') \cap F_1=\pi^{-1}(Z') \cap A \cap B \cap W^c =\emptyset,$ so $\pi^{-1}(Z') \cap A \cap B \subseteq W.$ Similarly, $\pi^{-1}(Z') \cap C \subseteq W.$ Finally, $A \cap B \cap \pi^{-1}(Z')=A \cap B \cap \pi^{-1}(Z') \cap W= C \cap W \cap \pi^{-1}(Z')= C \cap \pi^{-1}(Z').$  Clearly, the same remains true when replacing $Z'$ by any connected affinoid neighborhood $Z''\subseteq Z'.$
\end{proof}

Let $U$ and $V$ be connected affinoid domains of $\mathbb{P}_{\mathcal{H}(x)}^{1, \mathrm{an}}$ containing only type 3 points in their boundaries. Suppose that $U \cap V$ is a single type 3 point $\{\eta\}.$ This means that $U \cap V= \partial{U} \cap \partial{V}=\{\eta\}.$ By Lemma \ref{18aaa}, there exist $R(T) \in \mathcal{O}_x[T]$ irreducible over $\mathcal{H}(x)$ and $r \in \mathbb{R}_{>0} \backslash \sqrt{|\mathcal{H}(x)^\times|}$ such that $\eta=\eta_{R,r}$.

By Lemma ~\ref{15}, either $U \subseteq \{u \in \mathbb{P}_{\mathcal{H}(x)}^{1, \mathrm{an}}: |R|_u \leqslant r\}$ or $U \subseteq \{u \in \mathbb{P}_{\mathcal{H}(x)}^{1, \mathrm{an}}: |R|_u \geqslant r\}.$ Without loss of generality, let us assume $U \subseteq \{u \in \mathbb{P}_{\mathcal{H}(x)}^{1, \mathrm{an}}: |R|_u \leqslant r\}$. Then by Lemma ~\ref{16aaa}, $V \subseteq \{u \in \mathbb{P}_{\mathcal{H}(x)}^{1, \mathrm{an}}: |R|_u \geqslant r\}$. Set $\partial{U}=\{\eta_{R,r}, \eta_{P_i,r_i}\}_{i=1}^n$ and $\partial{V}=\{\eta_{R,r}, \eta_{P_j', r_j'}\}_{j=1}^m,$ where $P_i, P_j' \in \mathcal{O}_x[T]$ are irreducible over $\mathcal{H}(x),$ and $r_i,  r_j' \in \mathbb{R}_{>0} \backslash \sqrt{|\mathcal{H}(x)^\times|},$ for all $i$ and ~$j.$ By Proposition \ref{projectivedom}:
\begin{equation} \label{eq1}
U=\{u \in \mathbb{P}_{\mathcal{H}(x)}^{1, \mathrm{an}}: |R|_u \leqslant r, |P_i|_u \bowtie_i r_i, i=1,2,\dots,n\},
\end{equation}
\begin{equation} \label{eq2}
V=\{u \in \mathbb{P}_{\mathcal{H}(x)}^{1, \mathrm{an}}: |R|_u \geqslant r, |P_j'|_u \bowtie_j' r_j', j=1,2,\dots, m\}, 
\end{equation}
where $\bowtie_i, \bowtie_j' \in \{\leqslant, \geqslant\}$ for all $i,j.$ There exists a connected affinoid neighborhood $Z$ of $x$ in $S,$ such that $P_i, P_j', R \in \mathcal{O}(Z)[T]$ for all $i,j.$ Let us study the relationship between the $Z$-thickenings $U_Z, V_Z$ of $U$ and $V,$ respectively.

\begin{prop} \label{nukkuptoj2}
There exists a connected affinoid neighborhood $Z' \subseteq Z$ of $x$ such that:
\begin{enumerate}
\item $U_{Z'} \cap V_{Z'} = (U \cap V)_{Z'}=\{u \in \mathbb{P}_{Z'}^{1, \mathrm{an}}: |R|_u=r\};$
\item $U_{Z'} \cup V_{Z'}= (U \cup V)_{Z'}= \{u \in \mathbb{P}_{Z'}^{1, \mathrm{an}}:  |P_i|_u \bowtie_i r_i,  |P_j'|_u \bowtie_j' r_j', i,j\}$ (see Lemma~\ref{16aaa}). If $n=m=0$ (see (\ref{eq1}) and (\ref{eq2}) above), this means that $U_{Z'} \cup V_{Z'}=\mathbb{P}_{Z'}^{1, \mathrm{an}}.$
\end{enumerate}
The same is true for any connected affinoid neighborhood $Z'' \subseteq Z'$ of $x.$
\end{prop}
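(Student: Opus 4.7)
The plan is to reduce the statement to the propagation of strict inequalities from a single point to a neighborhood, and to exploit the properness of the structural map $\pi\colon \mathbb{P}_S^{1,\mathrm{an}}\to S$. First I would shrink $Z$ if necessary so that Theorem \ref{ajune} applies, making $A_Z:=\{u\in\mathbb{P}_Z^{1,\mathrm{an}} : |R|_u=r\}$ a connected affinoid domain of $\mathbb{P}_Z^{1,\mathrm{an}}$ (and similarly for every connected affinoid neighborhood of $x$ contained in $Z$). Since $r\notin \sqrt{|\mathcal{H}(x)^\times|}$, the fiber $A_Z\cap\pi^{-1}(x)$ reduces to the single point $\{\eta_{R,r}\}$.

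The key observation is that at $\eta_{R,r}$ every remaining inequality defining $U$ or $V$ holds strictly. Indeed, the points $\eta_{P_i,r_i}$ and $\eta_{P_j',r_j'}$ are all pairwise distinct from $\eta_{R,r}$, so $|P_i|_{\eta_{R,r}}\neq r_i$ and $|P_j'|_{\eta_{R,r}}\neq r_j'$ for every $i,j$. Combined with $\eta_{R,r}\in U\cap V$, this forces the non-strict relations $|P_i|_{\eta_{R,r}}\bowtie_i r_i$ and $|P_j'|_{\eta_{R,r}}\bowtie_j' r_j'$ to in fact be strict.

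Next, I would introduce the open subset $\Omega_Z\subseteq \mathbb{P}_Z^{1,\mathrm{an}}$ defined by the strict versions of all the $P_i$- and $P_j'$-inequalities, and set $C:=A_Z\setminus \Omega_Z$, closed in $A_Z$. Because $\pi$ is proper and $A_Z$ is closed in $\mathbb{P}_Z^{1,\mathrm{an}}$, the restriction $\pi|_{A_Z}$ is closed; hence $\pi(C)$ is closed in $Z$, and by the previous paragraph $x\notin\pi(C)$. I then choose any connected affinoid neighborhood $Z'$ of $x$ contained in the open set $Z\setminus\pi(C)$. For every $u\in A_{Z'}$, the fact that $\pi(u)\notin \pi(C)$ forces $u\notin C$, giving $A_{Z'}\subseteq \Omega_{Z'}$.

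The two claims then follow by elementary set manipulations. Letting $N_{Z'}:=\{u\in \mathbb{P}_{Z'}^{1,\mathrm{an}} : |P_i|_u\bowtie_i r_i,\ |P_j'|_u\bowtie_j' r_j'\text{ for all }i,j\}$ denote the non-strict version, one has $\Omega_{Z'}\subseteq N_{Z'}$ and therefore $A_{Z'}\subseteq N_{Z'}$. Hence $U_{Z'}\cap V_{Z'}=\{u: |R|_u=r\}\cap N_{Z'}=A_{Z'}\cap N_{Z'}=A_{Z'}$, proving (1); and using $\mathbb{P}_{Z'}^{1,\mathrm{an}}=\{|R|_u\leq r\}\cup\{|R|_u\geq r\}$, one gets $U_{Z'}\cup V_{Z'}=N_{Z'}$, proving (2). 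Stability under further shrinking $Z''\subseteq Z'$ is automatic, since $A_{Z''}\subseteq A_{Z'}\subseteq \Omega_{Z'}$ and the strict inequalities simply restrict. The main technical point is exactly the passage in the third paragraph: we only know the strict inequalities at the single fiber point $\eta_{R,r}$, and we must upgrade this to a uniform statement on the whole of $A_{Z'}$; this is precisely what the closed-map property of $\pi|_{A_Z}$ provides.
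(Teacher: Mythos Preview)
Your argument for part (1) is correct and is essentially the paper's argument specialized to this case: the paper isolates a general closed-map lemma (Lemma~\ref{nukkuptoj}) and applies it with $A=U_Z$, $B=V_Z$, $C=\{|R|=r\}$, and $W=\Omega_Z$, which amounts exactly to your third paragraph.

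There is, however, a genuine gap in your treatment of part (2). From $A_{Z'}\subseteq \Omega_{Z'}\subseteq N_{Z'}$ and the decomposition $\mathbb{P}_{Z'}^{1,\mathrm{an}}=\{|R|\le r\}\cup\{|R|\ge r\}$ you correctly obtain $N_{Z'}\subseteq U_{Z'}\cup V_{Z'}$, but the reverse inclusion $U_{Z'}\cup V_{Z'}\subseteq N_{Z'}$ does \emph{not} follow. A point $u\in U_{Z'}$ with $|R|_u<r$ satisfies the $P_i$-constraints by definition, but nothing you have proved forces $|P_j'|_u\bowtie_j' r_j'$: your shrinking argument only controls points on the locus $|R|=r$, not on the whole of $U_{Z'}$. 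In other words, you have shown the strict $P_j'$-inequalities at the single fiber point $\eta_{R,r}$ and propagated them over the thickening of $\{\eta_{R,r}\}$, but for the union you need them on the entire fiber affinoid $U$ and then to propagate them over the thickening $U_{Z'}$.

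The fix is to run your properness argument a second time with $U_Z$ in place of $A_Z$. One first checks, on the fiber, that every $u\in U$ satisfies the \emph{strict} inequalities $|P_j'|_u\,\underline{\bowtie}_j'\, r_j'$: indeed $U\subseteq U\cup V\subseteq\{|P_j'|\bowtie_j' r_j'\}$ in $\mathbb{P}^{1,\mathrm{an}}_{\mathcal{H}(x)}$, and equality $|P_{j_0}'|_u=r_{j_0}'$ at some $u\in U$ would force $u=\eta_{P_{j_0}',r_{j_0}'}\in\partial V$, hence $u\in U\cap V=\{\eta_{R,r}\}$, a contradiction. Then the closed set $U_Z\setminus\{|P_j'|\,\underline{\bowtie}_j'\,r_j'\ \forall j\}$ misses the fiber over $x$, so its image under $\pi$ avoids $x$, and after shrinking one obtains $U_{Z'}\subseteq N_{Z'}$; a symmetric argument gives $V_{Z'}\subseteq N_{Z'}$. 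This is precisely how the paper proceeds (applying Lemma~\ref{nukkuptoj} with $A=C=U_Z$, $B=N_Z$, and $W$ the strict $P_j'$-locus).
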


\begin{proof}
\begin{enumerate}
\item Set $W=\{u \in \mathbb{P}_Z^{1, \mathrm{an}}: |P_i|_u \ \underline{\bowtie}_i  \ r_i,  |P_j'|_u \ \underline{\bowtie}_j' \ r_j', i,j\},$ where $\underline{\bowtie}_i$ (resp. $\underline{\bowtie}_j'$) is the strict version of
$\bowtie_i$ (resp. $\bowtie_j'$), meaning for example if $\bowtie_i$ is $\leqslant$ then $\underline{\bowtie}_i$ is $<$. Set also
${A=U_Z,} B=V_Z,$ and $C=\{u \in \mathbb{P}_{Z}^{1, \mathrm{an}}: |R|_u=r\}.$ Remark that: $W$ is open, $A, B, C$ are closed, $A \cap B \cap W=\{u \in \mathbb{P}_Z^{1, \mathrm{an}}:|R|_u=r,  |P_i|_u \ \underline{\bowtie}_i \ r_i,  |P_j'|_u \ \underline{\bowtie}_j' \ r_j', i,j\} =C \cap W$, and $A \cap B \cap F_x=U \cap V=\{\eta_{R,r}\}=C \cap F_x.$ By Lemma \ref{nukkuptoj}, there exists a connected affinoid neighborhood $Z'$ of $x$ such that $${U_{Z'} \cap V_{Z'} = \{u \in \mathbb{P}_{Z'}^{1, \mathrm{an}}: |R|_u=r\}}=(U \cap V)_{Z'},$$ and the same remains true when replacing $Z'$ with any connected affinoid neighborhood $Z'' \subseteq Z'$ of $x.$

\item  Set $W=\{u \in \mathbb{P}_Z^{1, \mathrm{an}}: |P_j'|_u \ \underline{\bowtie}_j' \ r_j', j=1,\dots, m\},$ where $\underline{\bowtie}_j'$ is the strict version of $\bowtie_j'$. Set also $A=C=U_Z$ and ${B=\{u \in \mathbb{P}_{Z}^{1, \mathrm{an}}: |P_i|_u \bowtie_i r_i,  |P_j'|_u \bowtie_j' r_j', i,j\}}.$ Clearly, $W$ is open and $A, B, C$ are closed. Also, $$A \cap B \cap W={\{u \in \mathbb{P}_{Z'}^{1, \mathrm{an}}: |R|_u \leqslant r,} |P_i|_u \bowtie_i r_i,  |P_j'|_u \ \underline{\bowtie}_j' \ r_j', i,j\}=C \cap W.$$

Recall that by Lemma \ref{16aaa}, $B \cap F_x= U \cup V$. Hence, $A \cap B \cap F_x=U=C \cap F_x$. Thus, it only remains to show that $C \cap F_x \subseteq W$ in order for the Lemma \ref{nukkuptoj} to be applicable. Remark that $U \subseteq U \cup V= B \cap F_x$, and $B \cap F_x \subseteq \{u \in \mathbb{P}_Z^{1, \mathrm{an}}: |P_j'|_u \ {\bowtie}_j' \ r_j', j=1,\dots, m\} \cap F_x =\{u \in \mathbb{P}_{\mathcal{H}(x)}^{1, \mathrm{an}}: |P_j'|_u \ {\bowtie}_j' \ r_j', j=1,\dots, m\}$. Assume for some $y \in U,$ there exists $j_0 \in \{1,2, \dots, m\}$ such that $|P'_{j_0}|_y = r_{j_0}'.$ As there is a unique point ($\eta_{P_{j_0}', r_{j_0}'}$) in $\mathbb{P}_{\mathcal{H}(x)}^{1, \mathrm{an}}$ satisfying $|P_{j_0}'|=r'_{j_0},$ we obtain $y=\eta_{P_{j_0}', r_{j_0}'},$ implying $y \in V.$ Hence, $y \in U \cap V,$ so $y=\eta_{R,r}$, contradiction. 
Finally, $y \in \{u \in \mathbb{P}_{\mathcal{H}(x)}^{1, \mathrm{an}}: |P_j'|_u \ \underline{\bowtie}_j' \ r_j', j=1,\dots, m\} \subseteq W$, meaning $C \cap F_x= U \subseteq W$.   
\begin{sloppypar}
Lemma \ref{nukkuptoj} is applicable, so there exists a connected affinoid neighborhood ${Z_1' \subseteq Z}$ of $x$ such that $U_Z \cap B \cap \pi^{-1}(Z_1')=U_Z \cap \pi^{-1}(Z_1'),$ implying ${U_{Z_1'} \subseteq B \cap \pi^{-1}(Z_1')}.$ The same remains true for any connected affinoid neighborhood $Z_1'' \subseteq Z_1'$ of $x.$
\end{sloppypar}
Using similar arguments one shows that there exists a connected affinoid neighborhood $Z_2' \subseteq Z$ of $x$ such that $V_{Z_2'} \subseteq B \cap \pi^{-1}(Z_2'),$ and the same remains true for any connected affinoid neighborhood $Z_2'' \subseteq Z_2'$ of $x.$

Thus, there exists a connected affinoid neighborhood $Z' \subseteq Z$ of $x$ such that $U_{Z'} \cup V_{Z'} \subseteq B_{Z'}:=\{u \in \mathbb{P}_{Z'}^{1, \mathrm{an}}:  |P_i|_u \bowtie_i r_i, |P_j'|_u \bowtie_j' r_j', i,j\}$, and the same is true for any connected affinoid neighborhood $Z'' \subseteq Z'$ of $x.$  Let $u \in B_{Z''}$, where $B_{Z''}:=B_{Z'} \cap \pi^{-1}(Z'').$ If $|R|_u \leqslant r,$ then $u \in U_{Z''}.$ If $|R|_u \geqslant r,$ then $u \in V_{Z''}.$ Consequently, $u\in U_{Z''} \cup V_{Z''},$ and $U_{Z''} \cup V_{Z''}=B_{Z''}=(U\cup V)_{Z''}.$ 
\end{enumerate}
\enlargethispage*{0.5cm}
\end{proof}
Let us show that this construction of affinoid domains in $\mathbb{P}_Z^{1,\mathrm{an}},$ where $Z$ is a connected affinoid neighborhood of $x,$ gives us a family of neighborhoods of the points of the fiber $F_x$ in $\mathbb{P}_Z^{1,\mathrm{an}}$ (given we choose $Z$ small enough).

\begin{lm} \label{103}
Let $A$ be an open subset of $\mathbb{P}_S^{1,an}$ such that $A \cap F_x \neq \emptyset.$ Let $U=\{u \in \mathbb{P}_{\mathcal{H}(x)}^{1,\mathrm{an}}: |P_i|_u \bowtie_i 
r_i, i=1,2,\dots, n\},$ $\bowtie_i \in \{\leqslant, \geqslant\},$ be any affinoid domain of $\mathbb{P}_{\mathcal{H}(x)}^{1,\mathrm{an}}$ contained in $A \cap F_x,$ where $P_i \in \mathcal{O}_x[T]$ is irreducible over $\mathcal{H}(x)$ and $r_i \in \mathbb{R}_{\geqslant 0}, i=1,2,\dots, n.$
Then there exists a connected affinoid neighborhood $Z$ of $x$ such that $P_i \in \mathcal{O}(Z)[T]$ for all $i$ and $U_Z \subseteq A.$
The same is true for any connected affinoid neighborhood $Z' \subseteq Z$ of $x.$
\end{lm}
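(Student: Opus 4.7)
The plan is to use the properness of the structural morphism $\pi: \mathbb{P}_S^{1,\mathrm{an}} \to S$ and a compactness argument, in the same spirit as the proof of Lemma~\ref{nukkuptoj}.

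First, I would pick any connected affinoid neighborhood $Z_0$ of $x$ in $S$ such that all polynomials $P_i$ lie in $\mathcal{O}(Z_0)[T]$; this is possible since $\mathcal{O}_x = \varinjlim_Z \mathcal{O}(Z)$ over affinoid neighborhoods of $x$, and there are only finitely many $P_i$. The $Z_0$-thickening $U_{Z_0} = \{u \in \mathbb{P}_{Z_0}^{1,\mathrm{an}} : |P_i|_u \bowtie_i r_i, \ i=1,\dots,n\}$ is then a well-defined affinoid domain of $\mathbb{P}_{Z_0}^{1,\mathrm{an}}$, and one has $U_{Z_0} \cap F_x = U$.

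Next, I would form the set $K := U_{Z_0} \setminus A = U_{Z_0} \cap A^c$. Since $A$ is open in $\mathbb{P}_S^{1,\mathrm{an}}$ (and hence its trace on $\mathbb{P}_{Z_0}^{1,\mathrm{an}}$ is open there) and $U_{Z_0}$ is closed in $\mathbb{P}_{Z_0}^{1,\mathrm{an}}$, the set $K$ is closed in $\mathbb{P}_{Z_0}^{1,\mathrm{an}}$. Moreover, by hypothesis $U \subseteq A$, so
\[
K \cap F_x = (U_{Z_0} \cap F_x) \cap A^c = U \cap A^c = \emptyset.
\]
In particular $x \notin \pi(K)$. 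Since $\pi$ is proper, $\pi(K)$ is closed in $Z_0$. Therefore there exists a connected affinoid neighborhood $Z \subseteq Z_0$ of $x$ such that $Z \cap \pi(K) = \emptyset$; equivalently $\pi^{-1}(Z) \cap K = \emptyset$, which gives $U_Z := U_{Z_0} \cap \pi^{-1}(Z) \subseteq A$, as desired.

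Finally, for the last assertion, if $Z' \subseteq Z$ is any connected affinoid neighborhood of $x$, then $U_{Z'} = U_Z \cap \pi^{-1}(Z') \subseteq U_Z \subseteq A$, so the conclusion is automatically preserved. The only real content of the argument is the properness step, which is essentially the same input as in Lemma~\ref{nukkuptoj}; I do not anticipate any serious obstacle beyond being careful that $A$ restricts to an open of $\mathbb{P}_{Z_0}^{1,\mathrm{an}}$ so that $K$ is closed there.
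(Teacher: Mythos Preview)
Your proposal is correct and follows essentially the same approach as the paper's proof: both form $U_{Z_0}\setminus A$, observe it misses the fiber $F_x$, push it down via $\pi$ to get a closed (the paper says compact) subset of $Z_0$ avoiding $x$, and then shrink $Z_0$ accordingly. The only cosmetic difference is that the paper phrases the key step using compactness of $U_{Z_0}\setminus A$ directly, whereas you invoke properness of $\pi$ on a closed set; both justifications are valid and lead to the same conclusion.
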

 
\begin{proof}
Let $Z_0$ be a connected affinoid neighborhood of $x$ for which the thickening $U_{Z_0}$ exists. Suppose $U_{Z_0} \not \subseteq A.$ 
Then $U_{Z_0} \backslash A$ is a non-empty compact subset of $\mathbb{P}_{S}^{1, \mathrm{an}}.$ This implies that $\pi(U_{Z_0} \backslash A)$ is a compact subset of $S.$ Furthermore, since $U \subseteq A,$  $x \not \in \pi(U_{Z_0} \backslash A),$ so there exists a connected affinoid neighborhood $Z \subseteq Z_0$ of $x$ such that $Z \cap \pi(U_{Z_0} \backslash A) = \emptyset.$ This implies that for any connected affinoid neighborhood $Z' \subseteq Z$ of $x,$  $U_{Z'} \backslash A=  \pi^{-1}(Z') \cap (U_{Z_0} \backslash A)=\emptyset,$ and finally that $U_{Z'}  \subseteq A.$
\end{proof}

Let $\mathcal{U}_x$ be a nice cover of $\mathbb{P}_{\mathcal{H}(x)}^{1, \mathrm{an}}$ (recall Definition \ref{nice}). Let $S_{\mathcal{U}_x}=\{\eta_1, \eta_2, \dots, \eta_t\}$ be the set of intersection points of the elements of $\mathcal{U}_x.$ For any $\eta_i \in S_{\mathcal{U}_x}, i=1,2,\dots, t,$ there exist $R_i \in \mathcal{O}_x[T]$ irreducible over $\mathcal{H}(x)$ and $r_i \in \mathbb{R}_{>0} \backslash \sqrt{|\mathcal{H}(x)^\times|}$ such that $\eta_i=\eta_{R_i,r_i}.$ Since ${\bigcup_{U \in \mathcal{U}_x} \partial{U}=S_{\mathcal{U}_x}},$ all pieces of $\mathcal{U}_x$ are a combination of intersections of the affinoid domains $\{|R_i| \bowtie_i r_i\}$ of $\mathbb{P}_{\mathcal{H}(x)}^{1, \mathrm{an}},$ where $\bowtie_i \in \{\leqslant, \geqslant\}, i=1,2,\dots, t.$ 

For any affinoid neighborhood $Z_a$ of $x$ such that $R_i \in \mathcal{O}(Z_a)[T]$ for all $i,$ let us denote by $\mathcal{U}_{Z_a}$ the set of $Z_a$-thickenings of the elements of $\mathcal{U}_x.$ Let $Z'$ be a fixed connected affinoid neighborhood of $x$ such that $R_i \in \mathcal{O}(Z')[T]$ for all $i=1,2,\dots, t.$
\begin{thm} \label{nicecover}
There exists a connected affinoid neighborhood $Z \subseteq Z'$ of $x$ such that  the set $\mathcal{U}_Z$ is a cover of $\mathbb{P}_Z^{1, \mathrm{an}},$ and
\begin{enumerate}
\item for any $U \in \mathcal{U}_x,$ the $Z$-thickening $U_Z$ is a connected affinoid domain of $\mathbb{P}_Z^{1, \mathrm{an}};$
\item for any different $U, V \in \mathcal{U}_x$, either $U_Z \cap V_Z=\emptyset$ or there exists a unique ${j \in \{1, \dots, t\}}$ such that $U_Z \cap V_Z=\{u \in \mathbb{P}_Z^{1, \mathrm{an}}: |R_j|_u=r_j\}=(U \cap V)_Z$ is a connected affinoid domain of $\mathbb{P}_Z^{1, \mathrm{an}}$;  in particular, $U_Z \cap V_Z \neq \emptyset$ if and only if $U \cap V \neq \emptyset;$
\item for any $U_Z, V_Z \in \mathcal{U}_Z,$ $U_Z \cup V_Z$ is either $\mathbb{P}_Z^{1, \mathrm{an}}$ or a connected affinoid domain of ~$\mathbb{P}_Z^{1, \mathrm{an}}$ that is the $Z$-thickening of $U \cup V.$
\end{enumerate}
The statement is true for any connected affinoid neighborhood $Z'' \subseteq Z$ of $x.$
\end{thm}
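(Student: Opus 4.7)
The plan is to assemble the theorem from the single-point results already established, applied to each of the finitely many intersection points, each element of $\mathcal{U}_x$, and each pair of such elements; at the end I take a common refinement of all the connected affinoid neighborhoods of $x$ produced along the way. First I would deal with part (1). Each $U \in \mathcal{U}_x$ is of the form $\{u \in \mathbb{P}_{\mathcal{H}(x)}^{1,\mathrm{an}} : |R_j|_u \bowtie_j r_j,\ j \in I_U\}$ for some $I_U \subseteq \{1,\dots,t\}$ and $\bowtie_j \in \{\leqslant,\geqslant\}$. Iterating the argument used in the proof of Theorem \ref{ajune} (in particular Lemma \ref{pas} applied to each $R_j$) over a small enough connected affinoid neighborhood $Z$ of $x$, one sees that the defining inequalities $|R_j|_u \bowtie_j r_j$ carve out, fiberwise over each $y \in Z$, a connected affinoid subdomain of $\mathbb{P}_{\mathcal{H}(y)}^{1,\mathrm{an}}$ (the fiber of $U$ at $x$ is already connected, and continuity of $y \mapsto |\alpha-\alpha_j|_y$ persists on a neighborhood). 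Then the argument of Lemma \ref{connectedness} applies verbatim: if $U_Z$ decomposed as a disjoint union of two closed subsets, projecting via the proper $\pi$ and using fiberwise connectedness would contradict connectedness of $Z$.

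For part (2), split into two cases. If $U \cap V \neq \emptyset$ in the fiber, then by the niceness of $\mathcal{U}_x$ the intersection is a single type $3$ point $\eta_{R_j, r_j}$, so Proposition \ref{nukkuptoj2}(1) yields a connected affinoid neighborhood $Z_{U,V}$ of $x$ with $U_{Z_{U,V}} \cap V_{Z_{U,V}} = \{u \in \mathbb{P}_{Z_{U,V}}^{1,\mathrm{an}} : |R_j|_u = r_j\} = (U\cap V)_{Z_{U,V}}$, and the latter is a connected affinoid by Theorem \ref{ajune}. If $U \cap V = \emptyset$ in the fiber, then $U_{Z'} \cap V_{Z'}$ is a compact subset of $\mathbb{P}_{Z'}^{1,\mathrm{an}}$ disjoint from $F_x$; since $\pi$ is proper, $\pi(U_{Z'} \cap V_{Z'})$ is a compact subset of $Z'$ not containing $x$, so any sufficiently small connected affinoid neighborhood $Z_{U,V}$ of $x$ satisfies $U_{Z_{U,V}} \cap V_{Z_{U,V}} = \emptyset$ — the same type of compactness argument as in Lemma \ref{103}.

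Part (3) is obtained from Proposition \ref{nukkuptoj2}(2) when $U \cap V$ is a single type $3$ point (and, in the degenerate case $n=m=0$ there, directly gives $U_Z \cup V_Z = \mathbb{P}_Z^{1,\mathrm{an}}$); when $U \cap V = \emptyset$, the union is a disjoint union of two connected affinoids, which is again an affinoid thickening of $U \cup V$. Finally, for the covering property, one notes that $\mathbb{P}_{Z'}^{1,\mathrm{an}} \setminus \bigcup_{U \in \mathcal{U}_x} U_{Z'}$ is closed in $\mathbb{P}_{Z'}^{1,\mathrm{an}}$, and since $\mathcal{U}_x$ covers $F_x$ it is disjoint from $F_x$; properness of $\pi$ makes its image in $Z'$ closed and not containing $x$, so shrinking again we achieve $\bigcup_{U \in \mathcal{U}_x} U_Z = \mathbb{P}_Z^{1,\mathrm{an}}$. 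Taking $Z$ to be a connected affinoid neighborhood of $x$ contained in the intersection of the finitely many neighborhoods produced above gives the required $Z$; stability under further shrinking is built into each of the cited statements. The main technical obstacle I foresee is the fiberwise connectedness needed in part (1) when $U$ is cut out by several inequalities: one has to control all the continuous radius functions of Lemma \ref{pas} simultaneously on a single neighborhood of $x$, rather than one at a time as in the proof of Theorem \ref{ajune}. Once this fiberwise connectedness is uniform over $Z$, the global connectedness and all the remaining assertions reduce to repeated use of properness of $\pi$ together with the connectedness of $Z$.
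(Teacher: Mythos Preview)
Your handling of (2) and (3) matches the paper's, but there are two real gaps.

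\textbf{The main gap is in part (1).} Your plan is to show that the fiber of $U_Z$ over every $y\in Z$ is connected and then run the argument of Lemma~\ref{connectedness}. This fiberwise connectedness can simply fail: for $y\neq x$ the polynomial $R_j$ need not be irreducible over $\mathcal{H}(y)$ and $r_j$ need not lie outside $\sqrt{|\mathcal{H}(y)^\times|}$, so already a single half-space such as $\{|R_j|\leqslant r_j\}$ may break into several disjoint virtual discs in $\mathbb{P}^{1,\mathrm{an}}_{\mathcal{H}(y)}$. Lemma~\ref{pas} does not help here: it only describes the level set $\{|R|=r\}$ after passing to the splitting cover, and the connectedness proved in Theorem~\ref{ajune} works precisely because all the pieces upstairs map to the single point $\eta_{R,r}$ in the fiber over $x$; for a general $U_Z$ there is no such common point. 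The paper avoids this entirely by reversing the order: it first establishes (2) and the covering property (using Theorem~\ref{ajune} only for the connectedness of the intersections $\{|R_j|=r_j\}$), and then deduces (1) by contradiction. If some $U_Z$ had a connected component $C$ missing $F_x$, then by (2) every intersection $U_Z\cap V_Z$ is connected and meets $F_x$, hence lies in the component $B$ of $U_Z$ meeting $F_x$; so $C$ is disjoint from all other $V_Z$, and $C$ together with $(U_Z\setminus C)\cup\bigcup_{V\neq U}V_Z$ would disconnect $\mathbb{P}^{1,\mathrm{an}}_Z$.

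\textbf{A second gap is in the covering argument.} The set $\mathbb{P}^{1,\mathrm{an}}_{Z'}\setminus\bigcup_{U\in\mathcal{U}_x}U_{Z'}$ is the complement of a finite union of closed sets, so it is open, not closed; properness of $\pi$ gives you nothing for its image. The paper does not use compactness here. Instead it picks, via \cite[Lemma~2.20]{une}, $n-1$ elements whose union $V=\bigcup_{l=1}^{n-1}U_l$ is connected; then $U_n\cap V$ is a single point $\eta_{R_j,r_j}$, so $U_n=\{|R_j|\geqslant r_j\}$ and, by iterated use of Proposition~\ref{nukkuptoj2}(2), $\bigcup_{l=1}^{n-1}U_{l,Z}=(U\cup V)_Z=\{|R_j|\leqslant r_j\}$, whence $U_{n,Z}\cup\bigcup_{l=1}^{n-1}U_{l,Z}=\mathbb{P}^{1,\mathrm{an}}_Z$ identically.
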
 

\begin{proof}
By Theorem \ref{ajune}, there exists a connected affinoid neighborhood $Z$ of $x,$ such that $R_i \in \mathcal{O}(Z)[T]$ and the affinoid domains $\{u \in \mathbb{P}_Z^{1, \mathrm{an}}: |R_i|_u=r_i\}$ are all connected. We may also assume that for any two non-disjoint elements ${U=\{u \in \mathbb{P}_{\mathcal{H}(x)}^{1, \mathrm{an}}: |P_i|_u \bowtie_i r_i, |R|_u \leqslant r:}$ $i=1,\dots, n\}$ and $V=\{u \in \mathbb{P}_{\mathcal{H}(x)}^{1, \mathrm{an}}: |P_j'|_u \bowtie_j' r_j', |R|_u \geqslant r: j=1,\dots, m\}$ of $\mathcal{U}_x$, Proposition ~\ref{nukkuptoj2} holds. 

Let $\mathcal{U}_x=\{U_1, U_2, \dots, U_n\}.$ By \cite[Lemma 2.20]{une}, there exist $n-1$ elements of $\mathcal{U}_x$ whose union is connected. Without loss of generality, let us assume that $V:=\bigcup_{l=1}^{n-1} U_l$ is connected. By \cite[Th\'eor\`eme 6.1.3]{Duc}, this is a connected affinoid domain; also $V \cup U_n=\mathbb{P}_{\mathcal{H}(x)}^{1, \mathrm{an}}.$ Since $V, U_n,$ and $U_n \cup V$ are connected subsets of $\mathbb{P}_{\mathcal{H}(x)}^{1, \mathrm{an}}$, $U_n \cap V$ is a non-empty connected set, hence a single type ~3 point $\{\eta_{R_j, r_j}\}$ for some $j \in \{1, 2,\dots, t\}$. In particular, this implies that $U_n=\{u \in \mathbb{P}_{\mathcal{H}(x)}^{1, \mathrm{an}}: |R_j|_u \bowtie r_j\}$, where $\bowtie \in \{\leqslant, \geqslant\}$ (recall Proposition \ref{projectivedom}). Let us assume, without loss of generality, that $U_n=\{u \in \mathbb{P}_{\mathcal{H}(x)}^{1, \mathrm{an}}:|R_j|_u \geqslant r_j\}.$ Then $V=\{u \in \mathbb{P}_{\mathcal{H}(x)}^{1, \mathrm{an}}:|R_j|_u \leqslant r_j\}$ (see Lemma ~\ref{16aaa} to recall what the inequalities for the union of two non-disjoint elements of a nice cover look like). Consequently, $U_{n, Z}=\{u \in \mathbb{P}_Z^{1,an}: |R_j|_u \geqslant r_j\}$ and by Proposition ~\ref{nukkuptoj2}, $V_Z= \left(\bigcup_{l=1}^{n-1} U_{i} \right)_Z= \bigcup_{i=1}^{n-1} U_{i, Z}= \{u \in \mathbb{P}_Z^{1,an}: |R_j|_u \leqslant r_j\},$ so $U_{n,Z} \cup V_Z=\mathbb{P}_Z^{1,an},$ and $\mathcal{U}_Z$ is a cover of $\mathbb{P}_Z^{1, \mathrm{an}}.$

Let $U \neq  V \in \mathcal{U}_x$.  Clearly, if $U_Z \cap V_Z=\emptyset,$ then $U \cap V=\emptyset.$ Assume $U \cap V=\emptyset.$ Suppose $A:=U_Z \cap V_Z \neq \emptyset.$ Remark that ${A \cap F_x=\emptyset}.$ Since $A$ is compact and $\pi$ proper, $\pi(A)$ is a compact subset of $Z$ not containing ~$x.$ Thus, there exists a connected affinoid neighborhood $Z_1 \subseteq Z,$ such that $A \cap \pi^{-1}(Z_1)=\emptyset,$ and $U_{Z_1} \cap V_{Z_1}=\emptyset.$ Thus, we may assume that for any disjoint $U, V \in \mathcal{U}_x,$ $U_Z \cap V_Z=\emptyset,$ which, taking into account Proposition \ref{nukkuptoj2}, shows that property (2) of the statement is true. 

Property (3) is a consequence of \cite[Corollary 2.2.7(i)]{Ber90} if $U_Z \cap V_Z=\emptyset,$ and of Proposition ~\ref{nukkuptoj2} if not.

Let $Z$ be such that property (2) is satisfied. Suppose there exists $U \in \mathcal{U}_x$ such that $U_Z$ is not connected. Let $C$ be a connected component of $U_Z$ that doesn't intersect $F_x,$ and $B$ the connected component that does. For any $V \in \mathcal{U}_x$ for which $U \cap V=\emptyset,$ $C \cap V_Z \subseteq U_Z \cap V_Z=\emptyset.$ For any $V \in \mathcal{U}_x$ for which $U \cap V \neq \emptyset,$ there exists a unique $j$ such that $U_Z \cap V_Z=\{u \in \mathbb{P}_Z^{1, \mathrm{an}}: |R_j|_u=r_j\}$ is a connected affinoid domain, so $U_Z \cap V_Z=B \cap V_Z.$ Consequently, $C \cap V_Z=\emptyset.$  
This means that $C \cap \left((U_Z \backslash C) \cup \bigcup_{V \in \mathcal{U}_x, U \neq V} V_Z \right)=\emptyset,$ and $C \cup \left((U_Z \backslash C) \cup \bigcup_{V \in \mathcal{U}_x, U \neq V} V_Z \right)=\mathbb{P}_Z^{1, \mathrm{an}},$ implying $\mathbb{P}_Z^{1, \mathrm{an}}$ is not connected, contradiction. This concludes the proof of part (1). 

The last sentence of the statement is immediate from the nature of the proof.
\end{proof}

Finally:

\begin{defn} \label{104}
Let $\mathcal{U}_x$ be a nice cover of $\mathbb{P}_{\mathcal{H}(x)}^{1, \mathrm{an}},$ and $Z$ a connected affinoid neighborhood of $x$ such that the $Z$-thickening of all of the elements of $\mathcal{U}_x$ exist. Let us denote this set by $\mathcal{U}_Z.$ We will say it is a $Z$-\textit{thickening} of $\mathcal{U}_x.$
 The set $\mathcal{U}_Z$ will be said to be a $Z$-\textit{relative nice cover} of $\mathbb{P}_{Z}^{1, \mathrm{an}}$ if the statement of Theorem \ref{nicecover} is satisfied.
\end{defn}

\begin{rem} \label{105}
Whenever taking the thickening of a nice cover $\mathcal{U}_x$ of $\mathbb{P}_{\mathcal{H}(x)}^{1, \mathrm{an}}$ to obtain a $Z$-relative nice cover of $\mathbb{P}_Z^{1, \mathrm{an}}$ for a suitably chosen $Z$, we will suppose that a writing was fixed simultaneously for all of the points of $\bigcup_{U \in \mathcal{U}_x} \partial{U},$ and that constructions were made based on this ``compatible" writing of the boundary points (as we did \textit{e.g.} in Proposition ~\ref{nukkuptoj2} and Theorem \ref{nicecover}). The same principle goes for any family of affinoid domains of $\mathbb{P}_{\mathcal{H}(x)}^{1, \mathrm{an}}$ whose $Z$-thickenings we consider simultaneously.   
\end{rem}

We have shown:

\begin{thm} \label{106}
Let $\mathcal{U}_x$ be a nice cover of $\mathbb{P}_{\mathcal{H}(x)}^{1, \mathrm{an}}.$ There exists a connected affinoid neighborhood $Z$ of $x$ such that the $Z$-thickening of $\mathcal{U}_x$ exists and is a $Z$-relative nice cover of $\mathbb{P}_Z^{1, \mathrm{an}}$. The same is true for any other connected affinoid neighborhood $Z' \subseteq Z$ of $x.$
\end{thm}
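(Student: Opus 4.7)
The plan is to observe that Theorem~\ref{106} is essentially a repackaging of Theorem~\ref{nicecover}, combined with Lemma~\ref{18aaa}, so almost all of the work has already been carried out. The only thing that needs to be checked carefully is that the standing assumption behind the construction of thickenings, namely that the polynomials representing the boundary points of $\mathcal{U}_x$ live in $\mathcal{O}(Z)[T]$ for some small enough $Z$, can be arranged simultaneously for the whole cover.

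First, I would unwind the data of the nice cover. Write $\mathcal{U}_x = \{U_1,\dots,U_n\}$ and let $S_{\mathcal{U}_x} = \{\eta_1,\dots,\eta_t\}$ be the finite set of intersection points of the elements of $\mathcal{U}_x$; by Definition~\ref{nice}, every $\eta_j$ is a type~$3$ point lying on some $\partial U_i$. Applying Lemma~\ref{18aaa} (compatibly over all $U_i$, in the sense of Remark~\ref{105}), I may fix, once and for all, a writing $\eta_j = \eta_{R_j, r_j}$ with $R_j \in \mathcal{O}_x[T]$ irreducible over $\mathcal{H}(x)$ and $r_j \in \mathbb{R}_{>0}\setminus\sqrt{|\mathcal{H}(x)^\times|}$, and by Proposition~\ref{projectivedom} each $U_i$ is cut out in $\mathbb{P}_{\mathcal{H}(x)}^{1,\mathrm{an}}$ by a subset of the inequalities $|R_j|\bowtie_{i,j} r_j$ with $\bowtie_{i,j}\in\{\leq,\geq\}$.

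Next, since $\mathcal{O}_x = \varinjlim_Z \mathcal{O}(Z)$ over connected affinoid neighborhoods $Z$ of $x$, and since the finitely many coefficients of $R_1,\dots,R_t$ are elements of $\mathcal{O}_x$, I can choose a single connected affinoid neighborhood $Z'$ of $x$ such that $R_j \in \mathcal{O}(Z')[T]$ for all $j=1,\dots,t$. For this $Z'$, the $Z'$-thickening $U_{i,Z'}$ of every $U_i$ is well-defined in the sense of Definition~\ref{100}, so the family $\mathcal{U}_{Z'}$ is a well-defined $Z'$-thickening of $\mathcal{U}_x$ in the sense of Definition~\ref{104}. The same of course holds for any connected affinoid neighborhood of $x$ contained in $Z'$, because restriction preserves the polynomial representation.

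Finally, I would invoke Theorem~\ref{nicecover} directly: it produces a connected affinoid neighborhood $Z \subseteq Z'$ of $x$ for which $\mathcal{U}_Z$ is a cover of $\mathbb{P}_Z^{1,\mathrm{an}}$ satisfying properties $(1)$--$(3)$ there, and for which the same conclusion persists for every connected affinoid neighborhood $Z'' \subseteq Z$ of $x$. By Definition~\ref{104}, this is precisely the statement that $\mathcal{U}_Z$ is a $Z$-relative nice cover, and the persistence under shrinking is exactly the last sentence of the theorem. There is no genuine obstacle here; the only point requiring any care is the bookkeeping from Remark~\ref{105}, namely that the writings of all boundary points in $S_{\mathcal{U}_x}$ are fixed \emph{simultaneously} and compatibly, so that the thickenings of different elements of $\mathcal{U}_x$ can be compared on the overlaps as in Proposition~\ref{nukkuptoj2}.
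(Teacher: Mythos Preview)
Your proposal is correct and matches the paper's approach exactly: the paper presents Theorem~\ref{106} immediately after Definition~\ref{104} with the phrase ``We have shown:'', indicating it is a direct restatement of Theorem~\ref{nicecover} in the language of that definition, with the existence of a common $Z'$ over which all thickenings are defined coming from Lemma~\ref{18aaa} and the fact that $\mathcal{O}_x = \varinjlim_Z \mathcal{O}(Z)$. Your careful attention to the compatibility issue from Remark~\ref{105} is appropriate and is precisely the bookkeeping the paper flags.
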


\begin{cor} \label{107}

Let $U$ be a connected affinoid domain of $\mathbb{P}_{\mathcal{H}(x)}^{1, \mathrm{an}}$ containing only type ~3 points in its boundary. There exists an affinoid neighborhood $Z$ of $x$ in $S$ such that the $Z$-thickening $U_Z$ exists and is connected. The same is true for any connected affinoid neighborhood $Z' \subseteq Z$ of $x.$
\end{cor}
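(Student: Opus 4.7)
The plan is to exhibit $U$ as a member of a nice cover of $\mathbb{P}_{\mathcal{H}(x)}^{1,\mathrm{an}}$ and then invoke Theorem \ref{106} together with Theorem \ref{nicecover}(1). The point is that once $U$ sits inside a nice cover, the conclusion about connectedness of its thickening is built into the definition of a relative nice cover.

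First, I would analyze the complement of $U$. Write $\partial U = \{\eta_1, \dots, \eta_n\}$, with each $\eta_i$ a type 3 point. If $n = 0$, then $U$ has empty boundary in the connected space $\mathbb{P}_{\mathcal{H}(x)}^{1,\mathrm{an}}$, so $U = \mathbb{P}_{\mathcal{H}(x)}^{1,\mathrm{an}}$, and the statement is trivial by taking $Z = S$. Otherwise, using the tree structure of $\mathbb{P}_{\mathcal{H}(x)}^{1,\mathrm{an}}$ (and the explicit representation of $U$ coming from Proposition \ref{projectivedom} via Lemma \ref{18aaa}), the open complement of $U$ decomposes as a disjoint union of connected components $W_1, \dots, W_n$, each of which is a virtual open disc attached to $U$ at exactly one of the boundary points $\eta_{\sigma(j)}$. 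The closures $\overline{W_j}$ are then connected affinoid domains of $\mathbb{P}_{\mathcal{H}(x)}^{1,\mathrm{an}}$ whose boundary consists of the single type 3 point $\eta_{\sigma(j)}$, and they are pairwise disjoint.

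Next, I would set $\mathcal{U}_x := \{U, \overline{W_1}, \dots, \overline{W_n}\}$ and check that this is a nice cover in the sense of Definition \ref{nice}. Every member is a connected affinoid with only type 3 boundary points. For two distinct members, $U \cap \overline{W_j} = \{\eta_{\sigma(j)}\}$ is a single type 3 point, while $\overline{W_j} \cap \overline{W_k} = \emptyset$ for $j \neq k$, so in both cases the intersection is a finite set of type 3 points (possibly empty), as required. The inclusion conditions are clear since $\overline{W_j}$ lies outside the interior of $U$ and outside every other $\overline{W_k}$.

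Finally, I would apply Theorem \ref{106} to $\mathcal{U}_x$: there exists a connected affinoid neighborhood $Z$ of $x$ (and such neighborhoods are closed under further shrinking, by the last sentence of that theorem) whose $Z$-thickening $\mathcal{U}_Z$ is a $Z$-relative nice cover of $\mathbb{P}_Z^{1,\mathrm{an}}$. By condition (1) of Theorem \ref{nicecover}, every element of $\mathcal{U}_Z$ is a connected affinoid domain; in particular $U_Z$ is connected. The main technical point in this plan is the topological decomposition of the complement of $U$ into the pieces $\overline{W_j}$, each having a single type 3 boundary point; however, this is essentially a reformulation of the affinoid description provided by Proposition \ref{projectivedom}, and so presents no real difficulty.
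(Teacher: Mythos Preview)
Your approach is essentially the same as the paper's: embed $U$ in a nice cover of $\mathbb{P}_{\mathcal{H}(x)}^{1,\mathrm{an}}$ and then invoke Theorem~\ref{106}. The paper obtains the nice cover by citing \cite[Lemma 2.14]{une}, whereas you construct it by hand as $\{U,\overline{W_1},\dots,\overline{W_n}\}$; this is exactly the kind of construction that lemma provides.

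There is, however, a genuine gap: the case where $U$ is a single type~3 point $\{\eta\}$. Then $\partial U=\{\eta\}$ so $n=1$, but the complement $\mathbb{P}_{\mathcal{H}(x)}^{1,\mathrm{an}}\setminus\{\eta\}$ has \emph{two} connected components (a type~3 point has valence~2 in the tree), not one. Even after correcting the count, the family $\{U,\overline{W_1},\overline{W_2}\}$ is not a nice cover: since $U=\{\eta\}\subseteq\overline{W_1}$, condition~(3) of Definition~\ref{nice} fails. The paper handles this case separately at the outset by a direct appeal to Theorem~\ref{ajune}, and you should do the same. (Incidentally, your $n=0$ case is vacuous: a connected affinoid domain with empty boundary would have to be all of $\mathbb{P}_{\mathcal{H}(x)}^{1,\mathrm{an}}$, which is not affinoid.)
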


\begin{proof}
If $U$ is a type 3 point, then this is Theorem \ref{ajune}. Suppose this is not the case.
By \cite[Lemma 2.14]{une}, there exists a nice cover $\mathcal{U}_x$ of $\mathbb{P}_{\mathcal{H}(x)}^{1, \mathrm{an}}$ such that $U \in \mathcal{U}_x.$ Let $Z$ be a connected affinoid neighborhood of $x$ such that the $Z$-thickening $\mathcal{U}_Z$ exists and is a $Z$-relative nice cover. Then $U_Z \in \mathcal{U}_Z$ is connected. The last part of the statement is clear since the same property is true in Theorem \ref{nicecover}.
\end{proof}

\begin{rem} \label{108}
The notion of a $Z$-relative nice cover can be extended to connected affinoid domains of $\mathbb{P}_Z^{1, \mathrm{an}}$ provided the latter are $Z$-thickenings of affinoid domains of $\mathbb{P}_{\mathcal{H}(x)}^{1, \mathrm{an}}.$
\end{rem}

\section{A norm comparison} \label{4.2}

As seen in the previous section, when constructing relative nice covers we often have to restrict to smaller neighborhoods of the fiber. The same phenomenon appears when trying to apply the patching results of Section \ref{patching} to the setting introduced via Notation ~\ref{17aaa}. This is why we need some uniform-boundedness-type results.

Recall Notation \ref{17aaa}. Let $Z$ be any connected affinoid neighborhood of $x$ in $S.$ Set $A_Z=\mathcal{O}(Z).$ The $k$-algebra $A_Z$ is a $k$-affinoid algebra, and since $Z$ is connected  and reduced (recall $S$ is normal), $A_Z$ is an integral domain. By \cite[Proposition 9.13]{coanno}, the spectral norm $\rho_Z$ of $A_Z$ is equivalent to the norm of $A_Z,$ and it satisfies: for all ${f \in A_Z,} |f|_{\rho_Z}=\max_{y \in Z} |f|_y.$ In this section, for any connected affinoid neighborhood $Z$ of $x$ in $S$, we endow the corresponding affinoid algebra $A_Z$ with its spectral norm $\rho_Z.$ 

For any positive real number $r,$ we will use the notation $A_Z\{rT^{-1}\}$ (where $T$ is a fixed variable on $\mathbb{P}_Z^{1, \mathrm{an}}$) for the $A_Z$-affinoid algebra $\left\lbrace\sum_{n \geqslant 0} \frac{a_n}{T^n}: a_n \in A_Z, \lim_{n \rightarrow +\infty}|a_n|_{\rho_Z}r^{-n}=0\right\rbrace$ with corresponding submultiplicative norm $|\sum_{n \geqslant 0} \frac{a_n}{T^n}|:=\max_{n} |a_n|_{\rho_Z} r^{-n}.$

\begin{rem}
In what follows we suppose that the coefficient $r$ is not an element of $\sqrt{|k^\times|}.$ The only reason behind this assumption is to be able to guarantee the connectedness of the affinoid domains that are considered. If we assume connectedness, then the rest works the same regardless of whether $r \in \sqrt{|k^\times|}$. 
\end{rem}

\subsection{Affinoid domains defined through a polynomial}
We study here affinoid domains of the relative projective line $\mathbb{P}^{1, \mathrm{an}}$ which are defined through one (irreducible) polynomial. We will be particularly interested in finding explicit ``well-behaved" candidates for the respective (classes of equivalence of) norms they are endowed with.  

\subsubsection{The case of degree one polynomials} \label{degone} We will study here the following affinoid domains of the relative projective analytic line which are defined through a polynomial of degree one. Let $r \in \mathbb{R}_{>0} \backslash \sqrt{|\mathcal{H}(x)^\times|}$.

(1) Set $X_{|T|\leqslant r,Z}=\{u \in \mathbb{P}_Z^{1, \mathrm{an}}: |T|_u \leqslant r\}.$ It is an affinoid domain of $\mathbb{P}_Z^{1, \mathrm{an}}$, and $\mathcal{O}(X_{|T|\leqslant r,Z})=A_Z\{r^{-1}T\},$ where $$A_Z\{r^{-1}T\}=\{ \sum_{n \geqslant 0}a_n T^n, a_n \in A_Z, \lim_{n \rightarrow +\infty}|a_n|_{\rho_Z} r^n =0\}$$ and it is endowed with the norm $|\sum_{n \geqslant 0}  a_nT^n|_{|T| \leqslant r, Z}:=\max_{n \geqslant 0} |a_n|_{\rho_Z} r^n.$

(2) Set $X_{|T|\geqslant r,Z}=\{u \in \mathbb{P}_Z^{1, \mathrm{an}}: |T|_u \geqslant r\}.$ It is an affinoid domain of $\mathbb{P}_Z^{1, \mathrm{an}}$ and $\mathcal{O}(X_{|T|\geqslant r,Z})=A_Z\{rT^{-1}\},$ where $$A_Z\{rT^{-1}\}=\{\sum_{n \geqslant 0} \frac{a_n}{T^n}: a_n \in A_Z, \lim_{n \rightarrow +\infty}|a_n|_{\rho_Z} r^{-n}=0\}$$ and it is endowed with the norm $|\sum_{n \geqslant 0} a_nT^{-n}|_{|T| \geqslant r, Z}:= \max_{n \ge 0} |a_n|_{\rho_Z}r^{-n}.$

(3) Set $X_{|T|= r,Z}=\{u \in \mathbb{P}_Z^{1, \mathrm{an}}:|T|_u=r\}$. It is an affinoid domain of $\mathbb{P}_Z^{1, \mathrm{an}}$ and $\mathcal{O}(X_{|T|= r,Z})=A_Z\{r^{-1}T, rT^{-1}\},$ where $$A_Z\{r^{-1}T, rT^{-1}\}=\{\sum_{n \in \mathbb{Z}} a_n T^n : a_n \in A_Z, \lim_{n \rightarrow \pm\infty}|a_n|_{\rho_Z}r^n=0\}$$ and it is endowed with the norm $|\sum_{n \in \mathbb{Z}} a_nT^n|_{|T| = r, Z}:= \max_{n \in \mathbb{Z}} |a_n|_{\rho_Z}r^{n}.$

By Corollary \ref{107}, there exists a connected affinoid neighborhood $Z_T$ of $x$ in $S$ such that for any connected affinoid neighborhood $Z \subseteq Z_T$ of $x,$ the affinoids $X_{|T|\leqslant r,Z}, X_{|T|\geqslant r,Z}$ and $X_{|T|= r,Z}$ are connected (and hence integral). For the rest of this subsection, we suppose $Z \subseteq Z_T.$

\begin{lm} \label{109}
The norms $|\cdot|_{|T| \leqslant r,Z}, |\cdot|_{|T| \geqslant r,Z}, |\cdot|_{|T|=r,Z}$ defined above are equal to the spectral norms on $A_Z\{r^{-1}T\}, A_Z\{rT^{-1}\}, A_Z\{r^{-1}T, rT^{-1}\},$ respectively. 
\end{lm}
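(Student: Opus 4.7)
The plan is to prove all three equalities simultaneously by the same argument, based on evaluating elements at the Gauss-type point in each fiber of $\pi: \mathbb{P}_Z^{1,\mathrm{an}} \to Z$. Write $f = \sum_n a_n T^n$ for a generic element (with index range $n \ge 0$, $n \le 0$, or $n \in \mathbb{Z}$ depending on which of the three algebras is considered), let $\rho$ denote the spectral norm, and let $|f|$ denote the defined Gauss-type norm. The goal is $\rho(f) = |f|$.

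The inequality $\rho(f) \le |f|$ is immediate from the ultrametric triangle inequality. Indeed, for any $u$ in the affinoid domain in question, one has $|a_n(u)| \le |a_n|_{\pi(u)} \le |a_n|_{\rho_Z}$ and $|T|_u$ lies in the prescribed range. Since the series converges absolutely at $u$, this gives $|f(u)| \le \max_n |a_n|_{\rho_Z}\, r^{n}$ (with the sign on $n$ adapted to each case), and taking the supremum over $u$ yields the bound.

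For the reverse inequality $\rho(f) \ge |f|$, the key idea is to use the Gauss point in each fiber of $\pi$. For $y \in Z$, let $\eta_{y,r}$ be the unique point of $\mathbb{P}_{\mathcal{H}(y)}^{1,\mathrm{an}}$ characterized by $|\sum b_n T^n|_{\eta_{y,r}} = \max_n |b_n|\, r^n$ for $b_n \in \mathcal{H}(y)$. This point lies in all three affinoid domains $X_{|T|\le r, Z}$, $X_{|T|\ge r, Z}$ and $X_{|T|=r,Z}$, and evaluating $f$ at $\eta_{y,r}$ through the base-change map $A_Z \to \mathcal{H}(y)$ produces $|f|_{\eta_{y,r}} = \max_n |a_n(y)| \, r^n$, where $a_n(y)$ denotes the image of $a_n$ in $\mathcal{H}(y)$.

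Since $|a_n|_{\rho_Z} r^n \to 0$ in the relevant direction, the maximum defining $|f|$ is attained at some index $n_0$. Because $y \mapsto |a_{n_0}|_y$ is a continuous function on the compact space $Z$ with supremum $|a_{n_0}|_{\rho_Z}$, there exists $y^\ast \in Z$ with $|a_{n_0}(y^\ast)| = |a_{n_0}|_{\rho_Z}$. Then
\[
|f|_{\eta_{y^\ast,r}} \;\ge\; |a_{n_0}(y^\ast)|\, r^{n_0} \;=\; |a_{n_0}|_{\rho_Z}\, r^{n_0} \;=\; |f|,
\]
which yields $\rho(f) \ge |f|$. The argument is uniform across the three cases, and I do not expect a major obstacle: the only subtlety is checking that $\eta_{y,r}$ is a well-defined point of the relative affinoid and that the Gauss-point evaluation formula carries over after the base-change $A_Z \to \mathcal{H}(y)$, which is standard.
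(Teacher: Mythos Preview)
Your proof is correct and follows essentially the same approach as the paper: both arguments identify the Gauss-type point $\eta_{y,r}$ in each fiber over $y\in Z$, use the formula $|f|_{\eta_{y,r}}=\max_n |a_n(y)|\,r^n$, and reduce to the equality $|a_n|_{\rho_Z}=\max_{y\in Z}|a_n|_y$. The paper's write-up is marginally slicker in that it directly exchanges $\max_{y\in Z}\max_n$ with $\max_n\max_{y\in Z}$ rather than splitting into two inequalities and invoking compactness to pick a maximizing $y^\ast$, but the substance is the same.
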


\begin{proof} \begin{sloppypar}
By \cite[Theorem 1.3.1]{Ber90}, for any affinoid space $X,$ its associated spectral norm $\rho_X$ has the property that $|f|_{\rho_X}=\max_{y \in X} |f|_y$ for all $f \in \mathcal{O}(X).$
Let $f=\sum_{n \geqslant 0} a_n T^n$ be any element of $A_Z\{r^{-1}T\}.$ Let $\rho_{|T| \leqslant r, Z}$ denote the spectral norm on the integral affinoid space $X_{|T| \leqslant r, Z}.$ We will show that $|f|_{|T| \leqslant r, Z}=|f|_{\rho_{|T| \leqslant r, Z}}.$ By \emph{loc.cit.}, ${|f|_{\rho_{|T| \leqslant r, Z}}=\max_{u \in X_{|T| \leqslant r, Z}} |f|_u}$. For any $y \in Z,$ the fiber of $X_{|T| \leqslant r, Z}$ over $y$ is the disc $\{u \in \mathbb{P}_{\mathcal{H}(y)}^{1, \mathrm{an}}: |T|_y \leqslant r\},$ whose Shilov boundary is the singleton $\{\eta_{0,r}^y\}$ (\textit{i.e.} the point $\eta_{0,r} \in \mathbb{P}_{\mathcal{H}(y)}^{1, \mathrm{an}}$).
 Consequently, in the fiber of $X_{|T| \leqslant r, Z}$ over $y,$ the function $f$ attains its maximum at the point $\eta_{0,r}^y,$ implying $|f|_{\rho_{|T| \leqslant r, Z}}=\max_{y \in Z} |f|_{\eta_{0,r}^y}$ (see also Lemma \ref{128}). 
\end{sloppypar}
 Since ${|f|_{\eta_{0,r}^y}=|\sum_{n \geqslant 0} a_n T^n|_{\eta_{0,r}^y}=\max_{n \geqslant 0} |a_n|_y r^n},$ we obtain that $$|f|_{\rho_{|T| \leqslant r, Z}}=\max_{y \in Z} \max_{n \geqslant 0} |a_n|_y r^n.$$ At the same time, ${|f|_{|T| \leqslant r, Z}=\max_{n \geqslant 0} |a_n|_{\rho_Z} r^n=\max_{n \geqslant 0} \max_{y \in Z} |a_n|_{y} r^n},$ implying the equality of the statement.

The result is proven in the same way for the norms $|\cdot|_{|T|\geqslant r, Z}$ and $|\cdot|_{|T|=r, Z}$.
\end{proof}

\subsubsection{The general case}
Let $P(T)$ be a unitary polynomial over $\mathcal{O}_x$ that is irreducible over~$\mathcal{H}(x)$. Then there exists an affinoid neighborhood $Z'$ of $x$ such that $P(T) \in \mathcal{O}(Z')[T].$ 

\begin{nota} \label{hajdede}
Let $r \in \mathbb{R}_{>0} \backslash \sqrt{|\mathcal{H}(x)^\times|}.$ Set $X_{|P|\leqslant r, Z}=\{u \in \mathbb{P}_Z^{1, \mathrm{an}}:|P|_u \leqslant r\},$ $X_{|P| \geqslant r, Z}=\{u \in \mathbb{P}_Z^{1, \mathrm{an}}:|P|_u \geqslant r\}$ and $X_{|P|=r, Z}=\{u \in \mathbb{P}_Z^{1, \mathrm{an}}:|P|_u = r\}.$ These are affinoid domains of $\mathbb{P}_Z^{1, \mathrm{an}}$ (furthermore, $X_{|P|\leqslant r, Z}$ and $X_{|P|=r, Z}$ are affinoid domains of~$\mathbb{A}_{Z}^{1, \mathrm{an}}$). By Corollary ~\ref{107}, there exists an affinoid neighborhood $Z_P$ of $x$ such that for any connected affinoid neighborhood $Z \subseteq Z_P,$ $X_{|P|\leqslant r, Z}, X_{|P|\geqslant r, Z}$ and $X_{|P|= r, Z}$ are connected (hence integral). For the rest of this subsection, we assume that $Z \subseteq Z' \cap Z_T \cap Z_P.$ 
\end{nota}
The rings $\mathcal{O}(X_{|P|\leqslant r, Z})$ and $\mathcal{O}(X_{|P|=r, Z})$ have been studied extensively and under more general conditions by Poineau in \cite[Chapter 5]{poilibri}. Restricted to our setting, the following is shown (see \cite[Proposition 5.3.3]{poilibri}):

\begin{lm} \label{113}
Let $Z$ be a connected affinoid neighborhood of $x$ such that ${Z \subseteq Z' \cap Z_T \cap Z_P}.$ Then $\mathcal{O}(X_{|P|\leqslant r, Z}) \cong\mathcal{O}(X_{|T| \leqslant r, Z})[X]/(P(X)-T)=A_Z\{r^{-1}T\}[X]/(P(X)-T)$ and $\mathcal{O}(X_{|P|=r,Z})=\mathcal{O}(X_{|T|=r, Z})[Y]/(P(Y)-T)=A_Z\{r^{-1}T, rT^{-1}\}[Y]/(P(Y)-T).$
\end{lm}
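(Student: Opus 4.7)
The plan is to realize the affinoid domains $X_{|P|\leqslant r, Z}$ and $X_{|P|=r, Z}$ as preimages of the corresponding ``$|T|$-domains'' under an explicit finite morphism, and to read off their algebras of analytic functions by a base-change argument. This is essentially just the specialization of Poineau's more general setup in \cite[Chapter 5]{poilibri} to the particular situation at hand.

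Concretely, I would introduce the finite morphism $\phi_P \colon \mathbb{A}_Z^{1,\mathrm{an}} \to \mathbb{A}_Z^{1,\mathrm{an}}$ corresponding on coordinate rings to the $A_Z$-algebra homomorphism $A_Z[T] \to A_Z[X]$ sending $T \mapsto P(X)$. Because $P$ is monic of degree $d \geqslant 2$, the ring $A_Z[X]$ is free of rank $d$ over $A_Z[T]$ with basis $1, X, \dots, X^{d-1}$, and $P(X)-T$ generates the kernel of the surjection $A_Z[T][X] \twoheadrightarrow A_Z[X]$; hence we have the purely algebraic identity $A_Z[X] \cong A_Z[T][X]/(P(X)-T)$. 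Evaluating the absolute value at each point $u$ then shows immediately that $\phi_P^{-1}(X_{|T|\leqslant r, Z}) = X_{|P|\leqslant r, Z}$ and $\phi_P^{-1}(X_{|T|= r, Z}) = X_{|P|= r, Z}$, where on the source we use the coordinate $X$ (resp.\ $Y$) in place of $T$.

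The decisive step is then to apply the base-change formula for the algebra of analytic functions of the preimage of an affinoid subdomain under a finite morphism. This is precisely what Poineau carries out in \cite[Chapter 5]{poilibri} in a more general framework. Specialized to our $\phi_P$ and to the affinoid subdomains $X_{|T|\leqslant r, Z}$ and $X_{|T|= r, Z}$, his results give an isomorphism $\mathcal{O}(X_{|P|\bowtie r, Z}) \cong \mathcal{O}(X_{|T|\bowtie r, Z}) \otimes_{A_Z[T]} A_Z[X]$ for $\bowtie \in \{\leqslant, =\}$. Combining this with the algebraic identity above, and using that tensor product commutes with quotienting by the monic polynomial $P(X)-T$, yields the two isomorphisms in the statement.

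The main delicate point, which I expect to be the principal technical obstacle, is verifying that the completions hidden in the definitions of $A_Z\{r^{-1}T\}$ and $A_Z\{r^{-1}T, rT^{-1}\}$ interact correctly with the base change along $\phi_P$; this is exactly what is handled by Poineau's framework. The hypothesis $Z \subseteq Z' \cap Z_T \cap Z_P$ plays its role here precisely to make this work: the inclusion $Z \subseteq Z'$ ensures $P \in \mathcal{O}(Z)[T]$ so that $\phi_P$ is defined, while $Z \subseteq Z_T \cap Z_P$ guarantees that all the affinoid domains appearing are connected and hence integral, so that no degeneracies arise when passing to the quotient by $(P(X)-T)$.
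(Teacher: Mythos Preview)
Your proposal is correct and follows essentially the same approach as the paper: the paper's proof consists of a single sentence pointing to the finite morphism $\mathbb{P}_Z^{1,\mathrm{an}}\to\mathbb{P}_Z^{1,\mathrm{an}}$ induced by $A_Z[T]\to A_Z[T]$, $T\mapsto P(T)$, together with the reference to Poineau's \cite[Chapter~5]{poilibri} that precedes the lemma. Your write-up simply makes explicit the base-change computation that the paper leaves implicit in that reference.
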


\begin{proof}
The statement can be seen by considering the finite morphism $\mathbb{P}_{Z}^{1, \mathrm{an}} \rightarrow \mathbb{P}_Z^{1, \mathrm{an}}$ induced by $A_Z[T] \rightarrow A_Z[T], T \mapsto P(T).$ 
\end{proof}

\begin{lm} \label{114}
Let $j_P$ denote the restriction morphism $\mathcal{O}(X_{|P| \leqslant r, Z}) \hookrightarrow \mathcal{O}(X_{|P| = r, Z})$ and similarly for $j_T.$ Then the following diagram commutes and $j_P(X)=Y.$

\[
\begin{tikzcd}
A_{Z}\{r^{-1}T\}[X]/(P(X)-T) \arrow{r}{j_P}  & A_Z\{r^{-1}T, rT^{-1}\}[Y]/(P(Y)-T)  \\
A_Z\{r^{-1}T\} \arrow{u} \arrow{r}{j_T} & A_{Z}\{r^{-1}T, rT^{-1}\} \arrow{u}
\end{tikzcd}
\]
\end{lm}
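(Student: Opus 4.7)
The plan is to unpack what the isomorphisms in Lemma~\ref{113} really say, namely that $X$ and $Y$ each represent the ``source coordinate'' of the finite morphism $\varphi \colon \mathbb{P}_Z^{1,\mathrm{an}} \to \mathbb{P}_Z^{1,\mathrm{an}}$ given on rings by $T \mapsto P(T)$. Both the commutativity and the identity $j_P(X) = Y$ will then drop out of the functoriality of $\mathcal{O}(-)$ applied to a square of affinoid inclusions.

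First, I would set up the geometric square. Writing $U$ for the coordinate on the source copy of $\mathbb{P}_Z^{1,\mathrm{an}}$ and $T$ for the coordinate on the target, the morphism $\varphi$ sends $X_{|P|\leqslant r, Z} = \varphi^{-1}(X_{|T|\leqslant r, Z})$ to $X_{|T|\leqslant r, Z}$ and $X_{|P|=r, Z} = \varphi^{-1}(X_{|T|=r, Z})$ to $X_{|T|=r, Z}$, and these restrictions are still finite. Combined with the open inclusions $X_{|T|=r,Z} \hookrightarrow X_{|T|\leqslant r,Z}$ and $X_{|P|=r,Z} \hookrightarrow X_{|P|\leqslant r,Z}$, this yields a commutative square of affinoid morphisms. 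Applying the contravariant functor $\mathcal{O}(-)$ gives exactly the square in the statement, with $j_P$, $j_T$ being the pullbacks of the inclusions and the vertical arrows being the pullbacks of $\varphi$. Commutativity of the diagram is then just the functoriality of $\mathcal{O}$.

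Next, I would identify the generators $X$ and $Y$ concretely. Under the isomorphism of Lemma~\ref{113}, the presentation $\mathcal{O}(X_{|P|\leqslant r, Z}) \cong A_Z\{r^{-1}T\}[X]/(P(X)-T)$ is obtained precisely by observing that the source coordinate $U$ on $X_{|P|\leqslant r, Z}$ satisfies $P(U) = \varphi^*(T)$; hence the abstract indeterminate $X$ corresponds to the function $U \in \mathcal{O}(X_{|P|\leqslant r,Z})$. Identically, $Y$ corresponds to the restriction of $U$ to $X_{|P|=r,Z}$ under $\mathcal{O}(X_{|P|=r, Z}) \cong A_Z\{r^{-1}T, rT^{-1}\}[Y]/(P(Y)-T)$.

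Finally, since $j_P$ is the pullback of an inclusion $X_{|P|=r,Z} \hookrightarrow X_{|P|\leqslant r,Z}$, it simply restricts functions; it therefore sends the function $U$ on the bigger domain to the same function $U$ on the smaller one. Translating through the two presentations, this reads $j_P(X) = Y$, as required. The ``main obstacle'' is purely bookkeeping — making sure that the two invocations of Lemma~\ref{113} are set up with the same choice of source coordinate $U$ so that $X$ and $Y$ can be identified — and once that is pinned down the argument is entirely formal.
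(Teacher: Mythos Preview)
Your proposal is correct and follows essentially the same approach as the paper: both set up the finite morphism $\varphi$ induced by $T\mapsto P(T)$, obtain commutativity from functoriality of $\mathcal{O}(-)$ applied to the resulting square, and deduce $j_P(X)=Y$ by identifying $X$ and $Y$ with the same source coordinate (the paper phrases this last step via the tensor product description $\mathcal{O}(X_{|P|=r,Z})=\mathcal{O}(X_{|P|\leqslant r,Z})\otimes_{\mathcal{O}(X_{|T|\leqslant r,Z})}\mathcal{O}(X_{|T|=r,Z})$ and $j_P(f)=f\otimes 1$, which is the same content). One minor terminological slip: the inclusions $X_{|T|=r,Z}\hookrightarrow X_{|T|\leqslant r,Z}$ are affinoid domain embeddings, not open inclusions, but this does not affect your argument.
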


Taking this into account, we will from now on write $A_{Z}\{r^{-1}T\}[X]/(P(X)-T)$ and $A_{Z}\{r^{-1}T, rT^{-1}\}[X]/(P(X)-T)$ (\textit{i.e.} using the same variable $X$).
 
\begin{proof}
This follows again from the work of Poineau in \cite[Chapter 5]{poilibri}.  Remark that the finite morphism $A_{Z}[T] \rightarrow A_{Z}[T], T \mapsto P(T),$ induces a finite morphism ${\varphi:X_{|P| \leqslant r, Z} \rightarrow X_{|T| \leqslant r, Z}}$ and $\varphi^{-1}(X_{|T|=r, Z})=X_{|P|=r, Z}.$ The vertical maps of the diagram above are induced by $\varphi$, which implies its commutativity. Remark that ${j_T(T)=T.}$ Also, since $\varphi^{-1}(X_{|T|=r, Z})=X_{|P|=r, Z},$ we have that $$\mathcal{O}(X_{|P|=r, Z})=\mathcal{O}(X_{|P| \leqslant r, Z}) \otimes_{\mathcal{O}(X_{|T| \leqslant r, Z})} \mathcal{O}(X_{|T|=r, Z}).$$ The restriction morphism $j_P$ is given by $f \mapsto f \otimes 1,$ implying $j_P(X)=Y.$ 
\end{proof}

\begin{rem} \label{bowtie}
Recall that $\mathcal{O}(X_{|P|\leqslant r, Z}),$ $\mathcal{O}(X_{|P|\geqslant r, Z})$, and $\mathcal{O}(X_{|P|= r, Z})$ are affinoid algebras, meaning they are naturally endowed with submultiplicative norms $|\cdot|_{\leqslant}, |\cdot|_{\geqslant}$ and $|\cdot|_{=},$ respectively. (These norms are uniquely determined only up to equivalence.) 
\end{rem}

We start by giving an explicit choice for $|\cdot|_{\leqslant}$ and $|\cdot|_{=}.$ This was already done in Subsection \ref{degone} for the case $P(T)=T.$

The morphism $A_Z[T] \rightarrow A_Z[T], T \mapsto P(T),$ induces a finite morphism ${\varphi_Z:\mathbb{P}_{Z}^{1, \mathrm{an}} \rightarrow \mathbb{P}_{Z}^{1, \mathrm{an}}},$ for which $\varphi_Z^{-1}(X_{|T|\bowtie r,Z})=X_{|P| \bowtie r, Z},$ where $\bowtie \in \{ \leqslant, =, \geqslant\}.$ In particular, this gives rise to a finite morphism $X_{|P| \bowtie r, Z} \rightarrow X_{|T| \bowtie r, Z},$ hence to a finite morphism $\mathcal{O}(X_{|T|\bowtie r, Z}) \rightarrow \mathcal{O}(X_{|P| \bowtie r, Z}).$ The latter gives rise to a surjective morphism $\psi_1: \mathcal{O}(X_{|T| \bowtie r, Z})^n \twoheadrightarrow \mathcal{O}(X_{|P| \bowtie r, Z})$ for some $n \in \mathbb{N}.$ Let $|\cdot|_{\bowtie}'$ denote the norm (determined up to equivalence) on $\mathcal{O}(X_{|P| \bowtie r, Z})$ obtained by $\psi_1$, making $\psi_1$ admissible. We recall that the affinoid algebra $\mathcal{O}(X_{|T| \bowtie r, Z})$ is endowed with the norm defined on (1), (2) or (3) of Subsection ~\ref{degone}.

\begin{prop} \label{115} The norms $|\cdot|_{\bowtie}$ and  $|\cdot|_{\bowtie}'$ are equivalent for any $\bowtie \in \{\leqslant, =, \geqslant\}.$
\end{prop}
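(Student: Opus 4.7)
The plan is to argue that $|\cdot|_{\bowtie}$ and $|\cdot|_{\bowtie}'$ are both complete Banach norms on the same underlying $k$-algebra $\mathcal{O}(X_{|P| \bowtie r, Z})$, and then invoke Banach's open mapping theorem (in the form used already in the excerpt just before Lemma~\ref{111}).

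More concretely, I would proceed as follows. First, the morphism $\mathcal{O}(X_{|T| \bowtie r, Z}) \to \mathcal{O}(X_{|P|\bowtie r, Z})$ induced by $\varphi_Z$ is a morphism of $k$-affinoid algebras, and any morphism between $k$-affinoid algebras is automatically bounded with respect to any choice of Banach algebra norms. In particular, it is bounded from $(\mathcal{O}(X_{|T| \bowtie r, Z}), |\cdot|_{|T|\bowtie r, Z})$ to $(\mathcal{O}(X_{|P| \bowtie r, Z}), |\cdot|_{\bowtie})$. Equipping $\mathcal{O}(X_{|T|\bowtie r, Z})^n$ with the max norm, the surjection $\psi_1$ is therefore also bounded from $(\mathcal{O}(X_{|T|\bowtie r, Z})^n, |\cdot|_{\max})$ to $(\mathcal{O}(X_{|P|\bowtie r, Z}), |\cdot|_{\bowtie})$.

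Next, since $\mathcal{O}(X_{|T|\bowtie r, Z})^n$ and $\mathcal{O}(X_{|P| \bowtie r, Z})$ are both complete (as finite direct sums resp.\ affinoid algebras), and since $\psi_1$ is a bounded surjective morphism of Banach $k$-modules, Banach's open mapping theorem applies exactly as in the discussion preceding Lemma~\ref{111}: when $k$ is non-trivially valued one invokes the classical statement (cf.\ \cite{bou}), and when $k$ is trivially valued one first performs a change of base field to obtain a non-trivially valued setting as in \cite[Chapter 2, Proposition 2.1.2(ii)]{Ber90}. Consequently, $\psi_1$ is admissible with respect to $|\cdot|_{\bowtie}$; that is, the quotient norm induced by $\psi_1$ on $\mathcal{O}(X_{|P|\bowtie r, Z})$ is equivalent to $|\cdot|_\bowtie$. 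But by definition this quotient norm is $|\cdot|_{\bowtie}'$ (up to equivalence), so the two norms are equivalent.

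The only real obstacle is simply the bookkeeping: one must verify that both norms genuinely make the relevant spaces complete Banach $k$-modules (which follows because both are constructed via standard affinoid constructions), and that $k$ being possibly trivially valued does not prevent the use of the open mapping theorem. Since Setting~\ref{therealone} and Notation~\ref{17aaa} already presuppose that $k$ is complete and non-trivially valued, this second point is immediate, and the proof reduces to a direct application of the open mapping theorem as described.
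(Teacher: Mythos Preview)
Your argument is correct in substance and in fact more direct than the paper's. The paper does not apply the open mapping theorem to $\psi_1$ directly; instead it extends scalars to a complete non-trivially valued field $K$ over which both $\mathcal{O}(X_{|T|\bowtie r,Z})\widehat{\otimes}_k K$ and $\mathcal{O}(X_{|P|\bowtie r,Z})\widehat{\otimes}_k K$ become \emph{strict} $K$-affinoid algebras, checks (via \cite[Lemme~3.1]{angepoi}) that the embeddings $\mathcal{O}(X_{|P|\bowtie r,Z})\hookrightarrow \mathcal{O}(X_{|P|\bowtie r,Z_K})$ are isometric for both norms, and then invokes \cite[6.1.3/2]{bo} (uniqueness of the Banach structure on a strict affinoid algebra) upstairs to conclude. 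Your route replaces this strictification-and-descent manoeuvre by a single invocation of the open mapping theorem on $\psi_1$; this is cleaner and perfectly valid once $k$ is non-trivially valued.

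One small correction: Notation~\ref{17aaa} does \emph{not} presuppose that $k$ is non-trivially valued (Section~\ref{4.1} begins only with ``Let $k$ be a complete ultrametric field''; the non-triviality hypothesis is added only from Section~\ref{4.3} onward). So your final sentence dismissing the trivially valued case is not justified as written. This is not fatal, since you already indicated the fix---base change to a non-trivially valued field as in \cite[Proposition~2.1.2(ii)]{Ber90}, exactly as the paper does before Lemma~\ref{111}---but carrying that out carefully (isometry of the embedding, admissibility of $\psi_1\widehat{\otimes}_k K$, descent) is precisely what the paper's proof spells out. In other words, the paper's longer argument is essentially your ``workaround'' made explicit.
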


\begin{proof} We remark that the statement is trivial if $\deg  P(T)=1.$

By \cite[pg. 22]{Ber90}, there exists a complete non-trivially valued field extension $K$ of ~$k$ such that $\mathcal{O}(X_{|T|\bowtie r, Z}) \widehat{\otimes}_k K =: \mathcal{O}(X_{|T|\bowtie r, Z_K})$ and $\mathcal{O}(X_{|P|\bowtie r, Z}) \widehat{\otimes}_k K =: \mathcal{O}(X_{|P|\bowtie r, Z_K})$ are strict $K$-affinoid algebras, where $Z_K:=Z \times_k K.$ Moreover, we have the following commutative diagram
\[
\begin{tikzcd}
\mathcal{O}(X_{|T| \bowtie r, Z}) \arrow{r}{T \mapsto P(T)}\arrow{d}{\widehat{\otimes}_k K}  & \mathcal{O}(X_{|P| \bowtie r, Z})  \arrow{d}{\widehat{\otimes}_k K} \\
\mathcal{O}(X_{|T| \bowtie r, Z_K}) \arrow{r}{T \mapsto P(T)} & \mathcal{O}(X_{|P| \bowtie r, Z_K})
\end{tikzcd}
\]
which gives rise to the following commutative diagram, where $\psi_2$ is a surjective admissible morphism induced by $\psi_1$:

\[
\begin{tikzcd}
\mathcal{O}(X_{|T| \bowtie r, Z}) \arrow{r}\arrow{d}{\widehat{\otimes}_k K}  & \mathcal{O}(X_{|T| \bowtie r, Z})^n \arrow{d}{\widehat{\otimes}_k K} \arrow{r}{\psi_1} &  \mathcal{O}(X_{|P| \bowtie r, Z})  \arrow{d}{\widehat{\otimes}_k K} \\
\mathcal{O}(X_{|T| \bowtie r, Z_K}) \arrow{r} & \mathcal{O}(X_{|T| \bowtie r, Z_K})^n \arrow{r}{\psi_2}
 & \mathcal{O}(X_{|P| \bowtie r, Z_K})
\end{tikzcd}
\]
Let $|\cdot|_{\bowtie,K}'$ be the norm (determined up to equivalence) on $\mathcal{O}(X_{|P| \bowtie r, Z_K})$ induced by the morphism $\psi_2.$ Then $\mathcal{O}(X_{|P| \bowtie r, Z_K})$ is a Banach $K$-algebra with respect to $|\cdot|_{\bowtie,K}'.$

Since $\mathcal{O}(|X|_{|T| \bowtie r, Z}) \hookrightarrow \mathcal{O}(X_{|T|\bowtie r, Z_K})$ is an isometry (see \cite[Lemme 3.1]{angepoi}), the diagram above implies that $(\mathcal{O}(|X|_{|P| \bowtie r, Z}), |\cdot|'_{\bowtie}) \hookrightarrow (\mathcal{O}(X_{|P|\bowtie r, Z_K}), |\cdot|_{\bowtie,K}')$ is also an isometry.

Let $|\cdot|_{\bowtie, K}$ denote the norm that the $K$-affinoid algebra $\mathcal{O}(X_{|P| \bowtie r, Z_K})$ is naturally endowed with. Then $(\mathcal{O}(|X|_{|P| \bowtie r, Z}), |\cdot|_{\bowtie}) \hookrightarrow (\mathcal{O}(X_{|P|\bowtie r, Z_K}), |\cdot|_{\bowtie, K})$ is an isometry (again, see \cite[Lemme 3.1]{angepoi}).

Since $\mathcal{O}(X_{|P| \bowtie r, Z_K})$ is a strict $K$-affinoid algebra, by \cite[6.1.3/2]{bo}, there is a unique way to define the structure of a Banach $K$-algebra on it. Hence, $|\cdot|_{\bowtie,K}'$ is equivalent to $|\cdot|_{\bowtie, K}$, so the norms $|\cdot|'_{\bowtie},$ resp. $|\cdot|_{\bowtie},$ they induce on $\mathcal{O}(X_{|P|\bowtie r, Z})$ are equivalent.  
\end{proof}

\begin{nota} \label{star}
Set $d=\deg{P}.$ Since $P(X)$ is unitary, any $f \in A_Z\{r^{-1}T\}[X]/(P(X)-T)$ (resp. $f \in A_Z\{r^{-1}T, rT^{-1}\}[X]/(P(X)-T)$) has a unique representation of the form $\sum_{i=0}^{d-1} \alpha_i X^i,$ where $\alpha_i \in A_Z\{r^{-1}T\}$ (resp. $\alpha_i \in A_Z\{r^{-1}T, rT^{-1}\}$) for all ${i=0,1,\dots, d-1}.$ Set $|f|_{|P| \leqslant r, Z}:=\max_{i}(|\alpha_i|_{|T| \leqslant r, Z})$ (resp. $|f|_{|P|=r, Z}:=\max_{i}(|\alpha_i|_{|T|=r, Z})$). Recall Remark ~\ref{bowtie}. By Proposition ~\ref{115}, we can take $|\cdot|_{\leqslant}=|\cdot|_{|P| \leqslant r, Z}$ and $|\cdot|_{=}=|\cdot|_{|P| = r, Z}.$ (This kind of norm is called $||\cdot||_{U, \mathrm{div}}$ in \cite[5.2]{poilibri}; here $U$ is $X_{|T| \leqslant r, Z}$ or $X_{|T| = r, Z}$.) 
\end{nota}
It remains to find a good representative for $\mathcal{O}(X_{|P| \geqslant r,Z})$ and its norm. This was done in (2) of Subsection \ref{degone} for the case $P(T)=T$. 

In what follows, we identify the $k$-affinoid algebras $\mathcal{O}(X_{|P| \leqslant r, Z})$ and $\mathcal{O}(X_{|P| \geqslant r, Z})$ with $A_Z$-subalgebras of $\mathcal{O}(X_{|P| = r, Z})$ via the respective restriction morphisms. Since $H^1(X_{|P| \leqslant r, Z} \cup X_{|P|\geqslant r, Z}, \mathcal{O})=H^{1}(\mathbb{P}_Z^{1, \mathrm{an}}, \mathcal{O})=0$ (see \mbox{\cite[Th\'eor\`eme A.1 i)]{poi1})}, we have the following short exact sequence:

\reqnomode
\begin{align} \label{sequence} 0 \rightarrow A_Z \rightarrow \mathcal{O}(X_{|P| \leqslant r, Z}) \oplus \mathcal{O}(X_{|P| \geqslant r, Z}) \rightarrow \mathcal{O}(X_{|P|=r, Z}) \rightarrow 0.
\end{align} 

\

Let $f \in \mathcal{O}(X_{|P|=r, Z})=A_Z\{r^{-1}T, rT^{-1}\}[X]/(P(X)-T).$ Suppose its unique representative of degree $<d$ in the indeterminate $X$ is $f_0=\sum_{i=0}^{d-1} \sum_{n \in \mathbb{Z}} a_{n,i} T^n X^i,$ where $\sum_{n \in \mathbb{Z}} a_{n,i} T^n \in A_Z\{r^{-1}T, rT^{-1}\}$ for all $i.$ 
Then we can write the following decomposition for $f_0$: 
$$
f_0=a_{0,0} + \underbrace{\left(\sum_{n \geqslant 1}a_{n,0}T^n + \sum_{i=1}^{d-1}\sum_{n \geqslant 0} a_{n,i}T^n X^i\right)}_{\alpha_f} + \underbrace{\left(\sum_{i=0}^{d-1}\sum_{n \leqslant -1} a_{n,i}T^n X^i\right)}_{\beta_f}.$$
Remark that $\alpha_f \in A_Z\{r^{-1}T\}[X]/(P(X)-T).$

\begin{prop} \label{116}
The $A_Z$-subalgebra $\mathcal{O}(X_{|P| \geqslant r, Z})$ of $\mathcal{O}(X_{|P|=r, Z})$ is equal to
$$B:=\left\lbrace f \in A_Z\{r^{-1}T, rT^{-1}\}[X]/(P(X)-T): f=a_{0,0}+ \sum_{i=0}^{d-1}\sum_{n \geqslant 1} \frac{a_{n,i}}{T^n}X^i \right\rbrace.$$
\end{prop}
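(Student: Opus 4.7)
The plan is to establish the two inclusions $B \subseteq \mathcal{O}(X_{|P|\geqslant r, Z})$ and $\mathcal{O}(X_{|P|\geqslant r, Z}) \subseteq B$ separately, using the short exact sequence (\ref{sequence}) to leverage the first into the second.

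For the inclusion $B \subseteq \mathcal{O}(X_{|P|\geqslant r, Z})$, I would first check that each basis monomial $T^{-n}X^i = P(X)^{-n}X^i$ (for $n\geqslant 1$, $0 \leqslant i \leqslant d-1$) is a regular function on $X_{|P|\geqslant r,Z}$. In the affine part, $|P(X)|_u\geqslant r >0$ prevents poles; at the point $\infty$ the order is $dn-i\geqslant d-(d-1)=1>0$ since $P$ is monic of degree $d$, so there is no pole at infinity either. To upgrade this to convergence of the full series defining an element of $B$, for each fixed $i$ the rational function $X^i/P(X)$ is regular on the affinoid $X_{|P|\geqslant r, Z}$, hence its supremum $M_i := \sup_{u \in X_{|P|\geqslant r,Z}} |X|_u^i |P(X)|_u^{-1}$ is finite. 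Writing $|X^i T^{-n}|_u \leqslant M_i\cdot |P(X)|_u^{-(n-1)} \leqslant M_i \, r^{-(n-1)}$ gives a uniform bound $|a_{n,i} T^{-n} X^i|_u \leqslant r M_i \cdot |a_{n,i}|_{\rho_Z} r^{-n}$ on $X_{|P|\geqslant r, Z}$. Since by definition of $B$ we have $|a_{n,i}|_{\rho_Z} r^{-n}\to 0$, the series converges uniformly, hence in the Banach algebra $\mathcal{O}(X_{|P|\geqslant r, Z})$, to the sought element.

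For the reverse inclusion, I would take $f \in \mathcal{O}(X_{|P|\geqslant r, Z})$ and consider its unique degree-$<d$ representative $f_0 = a_{0,0} + \alpha_f + \beta_f$ inside $\mathcal{O}(X_{|P|=r, Z})$. By construction $\alpha_f \in \mathcal{O}(X_{|P|\leqslant r, Z}) = A_Z\{r^{-1}T\}[X]/(P(X)-T)$, while $a_{0,0}+\beta_f$ has precisely the shape defining $B$, so by the first inclusion $a_{0,0}+\beta_f \in \mathcal{O}(X_{|P|\geqslant r, Z})$. Hence $\alpha_f = f - (a_{0,0}+\beta_f)$ sits in both $\mathcal{O}(X_{|P|\leqslant r, Z})$ and $\mathcal{O}(X_{|P|\geqslant r, Z})$, so by exactness of (\ref{sequence}) it lies in the image of $A_Z$.

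The concluding step is to see that this forces $\alpha_f=0$. Because $P(X)-T$ is monic of degree $d$ in $X$, every element of $A_Z\{r^{-1}T\}[X]/(P(X)-T)$ has a \emph{unique} representative of degree $<d$ in $X$ with coefficients in $A_Z\{r^{-1}T\}$. The representative of $\alpha_f$ has $X^0$-coefficient $\sum_{n\geqslant 1}a_{n,0}T^n$ (no constant term in $T$) and $X^i$-coefficients $\sum_{n\geqslant 0}a_{n,i}T^n$ for $i\geqslant 1$; the representative of any element of $A_Z$ is simply a constant in $A_Z\cdot X^0$. Matching these unique representations forces every $a_{n,i}$ appearing in $\alpha_f$ to vanish, so $\alpha_f=0$ and consequently $f = a_{0,0}+\beta_f \in B$. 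The main technical step is the uniform bound in the first paragraph; everything else is essentially a formal consequence of the Mayer--Vietoris sequence together with the uniqueness of degree-$<d$ representatives.
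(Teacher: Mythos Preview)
Your proof is correct and proceeds by a genuinely different route from the paper's.

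The paper never establishes the inclusion $B \subseteq \mathcal{O}(X_{|P|\geqslant r, Z})$ by estimates. Instead it argues structurally: it produces a section at infinity (evaluation on the closed subspace $\{|T^{-1}|=0\}$) to split $\mathcal{O}(X_{|P|\geqslant r, Z}) = A_Z \oplus \mathcal{O}(X_{|P|\geqslant r, Z})_\infty$, then uses the exact sequence~(\ref{sequence}) to show that $\mathcal{O}(X_{|P|\leqslant r, Z}) \oplus \mathcal{O}(X_{|P|\geqslant r, Z})_\infty \to \mathcal{O}(X_{|P|=r, Z})$ is a bijection, and reads off $\mathcal{O}(X_{|P|\geqslant r, Z})_\infty$ as the complement of the explicit section $s'$ picking out the nonnegative-$T$-part. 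Your argument replaces the section-at-infinity machinery with the direct convergence bound $|X^iP(X)^{-n}|_u \leqslant M_i\,|P(X)^{-1}|_u^{\,n-1}\leqslant M_i\,r^{-(n-1)}$, and then runs the exact sequence the other way: once $a_{0,0}+\beta_f$ is known to lie in $\mathcal{O}(X_{|P|\geqslant r, Z})$, the residual $\alpha_f$ must glue to a global section, hence lie in $A_Z$, and uniqueness of the degree-$<d$ representative kills it. Your path is a bit more elementary (no need to introduce the evaluation map at the infinity locus) and incidentally makes the verification that $B$ is a subalgebra superfluous, since you identify it with an algebra directly; the paper's path is coordinate-free at the key step and avoids any norm estimates on $X_{|P|\geqslant r, Z}$.

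One cosmetic remark: your definition $M_i := \sup_u |X|_u^i\,|P(X)|_u^{-1}$ is ill-formed at points over the infinity section (where neither factor is separately defined), even though the intended quantity $\sup_u |X^i/P(X)|_u$ is perfectly finite. Writing it as the spectral norm of the regular function $X^iP(X)^{-1}$ on the affinoid would make the estimate $|X^iP(X)^{-n}|_u = |X^iP(X)^{-1}|_u\cdot|P(X)^{-1}|_u^{\,n-1}$ unambiguous everywhere.
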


\begin{proof}
Let us first show that $B$ is closed with respect to multiplication. Let $f=a_{0,0}+\sum_{i=0}^{d-1}\sum_{n \geqslant 1}\frac{a_{n,i}}{T^n}X^i , g =b_{0,0}+\sum_{i=0}^{d-1}\sum_{n \geqslant 1}\frac{b_{n,i}}{T^n}X^i \in B.$ For any $m$ such that  $d \leqslant m <2d,$ the coefficient corresponding to $X^m$ in the product $fg$ is of the form $\sum_{n \geqslant 2}\frac{c_{n,m}}{T^n}$ where $c_{n,m} \in A_Z$ for all $n,m.$ By using Euclidean division, since $P(X)$ is unitary, we obtain $X^m=P(X)Q(X)+R(X)$ where $Q, R \in A_Z[X],$ $\deg{R}<d$ and $\deg{Q}=m-d<d.$ Hence, $\sum_{n \geqslant 2} \frac{c_{n,m}}{T^n}X^m=\sum_{n \geqslant 2}\frac{c_{n,m}}{T^n}P(X)Q(X)+ \sum_{n \geqslant 2}\frac{c_{n,m}}{T^n}R(X)=\sum_{n \geqslant 1}\frac{c_{n,m}}{T^n}Q(X)+ \sum_{n \geqslant 2}\frac{c_{n,m}}{T^n}R(X)$ in $A_Z\{r^{-1}T, rT^{-1}\}[X]/(P(X)-T),$ which is an element of $B$ seeing as $\deg{Q}, \deg{R}<d.$ Consequently, $fg \in B,$ and $B$ is an $A_Z$-algebra.  

Let us consider the restriction morphism $\psi: A_Z=\mathcal{O}(\mathbb{P}_Z^{1, \mathrm{an}}) \rightarrow \mathcal{O}(X_{|P|\geqslant r}, Z),$ a section of which is given as follows: for any $f \in \mathcal{O}(X_{|P|\geqslant r}, Z),$ let $f_{\infty}$ denote the restriction of $f$ to the Zariski closed subset $\mathcal{Z}:=\{x \in X_{|P| \geqslant r, Z}: |T^{-1}|_x=0\}.$ Remark that in the copy of $\mathbb{A}_Z^{1, \mathrm{an}}$ in $\mathbb{P}_Z^{1, \mathrm{an}}$ with coordinate $T^{-1},$ $\mathcal{Z}=\{u \in \mathbb{A}_{Z}^{1, \mathrm{an}}: |T^{-1}|_u=0\}$, so $\mathcal{O}(\mathcal{Z})=A_Z.$ 

The morphism $s: \mathcal{O}(X_{|P|\geqslant r}, Z) \rightarrow A_Z, f \mapsto f_{\infty},$ is a section of ~$\psi.$ Let $\mathcal{O}(X_{|P|\geqslant r,Z})_{\infty}$ denote the kernel of $s.$ Then $\mathcal{O}(X_{|P|\geqslant r,Z})=A_Z \oplus \mathcal{O}(X_{|P|\geqslant r}, Z)_{\infty}.$

Let us consider the following commutative diagram that is obtained from the short exact sequence (\ref{sequence}) above.
\begin{center}
\begin{tikzcd}
 & \mathcal{O}(X_{|P| \leqslant r, Z}) \oplus \mathcal{O}(X_{|P| \geqslant r, Z}) \arrow{dr}{h''} \\
\mathcal{O}(X_{|P| \leqslant r, Z}) \oplus \mathcal{O}(X_{|P| \geqslant r, Z})_{\infty} \arrow{ur}{h'} \arrow{rr}{h} && \mathcal{O}(X_{|P|=r, Z})
\end{tikzcd}
\end{center}

Let us show $h$ is bijective. Let $f \in \mathcal{O}(X_{|P|=r, Z}).$ By the surjectivity of $h''$ (from the short exact sequence (\ref{sequence})) there exist $f_1 \in \mathcal{O}(X_{|P| \leqslant r, Z})$ and $f_2 \in \mathcal{O}(X_{|P| \geqslant r, Z})$ such that $f_1+f_2=f.$ Let $f_2' \in A_Z$ and $f_2'' \in \mathcal{O}(X_{|P| \geqslant r, Z})_{\infty}$ be such that $f_2=f_2'+f_2''$ (as we saw above, such $f_2', f_2''$ are unique). Set $f_1':=f_1+f_2'$ and remark that $f_1' \in \mathcal{O}(X_{|P| \leqslant r, Z}).$ By the commutativity of the diagram, $h(f_1', f_2'')=f,$ \textit{i.e.} $h$ is surjective. Let us also show it is injective. Suppose $h(a,b)=0$ for some $a \in \mathcal{O}(X_{|P| \leqslant r, Z})$ and $b \in \mathcal{O}(X_{|P| \geqslant r, Z})_{\infty} \subseteq \mathcal{O}(X_{|P| \geqslant r, Z}).$ Then $a+b=h''(a,b)=0,$ and the exact sequence (\ref{sequence}) implies that ${a=-b \in A_Z}.$ Since $b \in A_Z$ and $b \in \mathcal{O}(X_{|P| \geqslant r, Z})_{\infty},$ we obtain that $b=0$ and $a=0,$ \textit{i.e.} $h$ is injective.  

By Lemma \ref{114}, the map $s': \mathcal{O}(X_{|P| = r, Z}) \rightarrow \mathcal{O}(X_{|P| \leqslant r, Z}),$ which to an element $f_0:= \sum_{i=0}^{d-1} \sum_{n \in \mathbb{Z}} d_{n, i} T^n X^i$ associates the element $f_{\leqslant} :=\sum_{i=0}^{d-1}\sum_{n \leqslant -1} d_{n,i}T^n X^i$, is a section of the isomorphism $\mathcal{O}(X_{|P| \leqslant r, Z}) \oplus \mathcal{O}(X_{|P| \geqslant r, Z})_{\infty}   
\rightarrow \mathcal{O}(X_{|P| = r, Z}).$ Consequently, $\mathcal{O}(X_{|P| \geqslant r, Z})_{\infty}=\left\lbrace f \in \mathcal{O}(X_{|P| =r, Z}): f=\sum_{i=0}^{d-1}\sum_{n \leqslant -1} a_{n,i}T^nX^i \right\rbrace.$

Finally, since $\mathcal{O}(X_{|P| \geqslant r, Z})=A_Z \oplus \mathcal{O}(X_{|P| \geqslant r, Z})_{\infty},$ we get: $\mathcal{O}(X_{|P| \geqslant r, Z})=B.$
\end{proof}

\begin{rem} \label{117}
Let $I$ be the ideal of $A_{Z}\{rT^{-1}\}$ generated by $T^{-1}.$ Denote by $I[X]^{d-1}$ the polynomials in $X$ with coefficients in $I$ and degree at most $d-1.$ Then the $k$-affinoid algebra $B$ can be written as $(A_Z\oplus I[X]^{d-1})/(P(X)T^{-1}-1),$ where multiplication is done using Euclidean division, just like in $B.$ 
\end{rem}

\begin{nota} \label{startstar}
The morphism $A_Z\{rT^{-1}\} \rightarrow B,$ $T^{-1} \mapsto \frac{1}{T}$ is finite (it is the one induced by $A_Z[T] \rightarrow A_Z[T], T \mapsto P(T)$), and $1,X, \dots, X^{d-1}$ is a set of generators of $B$ as an $A_Z$-module. Let $|\cdot|_{|P|\geqslant r, Z}$ be the norm on $B$ induced by the norm $|\cdot|_{|T|\geqslant r, Z}$ on $A_Z\{rT^{-1}\}.$ By~\mbox{\cite[Proposition 2.1.12]{Ber90}}, $B$ is complete with respect to this norm. As before, by Proposition ~\ref{115}, we can take $|\cdot|_{\geqslant}:=|\cdot|_{|P| \geqslant r, Z}.$ Explicitely, for any ${f:=a_{0,0} + \sum_{i=0}^{d-1}\sum_{n \geqslant 1}\frac{a_{n,i}}{T^n}X^i=\sum_{i=0}^{d-1}\alpha_i X^i \in B},$ where $\alpha_i \in A_Z\{r T^{-1}\},$
 $$|f|_{|P|\geqslant r, Z}=\max_{i} |\alpha_i|_{|T| \geqslant r, Z}.$$
\end{nota}
\begin{lm} \label{118}
\begin{sloppypar} The restriction maps  from $A_Z\{r^{-1}T\}[X]/(P(X)-T)$ and $B$ to $A_Z\{r^{-1}T, rT^{-1}\}[X]/(P(X)-T)$ are isometries with respect to the corresponding norms $|\cdot|_{|P| \leqslant r, Z},$ $|\cdot|_{|P| \geqslant r, Z}$ and $|\cdot|_{|P| = r, Z}.$
\end{sloppypar}
\end{lm}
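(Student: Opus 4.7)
The plan is to reduce everything to the fact that the restriction maps between the three ``base'' algebras $A_Z\{r^{-1}T\}$, $A_Z\{rT^{-1}\}$ and $A_Z\{r^{-1}T,rT^{-1}\}$ are already known to be isometric by Lemma~\ref{110}, and then to transport this through the free-module decomposition determined by the unitary polynomial $P(X)-T$.

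First I would exploit that $P(X)$ is unitary of degree $d$, so that Euclidean division gives a canonical $A$-module decomposition $A[X]/(P(X)-T)=\bigoplus_{i=0}^{d-1} A \cdot X^i$ for any of the three choices $A\in\{A_Z\{r^{-1}T\},\,A_Z\{rT^{-1}\},\,A_Z\{r^{-1}T,rT^{-1}\}\}$. By Notation~\ref{star} (for the $\leqslant$ and $=$ cases) and Notation~\ref{startstar} together with Proposition~\ref{116} (for the $\geqslant$ case, where one rewrites $f=a_{0,0}+\sum_{i=0}^{d-1}\sum_{n\geqslant1}a_{n,i}T^{-n}X^{i}$ as $\sum_{i=0}^{d-1}\alpha_{i}X^{i}$ with $\alpha_0=a_{0,0}+\sum_{n\geqslant 1}a_{n,0}T^{-n}$ and $\alpha_i=\sum_{n\geqslant 1}a_{n,i}T^{-n}$ for $i\geqslant 1$, all in $A_Z\{rT^{-1}\}$), each of the three norms is explicitly given by the maximum of the norms of the coefficients $\alpha_i$ in the respective base algebra.

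Next I would check that the restriction maps respect these decompositions. Lemma~\ref{114} (and the commutative diagram just after it) shows that the restriction $\mathcal{O}(X_{|P|\leqslant r,Z})\hookrightarrow\mathcal{O}(X_{|P|=r,Z})$ is induced by the base restriction $j_T\colon A_Z\{r^{-1}T\}\hookrightarrow A_Z\{r^{-1}T,rT^{-1}\}$ and sends the variable $X$ to $X$. By uniqueness of the degree-$<d$ representative, an element $f=\sum_{i=0}^{d-1}\alpha_i X^i$ with $\alpha_i\in A_Z\{r^{-1}T\}$ maps to $\sum_{i=0}^{d-1}j_T(\alpha_i)X^i$, and this latter expression is already in the canonical form over $A_Z\{r^{-1}T,rT^{-1}\}$. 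The identical argument applies to $B\hookrightarrow\mathcal{O}(X_{|P|=r,Z})$: this map is induced by $A_Z\{rT^{-1}\}\hookrightarrow A_Z\{r^{-1}T,rT^{-1}\}$, sends $X$ to $X$, and thus transports the representation $\sum_{i=0}^{d-1}\alpha_i X^i$ coefficient-by-coefficient.

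To finish, apply Lemma~\ref{110}, which says $|\alpha_i|_{|T|\leqslant r,Z}=|\alpha_i|_{|T|=r,Z}$ and $|\alpha_i|_{|T|\geqslant r,Z}=|\alpha_i|_{|T|=r,Z}$ for every coefficient. Taking the maximum over $i=0,\dots,d-1$ yields $|f|_{|P|\leqslant r,Z}=|j_P(f)|_{|P|=r,Z}$ and $|f|_{|P|\geqslant r,Z}=|f|_{|P|=r,Z}$, which is exactly the isometry claim. The only non-routine point is verifying that the two canonical decompositions genuinely match under restriction, but this is forced by the uniqueness provided by $P(X)$ being unitary, so no serious obstacle arises.
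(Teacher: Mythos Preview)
Your proposal is correct and follows essentially the same approach as the paper: write $f$ in its unique degree-$<d$ representation $\sum_{i=0}^{d-1}\alpha_i X^i$, observe that the norms are by definition $\max_i|\alpha_i|$ in the respective base algebras, and then invoke Lemma~\ref{110} coefficient-by-coefficient. The paper's proof is a two-line version of this, taking for granted (where you cite Lemma~\ref{114} and Proposition~\ref{116}) that restriction preserves the canonical form; your added justification of this point is sound but not strictly necessary.
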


\begin{proof}
Let us first suppose that $P(T)=T$. Let $f = \sum_{n \geqslant 0} a_n T^n \in A_Z\{r^{-1}T\}.$ Then $|f|_{|T|=r, Z}=\max_{n}|a_n|_{\rho_Z} r^n=|f|_{|T| \leqslant r, Z},$ where $|\cdot|_{\rho_Z}$ is the spectral norm on $A_Z$. The same is true for $A\{rT^{-1}\},$ so the statement is satisfied in this special case. 

For the general case, let $f=\sum_{i=0}^{d-1}\sum_{n \geqslant 0} a_{n,i} T^n X^i \in A_Z\{r^{-1}T\}[X]/(P(X)-T).$ Then $|f|_{|P|= r,Z}=\max_{i} |\sum_{n\geqslant 0} a_{n,i}T^n|_{|T| = r, Z}=\max_{i} |\sum_{n \geqslant 0} a_{n,i}T^n|_{|T| \leqslant r, Z}=|f|_{|P| \leqslant r, Z}.$ The statement for $B$ is proven in the same way.
\end{proof}

\begin{rem} \label{edhenje}
The short exact sequence (\ref{sequence}) above (page 23) gives rise to an admissible surjection $$A_{Z}\{r^{-1}T\}[X]/(P(X)-T) \oplus B \twoheadrightarrow A_{Z}\{r^{-1}T, rT^{-1}\}[X]/(P(X)-T).$$ Admissibility follows from Banach's Open Mapping Theorem if $k$ is non-trivially valued (for a proof see \cite{bou}), and by a change of basis followed by the Open Mapping Theorem if it is (see \cite[Chapter 2, Proposition ~2.1.2(ii)]{Ber90}).
\end{rem}

\begin{lm} \label{119} \begin{sloppypar}
For any $c \in A_{Z}\{r^{-1}T, rT^{-1}\}[X]/(P(X)-T),$ there exist $a \in A_{Z}\{r^{-1}T\}[X]/(P(X)-T)$ and $b \in B$ such that $a+b=c$ and $$\max(|a|_{|P| \leqslant r, Z}, |b|_{|P| \geqslant r, Z})=|c|_{|P|=r, Z}.$$
\end{sloppypar}
\end{lm}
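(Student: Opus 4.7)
The strategy is to reduce to the scalar case treated in Lemma \ref{111} by working coordinate-wise in the $X$-basis. Given $c\in A_Z\{r^{-1}T,rT^{-1}\}[X]/(P(X)-T)$, I would first expand it uniquely as $c=\sum_{i=0}^{d-1}\alpha_i X^i$ with $\alpha_i=\sum_{n\in\mathbb{Z}}a_{n,i}T^n\in A_Z\{r^{-1}T,rT^{-1}\}$; by Notation \ref{star}, this yields $|c|_{|P|=r,Z}=\max_i |\alpha_i|_{|T|=r,Z}$.

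Next, I would apply Lemma \ref{111} to each $\alpha_i$ separately: setting
\[
a_i:=\sum_{n\geq 0}a_{n,i}T^n\in A_Z\{r^{-1}T\},\qquad b_i:=\sum_{n<0}a_{n,i}T^n\in A_Z\{rT^{-1}\},
\]
we have $\alpha_i=a_i+b_i$ and $|\alpha_i|_{|T|=r,Z}=\max\bigl(|a_i|_{|T|\leqslant r,Z},|b_i|_{|T|\geqslant r,Z}\bigr)$. Now define $a:=\sum_{i=0}^{d-1}a_i X^i$ and $b:=\sum_{i=0}^{d-1}b_i X^i$. Then $a\in A_Z\{r^{-1}T\}[X]/(P(X)-T)$ by construction, and $a+b=c$.

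The one thing that genuinely requires checking is that $b\in B$. But each $b_i$ involves only strictly negative powers of $T$, so
\[
b=\sum_{i=0}^{d-1}\sum_{n\geq 1}\frac{a_{-n,i}}{T^n}X^i,
\]
which lies in $B$ by the explicit description of Proposition \ref{116} (taking the $A_Z$-constant term equal to $0$).

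Finally, combining Notation \ref{star} for $|\cdot|_{|P|\leqslant r,Z}$ and Notation \ref{startstar} for $|\cdot|_{|P|\geqslant r,Z}$ (both norms are computed as the maximum over $i$ of the norms of the scalar coefficients) with the equality from Lemma \ref{111}, I obtain
\[
\max\bigl(|a|_{|P|\leqslant r,Z},|b|_{|P|\geqslant r,Z}\bigr)=\max_i\max\bigl(|a_i|_{|T|\leqslant r,Z},|b_i|_{|T|\geqslant r,Z}\bigr)=\max_i|\alpha_i|_{|T|=r,Z}=|c|_{|P|=r,Z},
\]
which is the desired equality. There is no real obstacle here; the entire argument is just bookkeeping once the decomposition of Lemma \ref{111} is applied coordinate-wise and the explicit description of $B$ in Proposition \ref{116} is invoked to place $b$ in the correct algebra.
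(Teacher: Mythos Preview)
Your proof is correct and takes essentially the same approach as the paper: both use the identical decomposition of each coefficient $\alpha_i$ into its non-negative and negative $T$-parts, and both rely on the coordinate-wise description of the norms from Notations \ref{star} and \ref{startstar}. The only cosmetic difference is that you invoke Lemma \ref{111} explicitly for each $\alpha_i$, whereas the paper redoes that computation inline and appeals to Lemma \ref{118} for one of the two inequalities.
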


\begin{proof}
\begin{sloppypar}
There exists a unique degree $<d$ polynomial $c_0(X)$ over $A_Z\{r^{-1}{T}, rT^{-1}\}$ such that $c=c_0$ in $A_{Z}\{r^{-1}T, rT^{-1}\}[X]/(P(X)-T).$ Let $c_0=\sum_{i=0}^{d-1} \sum_{n \in \mathbb{Z}}a_{n, i} T^n X^i.$ Let $a$ and $b$ be given as follows:
$$c_0=\underbrace{ \left(\sum_{i=0}^{d-1}\sum_{n \geqslant 0} a_{n,i}T^n X^i\right)}_{a} + \underbrace{\left(\sum_{i=0}^{d-1}\sum_{n \leqslant -1} a_{n,i}T^n X^i\right)}_{b}.$$
Clearly, $a \in A_{Z}\{r^{-1}T\}[X]/(P(X)-T)$ and $b \in B.$

Then $$|a|_{|P| \leqslant r, Z}=\max_{i} |\sum_{n \geqslant 0} a_{n,i} T^n|_{|T| \leqslant r,Z}=\max_{i} \max_{n \in \mathbb{N}} |a_{n,i}|_{\rho_Z} r^n$$$$ \leqslant \max_{i} \max_{n \in \mathbb{Z}} |a_{n,i}|_{\rho_Z} r^n=|c|_{|P|=r, Z},$$ 
and the same is true for $|b|_{|P|\geqslant r, Z}.$ Consequently, $\max(|a|_{|P| \leqslant r, Z}, |b|_{|P| \geqslant r, Z}) \leqslant |c|_{|P|=r, Z}.$

On the other hand, $c=a+b,$ so $|c|_{|P|=r, Z} \leqslant \max(|a|_{|P| = r, Z}, |b|_{|P| = r, Z}),$ which, by Lemma ~\ref{118}, is the same as $\max(|a|_{|P| \leqslant r, Z}, |b|_{|P| \geqslant r, Z}).$
\end{sloppypar}
\end{proof}

\begin{lm} \label{120}
Let $Z_1 \subseteq Z$ be a connected affinoid neighborhood of $x.$ 
 The restriction morphism $\mathcal{O}(X_{|P|=r, Z}) \hookrightarrow \mathcal{O}(X_{|P|=r, Z_1})$ is a contraction with respect to the corresponding norms $|\cdot|_{|P|=r, Z}$ and $|\cdot|_{|P|=r, Z_1}.$
\end{lm}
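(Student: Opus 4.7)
The plan is to reduce the statement to the already-established contraction property for the $|\cdot|_{|T|=r,\bullet}$ norms on the ``degree one'' case (Corollary \ref{jjjjjjjjj}), by exploiting the explicit presentation of the affinoid algebras involved.

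First, I would unwind the definitions. By Lemma \ref{113}, one has
\[
\mathcal{O}(X_{|P|=r,Z})=A_Z\{r^{-1}T,rT^{-1}\}[X]/(P(X)-T),
\]
and similarly with $Z_1$ in place of $Z$. Since $P$ is unitary of degree $d$, Euclidean division in $X$ (which only uses that $P$ is monic in $X$) gives every class a unique representative of the form $\sum_{i=0}^{d-1}\alpha_i X^i$ with $\alpha_i\in A_Z\{r^{-1}T,rT^{-1}\}$, and by Notation \ref{star} the norm is $|f|_{|P|=r,Z}=\max_i|\alpha_i|_{|T|=r,Z}$. The same holds over $Z_1$.

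Next I would identify the restriction morphism concretely. The restriction $\mathcal{O}(X_{|P|=r,Z})\to\mathcal{O}(X_{|P|=r,Z_1})$ is induced by the coefficient restriction $r_{Z,Z_1}\colon A_Z\{r^{-1}T,rT^{-1}\}\to A_{Z_1}\{r^{-1}T,rT^{-1}\}$. Since Euclidean division in $X$ by the monic polynomial $P(X)-T$ is compatible with applying $r_{Z,Z_1}$ to the coefficients (it is a polynomial identity in $X$ with coefficients that are polynomials in the $\alpha_i$), the restriction of the degree $<d$ representative $\sum_{i=0}^{d-1}\alpha_iX^i$ is the degree $<d$ representative $\sum_{i=0}^{d-1}r_{Z,Z_1}(\alpha_i)X^i$ of the image. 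Hence uniqueness of representatives is preserved under restriction.

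Now by Corollary \ref{jjjjjjjjj}, $r_{Z,Z_1}$ is a contraction from $(A_Z\{r^{-1}T,rT^{-1}\},|\cdot|_{|T|=r,Z})$ to $(A_{Z_1}\{r^{-1}T,rT^{-1}\},|\cdot|_{|T|=r,Z_1})$, so $|r_{Z,Z_1}(\alpha_i)|_{|T|=r,Z_1}\leqslant|\alpha_i|_{|T|=r,Z}$ for each $i$. Taking the maximum over $i\in\{0,\dots,d-1\}$ on both sides yields
\[
|f|_{|P|=r,Z_1}=\max_i|r_{Z,Z_1}(\alpha_i)|_{|T|=r,Z_1}\leqslant\max_i|\alpha_i|_{|T|=r,Z}=|f|_{|P|=r,Z},
\]
which is exactly the contraction property claimed. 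There is no real obstacle here; the only point requiring a moment of care is the compatibility of the degree $<d$ normal form with the restriction, which is automatic because $P$ is monic in $X$.
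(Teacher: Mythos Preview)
Your proof is correct and takes essentially the same approach as the paper: both identify the restriction map via the degree $<d$ representative in $A_\bullet\{r^{-1}T,rT^{-1}\}[X]/(P(X)-T)$ and reduce to the coefficient-wise contraction on $A_Z\{r^{-1}T,rT^{-1}\}\to A_{Z_1}\{r^{-1}T,rT^{-1}\}$. The only cosmetic difference is that the paper writes out the double-indexed coefficients $a_{n,i}$ explicitly and appeals directly to the spectral-norm inequality $|a_{n,i}|_{\rho_{Z_1}}\leqslant|a_{n,i}|_{\rho_Z}$, whereas you package this step as an invocation of Corollary~\ref{jjjjjjjjj}.
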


\begin{proof}
We remark that the statement is immediate from Lemma \ref{109} if $P(T)=T.$

Let the restriction morphism $\mathcal{O}(X_{|P|=r, Z}) \hookrightarrow \mathcal{O}(X_{|P|=r, Z_1})$ be denoted by $j_{P,1}.$ Similarly to Lemma \ref{114}, the following diagram is commutative and $j_{P,1}(X)=Y$ (remark that $j_{T,1}(T)=T, j_{T,1}(T^{-1})=T^{-1}$, and the  restriction of $j_{T,1}$ to $A_Z$ is the restriction morphism $A_Z \rightarrow A_{Z_1}$). 

\[
\begin{tikzcd}
A_{Z}\{r^{-1}T, rT^{-1}\}[X]/(P(X)-T) \arrow{r}{j_{P,1}}  & A_{Z_1}\{r^{-1}T, rT^{-1}\}[Y]/(P(Y)-T)  \\
A_Z\{r^{-1}T, rT^{-1}\} \arrow{u} \arrow{r}{j_{T,1}} & A_{Z_1}\{r^{-1}T, rT^{-1}\} \arrow{u}
\end{tikzcd}
\]

Let $f=\sum_{i=0}^{d-1} \sum_{n \in \mathbb{Z}} a_{n,i} T^n X^i \in \mathcal{O}(X_{|P|=r, Z})=A_Z\{r^{-1}T, rT^{-1}\}[X]/(P(X)-T).$ Then $|f|_{|P|=r, Z_1}=\max_{i} \max_{n} |a_{n, i}|_{\rho_{{Z_1}}} r^n.$ Since $A_{Z}$ and $A_{Z_1}$ are equipped with their respective spectral norms, $|a_{n,i}|_{\rho_{Z_1}} \leqslant |a_{n,i}|_{\rho_Z},$ implying $|f|_{|P|=r, Z_1} \leqslant \max_{i} \max_{n} |a_{n, i}|_{\rho_{Z}} r^n=|f|_{|P|=r, Z}.$
\end{proof}

\begin{rem} \label{122}
By applying the above to the case when $S$ is a point (\textit{i.e.} if everything is defined over a complete ultrametric field), it makes sense to speak of the affinoid domains $X_{|P| \bowtie r, x}$ of $\mathbb{P}_{\mathcal{H}(x)}^{1, \mathrm{an}}$  and their norms $|\cdot|_{|P| \bowtie r, x}$ for $\bowtie \in \{ \leqslant, =, \geqslant\},$ which satisfy all of the properties we have proven so far in this section.

\begin{sloppypar}
Furthermore, if $P$ is a unitary polynomial of degree $d\geqslant 1$ over $A_Z$ that is irreducible over $\mathcal{H}(x),$ then there exists a ``restriction morphism" ${(\mathcal{O}(X_{|P| \bowtie r, Z}), |\cdot|_{|P| \bowtie r, Z}) \rightarrow (\mathcal{O}(X_{|P| \bowtie r, x}), |\cdot|_{|P| \bowtie r, x})}$ on the fiber (corresponding to base change), which is a contraction. To see this, let $f = \sum_{i=0}^{d-1}\sum_{n \in \mathbb{Z}} a_{n,i} T^n X^i \in \mathcal{O}(X_{|P| \bowtie r, Z})$ (with certain $a_{n,i}$ possibly $0$ depending on what $\bowtie$ is). Then $|f|_{|P| \bowtie r, x}=\max_{i} \max_{n}|a_{n,i}|_{x}r^n \leqslant \max_{i} \max_{n}|a_{n,i}|_{\rho_Z}r^n = |f|_{|P| \bowtie r, Z}.$
\end{sloppypar}
\end{rem}

\subsection{The explicit norm comparison}
The following is mainly a special case of \cite[5.2]{poilibri} (or a rather direct consequence thereof), which we summarize here with an emphasis on the results that interest us.
\begin{sloppypar}
Let $P$ be a unitary polynomial of degree $d>0$ over $\mathcal{O}_x$ that is irreducible over $\mathcal{H}(x).$ Also, let $r \in \mathbb{R}_{>0} \backslash \sqrt{|\mathcal{H}(x)^{\times}|}.$
As in Notation \ref{hajdede}, let $Z$ be any connected affinoid neighborhood of ~$x$ contained in $Z' \cap Z_T \cap Z_P.$ 

For $t \in \{  x, Z\}$ (we understand here that $t$ can be $x$ or any connected affinoid neighborhood of $x$ contained in $Z' \cap Z_T \cap Z_P$), let $(R_t, |\cdot|_{r,t})$ be $(A_Z\{r^{-1}T, rT^{-1}\}, |\cdot|_{|T|=r, Z})$ if $t=Z$ and $(\mathcal{H}(x)\{r^{-1}T, rT^{-1}\},|\cdot|_{|T|=r, x})$ otherwise. Remark that $(R_t, |\cdot|_{r,t})$ is an affinoid algebra over $A_Z$ if $t=Z$ and over $\mathcal{H}(x)$ if $t=x.$ As mentioned in Remark ~\ref{122}, there is a contraction $R_Z \hookrightarrow R_x$ induced from the restriction $A_Z \hookrightarrow \mathcal{O}_x \hookrightarrow \mathcal{H}(x).$

For any $s \in \mathbb{R}_{>0},$ let $|\cdot|_{t,s}$ denote the norm on $R_t[X]$ induced from the $R_t$-affinoid algebra $R_t\{s^{-1}X\}$. Let $|\cdot|_{t, s, \mathrm{res}}$ denote the residue norm on $R_t[X]/(P(X)-T)$ induced by $|\cdot|_{t,s}.$
\end{sloppypar}
\begin{lm} \label{123}
For any $t \in \{x, Z\},$ there exists $v_t' >0$, such that for any $s \geqslant v_t',$ the norm $|\cdot|_{t, s, res}$ is equivalent to $|\cdot|_{|P|=r,t}$. Explicitly, for any $f \in R_t[X]/(P(X)-T),$
$$|f|_{t,s,res} \leqslant |f|_{|P|=r, t} \leqslant C_t \max_{1 \leqslant i \leqslant d-1} (s^{-i}) |f|_{t,s,res},$$
where $C_t=\max(2, 2v_t'^{-d}).$

Fix a connected affinoid neighborhood $Z_0 \subseteq Z'\cap Z_T \cap Z_P$ of $x$. There exist $v', C'>0$ such that the statement is true for any $s \geqslant v'$ and any $t \in \{x, Z: Z \subseteq Z_0\}.$
\end{lm}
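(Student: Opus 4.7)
Plan: The structural input is that $P(X)$ is monic of degree $d$, so $R_t[X]/(P(X)-T)$ is a free $R_t$-module of rank $d$ with basis $\{1, X, \ldots, X^{d-1}\}$, and $|\cdot|_{|P|=r,t}$ is precisely the maximum norm in this basis (Notation \ref{star}). The residue norm $|\cdot|_{t,s,\mathrm{res}}$ is the quotient norm on the same module, obtained from polynomial lifts in $R_t[X]$ equipped with the norm $|\cdot|_{t,s}$ inherited from $R_t\{s^{-1}X\}$. Once $s$ is large enough that the disc $\{|X| \leq s\}$ over $R_t$ contains $X_{|P|=r,t}$, the two norms are both Banach norms on the same finite $R_t$-module, so equivalence follows abstractly from the open mapping theorem; the content of the lemma is to make the equivalence constants explicit.

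For the upper bound $|f|_{t,s,\mathrm{res}} \leq |f|_{|P|=r,t}$, I would exhibit an explicit polynomial lift: the canonical reduced representative $\sum_{i=0}^{d-1} \alpha_i X^i$ gives $|f|_{t,s,\mathrm{res}} \leq \max_i |\alpha_i|_{r,t}\, s^i$. With the threshold $v_t'$ chosen suitably, and, if necessary, after modifying the lift by appropriate $R_t[X]$-multiples of $P(X)-T$ (which allow trading the monomial $X^d$ for the linear combination $T - \sum_i p_i X^i$, thereby redistributing weight among the coefficients), one arranges this to be bounded by $|f|_{|P|=r,t}$ in the stated regime $s \geq v_t'$.

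For the reverse inequality $|f|_{|P|=r,t} \leq C_t \max_{1 \leq i \leq d-1}(s^{-i})\, |f|_{t,s,\mathrm{res}}$, the key tool is Euclidean division. Given any polynomial lift $g = \sum_n g_n X^n \in R_t[X]$ of $f$, iterated substitution by the relation $X^d \equiv T - \sum_{i=0}^{d-1} p_i X^i$ reduces $g$ to its unique reduced form $\sum_j \alpha_j X^j$. Letting $b_n \in R_t^d$ denote the coordinates of the image of $X^n$ in the basis $\{1, X, \ldots, X^{d-1}\}$, the linear recursion $b_{n+1} = X \cdot b_n$ produces explicit entries $c_{n,j} \in R_t$ whose norms grow at most geometrically in $M := \max(|T|_{r,t}, |p_0|_{r,t}, \ldots, |p_{d-1}|_{r,t})$. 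Writing $\alpha_j = \sum_n g_n c_{n,j}$ and factoring $|g_n|_{r,t}\, M^n = (|g_n|_{r,t} s^n)\,(M/s)^n \leq |g|_{t,s}\,(M/s)^n$, one reduces the bound to controlling a geometric-type sum in $M/s$; this is exactly what dictates the choice of $v_t'$ (roughly $v_t' \asymp M$) and of $C_t = \max(2, 2 v_t'^{-d})$.

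The main technical obstacle is the careful bookkeeping in this Euclidean reduction needed to obtain exactly the claimed dependence on $s^{-i}$ (the range $1 \leq i \leq d-1$ reflecting that the reduced polynomial has degree at most $d-1$) and the exponent $d$ in $v_t'^{-d}$. For the final uniform statement over $t \in \{x, Z \colon Z \subseteq Z_0\}$, I would invoke Corollary \ref{jjjjjjjjj} and Remark \ref{122} to observe that the restriction morphisms $R_{Z_0} \to R_Z$ and $R_{Z_0} \to R_x$ are contractions; consequently $|T|_{r,t}$ and $|p_i|_{r,t}$ are uniformly bounded by their values over $Z_0$, so $M$ and hence $v'$ and $C'$ can be chosen independently of $t$.
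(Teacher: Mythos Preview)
Your strategy for the uniform statement (the second paragraph of the lemma) matches the paper's proof exactly: both observe that the threshold $v_t'$ depends only on the norms $|\beta_i|_{r,t}$ of the coefficients of $P(X)-T$, and that these norms can only decrease under restriction to smaller $Z$ (the paper invokes Lemma~\ref{120} and Remark~\ref{122}; your Corollary~\ref{jjjjjjjjj} and Remark~\ref{122} encode the same contraction property). Taking $v' := v'_{Z_0}$ and $C' := \max(2, 2v'^{-d})$ then works for every $t \in \{x, Z : Z \subseteq Z_0\}$.

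For the first paragraph (the explicit pair of inequalities with the specific constants), the paper does not argue at all: it simply cites \cite[Lemme~5.2.3]{poilibri}, and for the uniformity step only extracts from Poineau's proof the precise defining condition on $v_t'$, namely $\sum_{i=0}^{d-1} |\beta_i|_{r,t}\, v_t' \leq \tfrac{1}{2}$. Your proposal amounts to reproving Poineau's lemma from scratch via Euclidean division and geometric estimates, which is a legitimate route and presumably close to what the cited reference does. Two cautions, however. First, your heuristic $v_t' \asymp M$ is inverted relative to Poineau's condition, which forces $v_t'$ to be \emph{small} (of order $1/\sum_i |\beta_i|$), not large; this matters because it is exactly this explicit inequality that the paper feeds into the contraction argument. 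Second, and relatedly, the inequality $|f|_{t,s,\mathrm{res}} \leq |f|_{|P|=r,t}$ is not delivered by the canonical reduced lift once $s>1$: that lift has $|\cdot|_{t,s}$-norm $\max_i |\alpha_i|_{r,t}\, s^i$, which exceeds $|f|_{|P|=r,t}$, and adding polynomial multiples of $P(X)-T$ only introduces higher powers of $X$. So the mechanism you sketch for this direction needs genuine work (or a careful reading of \cite{poilibri} to see how the condition on $v_t'$ actually enters), rather than just ``modifying the lift.''
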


\begin{proof}
For the first part of the statement, see \cite[Lemme 5.2.3]{poilibri}. The norm $|\cdot|_{t,s,res}$ is the analogue of what in \textit{loc.cit.} is denoted by $|\cdot|_{U,w,res}$ (here $U$ is $X_{|T|=r, t}$ and $s=w$).

To see the last part of the statement, let us describe $v_t'$ explicitely. Let ${\alpha_0, \dots, \alpha_{d-1} \in A_Z}$ be the coefficients of $P,$ and $\beta_0, \dots, \beta_{d-1} \in A_Z[T] \subseteq R_Z$ the coefficients of $P(X)-T$ (\textit{i.e.} $\beta_0=\alpha_0-T, \beta_i=\alpha_i$ for $1 \leqslant i \leqslant d-1$). By the proof of Th\'eor\`eme 5.2.1 of \cite{poilibri}, we only require that $v_t'>0$ satisfy  $\sum_{i=0}^{d-1}|\beta_i|_{r,t} v_t' \leqslant \frac{1}{2}$. Set $v':=v_{Z_0}'.$ Then $\sum_{i=0}^{d-1}|\beta_i|_{r,Z_0} v' \leqslant \frac{1}{2}$. By Lemma ~\ref{120} and Remark \ref{122}, $|\beta_i|_{r, t} \leqslant |\beta_i|_{r,Z_0}$ for any $t \in \{x, Z: Z \subseteq Z_0\}$, so $$\sum_{i=0}^{d-1}|\beta_i|_{r,t} v'  \leqslant \sum_{i=0}^{d-1}|\beta_i|_{r,Z_0} v' \leqslant \frac{1}{2}.$$
Set $C'=\max(2, 2v'^{-d}).$ The statement is true with this choice of $v'$ and $C'.$
\end{proof}

\begin{thm} \label{124} Let $Z_0$ be as in Lemma \ref{123}. There exist $m,s,C'>0$ such that for any $ t \in \{x, Z: Z \subseteq Z_0\}$ and any $f \in R_t[X]/(P(X)-T)$:
$$|f|_{\rho_{|P|=r,t}} \leqslant |f|_{|P|=r,t} \leqslant C' \max_{1\leqslant i \leqslant d-1} (s^{-i}) \frac{d^2 (2s)^{d^2-d}}{m}|f|_{\rho_{|P|=r, t}},$$
where $\rho_{|P|=r,t}$ is the spectral norm on $R_t[X]/(P(X)-T)=\mathcal{O}(X_{|P|=r, t}).$  
\end{thm}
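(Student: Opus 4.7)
The plan is to combine Lemma~\ref{123} with a Vandermonde-type interpolation estimate on the fibers of the finite morphism $\varphi_t\colon X_{|P|=r,t}\to X_{|T|=r,t}$ induced by $T\mapsto P(T)$. The left inequality $|f|_{\rho_{|P|=r,t}}\le |f|_{|P|=r,t}$ is automatic, since the spectral norm of an affinoid algebra is dominated by any Banach algebra norm on it. Taking $s\ge v'$ with $v', C'$ the uniform constants from Lemma~\ref{123}, that lemma already gives $|f|_{|P|=r,t}\le C'\max_i(s^{-i})\,|f|_{t,s,\mathrm{res}}$ for every $t\in\{x,Z:Z\subseteq Z_0\}$. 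Hence it will suffice to produce a uniform bound of the form
$$|f|_{t,s,\mathrm{res}}\le \frac{d^{2}(2s)^{d^{2}-d}}{m}\,|f|_{\rho_{|P|=r,t}}.$$

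To produce this bound, I write $f=\sum_{i=0}^{d-1}\alpha_i X^{i}$ in its unique degree-$<d$ form, so that for $s$ large enough $|f|_{t,s,\mathrm{res}}=\max_i|\alpha_i|_{r,t}\,s^{i}$ (the monic polynomial $P(X)-T$ dominates every other lift). For each $y\in X_{|T|=r,t}$, the roots $X_1(y),\dots,X_d(y)\in\overline{\mathcal{H}(y)}$ of $P(X)-T(y)$ realise, via their Galois orbits, the Berkovich points above $y$ in $X_{|P|=r,t}$, and each of them satisfies $|X_j(y)|\le s$ uniformly in $t$ and $y$ once $s$ is chosen large enough in terms of $r$ and the spectral norms on $Z_0$ of the coefficients of $P$ (these being uniformly controlled for every admissible $t$ by Lemma~\ref{120} and Remark~\ref{122}). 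Evaluating $f$ on such a fiber gives $f(y,X_j(y))=\sum_i \alpha_i(y)X_j(y)^{i}$, and by construction each $|f(y,X_j(y))|\le|f|_{\rho_{|P|=r,t}}$. Inverting this Vandermonde system by Lagrange interpolation expresses $\alpha_i(y)$ as a non-Archimedean combination of the $f(y,X_j(y))$ whose coefficients are bounded by $(2s)^{d-1}/|P'(X_j(y))|$; a routine estimate then gives exactly the stated factor $d^{2}(2s)^{d^{2}-d}/m$, with $m$ a uniform positive lower bound on the derivative quantities $|P'(X_j(y))|=\bigl|\prod_{k\ne j}(X_j(y)-X_k(y))\bigr|$.

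The main obstacle is establishing this uniform $m>0$. The quantities $\prod_j |P'(X_j(y))|$ are controlled from below by the discriminant $|\mathrm{disc}_X(P(X)-T(y))|$, which is a continuous function on the compact space $X_{|T|=r,Z_0}$. Since $P$ is irreducible over $\mathcal{H}(x)$ and separable (automatic in characteristic zero; otherwise obtained by passing to the separable part of $P$), $\mathrm{disc}_X(P-T)$ is a nonzero element of $\mathcal{O}_x[T]$. On the fiber $X_{|T|=r,x}$ its vanishing locus is empty: any zero would occur at a rigid point whose $T$-coordinate is algebraic over $\mathcal{H}(x)$ with absolute value in $\sqrt{|\mathcal{H}(x)^\times|}$, contradicting $r\notin\sqrt{|\mathcal{H}(x)^\times|}$. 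Properness of the projection $X_{|T|=r,Z_0}\to Z_0$ then propagates this non-vanishing to a compact neighborhood of the fiber; shrinking $Z_0$ if necessary (which preserves the conclusion of Lemma~\ref{123}), one obtains $\inf_{y\in X_{|T|=r,Z_0}}|\mathrm{disc}_X(P-T)(y)|>0$, and hence a uniform $m>0$ valid for every admissible $t$. Combining this with the Vandermonde estimate and Lemma~\ref{123} yields the claimed double inequality.
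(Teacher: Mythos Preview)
Your approach is correct and follows the same overall architecture as the paper: the left inequality is trivial, Lemma~\ref{123} reduces the right inequality to bounding $|f|_{t,s,\mathrm{res}}$ by the spectral norm, and this last bound comes from an interpolation estimate governed by the resultant/discriminant of $P(X)-T$. Where you sketch the Vandermonde--Lagrange interpolation by hand, the paper simply invokes \cite[Proposition~5.2.7]{poilibri}, which packages exactly that estimate with the constant $d^{2}(2s)^{d^{2}-d}/m$.

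The one substantive difference is in how the uniform $m$ is obtained. You argue topologically: the discriminant is a nonzero polynomial in $T$, hence nonvanishing at the single type~3 point constituting $X_{|T|=r,x}$, and then compactness plus properness (with a possible shrinking of $Z_0$) propagates a positive lower bound to all of $X_{|T|=r,Z_0}$. The paper instead uses a purely normed-algebraic observation: since the restriction $R_Z\to R_x$ is a contraction (Lemma~\ref{120}, Remark~\ref{122}), one has $|\mathrm{Res}(P(X)-T,P'(X))|_{r,x}\le |\mathrm{Res}(P(X)-T,P'(X))|_{r,t}$ for every admissible $t$, so taking any $m<|\mathrm{Res}|_{r,x}$ works uniformly with no shrinking needed. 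This is cleaner and avoids the compactness detour; note also that your aside about vanishing at rigid points is unnecessary, since $X_{|T|=r,x}$ is a single point and $R_x$ is a field, so nonzero already implies positive norm. Your self-contained interpolation sketch, on the other hand, has the virtue of not relying on the external reference.
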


\begin{proof}
The first inequality is immediate from the definition of the spectral norm. 

By the previous lemma, there exist $v'>0$ and $C'>0$ such that for any $s \geqslant v'$ and any $t \in \{x, Z: Z \subseteq Z_0\}$, $|\cdot|_{|P|=r,t} \leqslant C'\max_{1 \leqslant i \leqslant d-1} (s^{-i}) |\cdot|_{t,s,res}.$ Thus, it suffices to compare the norm $|\cdot|_{t,s,res}$ to the spectral one. For a fixed $t,$ this is done in \cite[Proposition 5.2.7]{poilibri} as follows.
\begin{sloppypar}
Let $\mathrm{Res}(\cdot, \cdot)$ denote the resultant of two polynomials (we assume the ambient ring is unambiguously determined). 
Let us show that $\mathrm{Res}(P(X)-T, P'(X)) \neq 0$ in $A_{Z_0}[T].$ Otherwise, the polynomials $P(X)-T$ and $P'(X)$ would have a common divisor of positive degree, \textit{i.e.} there would exist $Q,R,R_1 \in A_{Z_0}[T][X],$ with $\deg_{X}{Q}>0$ such that ${P(X)-T=Q(X,T)R(X,T)}$ and ${P'(X)=Q(T,X)R_1(T,X)}.$ The second expression implies that the degree in $T$ of $Q$ and $R_1$ is $0,$ meaning $Q, R_1 \in A_{Z_0}[X].$ Consequently, ${P(X)-T=Q(X)R(X,T)}$, which is impossible if ${\deg_X{Q}>0}.$ Finally, this means that ${\mathrm{Res}(P(X)-T, P'(X)) \neq 0}$ in $A_{Z_0}[T].$ 
As the resultant does not depend on the ring in which it is computed, ${\mathrm{Res}(P(X)-T, P'(X)) \neq 0}$ in $R_{t}$, so  ${|\mathrm{Res}(P(X)-T, P'(X))|_{r,t} \neq 0}$ for any $t.$ 
\end{sloppypar}
\begin{sloppypar}
Let ${\alpha_0, \beta_1, \dots, \beta_{d-1} \in A_{Z_0}}$ be the coefficients of $P(X)$, and $${\beta_0:=\alpha_0-T, \beta_1, \dots, \beta_{d-1} \in A_{Z_0}[T] \subseteq R_{Z_0}}$$ the coefficients of $P(X)-T$. Set $v''_t:=\max_{1 \leqslant i \leqslant d-1}(|\beta_i|_{r,t}^{\frac{1}{d-i}}).$ Set ${v_t=\max(v', v''_t)}.$ Let $m_t>0$ be such that $|\mathrm{Res}(P(X)-T, P'(X))|_{r,t} > m_t$ (such an $m_t$ exists by the paragraph above). 
\end{sloppypar}
Let $s>v_t.$ Then for any $f \in R_t[X]/(P(X)-T)$ (see \cite[Proposition 5.2.7]{poilibri}): 
$$|f|_{t,s,res}\leqslant \frac{d^2(2s)^{d^2-d}}{m_t} |f|_{\rho_{|P|=r,t}}.$$  
By Lemma \ref{120} and Remark \ref{122}, for any $t \in \{x, Z: Z \subseteq Z_0\},$ $v_t'' \leqslant v_{Z_0}''.$ Set ${v=\max(v', v_{Z_0}'')},$ so that for any $t,$ $v_t \leqslant v.$ 

Set $m=m_{x}.$ Note that for any $t$, $$0<m<|\mathrm{Res}(P(X)-T, P'(X))|_{r,x} \leqslant |\mathrm{Res}(P(X)-T, P'(X))|_{r,t}.$$ Consequently, for any $t \in \{x, Z: Z \subseteq Z_0\}$ and any $s \geqslant v,$ $$|f|_{t,s,res}\leqslant \frac{d^2(2s)^{d^2-d}}{m} |f|_{\rho_{|P|=r,t}}.$$
From Lemma \ref{123}, $|f|_{|P|=r, t} \leqslant C'\max_{1 \leqslant i \leqslant d-1} (s^{-i}) |f|_{t,s,res}$ for all $t,$ so finally 
$$|f|_{|P|=r,t} \leqslant C' \max_{1\leqslant i \leqslant d-1} (s^{-i}) \frac{d^2 (2s)^{d^2-d}}{m}|f|_{\rho_{|P|=r, t}},$$
for all $f \in R_t[X]/(P(X)-T)$ and all $t \in \{x, Z: Z\subseteq Z_0\}.$
\end{proof}

\begin{rem} \label{125}
The previous theorem gives an explicit comparison between the norms $|\cdot|_{|P|=r,t}$ and $\rho_{|P|=r,t}$ with a constant that is valid for all $t \in \{x, Z: Z \subseteq Z_0\}.$ By Lemma ~\ref{109}, in the case $P(T)=T$, the two coincide.
\end{rem}

Set $C=C' \max_{1\leqslant i \leqslant d-1} (s^{-i}) \frac{d^2 (2s)^{d^2-d}}{m}.$ We have shown the following:

\begin{cor} \label{126}
Let $P(T)$ be a unitary polynomial in $\mathcal{O}_x[T]$ irreducible over $\mathcal{H}(x)$ and $r \in \mathbb{R}_{>0} \backslash \sqrt{|\mathcal{H}(x)^{\times}|}.$ There exists a connected affinoid neighborhood $Z_0$ of $x$ in $S$ such that for any $t \in \{x, Z: Z \subseteq Z_0 \ \text{is a connected affinoid neighborhood of} \ x\},$
$$|\cdot|_{\rho_{|P|=r, t}} \leqslant |\cdot|_{|P|=r,t} \leqslant C |\cdot|_{\rho_{|P|=r, t}}.$$
\end{cor}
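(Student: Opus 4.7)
The statement is essentially a uniform packaging of what has already been established in Theorem~\ref{124} together with Lemma~\ref{109}, so my plan is to split into the two cases $\deg P = 1$ and $\deg P > 1$ and glue them via the explicit definition of the constant $C$ given just before the corollary.

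First, I would handle the case $d := \deg P > 1$. Here the statement is almost immediate from Theorem~\ref{124}: that theorem produces real numbers $m, s, C' > 0$ and a connected affinoid neighborhood $Z_0$ of $x$ (obtained from Lemma~\ref{123} applied to some fixed initial neighborhood contained in $Z' \cap Z_T \cap Z_P$) such that, uniformly for $t \in \{x, Z : Z \subseteq Z_0\}$,
\[
|\cdot|_{\rho_{|P|=r,t}} \leqslant |\cdot|_{|P|=r,t} \leqslant C' \max_{1 \leqslant i \leqslant d-1}(s^{-i}) \frac{d^2 (2s)^{d^2-d}}{m} |\cdot|_{\rho_{|P|=r,t}}.
\]
Since by construction $C$ is defined as the maximum of $1$ and $C'\max_{i}(s^{-i}) d^2 (2s)^{d^2-d}/m$, the right-hand inequality with constant $C$ follows at once, and the left-hand inequality is just the general fact that the spectral norm is bounded above by any norm defining the affinoid algebra structure.

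Next, I would dispatch the degenerate case $d = 1$. Here $P(T) = T - \alpha$ for some $\alpha \in \mathcal{O}_x$ (since $P$ is unitary of degree $1$ and irreducible), and by Remark~\ref{112} all the results of the first subsection apply with $T$ replaced by $T - \alpha$. In particular, Lemma~\ref{109} gives exactly $|\cdot|_{|P|=r,t} = \rho_{|P|=r,t}$ for every $t \in \{x, Z : Z \subseteq Z_T\}$ with $Z_T$ as fixed in that subsection. Since $C \geqslant 1$ by definition, both inequalities of the corollary are then trivially satisfied on any connected affinoid neighborhood $Z_0 \subseteq Z_T$ of~$x$.

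Finally, to produce a single $Z_0$ that works uniformly, I would take $Z_0$ to be the neighborhood supplied by Theorem~\ref{124} in the case $d > 1$, and any connected affinoid neighborhood of $x$ contained in $Z_T$ in the case $d = 1$ (for instance, obtained from Corollary~\ref{107}). No serious obstacle is expected: all the analytic work has been done in Lemmas~\ref{109}, \ref{123} and Theorem~\ref{124}, and the only real content of this corollary is the observation that by enlarging the constant to $C = \max(1, \ldots)$ one obtains a single uniform statement that is stable under passing from $t = Z$ to $t = x$ or to any smaller connected affinoid neighborhood, which is exactly the uniformity already built into Theorem~\ref{124}.
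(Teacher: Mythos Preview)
Your proposal is correct and follows essentially the same route as the paper: the corollary is stated immediately after the definition of $C$ with the words ``We have shown the following,'' relying on Theorem~\ref{124} for $d>1$ and on Remark~\ref{125} (which invokes Lemma~\ref{109}) for the degree-one case, exactly as you do. Your case split and the observation that $C\geqslant 1$ absorbs the degree-one equality are precisely what the paper intends.
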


\begin{rem} \label{127}
From now on, whenever we consider spaces of the form $X_{|P| \bowtie r,t}$, ${t \in \{x, Z\}}$, $\bowtie \in \{\leqslant, =, \geqslant\},$ we will always assume its corresponding affinoid algebra to be endowed with the norm $|\cdot|_{|P| \bowtie r, t}$ defined in Notations \ref{star} and  \ref{startstar}.
\end{rem}

\subsection{A useful proposition} We show here Propositon \ref{131}, which we will use in Subsection \ref{parpl} to prove patching is possible on the relative projective line. We start with a couple of auxiliary results.
\begin{lm} \label{128}
Let $Y_1=\mathcal{M}(A)$ be a $k$-affinoid space. Let $Y_2=\mathcal{M}(B)$ be a relative affinoid space over $Y_1$ and $\phi: Y_2 \rightarrow Y_1$ the corresponding morphism. Let $y \in Y_1$ and set ${F_y:=\phi^{-1}(y)},$ which we identify with the $\mathcal{H}(y)$-analytic space $\mathcal{M}(B \widehat{\otimes}_A \mathcal{H}(y))$. For any $z \in F_y,$ $\mathcal{H}_{\mathcal{M}(B)}(z)=\mathcal{H}_{F_y}(z),$ where $\mathcal{H}_{N}(z)$ is the completed residue field of $z$ when regarded as a point of $N$, $N \in \{\mathcal{M}(B), F_y\}.$
\end{lm}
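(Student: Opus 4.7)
The plan is to construct natural isometric maps in both directions between $\mathcal{H}_{\mathcal{M}(B)}(z)$ and $\mathcal{H}_{F_y}(z)$ and verify they are mutually inverse. Throughout, recall that for $z \in \mathcal{M}(B)$ corresponding to a bounded multiplicative seminorm $|\cdot|_z^B$ on $B$, the field $\mathcal{H}_{\mathcal{M}(B)}(z)$ is the completion of $\mathrm{Frac}(B/\ker|\cdot|_z^B)$ under the multiplicative norm induced by $|\cdot|_z^B$. Similarly, viewing $z$ as a point of $F_y=\mathcal{M}(B\widehat{\otimes}_A\mathcal{H}(y))$ yields a bounded multiplicative seminorm $|\cdot|_z^F$ on $B\widehat{\otimes}_A\mathcal{H}(y)$, and $\mathcal{H}_{F_y}(z)$ is defined analogously. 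The identification $F_y\cong\mathcal{M}(B\widehat{\otimes}_A\mathcal{H}(y))$ is set up precisely so that $|\cdot|_z^F$ pulls back along $\sigma_B\colon B\to B\widehat{\otimes}_A\mathcal{H}(y)$ to $|\cdot|_z^B$.

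First, I would construct an isometric embedding $\mathcal{H}_{\mathcal{M}(B)}(z)\hookrightarrow\mathcal{H}_{F_y}(z)$. Since $\sigma_B$ pulls $|\cdot|_z^F$ back to $|\cdot|_z^B$, the induced map $B/\ker|\cdot|_z^B\to(B\widehat{\otimes}_A\mathcal{H}(y))/\ker|\cdot|_z^F$ is a norm-preserving injection of normed integral domains. It extends uniquely to an isometric embedding on fraction fields (the multiplicative norm on an integral domain extends uniquely to the fraction field), and then to an isometric embedding of their completions, giving the desired map.

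Next, for the reverse direction, I would use the universal property of the completed tensor product. The fact that $y$ is the image of $z$ implies that the canonical map $A\to B\to \mathcal{H}_{\mathcal{M}(B)}(z)$ has kernel containing $\ker|\cdot|_y$ and is bounded, so it factors through a bounded homomorphism $\mathcal{H}(y)\hookrightarrow\mathcal{H}_{\mathcal{M}(B)}(z)$. Combining this with the bounded $A$-algebra map $B\to\mathcal{H}_{\mathcal{M}(B)}(z)$ yields, by the universal property of $\widehat{\otimes}_A$, a bounded homomorphism $\Phi\colon B\widehat{\otimes}_A\mathcal{H}(y)\to\mathcal{H}_{\mathcal{M}(B)}(z)$. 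The pullback of the absolute value on $\mathcal{H}_{\mathcal{M}(B)}(z)$ via $\Phi$ is a bounded \emph{multiplicative} seminorm on $B\widehat{\otimes}_A\mathcal{H}(y)$ (pulled back from a field) whose restriction to $\sigma_B(B)$ equals $|\cdot|_z^B$; by the uniqueness clause in the $\mathcal{M}(\cdot)=\mathcal{M}(B)\times_{\mathcal{M}(A)}\mathcal{M}(\mathcal{H}(y))$ description of the fiber, this pulled-back seminorm is exactly $|\cdot|_z^F$. Thus $\Phi$ descends to an isometric injection on $(B\widehat{\otimes}_A\mathcal{H}(y))/\ker|\cdot|_z^F$, extends to fraction fields, and then completes to an isometric embedding $\mathcal{H}_{F_y}(z)\hookrightarrow\mathcal{H}_{\mathcal{M}(B)}(z)$.

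Finally, I would check that the two embeddings are mutually inverse. Both compositions act as the identity on the image of $B$ in the respective quotient, and the image of $B\otimes_A\mathcal{H}(y)$, which is dense in $B\widehat{\otimes}_A\mathcal{H}(y)$, is already contained in the subfield generated by $B$ and $\mathcal{H}(y)$ inside $\mathcal{H}_{\mathcal{M}(B)}(z)$; this density together with completeness of both fields forces the two isometric embeddings to be mutually inverse isomorphisms. The only delicate point, which I expect to be the main technical step, is verifying that the seminorm on $B\widehat{\otimes}_A\mathcal{H}(y)$ pulled back from $\mathcal{H}_{\mathcal{M}(B)}(z)$ really coincides with $|\cdot|_z^F$; this is precisely where the identification of $F_y$ with $\mathcal{M}(B\widehat{\otimes}_A\mathcal{H}(y))$ is used, via the uniqueness of a multiplicative extension of $|\cdot|_z^B$ compatible with the $\mathcal{H}(y)$-structure.
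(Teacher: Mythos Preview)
Your proposal is correct and follows essentially the same route as the paper's proof. Both arguments construct the bounded map $\Phi\colon B\widehat{\otimes}_A\mathcal{H}(y)\to\mathcal{H}_{\mathcal{M}(B)}(z)$ (called $\alpha$ in the paper) from the universal property of the completed tensor product, observe that the seminorm it induces on $B\widehat{\otimes}_A\mathcal{H}(y)$ is precisely $|\cdot|_z^F$, and combine this with the canonical map in the other direction; your version is simply more explicit about the universal property, the uniqueness step (which is just the injectivity of $F_y\hookrightarrow\mathcal{M}(B)$ on points), and the verification that the two embeddings are mutually inverse, whereas the paper compresses all of this into a couple of sentences around a commutative diagram.
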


\begin{proof} Considering the bounded embedding $\mathcal{H}(y) \hookrightarrow \mathcal{H}_{\mathcal{M}(B)}(z),$ we have the following commutative diagram where all the maps are bounded:

\begin{center}
\begin{tikzcd}[column sep=small]
 {} & {}  &    \mathcal{H}_{\mathcal{M}(B)}(z)  \\ 
B  \ar[r] \ar[urr] & B \widehat{\otimes}_A \mathcal{H}(y) \arrow{ur}[swap]{\alpha} \arrow{dr}{\beta} &   \\ 
{} & {}  & \mathcal{H}_{F_y}(z) \arrow[uu, dashed]  \\
\end{tikzcd} 
\end{center}
The proof is based on the identification of $F_y$ to $\mathcal{M}(B \widehat{\otimes}_A \mathcal{H}(x)).$
Remark that the map ~$\alpha$ induces on $B \widehat{\otimes}_A \mathcal{H}(y)$ the semi-norm determined by $z$, implying there is a bounded embedding $\mathcal{H}_{F_y}(z) \hookrightarrow \mathcal{H}_{\mathcal{M}(B)}(z)$ on the diagram above. Similarly, since the map $\beta$ induces on $B$ the semi-norm determined by $z,$ we obtain that $\mathcal{H}_{F_y}(z) = \mathcal{H}_{\mathcal{M}(B)}(z).$
\end{proof}

\begin{rem}
Recall Notation \ref{17aaa}. Let $P$ be a unitary polynomial in $\mathcal{O}_x[T]$ irreducible over $\mathcal{H}(x),$ and $r \in \mathbb{R}_{>0} \backslash \sqrt{|\mathcal{H}(x)^\times|}.$ Let $\eta:=\eta_{P,r} \in \mathbb{P}_{\mathcal{H}(x)}^{1, \mathrm{an}}.$ As seen in Lemma \ref{113} (\textit{cf.} also Remark \ref{122}), $\mathcal{H}(x)\{r^{-1}T, rT^{-1}\}[X]/(P(X)-T)$ is isomorphic to $\mathcal{O}_{\mathbb{P}_{\mathcal{H}(x)}^{1, \mathrm{an}}}(\{\eta\})=\mathcal{H}(\eta)$. By Proposition ~\ref{115} (see also  Remark \ref{122}), $|\cdot|_{|P|=r, x}$  is equivalent to the norm $|\cdot|_{\eta}$ on ~$\mathcal{H}(\eta).$ 
\end{rem}

Following Notation \ref{hajdede}, let $Z_0 \subseteq Z' \cap Z_T \cap Z_P$ be a connected affinoid neighborhood of ~$x.$

\begin{lm} \label{130}
The family $\{X_{|P|=r,Z}: Z \subseteq Z_0\}$ (where $Z$ is always considered to be a connected affinoid neighborhood of $x$) forms a basis of neighborhoods of $\eta$ in $X_{|P|=r, Z_0}.$
\end{lm}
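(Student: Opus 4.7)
The plan is to mimic the compactness/properness argument already used in Lemma \ref{103}. First I would observe that each $X_{|P|=r, Z}$ in the family is indeed a neighborhood of $\eta$ in $X_{|P|=r, Z_0}$: by construction $X_{|P|=r, Z} = \pi^{-1}(Z) \cap X_{|P|=r, Z_0}$, and if $Z$ is a (connected affinoid) neighborhood of $x$ in $Z_0$ then $\pi^{-1}(Z)$ is a neighborhood of $\pi^{-1}(x)$ in $\mathbb{P}_{Z_0}^{1,\mathrm{an}}$, hence of $\pi^{-1}(x)\cap X_{|P|=r, Z_0}$ in $X_{|P|=r, Z_0}$. Since $P$ is irreducible over $\mathcal{H}(x)$ and $r\notin\sqrt{|\mathcal{H}(x)^\times|}$, the point $\eta_{P,r}$ is the unique point of $\mathbb{P}_{\mathcal{H}(x)}^{1,\mathrm{an}}$ at which $|P|=r$, so in fact $\pi^{-1}(x)\cap X_{|P|=r, Z_0}=\{\eta\}$.

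Next I would show that every open neighborhood of $\eta$ in $X_{|P|=r, Z_0}$ contains some member of the family. Let $W$ be such an open neighborhood and set $F:=X_{|P|=r, Z_0}\setminus W$. Because $X_{|P|=r, Z_0}$ is affinoid, hence compact, $F$ is compact. The structure morphism $\pi:\mathbb{P}_{Z_0}^{1,\mathrm{an}}\to Z_0$ is proper and $X_{|P|=r, Z_0}$ is closed in $\mathbb{P}_{Z_0}^{1,\mathrm{an}}$, so $\pi(F)$ is a closed subset of $Z_0$. By the previous paragraph $\pi^{-1}(x)\cap F=\{\eta\}\setminus W=\emptyset$, so $x\notin \pi(F)$. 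Since $S$ (hence $Z_0$) is good, we may choose a connected affinoid neighborhood $Z\subseteq Z_0$ of $x$ such that $Z\cap\pi(F)=\emptyset$, which gives $\pi^{-1}(Z)\cap F=\emptyset$, i.e.\ $X_{|P|=r, Z}\subseteq W$, as required.

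There is no serious obstacle here; the main point to be careful about is identifying the fiber $\pi^{-1}(x)\cap X_{|P|=r, Z_0}$ with the singleton $\{\eta\}$, which relies only on $P$ being irreducible over $\mathcal{H}(x)$ together with $r\notin\sqrt{|\mathcal{H}(x)^\times|}$, and on the properness of the projection $\mathbb{P}_{Z_0}^{1,\mathrm{an}}\to Z_0$ so that $\pi(F)$ is closed in $Z_0$ and avoids $x$.
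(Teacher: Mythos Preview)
Your proposal is correct and follows essentially the same compactness/properness argument as the paper: take the complement of the neighborhood inside the affinoid $X_{|P|=r,Z_0}$, push it down via $\pi$ to a compact set in $Z_0$ missing $x$, and choose a connected affinoid neighborhood $Z$ avoiding it. The only addition on your side is the explicit verification that each $X_{|P|=r,Z}$ is a neighborhood of $\eta$ and that the fiber over $x$ in $X_{|P|=r,Z_0}$ reduces to $\{\eta\}$, which the paper leaves implicit.
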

 
\begin{proof}
Let $U$ be an open neighborhood of $\eta$ in $X_{|P|=r, Z_0}$. There exists a connected affinoid neighborhood $Z \subseteq Z_0$ of $x$ such that $X_{|P|=r, Z} \subseteq U.$ To see this, remark that $X_{|P|=r,Z_0} \backslash U$ is a compact subset of $\mathbb{P}_{Z_0}^{1, \mathrm{an}},$ so $\pi(X_{|P|=r,Z_0} \backslash U)$ is a compact subset of ~$Z_0.$ Furthermore, $x \not \in \pi(X_{|P|=r,Z_0} \backslash U),$ so there exists a connected affinoid neighborhood $Z \subseteq Z_0$ of $x$ such that $Z \cap \pi(X_{|P|=r,Z_0} \backslash U) =\emptyset.$ Consequently, $X_{|P|=r, Z} \backslash U = \pi^{-1}(Z) \cap (X_{|P|=r,Z_0} \backslash U)=\emptyset,$ so $X_{|P|=r, Z} \subseteq U.$
\end{proof}

\begin{prop} \label{131}
The local ring $\mathcal{O}_{X_{|P|=r, Z_0}, \eta}$ is a field.
\end{prop}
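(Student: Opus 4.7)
The plan is to express the stalk as a filtered colimit via Lemma~\ref{130} and show that its maximal ideal is trivial. By Lemma~\ref{130}, $\mathcal{O}_{X_{|P|=r, Z_0}, \eta} = \varinjlim_Z \mathcal{O}(X_{|P|=r, Z})$, where $Z$ ranges over connected affinoid neighborhoods of $x$ in $Z_0$. By Lemma~\ref{113}, each $\mathcal{O}(X_{|P|=r, Z}) = A_Z\{r^{-1}T, rT^{-1}\}[X]/(P(X)-T)$, in which every element admits a unique canonical representation $\sum_{i=0}^{d-1}\sum_{n\in\mathbb{Z}} a_{n,i} T^n X^i$ with $a_{n,i} \in A_Z$ and $d = \deg P$. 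Being a stalk of the structure sheaf, $\mathcal{O}_{X_{|P|=r, Z_0}, \eta}$ is a local ring with maximal ideal $m_\eta := \ker(\mathcal{O}_{X_{|P|=r, Z_0}, \eta} \to \mathcal{H}(\eta))$, so it suffices to prove $m_\eta = 0$.

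Take $g \in m_\eta$, represented in canonical form by $f = \sum a_{n,i} T^n X^i \in \mathcal{O}(X_{|P|=r, Z})$ for some $Z$. The condition $g \in m_\eta$ means $f(\eta) = 0$ in $\mathcal{H}(\eta)$. Applying Lemma~\ref{113} on the fiber gives the identification $\mathcal{H}(\eta) \simeq \mathcal{H}(x)\{r^{-1}T, rT^{-1}\}[X]/(P(X)-T)$ (as already recorded in the remark preceding Lemma~\ref{130}), under which $f(\eta) = \sum \bar a_{n,i} T^n X^i$ where $\bar a_{n,i}$ is the image of $a_{n,i}$ in $\mathcal{H}(x)$. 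Uniqueness of canonical form forces $\bar a_{n,i} = 0$ for every $n, i$, so each $a_{n,i}$ lies in $\ker(A_Z \to \mathcal{H}(x))$. Since $\mathcal{O}_x$ is a field, the factorization $A_Z \to \mathcal{O}_x \hookrightarrow \mathcal{H}(x)$ yields $\ker(A_Z \to \mathcal{H}(x)) = \ker(A_Z \to \mathcal{O}_x)$.

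The plan is then to invoke an identity principle for $k$-affinoids: $S$ being normal and $Z$ connected, $A_Z$ is an integral domain, and $\mathcal{O}_x = \varinjlim_V A_V$ as $V$ ranges over affinoid neighborhoods $V \subseteq Z$ of $x$; each restriction $A_Z \hookrightarrow A_V$ is injective since $V$ is a nonempty affinoid subdomain of the integral affinoid $Z$. A filtered colimit of injections is injective, so $A_Z \to \mathcal{O}_x$ is injective. This forces $a_{n,i} = 0$ in $A_Z$ for all $n, i$, hence $f = 0$ in $\mathcal{O}(X_{|P|=r, Z})$ and $g = 0$ in the stalk. Therefore $m_\eta = 0$, and $\mathcal{O}_{X_{|P|=r, Z_0}, \eta}$ is a field.

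I expect the main obstacle to be the injectivity of $A_Z \to A_V$ for affinoid subdomain inclusions $V \subseteq Z$ of an integral $k$-affinoid — the identity principle stating that an analytic function vanishing on a nonempty affinoid subdomain of a connected reduced $k$-affinoid vanishes identically. Beyond this standard input and the canonical-form bookkeeping, the only substantive structural hypothesis used is that $\mathcal{O}_x$ is a field, which is precisely what makes the map $\mathcal{O}_x \hookrightarrow \mathcal{H}(x)$ injective and transforms ``vanishing at $x$'' into ``vanishing in a neighborhood of $x$''.
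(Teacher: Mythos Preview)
Your proof is correct and follows essentially the same route as the paper's: represent an element of the maximal ideal by some $f=\sum_{i,n} a_{n,i}T^nX^i\in\mathcal{O}(X_{|P|=r,Z})$, use the identification of $\mathcal{H}(\eta)$ with $\mathcal{H}(x)\{r^{-1}T,rT^{-1}\}[X]/(P(X)-T)$ and uniqueness of the canonical form to deduce that each $a_{n,i}$ has vanishing image in $\mathcal{H}(x)$, and then use that $\mathcal{O}_x$ is a field to conclude $a_{n,i}=0$. The only difference is presentational: the paper phrases the vanishing step via the norm $|\cdot|_{|P|=r,x}$ (so $|a_{n,i}|_x=0$) and then asserts ``since $\mathcal{O}_x$ is a field, $a_{n,i}=0$ in $A_Z$'' without further comment, whereas you unpack this as the injectivity of $A_Z\to\mathcal{O}_x$ and justify it by the identity principle for integral affinoids. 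Your version is thus slightly more explicit at exactly the point the paper leaves terse.
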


\begin{proof}
Suppose that $\mathcal{O}_{X_{|P|=r, Z_0}, \eta}$ is not a field. Then its maximal ideal is non-zero, so there exists $f \in \mathcal{O}_{X_{|P|=r, Z_0}, \eta}$ such that $f\neq 0$ and $f(\eta)=0$ in $\mathcal{H}(\eta)$ (\textit{i.e.} $|f|_{\eta}=0$). By Lemma \ref{130}, there exists a connected affinoid neighborhood $Z \subseteq Z_0$ of $x$ such that $f \in \mathcal{O}(X_{|P|=r, Z}).$

By Lemma \ref{128}, evaluating $f \in \mathcal{O}(X_{|P|=r, Z})$ at the point $\eta \in \mathcal{O}(X_{|P|=r, Z})$ is the same as evaluating the restriction of $f$ to the fiber at the point $\eta$ on the fiber. Consequently, since the norm $|\cdot|_{\eta}$ is equivalent to $|\cdot|_{|P|=r, x}$ (see Proposition \ref{115} and Remark \ref{122}), we obtain that $|f|_{|P|=r, x} =0.$

Let $f=\sum_{i=0}^{d-1}\sum_{n \in \mathbb{Z}} a_{n,i} T^n X^i \in \mathcal{O}(X_{|P|=r, Z}).$ Then $|f|_{|P|=r, x}=\max_{i}\max_{n}|a_{n,i}|_x r^n.$ If $|f|_{|P|=r, x}=0,$ this implies that for any $n$ and any $i,$ $|a_{n,i}|_x=0,$ and since $\mathcal{O}_x$ is a field, we obtain $a_{n,i}=0$ in $A_Z.$ Consequently, $f=0$ over $X_{|P|=r,Z}.$

By Lemma \ref{130}, this means that $f=0$ in $\mathcal{O}_{X_{|P|=r,Z_0},\eta},$ contradiction. Hence, the local ring $\mathcal{O}_{X_{|P|=r, Z_0}, \eta}$ is a field.
\end{proof}

\section{Patching on the relative projective line} \label{4.3}

The goal of this section is to prove a relative analogue of \cite[Proposition 3.3]{une}. As before, let $k$ be a complete ultrametric field.

\subsection{A few preliminary results} Recall Notation \ref{17aaa}.
\begin{rem} \label{132}
By Theorem \ref{231}, for any integral $k$-affinoid space $Z$, $\mathscr{M}(\mathbb{P}_Z^{1, \mathrm{an}})=\mathscr{M}(Z)(T).$ 
\end{rem}
 
\begin{lm} \label{den}
Let $X$ be an integral $k$-affinoid space with corresponding affinoid algebra ~$R_X$. Set $F_X=\mathscr{M}(X).$ Let $z \in X$ be such that $\mathcal{O}_z$ is a field. 

The function $|\cdot|_{F_X}:=\max(|\cdot|_y:  y\in \Gamma(X) \cup \{z\})$ defines a submultiplicative norm on ~$F_X$ which when restricted to $R_X$ gives the spectral norm $|\cdot|_{\rho_X}$.

Let $X'$ be an integral $k$-affinoid space such that $X$ is a rational domain of $X'.$ Set $F_{X'}=\mathscr{M}(X').$
The field $F_{X'}$ is dense in $(F_X, |\cdot|_{F_{X}}).$
\end{lm}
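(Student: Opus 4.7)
The plan is to establish three things in turn: (i) each seminorm $|\cdot|_y$, for $y\in\Gamma(X)\cup\{z\}$, extends from a seminorm on $R_X$ to a genuine valuation on the fraction field $F_X=\mathrm{Frac}(R_X)$, so that $|\cdot|_{F_X}$ becomes a well-defined submultiplicative norm on $F_X$; (ii) the restriction of $|\cdot|_{F_X}$ to $R_X$ coincides with the spectral norm $|\cdot|_{\rho_X}$; (iii) $F_{X'}$ is dense in $(F_X,|\cdot|_{F_X})$.

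For step (i), I first treat $y=z$: since $\mathcal{O}_z$ is a field, $\kappa(z)=\mathcal{O}_z$ and $\mathcal{H}(z)$ is the completion of $\mathcal{O}_z$, so the composite $R_X\to\mathcal{O}_z\hookrightarrow\mathcal{H}(z)$ extends to an embedding of fields $F_X=\mathrm{Frac}(R_X)\hookrightarrow\mathcal{H}(z)$; injectivity of $R_X\to\mathcal{O}_z$ is immediate from the integrality of $R_X$. The valuation $|\cdot|_z$ on $F_X$ is then the pullback of the valuation of $\mathcal{H}(z)$. For $y\in\Gamma(X)$, the integrality of $R_X$ together with the structure theory of the Shilov boundary of a reduced affinoid space implies that $\ker(|\cdot|_y)$ in $R_X$ is the zero ideal; hence $|\cdot|_y$ is a valuation on $R_X$ that extends uniquely to $F_X$. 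Once every $|\cdot|_y$ is a valuation on $F_X$, the maximum over the finite set $\Gamma(X)\cup\{z\}$ is automatically a submultiplicative norm, positive on nonzero elements.

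For step (ii), the defining property of the Shilov boundary yields $\max_{y\in\Gamma(X)}|f|_y=|f|_{\rho_X}$ for every $f\in R_X$, and since $z\in X$ we have $|f|_z\leqslant|f|_{\rho_X}$, so adjoining $z$ does not change the maximum.

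For step (iii), use the description $X=\{u\in X':|f_i(u)|\leqslant|g(u)|,\ i=1,\dots,n\}$ with $f_i,g\in R_{X'}$ and $g$ non-vanishing on $X$; then $R_X$ is the completion, with respect to its spectral norm, of the subring $R_0:=R_{X'}[f_1/g,\dots,f_n/g,1/g]\subseteq F_{X'}$. Given $f=a/b\in F_X$ with $b\neq 0$, pick $a_n,b_n\in R_0\subseteq F_{X'}$ with $|a_n-a|_{\rho_X},|b_n-b|_{\rho_X}\to 0$. For each of the finitely many $y\in\Gamma(X)\cup\{z\}$, we have $|b|_y>0$ because $b\neq 0$, and $|b_n-b|_y\leqslant|b_n-b|_{\rho_X}\to 0$ gives $|b_n|_y\to|b|_y$, so eventually $|b_n|_y\geqslant|b|_y/2>0$. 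Then
$$\Bigl|\frac{a}{b}-\frac{a_n}{b_n}\Bigr|_y=\frac{|ab_n-a_nb|_y}{|b|_y\,|b_n|_y}\longrightarrow 0\quad\text{as }n\to\infty,$$
and taking the maximum over the finite set yields $|f-a_n/b_n|_{F_X}\to 0$. The main obstacle is step (i): verifying that every $|\cdot|_y$ has trivial kernel on $R_X$ rests on the integrality of $R_X$ together with Berkovich's structure theory of the Shilov boundary, and for $y=z$ on the field hypothesis on $\mathcal{O}_z$.
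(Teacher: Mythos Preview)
Your proof is correct and follows essentially the same approach as the paper's. Both proofs establish that the $|\cdot|_y$ are multiplicative norms on $R_X$ (hence on $F_X$), identify the restriction with the spectral norm via the defining property of $\Gamma(X)$, and prove density by approximating numerator and denominator separately using a subring of $F_{X'}$ that is dense in $R_X$. The differences are cosmetic: for the Shilov boundary points the paper cites \cite[Lemme~2.1]{Duc2} where you invoke ``structure theory''; for density of a subring of $F_{X'}$ in $R_X$ the paper cites \cite[Corollary~2.2.10]{Ber90} where you use the explicit rational-domain presentation $R_{X'}[f_1/g,\dots,f_n/g,1/g]$; and in the final estimate the paper exploits the ultrametric inequality to get $|v_0|_y=|v|_y$ exactly, whereas you use $|b_n|_y\geqslant |b|_y/2$, which works just as well.
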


\begin{proof}
Remark that $z$ (since $\mathcal{O}_z$ is a field) and all $y \in \Gamma(X)$ (because of \cite[Lemme 2.1]{Duc2}) determine multiplicative norms on $R_X,$ and hence also on $F_X.$ 

As a consequence, $|\cdot|_{F_X}$ is well-defined.  That it is a submultiplicative norm on $F_X$ extending $\rho_X$ follows from the fact that $|\cdot|_{\rho_X}=\max(|\cdot|_{y}: y \in \Gamma(X))$. Since $X$ is reduced, $\rho_X$ is equivalent to the norm on the affinoid algebra $R_X$ (\cite[Proposition 9.13]{coanno}). 

By \cite[Corollary 2.2.10]{Ber90}, for $S_X:=\{g \in \mathcal{O}(X'): |g|_x \neq 0  \ \text{for all} \ x \in X\},$ the set $S_X^{-1} \mathcal{O}(X')$ is dense in $\mathcal{O}(X)=R_X.$ As $S_X \subseteq \mathcal{O}(X')\backslash \{0\},$ by Lemma \ref{1.2}, $S_X^{-1} \mathcal{O}(X') \subseteq \mathscr{M}(X')=F_{X'},$ so $R_X \cap F_{X'} \subseteq F_X$ is a dense subset of $R_X.$ 

Let $f=\frac{u}{v} \in F_X,$ where $u, v \in R_X.$ Then by the above, $u,v$ can be approximated by some $u_0, v_0 \in R_X \cap F_{X'}.$ We will show that $\frac{u_0}{v_0}$ approximates $\frac{u}{v}$ in $F_X,$ implying (since $\frac{u_0}{v_0} \in F_{X'}$) that $F_{X'}$ is dense in $F_X$.

Since both $|u-u_0|_{\rho_X}$ and $|v-v_0|_{\rho_X}$ may be assumed to be arbitrarily small, we may suppose that $|u|_{y}=|u_0|_{y}$ and $|v_0|_y=|v|_y$ for all $y \in \Gamma(X) \cup \{z\}.$ Then $|\frac{1}{v}|_{F_X}=|\frac{1}{v_0}|_{F_{X}}.$ Finally, $|f-\frac{u_0}{v_0}|_{F_X}\leqslant |uv_0-u_0v|_{F_X} \cdot |\frac{1}{v}|^2_{F_X} =|uv_0-u_0v|_{R_X} \cdot |\frac{1}{v}|^2_{F_X} \rightarrow 0$ in $F_X$ when $u_0 \rightarrow u$ and $v_0 \rightarrow v$ in $R_X.$
\end{proof}
The following is an example of Setting \ref{therealone} which we will be working with. 

\begin{prop} \label{133}
Let $U,V$ be connected affinoid domains of $\mathbb{P}_{\mathcal{H}(x)}^{1, \mathrm{an}}$ containing only type ~3 points in their boundaries such that $U \cap V$ is a single type 3 point $\{\eta\}.$ Let $Z$ be a connected  affinoid neighborhood of $x$ in $S$ such that there exist $Z$-thickenings $U_Z, V_Z$ of $U, V,$ respectively. Assume that $Z$ is such that the statement of Proposition \ref{nukkuptoj2} is satisfied. Then the conditions of Setting ~\ref{therealone} are satisfied for: $F:=\mathscr{M}(Z)(T),$  $R_0:=\mathcal{O}(U_Z \cap V_Z),$  $R_1=A_1:=\mathcal{O}(U_Z), R_2=A_2:=\mathcal{O}(V_Z),$ and $F_i:= \mathrm{Frac} \  R_i, i=0,1,2.$ 
\end{prop}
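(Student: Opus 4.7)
The plan is to exploit that $A_j = R_j$: the surjection $\varphi_j : R_j^{n_j} \twoheadrightarrow A_j$ of Setting \ref{therealone} can then be taken to be the identity (with $n_j = 1$), and the clauses on $A_j$ collapse to the corresponding clauses on $R_j$. What remains to verify is: (i) $\mathcal{O}(U_Z)$, $\mathcal{O}(V_Z)$ and $\mathcal{O}(U_Z \cap V_Z)$ are integral Banach $k$-affinoid algebras with $k$-linear non-Archimedean norms; (ii) the restrictions $R_j \hookrightarrow R_0$ are bounded injections; (iii) $F = \mathscr{M}(Z)(T)$ is an infinite field embedded in both $F_1$ and $F_2$; and (iv) the induced map $\psi : R_1 \oplus R_2 \to R_0$ is surjective and admissible.

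For (i) and (ii), the Banach-algebra data ($k$-linearity, submultiplicativity, completeness, boundedness of restriction morphisms) are built into the general theory of $k$-affinoid algebras. The nontrivial point is integrality: since $S$ is normal, $\mathbb{P}_Z^{1, \mathrm{an}}$ is normal, so any connected reduced affinoid domain in it is integral. Theorem \ref{nicecover} together with Corollary \ref{107} gives the connectedness of $U_Z$ and $V_Z$, while Proposition \ref{nukkuptoj2}(1) identifies $U_Z \cap V_Z$ with the $Z$-thickening $\{u \in \mathbb{P}_Z^{1, \mathrm{an}} : |R|_u = r\}$ of the type $3$ point $\eta$, which is connected by Theorem \ref{ajune}. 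The injectivity of $R_j \hookrightarrow R_0$ then follows from the standard fact that restriction from an integral affinoid space to any non-empty affinoid subdomain is injective.

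For (iii), Remark \ref{132} gives $\mathscr{M}(\mathbb{P}_Z^{1,\mathrm{an}}) = \mathscr{M}(Z)(T) = F$, and restriction of meromorphic functions supplies canonical embeddings $F \hookrightarrow \mathscr{M}(U_Z) \subseteq F_1$ and $F \hookrightarrow F_2$; $F$ is infinite because it contains $k(T)$.

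The only substantive step is (iv). Proposition \ref{nukkuptoj2}(2) tells us that $U_Z \cup V_Z$ is itself an affinoid domain of $\mathbb{P}_Z^{1, \mathrm{an}}$, so $\{U_Z, V_Z\}$ is a finite affinoid cover of it. Tate's acyclicity theorem then yields the exact sequence
$$0 \longrightarrow \mathcal{O}(U_Z \cup V_Z) \longrightarrow \mathcal{O}(U_Z) \oplus \mathcal{O}(V_Z) \stackrel{\psi}{\longrightarrow} \mathcal{O}(U_Z \cap V_Z) \longrightarrow 0,$$
whose last arrow is precisely $\psi$, giving surjectivity. Admissibility of $\psi$ then follows from Banach's Open Mapping Theorem, applicable since $k$ is non-trivially valued. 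I expect the only subtlety to lie in ensuring that $U_Z \cup V_Z$ and $U_Z \cap V_Z$ are honest $k$-affinoid spaces with the expected structure, rather than merely set-theoretic unions and intersections---this is exactly what the preceding development of $Z$-relative nice covers provides, so the step reduces to invoking Proposition \ref{nukkuptoj2} and applying standard \v{C}ech/Tate machinery.
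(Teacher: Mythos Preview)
Your approach is the same as the paper's, and the argument is essentially correct. There is one small oversight in step (iv): Proposition \ref{nukkuptoj2}(2) does \emph{not} guarantee that $U_Z \cup V_Z$ is an affinoid domain---when $n=m=0$ (i.e.\ $\partial U = \partial V = \{\eta\}$) it explicitly gives $U_Z \cup V_Z = \mathbb{P}_Z^{1,\mathrm{an}}$, which is proper and not affinoid, so Tate acyclicity does not apply directly. The paper handles both cases uniformly by observing that $H^1(U_Z \cup V_Z, \mathcal{O})=0$ regardless: in the affinoid case this is Tate, and in the projective case it is the vanishing $H^1(\mathbb{P}_Z^{1,\mathrm{an}},\mathcal{O})=0$. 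Either way one obtains the short exact sequence you wrote, and the rest of your argument goes through unchanged.
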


\begin{proof} The field $F$ is clearly infinite and embeds in both $F_1$ and $F_2$.
Also, the rings $R_i,$ $i=0,1,2,$ are integral domains containing $k$ and endowed with a non-Archimedean submultiplicative norm that extends that of $k$ and is $k$-linear. The morphisms ${R_j \hookrightarrow R_0},$ $j=1,2,$ are  bounded seeing as they are restriction morphisms.

Remark that regardless of whether $U_Z \cup V_Z$ is an affinoid domain or all of~$\mathbb{P}_Z^{1, \mathrm{an}},$ ${H^{1}(U_Z \cup V_Z, \mathcal{O})=0}.$ Consequently, there exists a surjective admissible morphism ${R_1 \oplus R_2 \twoheadrightarrow R_0}$ (see e.g. Remark \ref{edhenje}).
\end{proof}

\begin{nota} \label{134}
In addition to Notation \ref{17aaa}, let $G$ be a rational linear algebraic group defined over $\mathcal{O}_x(T).$ Seeing as $\mathcal{O}_x(T)=\varinjlim_{Z} \mathscr{M}(Z)(T),$ where the direct limit is taken with respect to connected affinoid neighborhoods of $x,$ there exists such a $Z_G$ for which $G$ is a rational linear algebraic group defined over $\mathscr{M}(Z_G)(T).$ The same remains true for any connected affinoid neighborhood $Z \subseteq Z_G$ of $x.$ 
\end{nota}

\subsection{Patching over $\mathbb{P}^{1, \mathrm{an}}$} \label{parpl}
We now have all the necessary elements to show that patching is possible over $\mathbb{P}_Z^{1, \mathrm{an}}$ for a well-enough chosen affinoid neighborhood $Z$ of $x$ (both in the sense of \cite[Corollary 1.10]{une} and of \cite[Proposition 3.3]{une}). 

For the rest of this section, we assume that $k$ is a complete non-trivially valued ultrametric field. Recall Notation \ref{17aaa}. 

\begin{rem} \label{136} In order for the results of Section \ref{4.2} to be applicable, from now on, whenever taking a thickening of an affinoid domain with respect to a certain writing of its boundary points (see Definition \ref{100}), we will always assume that the corresponding polynomials were chosen to be unitary (since $\mathcal{O}_x$ is a field, this can be done without causing any restrictions to our general setting).  
\end{rem}

\begin{set}\label{137} Let $\eta$ be a type 3 point of $\mathbb{P}_{\mathcal{H}(x)}^{1, \mathrm{an}}.$ There exists a unitary polynomial ${P \in \mathcal{O}_x[T]}$ that is irreducible over $\mathcal{H}(x)$ and a real number $r \in \mathbb{R}_{>0} \backslash \sqrt{|\mathcal{H}(x)^\times|}$  such that $\eta=\eta_{P,r}.$ Let $Z_0$ be a connected affinoid neighborhood of $x$ in $S$ such that $P \in \mathcal{O}(Z_0)[T]$ and the $Z_0$-thickenings of $\{u \in \mathbb{P}_{\mathcal{H}(x)}^{1, \mathrm{an}}: |T|_u \bowtie r\}$, $\{u \in \mathbb{P}_{\mathcal{H}(x)}^{1, \mathrm{an}}: |P|_u \bowtie r\}, \bowtie \in \{\leqslant, \geqslant\}$, are connected. Let $Z \subseteq Z_0$ be any connected affinoid neighborhood of $x.$ 
\begin{sloppypar}
As before, set $X_{|P| 
\bowtie r, Z}:=\{u \in \mathbb{P}_{Z}^{1, 
\mathrm{an}}: |P|_u \bowtie r\},$ where ${\bowtie 
\in \{ \leqslant, =, \geqslant\}}$. Set ${(R_{0,Z}, 
|\cdot|_{R_{0,Z}}):=}(\mathcal{O}(X_{|P| = r, Z}), 
|\cdot|_{|P|=r, Z}),$  $(R_{1,Z}, |\cdot|
_{R_{1,Z}}):=(\mathcal{O}(X_{|P| \leqslant r, Z}), 
|\cdot|_{|P|\leqslant r, Z})$ and $(R_{2,Z}, |
\cdot|_{R_{2,Z}}):=(\mathcal{O}(X_{|P| \geqslant 
r, Z}), |\cdot|_{|P|\geqslant r, Z})$ (see Remark 
\ref{127}). Also, set ${F_{i,Z}:=\mathrm{Frac}
(R_{i,Z})}, i=0,1,2,$  and $F:=\mathscr{M}(Z)(T).$
\end{sloppypar}
 Assume that $Z_0$ is chosen so that the results (in particular,  Corollary \ref{126}) of Section~\ref{4.2} are satisfied. Moreover, assume $Z_0 \subseteq Z_G$ (see Notation \ref{134}).
\end{set}

Throughout this subsection, suppose we are in the situation of Setting \ref{137}. We now define three parameters $d, M$ and $\delta$; the goal is to have them satisfy the properties of Theorem \ref{e keqe}, which then allows us to prove a crucial patching result (Theorem \ref{145}). 

\begin{cond} \label{138} 
Since $H^{1}(\mathbb{P}_{Z}^{1, \mathrm{an}}, \mathcal{O})=0,$ there is an admissible surjection ${R_{1, Z} \oplus R_{2,Z} \twoheadrightarrow R_{0,Z}}$. Furthermore, by Lemma \ref{119}, for any $c \in R_{0,Z}$, there exist $a \in R_{1,Z}$ and $b \in R_{2, Z}$ such that $\frac{1}{2} \max(|a|_{R_{1,Z}}, |b|_{R_{2,Z}}) < |c|_{R_{0,Z}}.$ Set $d=\frac{1}{2}.$
\end{cond}

Let us set $F:=\mathscr{M}(Z_0)(T)$, so that $G$ is a rational linear algebraic group over $F$. We refer to Remark \ref{thingy} and recall that we obtain a commutative diagram (Diagram~(\ref{diagramG})): $m$ denotes the multiplication in $G$, and the Zariski open $S'$ of $G$ contains the identity element of $G$ and is isomorphic to a Zariski open $S''$ of some $\mathbb{A}_F^n$. (Here $\widetilde{S''}$ (resp. $S''$) is $\widetilde{S}$ (resp.~$S$) from Remark \ref{thingy}). 
\begin{equation} \label{diagramG}
\begin{tikzcd}[scale=1.8]
 \widetilde{S'} \arrow{r}{m_{|\widetilde{S'}}} \arrow{d}[swap]{(\varphi \times \varphi)_{|\widetilde{S'}}}& S' \arrow{d}{\varphi} \\
\widetilde{S''}  \arrow{r}[swap]{f} & S''
\end{tikzcd}
\end{equation}
\begin{cond} \label{139}
Let us look at the diagram above over the field $F_{0,Z_0}$ (recall the notation in Setting \ref{137}). We may suppose that $g_i, h_i \in R_{0, Z_0}[\underline{S}, \underline{T}]$ for all $i.$ Since $h_i(0) \neq 0$ and $\mathcal{O}_{X_{|P|=r,Z_0}, \eta}$ is a field (see Proposition \ref{131}), $|h_i(0)|_{\eta}\neq 0.$ Consequently, by Lemma \ref{130}, there exists a connected affinoid neighborhood $Z_1 \subseteq Z_0$ of ~$x$ such that $|h_i(0)|_u \neq 0$ for all $u \in X_{|P|=r, Z_1}$, $i.$ By \cite[Corollary 3.15]{coanno}, $h_i(0) \in R_{0, Z_1}^\times$ for all ~$i.$ This implies that  $h_i(0) \in R_{0, Z}^\times$ for all connected affinoid neighborhoods $Z \subseteq Z_1$ of $x.$

By Lemmas \ref{1} and \ref{2}, there exists $M \geqslant 1$ such that 
$$f_i=S_i + T_i + \sum_{|(l,m)| \geqslant 2} c_{l,m}^i \underline{S}^l\underline{T}^m \in R_{0,Z_1}[[\underline{S}, \underline{T}]], $$  and $|c_{l,m}^i|_{R_{0,Z_1}} \leqslant M^{|(l,m)|}$ for all $i,$ and all $(l,m) \in \mathbb{N}^{2n}$ such that $|(l,m)| \geqslant 2,$ where $|(l,m)|$ is the sum of the coordinates of $(l,m).$

By Lemma \ref{120},  for any connected affinoid neighborhood $Z \subseteq Z_1$ of $x,$
$f_i=S_i + T_i + \sum_{|(l,m)| \geqslant 2} c_{l,m}^i \underline{S}^l\underline{T}^m \in R_{0,Z}[[\underline{S}, \underline{T}]]$, and ${|c_{l,m}^i|_{R_{0,Z}} \leqslant |c_{l,m}^i|_{R_{0, Z_1}} \leqslant M^{|(l,m)|}}$ for all $i$ and all $(l,m) \in \mathbb{N}^{2n}$ such that $|(l,m)| \geqslant 2$.
\end{cond}

\begin{cond} \label{140}
Since $\widetilde{S''}$ is a Zariski open of $\mathbb{A}_F^{2n}$ and $F \hookrightarrow \mathcal{H}(\eta),$ we have that $\widetilde{S''}(\mathcal{H}(\eta))$ is a Zariski open of $\mathcal{H}(\eta)^{2n}$. Since the topology induced by the norm on $\mathcal{H}(\eta)$ is finer than the Zariski one and $0 \in \widetilde{S''}$, there exists $\delta>0$ such that the open disc $D_{\mathcal{H}(\eta)^{2n}}(0, \delta)$ in $\mathcal{H}(\eta)^{2n}$ (with respect to the max-norm), centered at $0$ and of radius $\delta$, is contained in $\widetilde{S''}(\mathcal{H}(\eta)) \subseteq \widetilde{S''}.$

Then for any connected affinoid neighborhood $Z \subseteq Z_0$ of $x,$ the open disc $D_{R_{0,Z}^{2n}}(0,\delta)$ in $R_{0,Z}^{2n}$ (with respect to the max-norm), centered at $0$ and of radius $\delta$, satisfies: $D_{R_{0,Z}^{2n}}(0, \delta) \subseteq D_{\mathcal{H}(\eta)^{2n}}(0, \delta) \subseteq \widetilde{S''}.$ This is clear seeing as for any $a \in R_{0, Z},$ $|a|_{\eta} \leqslant |a|_{\rho_{X_{|P|=r, Z}}} \leqslant |a|_{R_{0,Z}},$ where $\rho_{X_{|P|=r, Z}}$ is the spectral norm on $X_{|P|=r, Z}$. 
\end{cond}

\begin{rem} \label{141} 
\begin{sloppypar} Putting Parameters \ref{138}, \ref{139}, \ref{140} together, let $\varepsilon>0$ be such that ${\varepsilon < \min\left(\frac{d}{2M}, \frac{d^3}{M^4}, \frac{d\delta }{2}\right)}$. Then all of the conditions of Theorem \ref{e keqe} are satisfied for ${R_0:=R_{0,Z}}, A_1:=R_{1, Z}, A_2:=R_{2,Z}$, $F_0=\text{Frac} \ R_0,$ where $Z$ is any connected affinoid neighborhood of $x$ contained in $Z_1,$ with $Z_1$ as in Parameter \ref{139}.
\end{sloppypar}
\end{rem}

\begin{lm} \label{142}
Let $g \in G(F_{0,{Z_1}})$ (with $Z_1$ as in Parameter \ref{139}). Suppose $g \in S'$ (see Diagram~(\ref{diagramG})), and ${|\varphi(g)|_{\eta} < \frac{\varepsilon}{C}}$, where $C$ is the constant obtained in  Corollary \ref{126} corresponding to the polynomial $P$.
Then there exists a connected affinoid neighborhood $Z \subseteq Z_1$ of $x$, and $g_i \in G(F_{i,Z}), {i=1,2},$ such that $g=g_1 \cdot g_2$ in $G(F_{0,Z})$.
\end{lm}

\begin{proof}
Since $\varphi(g) \in \mathbb{A}_{F}^{n}(F_{0, Z_1})=F_{0, Z_1}^n,$ there exist $\alpha_i, \beta_i \in R_{0, Z_1}$ such that ${\varphi(g)=(\alpha_i/\beta_i)_{i=1}^n}.$ Since $\beta_i \neq 0,$ by Proposition \ref{131}, $|\beta_i|_{\eta} \neq 0$. Thus, by Lemma \ref{130}, there exists a connected affinoid neighborhood $Z' \subseteq Z_1$ of $x$ such that $|\beta_i|_u \neq 0$ for all $u \in X_{|P|=r,Z_1}$ and all $i.$ By \cite[Corollary 3.15]{coanno}, $\beta_i \in R_{0, Z'}^\times$ for all $i.$ In particular, this means that $\varphi(g) \in R_{0,Z'}^n.$ Remark that for any connected affinoid neighborhood $Z \subseteq Z'$ of~$x,$ $\varphi(g) \in R_{0,Z}^n.$

Since $|\varphi(g)|_{\eta} < \varepsilon/C,$ there exists a connected affinoid neighborhood $Z \subseteq Z'$ of $x$ such that $|\varphi(g)|_u < \varepsilon/C$ for all $u \in X_{|P|=r, Z}.$ Consequently, $|\varphi(g)|_{\rho_{X_{|P|=r, Z}}} < \varepsilon/C$, where $\rho_{X_{|P|=r, Z}}$ is the spectral norm on $X_{|P|=r, Z}.$ By Corollary \ref{126}, this means that ${|\varphi(g)|_{R_{0,Z}} < \varepsilon}.$ 

By Remark \ref{141}, the conditions of Theorem \ref{e keqe} are satisfied, meaning there exist ${g_i \in G(F_{i, Z})}, i=1,2,$ such that $g=g_1 \cdot g_2$ in $G(F_{0,Z}).$
\end{proof}

Remark that in the proposition above, we can in the same way show that there exist $g_i' \in G(F_{i,Z}), i=1,2,$ such that $g=g_2' \cdot g_1'$ in $G(F_{0,Z}).$

\begin{conv}\label{convi} Let us fix once and for all an embedding of $G$ into $\mathbb{A}_F^m$ for some $m \in \mathbb{N}.$ Let $K/F$ be a field extension, and $M \subseteq K.$
Set $G_{K}=G \times_F K.$ Let $U$ be a Zariski open subset of $G_{K}.$ We will denote by $U(M)$ the set $\mathbb{A}^m(M) \cap U,$ where $\mathbb{A}^m(M)$ is the set of vectors in $\mathbb{A}^m(K)$ with coordinates in $M$. 
\end{conv}

We now show that we can omit the hypothesis on the norm of $\varphi(g)$ in the Lemma \ref{142}. 

\begin{lm} \label{144} With the same notation as in Lemma \ref{142},
let $g \in G(F_{0,{Z_1}}).$ Suppose $g \in S'$. 
Then there exists a connected affinoid neighborhood $Z \subseteq Z_1$ of $x$, and ${g_i \in G(F_{i,Z})}, i=1,2,$ such that $g=g_1 \cdot g_2$ in $G(F_{0,Z})$.
\end{lm}

\begin{proof}
We will reduce to the first case (\textit{i.e.} Lemma \ref{142}). Recall that the fields $F_{0, Z_1}$ can be endowed with a submultiplicative norm $|\cdot|_{F_{0,Z_1}}$ as in Lemma \ref{den}, where the role of the point $z$ is played by $\eta$ here.

Let $\psi: gS'\cap S' \rightarrow \mathbb{A}^n_{F_{0,Z_1}}$ be the morphism given by $h \mapsto \varphi(g^{-1}h).$ Remark $0 \in \mathrm{Im}(\psi)$. The preimage $\psi^{-1}(D_{F_{0, Z_1}^n}(0, \varepsilon/C))$ is open in  $(gS' \cap S')(F_{0, Z_1}).$ 

Since $X_{|P|=r, Z_1}$ is a rational domain in $X_{|P| \leqslant r, Z_1},$ by Lemma \ref{den}, $F_{1, Z_1}$ is dense in $F_{0,Z_1},$  so $(gS'\cap S')(F_{1, Z_1})$ is dense in $(gS'\cap S')(F_{0, Z_1})$ (see Convention \ref{convi}). This means there exists ${h \in (gS' \cap S')(F_{1, Z_1}) \subseteq G(F_{1, Z_1})}$ such that $|\varphi(g^{-1}h)|_{F_{0, Z_1}} < \varepsilon/C,$ implying that $|\varphi(g^{-1}h)|_{\eta}<\varepsilon/C.$ 

By Lemma \ref{142}, there exists a connected affinoid neighborhood $Z \subseteq Z_1$ of $x$ and ${g_1' \in G(F_{1, Z})},$ $g_2' \in G(F_{2, Z}),$ such that $g^{-1}h=g_2' \cdot g_1'$ in $G(F_{0,Z}).$ Hence, there exist $g_1:=hg_1'^{-1}\in G(F_{1, Z})$ and $g_2:=g_2'^{-1} \in G(F_{2,Z})$ such that $g=g_1 \cdot g_2$ in $G(F_{0,Z}).$
\end{proof}

\begin{thm} \label{145}
Recall Setting \ref{137}. For any $g \in G(F_{0, Z_0}),$ there exists a connected affinoid neighborhood $Z \subseteq Z_0$ of $x$ and $g_i \in G(F_{i,Z}), i=1,2,$ such that $g=g_1 \cdot g_2$ in $G(F_{0,Z})$.
\end{thm}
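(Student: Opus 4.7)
The plan is to deduce Theorem \ref{145} from Proposition \ref{144} by a standard group-theoretic density trick. Proposition \ref{144} already handles the case $g \in S'(F_{0,Z_1})$, so the only remaining task is to remove the hypothesis $g \in S'$ by reducing to an element that lies in $S'$ after multiplication by an element of $G(F_{1,Z_1})$ coming from a ``thickening side''.

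Concretely, fix an arbitrary $g \in G(F_{0,Z_0})$; by restricting if necessary we may view $g \in G(F_{0,Z_1})$, where $Z_1 \subseteq Z_0$ is the connected affinoid neighborhood produced in Parameter \ref{139}. The subset $S' \subseteq G$ is a non-empty Zariski open, and so is $g \cdot S'^{-1}$ (the image of $S'$ under the automorphism $s \mapsto g s^{-1}$ of $G$). Since $G$ is rational in the sense of Definition \ref{12aaa}, it is irreducible, hence the Zariski open $W := S' \cap g \cdot S'^{-1}$ of $G$ is non-empty. Under the isomorphism $\varphi : S' \xrightarrow{\sim} S''$ with $S'' \subseteq \mathbb{A}_F^n$, the set $W$ corresponds to a non-empty Zariski open of $S''$. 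As $F_{1,Z_1}$ contains the infinite field $k$, such an open has $F_{1,Z_1}$-rational points, so we may pick $h \in W(F_{1,Z_1}) \subseteq G(F_{1,Z_1})$.

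By construction $h \in S'$ and $h^{-1} g \in S'$, and the latter element lies in $G(F_{0,Z_1})$ (since $F_{1,Z_1} \hookrightarrow F_{0,Z_1}$). Proposition \ref{144} applied to $h^{-1} g$ then yields a connected affinoid neighborhood $Z \subseteq Z_1$ of $x$ and elements $g_1' \in G(F_{1,Z})$, $g_2 \in G(F_{2,Z})$ such that
\[
h^{-1} g = g_1' \cdot g_2 \quad \text{in } G(F_{0,Z}).
\]
Setting $g_1 := h \cdot g_1' \in G(F_{1,Z})$ (via the restriction $F_{1,Z_1} \hookrightarrow F_{1,Z}$), we obtain $g = g_1 \cdot g_2$ in $G(F_{0,Z})$, which is the required factorization.

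The main conceptual point (and the only place a little care is needed) is the reduction step: one must justify that the Zariski open $W = S' \cap g S'^{-1}$ is non-empty and has $F_{1,Z_1}$-rational points. Non-emptiness follows from irreducibility of $G$, guaranteed by the notion of rationality we have adopted, and the existence of rational points over $F_{1,Z_1}$ follows from the fact that this field is infinite (containing the non-trivially valued field $k$) together with the standard fact that non-empty Zariski opens of affine space over an infinite field have plenty of rational points. Once this is in hand, everything else is a direct application of Proposition \ref{144}.
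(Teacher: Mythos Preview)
Your overall strategy---translate $g$ by an element of $G(F_{1,Z_1})$ so that the translate lands in $S'$, then invoke Proposition~\ref{144}---is sound and close in spirit to the paper's argument. There is, however, a gap in the reduction step. You assert that $G$ is irreducible because it is rational in the sense of Definition~\ref{12aaa}, and you need this to conclude that $W = S' \cap gS'^{-1}$ is non-empty. But Definition~\ref{12aaa} only asks that $G$ contain \emph{some} Zariski open isomorphic to an open of affine space; it does not force $G$ to be irreducible (the remark following that definition explicitly separates connectedness as an extra hypothesis). Since $S'$ is irreducible and contains the identity, it lies in the identity component $G^0$; likewise $S'^{-1} \subseteq G^0$, so $gS'^{-1} \subseteq gG^0$. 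If $g \notin G^0(F_{0,Z_1})$ these cosets are disjoint and $W = \emptyset$, so your argument only goes through under the additional hypothesis that $G$ is connected (which is assumed in the later applications via Notation~\ref{225}, but not in Setting~\ref{137} and Notation~\ref{134} governing Theorem~\ref{145}).

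The paper handles the general case differently: it invokes \cite[Lemma~3.1]{HHK} to produce directly an $F$-rational Zariski open $S_1'$ of $G$ with $g \in S_1'(F_{0,Z_1})$, then picks $\alpha \in S_1'(F)$ (using that $F$ is infinite), sets $S_1 = \alpha^{-1}S_1'$, and applies Proposition~\ref{144} to $\alpha^{-1}g \in S_1(F_{0,Z_1})$ with $S_1$ in the role of~$S'$. So the paper translates the rational \emph{open} by an element of $G(F)$, whereas you translate the element $g$ by something in $G(F_{1,Z_1})$; the paper's route delegates the covering-by-rational-opens question to the cited lemma rather than appealing to irreducibility of $G$.
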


\begin{proof}
Recall the construction of the connected affinoid neighborhood $Z_1 \subseteq Z_0$ of $x$ in Parameter \ref{139}. 
By \cite[Lemma 3.1]{HHK}, there exists a Zariski open $S_1'$ of $G$ isomorphic to an open $S_1''$ of $\mathbb{A}_F^n$ such that $g \in S_1'(F_{0, Z_1}).$
Since $F$ is infinite and $S_1'$ is isomorphic to an open of some $\mathbb{A}_{F}^{n}$, there exists $\alpha \in S_1'(F).$ Set $S_1:=\alpha^{-1}S_1'.$ 
Then $I \in S_1$, and $S_1$ is isomorphic to an open subset of $\mathbb{A}_F^n.$ By translation, we may assume that this isomorphism sends $I$ to $0 \in \mathbb{A}^n(F).$ Set  $g':=\alpha^{-1}g \in S_1(F_{0, Z_1}).$ Then by Lemma \ref{144}, there exists a connected affinoid neighborhood $Z \subseteq Z_1$ of $x$, and $g_1' \in G(F_{1, Z}),$ $g_2 \in G(F_{2, Z}),$ such that $g'=g_1' \cdot g_2$ in $G(F_{0,Z}).$ Consequently, for $g_1:=\alpha \cdot g_1' \in G(F_{1, Z}),$ we obtain that $g=g_1 \cdot g_2$ in $G(F_{0, Z}).$
\end{proof}
As a consequence, the following, which is the main tool for showing a local-global principle over the relative $\mathbb{P}^{1, \mathrm{an}},$ can be shown.

Recall that we are working under the hypotheses of Notation \ref{17aaa}.

\begin{prop} \label{146}
Let $U,V$ be connected affinoid domains in $\mathbb{P}_{\mathcal{H}(x)}^{1, \mathrm{an}}$  containing only type ~3 points in their boundaries such that $U \cap V$ is a single type 3 point $\{\eta_{P,r}\},$ with ${P \in \mathcal{O}_x[T]}$ irreducible over $\mathcal{H}(x)$ and $r \in \mathbb{R}_{>0} \backslash \sqrt{|\mathcal{H}(x)^\times|}$. Set ${W:= U \cap V}.$

Let $G$ be as in Notation \ref{134}, and $Z_0$ as in Setting \ref{137}.
Let $Z' \subseteq Z_0$ be a connected affinoid neighborhood of $x$ for which the $Z'$-thickenings $U_{Z'}, V_{Z'}, W_{Z'}$ exist, are connected, and Proposition \ref{nukkuptoj2} is satisfied. 
 
Then for any $g \in G(\mathscr{M}(W_{Z'})) \cup G(\mathscr{M}_{\mathbb{P}_{Z'}^{1, \mathrm{an}}, \eta})$, there exists a connected affinoid neighborhood $Z \subseteq Z'$ of $x$, and $g_U \in G(\mathscr{M}(U_Z)), g_V \in G(\mathscr{M}(V_Z)),$ such that $g=g_U \cdot g_V$ in ~$G(\mathscr{M}(W_Z))=G(\mathscr{M}(U_Z \cap V_Z)).$ 
\end{prop}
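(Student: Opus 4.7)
The plan is to reduce both assertions to Theorem~\ref{145}, the only nontrivial work being the identification of the meromorphic function fields involved with the fraction fields appearing in Setting~\ref{137}, and, for the second case, a cofinality argument for the stalk at $\eta$.

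First I would identify the sections of $\mathscr{M}$. By hypothesis the $Z'$-thickenings $U_{Z'}, V_{Z'}, W_{Z'}$ are connected; since $S$ is normal (hence reduced) and reducedness is inherited by $\pi^{-1}(Z')$ and its affinoid domains, these thickenings are integral $k$-affinoid spaces. By the description of the sheaf of meromorphic functions on an integral analytic space (see Appendix~I), for any such space $X = \mathcal{M}(R_X)$ one has $\mathscr{M}(X) = \mathrm{Frac}(R_X)$. All the properties required in Setting~\ref{137} are stable under shrinking $Z_0$ (this is precisely the point of Proposition~\ref{nukkuptoj2}, Corollary~\ref{107}, Corollary~\ref{126}, and the constructions of Parameters~\ref{138}--\ref{140}), so they are inherited by any connected affinoid neighborhood $Z \subseteq Z'$ of $x$. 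In particular, for every such $Z$ we have
$$\mathscr{M}(U_Z) = F_{1,Z}, \qquad \mathscr{M}(V_Z) = F_{2,Z}, \qquad \mathscr{M}(W_Z) = F_{0,Z}.$$

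For the first case, given $g \in G(\mathscr{M}(W_{Z'})) = G(F_{0,Z'})$, I would apply Theorem~\ref{145} with $Z'$ playing the role of $Z_0$ in Setting~\ref{137}; as noted above this is legitimate because every hypothesis of that setting passes to $Z'$. The theorem yields a connected affinoid neighborhood $Z \subseteq Z'$ of $x$ together with elements $g_1 \in G(F_{1,Z})$ and $g_2 \in G(F_{2,Z})$ such that $g = g_1 \cdot g_2$ in $G(F_{0,Z})$. Setting $g_U := g_1$ and $g_V := g_2$ and reinterpreting through the identifications above gives the decomposition in $G(\mathscr{M}(W_Z))$.

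For the second case, I would use Lemma~\ref{130}: the family $\{W_Z : Z \subseteq Z'\}$, indexed by connected affinoid neighborhoods of $x$, is a basis of neighborhoods of $\eta$ in $W_{Z'}$, and since $W_{Z'}$ is an analytic neighborhood of $\eta$ in $\mathbb{P}_{Z'}^{1,\mathrm{an}}$, this family is cofinal among analytic neighborhoods of $\eta$ in $\mathbb{P}_{Z'}^{1,\mathrm{an}}$ as well. Consequently
$$\mathscr{M}_{\mathbb{P}_{Z'}^{1,\mathrm{an}},\,\eta} \;=\; \varinjlim_{Z \subseteq Z'} \mathscr{M}(W_Z),$$
so any germ $g$ is represented by an actual element $\widetilde{g} \in \mathscr{M}(W_{Z''})$ for some connected affinoid neighborhood $Z'' \subseteq Z'$ of $x$. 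Applying the first case to $\widetilde{g}$ with $Z''$ in place of $Z'$ yields the required decomposition. The only real obstacles are the two identifications just made; the genuinely substantive input is already packaged in Theorem~\ref{145}.
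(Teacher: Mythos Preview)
Your argument has a genuine gap: the identification $\mathscr{M}(U_Z)=F_{1,Z}$ and $\mathscr{M}(V_Z)=F_{2,Z}$ is false in general. In Setting~\ref{137} the rings $R_{1,Z}$ and $R_{2,Z}$ are by definition $\mathcal{O}(X_{|P|\leqslant r,Z})$ and $\mathcal{O}(X_{|P|\geqslant r,Z})$, so $F_{1,Z}$ and $F_{2,Z}$ are the fraction fields of the \emph{half-spaces} $\{|P|\leqslant r\}$ and $\{|P|\geqslant r\}$. The affinoid domains $U$ and $V$, however, may have additional type~3 boundary points $\eta_{P_j,r_j}$ beyond $\eta_{P,r}$ (as in the discussion before Proposition~\ref{nukkuptoj2}), so $U_Z$ is typically a proper sub-affinoid of $X_{|P|\leqslant r,Z}$ and $\mathscr{M}(U_Z)\neq F_{1,Z}$. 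Only the identification $\mathscr{M}(W_Z)=F_{0,Z}$ is correct, since Proposition~\ref{nukkuptoj2}(1) gives $W_Z=U_Z\cap V_Z=X_{|P|=r,Z}$.

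The repair is exactly what the paper does: after Theorem~\ref{145} produces $g_1\in G(F_{1,Z})$ and $g_2\in G(F_{2,Z})$ with $g=g_1\cdot g_2$ in $G(F_{0,Z})$, one observes (via Proposition~\ref{projectivedom} and Lemma~\ref{16aaa}) that $U_Z\subseteq X_{|P|\leqslant r,Z}$ and $V_Z\subseteq X_{|P|\geqslant r,Z}$, and then \emph{restricts} to obtain $g_U:=g_{1|U_Z}\in G(\mathscr{M}(U_Z))$ and $g_V:=g_{2|V_Z}\in G(\mathscr{M}(V_Z))$. This restriction step is the missing ingredient in your write-up. Your treatment of the stalk case via Lemma~\ref{130} is fine and equivalent to the paper's use of Lemma~\ref{103}.
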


\begin{proof}
Remark that for any $g \in G(\mathscr{M}_{\mathbb{P}_{Z'}^{1, \mathrm{an}}, \eta}),$ by Lemma \ref{103}, there exists a connected affinoid neighborhood $Z \subseteq Z'$ of $x$ such that $g \in G(\mathscr{M}(W_{Z})).$ Thus, it suffices to show the result for any $g \in G(\mathscr{M}(W_{Z'})).$

By Theorem \ref{145}, there exists a connected affinoid neighborhood $Z \subseteq Z'$ of $x,$ and ${g_i \in G(F_{i, Z})}, i=1,2,$ such that $g=g_1 \cdot g_2$ in $G(\mathscr{M}(W_Z))$ (once again, recall Setting \ref{137}). Set $\partial{U}=\{\eta_{P,r}, \eta_{P_j, r_j}, j=1,2,\dots, n\},$ where $P_j \in \mathcal{O}_x[T]$ are unitary polynomials that are irreducible over $\mathcal{H}(x)$ and $r_j \in \mathbb{R}_{>0} \backslash \sqrt{|\mathcal{H}(x)^\times|}$ for all ~$j.$  

Seeing as $U=\{u \in \mathbb{P}_{\mathcal{H}(x)}^{1, \mathrm{an}}: |P|_u \bowtie r, |P_j|_u \bowtie_j r_j, j\},$ where $\bowtie, \bowtie_j \in \{\leqslant, \geqslant\}$ for all ~$j$ (Proposition  \ref{projectivedom}), $U_Z \subseteq \{u \in \mathbb{P}_{Z}^{1, \mathrm{an}}: |P|_u \bowtie r\}.$ Without loss of generality, suppose that $\bowtie$ is $\leqslant$. Then $U_Z \subseteq \{u \in \mathbb{P}_{Z}^{1, \mathrm{an}}: |P|_u \leqslant r\}$ and $V_Z \subseteq \{u \in \mathbb{P}_{Z}^{1, \mathrm{an}}: |P|_u \geqslant r\}$ (see Lemma ~\ref{16aaa}). 

Consequently, for $g_U:=g_{1|U_Z} \in G(\mathscr{M}(U_Z))$ and $g_V:=g_{2|Z} \in G(\mathscr{M}(V_Z)),$ $g=g_U \cdot g_V$ in $G(\mathscr{M}(W_Z))=G(\mathscr{M}(U_Z \cap V_Z)).$
\end{proof}

\subsection{Patching over relative nice covers}
Proposition \ref{146} is enough in itself to directly show a local-global principle over the relative projective line. However, just like in the one-dimensional case, when showing a local-global principle for relative projective curves, we use arguments that make it possible to descend to the line. The goal of this subsection is to present the necessary arguments to make this descent.

We keep using Notation \ref{17aaa} under the additional assumption that $k$ is non-trivially valued. In Definition \ref{nice}, we recalled the notion of a nice cover. We will also be using the following.

\begin{defn}[{\cite[Definition 2.18]{une}}] \label{parityfunction}
Let $C$ be a $k$-analytic curve. Let $\mathcal{U}$ be a nice cover of ~$C.$ A function $T_{\mathcal{U}} : \mathcal{U} \rightarrow \{0,1\}$ will be called \textit{a parity function for} $\mathcal{U}$ if for any different non-disjoint $U', U'' \in \mathcal{U}$, $T_{\mathcal{U}}(U') \neq T_{\mathcal{U}}(U'').$

We denote by $S_{\mathcal{U}}$ the set of points on the pairwise intersections of the elements of $\mathcal{U},$ meaning 
$S_{\mathcal{U}}:=\{x \in C: \exists U, V \in \mathcal{U}, U \neq V, x \in U \cap V\}$.

\end{defn}

\begin{thm} \label{147}
Let $\mathcal{U}_x$ be a nice cover of $\mathbb{P}_{\mathcal{H}(x)}^{1, \mathrm{an}},$ and $T_{\mathcal{U}_x}$ a parity function corresponding to $\mathcal{U}_x$.  Let $S_{\mathcal{U}_x}$ be the set of intersection points of the different elements of $\mathcal{U}_x.$ Let $Z_0$ be a connected affinoid neighborhood of $x$ such that the $Z_0$-thickening $\mathcal{U}_{Z_0}$ of $\mathcal{U}_x$ exists and is a $Z_0$-relative nice cover of $\mathbb{P}_{Z_0}^{1, \mathrm{an}}$ (see Definition \ref{104}).  

Let $G/\mathscr{M}(Z_0)(T)$ be a rational linear algebraic group.
Then for any element ${(g_s)_{s \in S_{\mathcal{U}_x}}}$ of ${\prod_{s \in S_{\mathcal{U}_x}} G\left(\mathscr{M}_{\mathbb{P}_{Z_0}^{1, \mathrm{an}}, s}\right)},$ there exists a connected affinoid neighborhood $Z \subseteq Z_0$ of $x$, and $(g_{U_Z})_{U \in \mathcal{U}_x} \in \prod_{U \in \mathcal{U}_x} G(\mathscr{M}(U_Z)),$ satisfying: for any ${s \in S_{\mathcal{U}_x}},$ there exist exactly two $U_{s}, V_{s} \in \mathcal{U}_x$ containing $s$, $g_s \in G(\mathscr{M}(U_{s, Z} \cap V_{s, Z})),$ and if ${T_{\mathcal{U}_x}(U_{s})=0},$ then $g_s=g_{U_{s,Z}} \cdot g_{V_{s,Z}}^{-1}$ in $G(\mathscr{M}(U_{s, Z} \cap V_{s, Z})).$ 
\end{thm}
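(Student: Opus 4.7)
The plan is to induct on $n := |\mathcal{U}_x| \geqslant 2$, with Proposition~\ref{146} playing the role of the elementary splitting primitive. The base case $n = 2$ is essentially Proposition~\ref{146}: if $\mathcal{U}_x = \{U, V\}$ with $U \cap V = \{s_0\}$ and $T_{\mathcal{U}_x}(U) = 0$, apply Proposition~\ref{146} to $g_{s_0}$ to produce $a \in G(\mathscr{M}(U_Z))$ and $b \in G(\mathscr{M}(V_Z))$ with $g_{s_0} = a \cdot b$, and set $g_{U, Z} := a$, $g_{V, Z} := b^{-1}$; the opposite parity case uses the reverse-order variant of Proposition~\ref{146} noted after Proposition~\ref{142}.

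For $n \geqslant 3$, I would pick a \emph{leaf} $U_0 \in \mathcal{U}_x$---an element meeting exactly one other element $V_0 \in \mathcal{U}_x$, necessarily in a single type~3 point $s_0$---whose existence follows from \cite[Lemma~2.20]{une} (as in the proof of Theorem~\ref{nicecover}). Setting $W_0 := U_0 \cup V_0$, Theorem~\ref{nicecover}(3) guarantees $W_0$ is a connected affinoid domain of $\mathbb{P}_{\mathcal{H}(x)}^{1, \mathrm{an}}$, so $\widetilde{\mathcal{U}}_x := (\mathcal{U}_x \setminus \{U_0, V_0\}) \cup \{W_0\}$ is a nice cover with $n - 1$ elements, equipped with the parity function $T_{\widetilde{\mathcal{U}}_x}$ inherited from $T_{\mathcal{U}_x}$ via $T_{\widetilde{\mathcal{U}}_x}(W_0) := T_{\mathcal{U}_x}(V_0)$; this is consistent because every $V' \in \mathcal{U}_x \setminus \{U_0, V_0\}$ meeting $W_0$ in $\widetilde{\mathcal{U}}_x$ was already adjacent to $V_0$ in $\mathcal{U}_x$ (since $U_0$ is a leaf). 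Proposition~\ref{nukkuptoj2}(2) ensures, after possibly shrinking $Z_0$, that $\widetilde{\mathcal{U}}_{x, Z_0}$ is again a $Z_0$-relative nice cover, and $S_{\widetilde{\mathcal{U}}_x} = S_{\mathcal{U}_x} \setminus \{s_0\}$.

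Apply Proposition~\ref{146} to $g_{s_0}$ to obtain $Z_1 \subseteq Z_0$ and elements $a \in G(\mathscr{M}(U_{0, Z_1}))$, $b \in G(\mathscr{M}(V_{0, Z_1}))$ with $g_{s_0} = a \cdot b$; assume $T_{\mathcal{U}_x}(U_0) = 0$, the opposite parity case being symmetric. Define modified germ data on $\widetilde{\mathcal{U}}_x$ by setting $\tilde{g}_s := g_s \cdot (b|_s)^{-1}$ whenever $s \in \partial V_0 \setminus \{s_0\}$, and $\tilde{g}_s := g_s$ otherwise; the correction $(b|_s)^{-1}$ is designed to absorb the factor $b$ that will appear in the forthcoming $g_{V_0, Z}$. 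The inductive hypothesis, applied to $\widetilde{\mathcal{U}}_x$ over $Z_1$ with data $(\tilde{g}_s)$, yields $Z \subseteq Z_1$ and sections $(\tilde{g}_{W, Z})_{W \in \widetilde{\mathcal{U}}_x}$ satisfying the cocycle conditions for $\widetilde{\mathcal{U}}_x$. I then set $g_{W, Z} := \tilde{g}_{W, Z}$ for $W \in \mathcal{U}_x \setminus \{U_0, V_0\}$, $g_{V_0, Z} := b^{-1} \cdot \tilde{g}_{W_0, Z}|_{V_{0, Z}}$, and $g_{U_0, Z} := a \cdot \tilde{g}_{W_0, Z}|_{U_{0, Z}}$ (all well-defined since $U_{0, Z}, V_{0, Z} \subseteq W_{0, Z}$). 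A direct computation verifies the cocycle conditions for $\mathcal{U}_x$: at $s_0$, the copies of $\tilde{g}_{W_0, Z}|_{s_0}$ cancel to give $g_{U_0, Z} \cdot g_{V_0, Z}^{-1}|_{s_0} = a \cdot b = g_{s_0}$; at each $s \in \partial V_0 \setminus \{s_0\}$ the inserted $b^{-1}$ in $g_{V_0, Z}$ and the correction $(b|_s)^{-1}$ in $\tilde{g}_s$ conspire to recover $g_s$; and intersections disjoint from $V_0$ are inherited unchanged.

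The main technical difficulty lies in the parity-sign bookkeeping and in verifying that the inserted $b$-factors cancel correctly at every intersection touching $V_0$. Because the nerve of a nice cover of $\mathbb{P}_{\mathcal{H}(x)}^{1, \mathrm{an}}$ is a tree, the leaf-pruning induction terminates cleanly after $n - 2$ steps with no global cocycle obstruction.
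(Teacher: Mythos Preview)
Your argument is correct and close in spirit to the paper's, but the organization differs in an interesting way. The paper formulates a slightly more general inductive claim (Statement~\ref{148}): for any connected sub-collection $\{U_h\}_{h \in I}$ of $\mathcal{U}_x$ with $|I|=i$, the desired decomposition exists. It then peels off a leaf $U_i$ from $I$, applies the inductive hypothesis \emph{first} to $I' = I \setminus \{i\}$ to obtain sections $(g_{U_h, Z_{I'}})_{h \in I'}$, and only afterwards invokes Proposition~\ref{146}---not on $g_\eta$ alone, but on the product $g_\eta \cdot g_{U_{h_0}, Z_I}$ (or $g_{U_{h_0}, Z_I}^{-1} \cdot g_\eta$, according to parity)---and finally right-multiplies \emph{all} previously constructed sections by the resulting factor $b^{-1}$ (or $c$). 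Your approach runs this in the opposite order: you apply Proposition~\ref{146} first, pre-adjust the germ data at $\partial V_0 \setminus \{s_0\}$, merge $U_0 \cup V_0$ into a single element $W_0$ of a smaller nice cover, and then invoke induction. The two are essentially dual---post-adjusting sections versus pre-adjusting data---and each relies on the same combinatorial fact (uniqueness of $h_0$, i.e.\ existence of a leaf, which in the paper is extracted from \cite[Lemma~2.20]{une} together with \cite[Lemma~2.12]{une}). Your merging step requires checking that $\widetilde{\mathcal{U}}_x$ is again a nice cover with compatible thickenings $W_{0,Z} = U_{0,Z} \cup V_{0,Z}$, which you correctly source from Proposition~\ref{nukkuptoj2}(2) and Theorem~\ref{nicecover}(3); the paper avoids this by never forming $W_0$ as an element of a cover. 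Either route works; the paper's has the minor advantage of not needing to verify that the merged family is nice, while yours localizes the bookkeeping to the data rather than spreading a global correction across all sections.
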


\begin{proof}
Set $\mathcal{U}_x=\{U_1, U_2, \dots, U_n\}.$ Clearly, $n>1$. Using induction we will show the following statement for all $i$ such that $2 \leqslant i \leqslant n$: 

\begin{fact} \label{148} Let $I \subseteq \{1,2,\dots,n\}$ be such that $|I|=i$ and $\bigcup_{h \in I} U_h$ is connected. Let $S_I$ $(\subseteq S_{\mathcal{U}_x})$ denote the set of intersection points of the different elements of $\{U_h\}_{h \in I}.$ Let $Z' \subseteq Z_0$ be any connected affinoid neighborhood of $x.$
Then for any $(g_s)_{s \in S_I} \in \prod_{s \in S_I} G(\mathscr{M}_{\mathbb{P}_{Z'}^{1, \mathrm{an}}, s}),$ there exists a connected affinoid neighborhood $Z_I \subseteq Z'$ of $x$ and $(g_{U_h,Z_I})_{h \in I} \in \prod_{h \in I} G(\mathscr{M}(U_{h,Z_I}))$, satisfying: for any $s \in S_I$ there exist exactly two elements $U_{s}, V_s \in \{U_h\}_{h \in I}$ containing $s$, $g_s \in G(\mathscr{M}(U_{s, Z_I} \cap V_{s, Z_I})),$ and if $T_{\mathcal{U}_x}(U_{s})=0,$ then $g_s=g_{U_s,Z_I} \cdot g_{V_s, Z_I}^{-1}$ in $G(\mathscr{M}(U_{s,Z_{I}} \cap V_{s, Z_{I}})).$  The same is true for any connected affinoid neighborhood $Z'' \subseteq Z_I$ of $x.$
\end{fact}

For $i=2,$ this is Proposition \ref{146}. Suppose it is true for some $i-1, 2<i\leqslant n,$ and let us show that it is true for $i.$ Without loss of generality, we may assume that $I=\{1,2,\dots, i\},$ \textit{i.e.} that  $\bigcup_{h=1}^i U_h$ is connected. By \cite[Lemma 2.20]{une}, there exist $i-1$ elements of $\{U_h\}_{h=1}^i$ whose union is connected. Without loss of generality, let us assume that $\bigcup_{h=1}^{i-1} U_h$ is connected. Set $I'=I\backslash \{i\}.$

Let us start by making a comparison between $S_{I}$ and $S_{I'}.$ Set $V_{i-1}=\bigcup_{h=1}^{i-1} U_h.$ This is a connected affinoid domain containing only type 3 points in its boundary. Since $V_{i-1}, U_i,$ and  $V_{i-1} \cup U_i$ are connected subsets of $\mathbb{P}_{\mathcal{H}(x)}^{1, \mathrm{an}}$, $V_{i-1} \cap U_i$ is  non-empty  and connected (see \cite[Lemma 2.7]{une}). Furthermore, since $V_{i-1} \cap U_i \subseteq S_{\mathcal{U}_x}$ (\textit{i.e.} it is contained in a finite set of type 3 points), $V_{i-1} \cap U_i$ is a single type 3 point $\{\eta\}$. Hence, there exists $h_0 \in I'$ such that $U_{h_{0}} \cap U_{i} \neq \emptyset.$ By \cite[Lemma 2.12]{une}, such an $h_0$ is unique. Consequently, $S_I=S_{I'} \cup \{\eta\}.$

For some $Z' \subseteq Z_0$ as in Statement \ref{148}, let $(g_s)_{s \in S_{I}} \in \prod_{s \in S_I}G(\mathscr{M}_{\mathbb{P}_{Z'}^{1, \mathrm{an}}, s}).$ From the induction hypothesis, for $(g_s)_{s \in S_{I'}} \in \prod_{s \in S_{I'}}G(\mathscr{M}_{\mathbb{P}_{Z'}^{1, \mathrm{an}},s}),$ there exist a connected affinoid neighborhood $Z_{I'} \subseteq Z'$ of $x$ and $(g_{U_{h,Z_{I'}}})_{h \in I'} \in \prod_{h \in I'} G(\mathscr{M}(U_{h,Z_{I'}}))$, satisfying: for any $s \in S_{I'},$ there exist exactly two $U_{s}, V_s \in \{U_h\}_{h \in I'}$ containing $s$, $g_s \in G(\mathscr{M}(U_{s, Z_{I'}} \cap V_{s, Z_{I'}})),$ and if $T_{\mathcal{U}_x}(U_s)=0,$ $g_s=g_{U_{s,Z_{I'}}} \cdot g_{V_{s, Z_{I'}}}^{-1}$ in $G(\mathscr{M}(U_{s,Z_{I'}} \cap V_{s, Z_{I'}})).$ 

Remark that the affinoid domains $V_{i-1}$ and $U_i$ satisfy the properties of Proposition ~\ref{146} with $V_{i-1} \cap U_i= \{\eta\}$. As seen above, there exist exactly two elements of $\{U_h\}_{h \in I}$ containing ~$\eta$. Also, since  $g_{\eta} \in G(\mathscr{M}_{ \mathbb{P}_{Z'}^{1, \mathrm{an}},\eta}),$ by Lemma \ref{103}, we may assume that $g_{\eta} \in G(\mathscr{M}(V_{i-1, Z'} \cap U_{i,Z'})).$ Hence, we may also assume that for any connected affinoid domain $Z''' \subseteq Z'$ of $x,$ $g_{\eta} \in G(\mathscr{M}(V_{i-1, Z'''} \cap U_{i,Z'''})).$ 
\begin{itemize}
\item Suppose $T_{\mathcal{U}_x}(U_i)=0.$ By Proposition \ref{146}, there exists a connected affinoid neighborhood $Z_I \subseteq Z_{I'} \subseteq Z'$ of $x$, and $a \in G(\mathscr{M}(U_{i, Z_I})), b \in G(\mathscr{M}(V_{i-1, Z_I})),$ such that $g_{\eta} \cdot g_{U_{i-1}, Z_I} =a\cdot b$ in $G(\mathscr{M}(U_{i, Z_I} \cap V_{i-1, Z_I})).$ For any $h \in I',$ set $g'_{U_h, Z_I}:=g_{U_h, Z_I} \cdot b^{-1}$ in $G(\mathscr{M}(U_{h, Z_I})).$ Also, set $g'_{U_i, Z_I}:=a$ in $G(\mathscr{M}(U_{i, Z_I})).$
\item Suppose $T_{\mathcal{U}_x}(U_i)=1.$ By Proposition \ref{146}, there exists a connected affinoid neighborhood $Z_I \subseteq Z_{I'} \subseteq Z'$ of $x$ and $c \in G(\mathscr{M}(V_{i-1, Z_I})), d \in G(\mathscr{M}(U_{i, Z_I}))$, such that $g_{U_{i-1}, Z_I}^{-1} \cdot g_{\eta}=c \cdot d$ in $G(\mathscr{M}(V_{i-1, Z_I} \cap U_{i, Z_I})).$ For any $h \in I',$ set $g'_{U_h, Z_I}:=g_{U_h, Z_I} \cdot c$ in $G(\mathscr{M}(U_{h, Z_I})).$ Also, set $g'_{U_i, Z_I}:=d^{-1}$ in $G(\mathscr{M}(U_{i, Z_I})).$
\end{itemize}
The family $(g'_{U_h, Z_I})_{h \in I} \in \prod_{h \in I} G(\mathscr{M}(U_{h, Z_I}))$ satisfies the conditions of Statement \ref{148} for the given $(g_s)_{s \in S_I}.$ The last part of Statement \ref{148} is obtained directly by taking restrictions of $g'_{U_{h, Z_I}}$ to $G(\mathscr{M}(U_{h,Z''})), h \in I.$

In particular, for $i=n,$ we obtain the result that was announced. 
\end{proof}

\section{Relative proper curves} \label{4.4}

Throughout this section, let $k$ denote a complete ultrametric field. 
We recall that a \textit{good} Berkovich analytic space is one where the affinoid domains form a basis of the Berkovich topology. Let us fix and study the following framework.

\begin{set} \label{200}
Let $S, C$ be $k$-analytic spaces such that $S$ is good and normal.  Suppose there exists a morphism $\pi: C \rightarrow S$ that makes $C$ a proper flat relative analytic curve (\textit{i.e.} all the non-empty fibers are curves) over $S.$  Let $x \in S$ be such that the stalk $\mathcal{O}_x$ is a field and ~${\pi^{-1}(x) \neq \emptyset}.$ 

Assume there exists a connected affinoid neighborhood $Z_0$ of $x$ such that:

\begin{enumerate}
\item for any $y \in Z_0,$ the fiber $
\pi^{-1}(y)$ is a normal irreducible projective $
\mathcal{H}(y)$-analytic curve $C_y$;
\item there exists a finite type scheme $C_{\mathcal{O}(Z_0)}$ over $
\text{Spec} \ \mathcal{O}(Z_0)$ such that the 
analytification of the structural morphism $\pi_{\mathcal{O}(Z_0)}:C_{\mathcal{O}(Z_0)} 
\rightarrow \text{Spec} \ \mathcal{O}(Z_0)$ (in 
the sense of \mbox{\cite[2.6]{ber93}}) is the projection
$C_{Z_0}:=C \times_S Z_0 \rightarrow Z_0$. (We say that $C_{Z_0} \rightarrow Z_0$ is \emph{algebraic}.) 
\end{enumerate}
\end{set}

\begin{rem}\label{200bis}
(1) Since $\pi$ is proper, it is boundaryless. As $S$ is good, by \cite[Definition~1.5.4]{ber93}, $C$ is good as well. (Beware that in \cite{ber93}, a boundaryless morphism is called \textit{closed}.) 

(2) Since it is boundaryless, by \cite[Theorem 9.2.3]{famduc}, $\pi$ is open. Hence, from now on we will assume, without loss of generality, that $\pi$ is surjective.
\end{rem}

Before exploring in more depth the properties of Setting \ref{200}, let us present two particular situations which lead to this setup, and which allow us to generalize (in different ways) some of the results of \cite{une}.

\subsection{Example: smooth geometrically connected fibers} \label{lame'} We show here that assuming the fiber smooth and geometrically connected guarantees that the latter has a neighborhood satisfying the conditions of Setting \ref{200}. Let us start by recalling the following auxiliary result.

\begin{lm} \label{duh}
Let $K$ be a complete ultrametric field. Let $X/K$ be a smooth geometrically connected proper $K$-analytic curve. Then $\mathcal{O}_X(X)=K.$
\end{lm}

\begin{proof}
Since $X$ is a proper analytic curve over $K,$ it is the analytification of a projective $K$-algebraic curve $X^{\mathrm{alg}}$ (\cite[Th\'eor\`eme 3.7.2]{Duc}); moreover, since $X$ is smooth and geometrically connected, so is $X^{\mathrm{alg}}$ (see Proposition 3.4.6 (4) and Theorem 3.4.8 (iii) of \cite{Ber90}; see also Theorem 3.2.1 of \emph{loc.cit.}). In particular, $X^{\mathrm{alg}}$ is geometrically integral. By \cite[Corollary~3.4.10]{Ber90}, $\mathcal{O}_{X}(X)=\mathcal{O}_{X^{\mathrm{alg}}}(X^{\mathrm{alg}}).$ Finally, by \cite[Corollary~3.3.21]{liulibri}, $\mathcal{O}_{X}(X)=K$.
\end{proof}

We are very thankful to Antoine Ducros for bringing to our attention the following result.

\begin{thm} \label{addition1}
Suppose $k$ is non-trivially valued. Let $S$ be a regular strict $k$-affinoid space. Let $\pi: C \rightarrow S$ be a flat proper relative analytic curve. Let $x \in S$ be such that $C_x:=\pi^{-1}(x)$ is a smooth and geometrically connected analytic curve over $\mathcal{H}(x)$ (hence non-empty). 

Then we can restrict to a connected affinoid neighborhood $Z$ of $x$ in $S$ such that: 
\begin{enumerate}
\item for any $z \in Z,$ $C_z:=\pi^{-1}(z)$ is a proper smooth geometrically connected $\mathcal{H}(z)$-analytic curve;
\item $\pi_{|Z} : C_Z \rightarrow Z$ is algebraic, where $C_Z:=\pi^{-1}(Z)$.
\end{enumerate}

\end{thm}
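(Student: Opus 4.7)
The strategy is to successively restrict $S$ to connected affinoid neighborhoods of $x$ in order to achieve the three required properties in turn: fiber smoothness, geometric connectedness of the fibers, and finally algebraicity of the relative curve.

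First I would handle smoothness by openness of the smooth locus of a flat morphism. The set $C^{\mathrm{sm}} \subseteq C$ on which $\pi$ is smooth is open (a standard fact in the theory of flat morphisms of Berkovich spaces developed by Ducros), and by hypothesis it contains all of $C_x$. Since $\pi$ is proper, $\pi(C \setminus C^{\mathrm{sm}})$ is closed in $S$ and does not contain $x$, so I can choose a connected affinoid neighborhood $Z_1$ of $x$ whose preimage lies entirely in $C^{\mathrm{sm}}$. Then every fiber of $\pi$ over $Z_1$ is smooth over its residue field.

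Next, for geometric connectedness I would use upper semi-continuity of $\dim_{\mathcal{H}(z)} H^0(C_z, \mathcal{O}_{C_z})$. For $\pi$ proper with geometrically reduced (in particular smooth) fibers, this integer equals the number of geometric connected components of $C_z$. By the geometric connectedness hypothesis it equals $1$ at $x$, and it is always at least $1$ on a non-empty fiber (non-emptiness is preserved in a neighborhood since flat proper morphisms are open). Upper semi-continuity of the direct image (applied to the coherent sheaf $\pi_*\mathcal{O}_{C_{Z_1}}$, coherent by the Berkovich analogue of Kiehl's theorem) then forces this dimension to equal $1$ throughout some connected affinoid neighborhood $Z_2 \subseteq Z_1$ of $x$, so that all fibers over $Z_2$ are geometrically connected.

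Finally, for the algebraization I would construct a relatively ample line bundle $\mathcal{L}$ on $C_{Z}$ for some connected affinoid neighborhood $Z \subseteq Z_2$ and then invoke relative GAGA. When the genus $g$ of $C_x$ satisfies $g \geq 2$, a high enough power of the relative dualizing sheaf $\omega_{C_{Z_2}/Z_2}$ is very ample on $C_x$; since ampleness on one fiber of a proper morphism spreads to an open neighborhood of that fiber in the base, after shrinking to some $Z$ we obtain such an $\mathcal{L}$ globally. The cases $g = 0,1$ require separate treatment, either via direct image sheaves and Riemann--Roch to produce a divisor of suitable positive degree, or by passing to a finite \'etale cover of the base and descending. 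The resulting closed immersion $C_Z \hookrightarrow \mathbb{P}^N_Z$ presents $C_Z \to Z$ as a projective analytic morphism over the strict $k$-affinoid $Z$, and relative GAGA then exhibits it as the analytification of a projective $\mathcal{O}(Z)$-scheme, as required in Setting \ref{200}(2). The main obstacle is this last step: constructing the relatively ample line bundle uniformly across a neighborhood (especially in the low-genus cases) and invoking the appropriate version of relative GAGA in the Berkovich affinoid setting.
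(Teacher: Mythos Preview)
Your overall three-step strategy---smoothness, then geometric connectedness, then algebraicity via a relatively ample line bundle and relative GAGA---matches the paper's. The first two steps are handled in essentially the same spirit (the paper phrases connectedness through Stein factorization and upper semi-continuity of the rank of $\pi_*\mathcal{O}_C$, which amounts to your $h^0$ argument), and the final appeal to relative ampleness plus Conrad's relative GAGA is exactly what the paper does.

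The genuine difference, and the point where your proposal leaves a gap you yourself flag, is the \emph{construction} of the relatively ample line bundle. You reach for $\omega_{C/S}$ and then need a case analysis for $g\leq 1$; the paper avoids this entirely by producing a horizontal effective Cartier divisor instead. Concretely: by \cite[Corollary 6.2.7]{famduc}, a smooth morphism admits \'etale-local sections, so there is an \'etale $\psi:S''\to S$ hitting $x$ together with an $S$-morphism $\varphi:S''\to C$. After shrinking one may take $\psi$ finite \'etale and surjective; then $\varphi$ is finite, and its image $D=\varphi(S'')$ is an irreducible Zariski closed subset of $C$ of codimension one. Since $C$ is regular (from $S$ regular, fibers smooth, and \cite[Theorem 11.3.3]{famduc}), $D$ is an effective Cartier divisor, and on each fiber $C_y$ it restricts to a non-empty finite set of closed points, hence has positive degree. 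The associated line bundle $\mathcal{O}(D)$ is therefore fiberwise ample, and \cite[Theorem 3.2.7]{conrad} gives algebraicity. This construction is uniform in the genus and sidesteps the low-genus difficulties you anticipated; it is the missing idea in your sketch.
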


\begin{rem}
If, in addition to the hypotheses of Theorem \ref{addition1}, we assume that $\mathcal{O}_x$ is a field, then all the conditions of Setting \ref{200} are satisfied. 
\end{rem}

\begin{proof}[Proof of Theorem \ref{addition1}] We may assume that $S$ is connected. By Remark \ref{200bis}(1), $C$ is a good $k$-analytic space. By the same argument as in Remark \ref{200bis}(2), we may assume that ~$\pi$ is surjective. 
By \cite[Theorem 10.7.5(3)]{famduc}, we may also assume that all of the fibers of ~$\pi$ are geometrically normal, \textit{i.e.} smooth since they are curves. As a consequence, since $S$ is regular, by \cite[Theorem 11.3.3(1c)]{famduc}, $C$ is also regular. Let us also remark that by \cite[Theorem 5.3.4(1)]{famduc}, $\pi$ is a quasi-smooth morphism; since it is also boundaryless, it is smooth. 

It remains to show geometric connectedness of the fibers and algebraicity ``around" the fiber of $x$. For an easier reading, we discuss these two steps separately. 

\
 
\emph{Fiberwise geometric connectedness.} For any $y \in S,$ let us denote by $C_y$ its fiber in $C$.
By \cite[Corollary 3.3.11(i)]{Ber90}, the set $A:=\{y \in S: \dim_{\mathcal{H}(y)}\mathcal{O}_{C_y}(C_y) \leqslant 1\}$ is an open analytic domain of $S$. By Lemma \ref{duh}, $x \in A$.  As ~$\pi$ is surjective, $\forall y \in A,$ ${\dim_{\mathcal{H}(y)}\mathcal{O}_{C_y}(C_y)=1}$. If $C_{y}^i, i=1,2,\dots, n_y,$ are the connected components of $C_y$, meaning they are proper smooth connected $\mathcal{H}(y)$-analytic curves, then  $\dim_{\mathcal{H}(y)}\mathcal{O}_{C_y}(C_y)=\sum_{i=1}^n \dim_{\mathcal{H}(y)}\mathcal{O}_{C_y}(C_y^i)$, implying $n=1,$ so that $C_y$ is connected. Thus, by restricting to a smaller $S$ if necessary, we may assume that $A=S$ and hence that $\pi$ has connected fibers.

By \cite[Proposition 3.3.7]{Ber90}, there is a unique Stein factorization $C \xrightarrow{f} S' \xrightarrow{g}  S$ for $\pi$, where 
\begin{itemize}
\item $f_{\star}\mathcal{O}_C=\mathcal{O}_{S'}$,
\item $f$ is a proper surjective morphism of $k$-analytic spaces with connected fibers,
\item $g$ is a surjective (by the surjectivity of $\pi$) and finite morphism, implying $S'$ is a $k$-affinoid space (\cite[Lemma 1.3.7]{ber93}).  
\end{itemize}

By \cite[Corollary 3.3.12]{Ber90}, $\pi_{\star}\mathcal{O}_C$ is a locally free sheaf on $S$ of rank $1.$ As $\pi_{\star}\mathcal{O}_C=g_{\star}f_{\star}\mathcal{O}_C=g_{\star}\mathcal{O}_{S'}$, we obtain that $g_{\star}\mathcal{O}_{S'}$ is a locally free sheaf on $S$ of rank $1$. This means that by reducing to a smaller $S$ if necessary, we may assume $g_{\star}\mathcal{O}_{S'}\cong\mathcal{O}_S$. Consequently, $\pi_{\star} \mathcal{O}_C=g_{\star}\mathcal{O}_{S'}=\mathcal{O}_S$. 

To summarize, we may assume that $\pi: C \rightarrow S$ has the following properties:
\begin{itemize}
\item $\pi_{\star}\mathcal{O}_C=\mathcal{O}_S,$
\item all the fibers of $\pi$ are connected.
\end{itemize}
(In fact, it is shown in the proof of \cite[Proposition 3.3.7]{Ber90} that the second point is a consequence of the first one, which we will now use to prove that the fibers are \textit{geometrically} connected.) 

Suppose there exists $s \in S$ such that the fiber $C_s$ is not geometrically connected. Then there exists a finite separable extension $L$ of $\mathcal{H}(s)$ such that $C_s \times_{\mathcal{H}(s)} L$ is not connected. By \cite[Theorem 3.4.1]{ber93}, there exists a finite \'etale morphism $h : U \rightarrow S$ of $k$-affinoid spaces and a point $u \in U$ in the preimage of $s$ such that the induced embedding $\mathcal{H}(s) \subseteq \mathcal{H}(u)$ is none other than $\mathcal{H}(s) \subseteq L$.    

Let us consider the commutative diagram (\ref{njeshi}) below, where $C_U:=C \times_S U.$ Seeing as~$h$ is flat, by using \cite[Proposition 2.3.1]{Ber90}, we obtain just as in the case of schemes (see \cite[Tag 02KH]{stacks-project}) that $(\pi_{U})_{\star}h_U^{\star} = h^{\star}\pi_{\star}$ (see also \cite[Corollary 5.3.6]{ber93}), thus implying that $(\pi_U)_{\star} \mathcal{O}_{C_U} = \mathcal{O}_{U}.$ By the proof of \cite[Proposition 3.3.7]{Ber90}, this implies that the fibers of $\pi_U$ are all connected.

\begin{minipage}{0.45\textwidth}
\begin{equation} \label{njeshi}
\begin{tikzcd}
C_U \arrow{r}{\pi_U} \arrow{d}[swap]{h_U}& U \arrow{d}{h} \\
C  \arrow{r}[swap]{\pi} & S
\end{tikzcd}
\end{equation}
\end{minipage}
\hfill
\begin{minipage}{0.45\textwidth}
\begin{equation} \label{dyshi}
\begin{tikzcd}[column sep=small] 
S'' \arrow{dr}[swap]{\psi} \arrow{rr}{\varphi} && C \arrow{dl}{\pi} \\
& S 
\end{tikzcd}
\end{equation}
\end{minipage}

On the other hand, the fiber $\pi_U^{-1}(u)$ of $u$ is the curve $C_s \times_{\mathcal{H}(s)} L$ and is thus by assumption not connected, contradiction. Hence, the fibers of $\pi$ are all geometrically connected.

\

\emph{Algebraicity.} By \cite[Corollary 6.2.7]{famduc}, as $\pi$ is a smooth morphism, there exists an \'etale morphism of $k$-analytic spaces $\psi: S'' \rightarrow S$ whose image contains $x$ and such that there is an $S$-morphism ${\varphi: S'' \rightarrow C}$ (see diagram (\ref{dyshi}) above). By \cite[5.2.16]{famduc}, by restricting to a smaller~$S$ if necessary, we may assume that $\psi : S'' \rightarrow S$ is a \textit{finite} (hence closed) and \'etale (hence open by  \mbox{\cite[Corollary 6.2.5]{famduc}}) morphism of $k$-affinoid spaces.

As $S''$ is a $k$-affinoid space, its connected components $(S_i)_{i\in I}$ are finite in number (by compactness) and are also $k$-affinoid spaces (\cite[Corollary 2.2.7(i)]{Ber90}). By \cite[Theorem ~11.3.3(3b)]{famduc}, $S''$ is normal, so by \cite[Proposition 5.14]{dex}, $(S_i)_i$ are the irreducible components of~$S''$. As $\psi$ is  both an open and closed morphism, for any $i \in I,$ $\psi(S_i)$ is an open and closed subset of $S$. Since the latter is connected, $\psi(S_i)=S$ and $\psi$ is surjective. By \mbox{\cite[1.5.10]{famduc}}, $\dim S_i=\dim \psi(S_i)=\dim S$ for all $i$. As $S_i$ is irreducible, it is pure-dimensional by \mbox{\cite[1.5.1]{famduc}}. Consequently, $S''$ is pure-dimensional with ${\dim S=\dim S''}.$

Set $C''=C \times_S S''$, and let us name the projections as follows: $\pi'': C'' \rightarrow S''$ and $p: C'' \rightarrow C.$ By construction, $\pi''$ has a section $s: S'' \rightarrow C''$. As $\pi''$ is separated, $s$ is a closed immersion (the proof of \cite[Proposition 3.3.9(f)]{liulibri} can be applied \emph{mutatis mutandis} to show this). Also, $p$ is a finite \'etale morphism (as a base change of $\psi$). As $\varphi=p \circ s$, we obtain that $\varphi$ is a finite morphism.

By \cite[1.5.10]{famduc}, $D:=\varphi(S'')$ is a pure-dimensional Zariski closed subset of $C$ such that $\dim D=\dim S''=\dim S.$ As $C$ is irreducible (see the proof of Lemma \ref{206}), it is also pure-dimensional, and by \cite[1.4.14(3)]{famduc}, $\dim D=\dim S=\dim C-1.$ Consequently, $D$ is a Zariski-closed subset of codimension one in $C.$

 By \cite[Corollary 3.2.9]{famduc}, if $X$ is a good analytic space and $a \in X$ a point, then there exists $\mathrm{centdim}(X,a) \in \mathbb{N} \cup \{0\}$ such that $\mathrm{centdim}(X,a) + \dim \mathcal{O}_{X,a}=\dim_a X$, where $\dim_a X$ is the \emph{local analytic} dimension of $X$ at $a$ (see \cite[1.4.9]{famduc}). 
In our case, we have that for any $z \in D,$
$$\mathrm{centdim}(D,z) + \dim \mathcal{O}_{D,z}=\dim_z D$$
$$\mathrm{centdim}(C,z) + \dim \mathcal{O}_{C,z}=\dim_z C.$$
As both $D$ and $C$ are pure-dimensional, $\dim_z D=\dim D$ and $\dim_z C=\dim C,$ so $\dim_z D=\dim_z C-1$. By \cite[3.2.4]{famduc}, $\mathrm{centdim}(D,z)=\mathrm{centdim}(C,z),$ implying that for any $z \in D,$ $\dim \mathcal{O}_{D,z}=\dim \mathcal{O}_{C,z}-1.$ 

Let us denote by $\mathcal{I}$ the ideal sheaf of $\mathcal{O}_C$ defining $D$. Seeing as $\mathcal{O}_{D,z}=\mathcal{O}_{C,z}/\mathcal{I}_z$ and $C$ is regular, by \cite[Tag 00NA]{stacks-project}, $\mathcal{I}_z$ is a height one prime ideal of $\mathcal{O}_{C,z}$; it is hence principal with a non-zero divisor (see \cite[Proposition 4.2.11]{liulibri}) as a generator. Consequently, $D$ is an \textit{effective Cartier} divisor on ~$C$ and it induces a line bundle $\mathcal{L}$ there. 

Let us remark that for any $y \in S$, by the surjectivity of $\psi,$ $D_y:=D \cap C_y \neq \emptyset.$ Moreover, seeing as $\psi$ is finite, $\psi^{-1}(y)=\sqcup_{i=1}^n \mathcal{M}(L_i),$ where $L_i$ is a finite field extension of $\mathcal{H}(y)$ for all $i$. This implies that $D_y$ is a finite set of Zariski closed points of $C_y,$ and hence that it is an effective Cartier divisor of \textit{positive} degree on $C_y$. By \cite[Proposition 7.5.5]{liulibri}, $\mathcal{L}_y$ is an \textit{ample} line bundle on $C_y$. 

By \cite[Proposition 3.1.2]{Ber90}, as $S$ is strict, so is $C$. Finally, thanks to \cite[Theorem 1.6.1]{ber93} (which is where the strictness hypothesis is used), we conclude by \cite[Theorem 3.2.7]{conrad} that $\pi: C \rightarrow S$ is algebraic (as $\mathcal{L}$ is an $S$-\textit{ample line bundle} on $C$).   

This concludes the proof of Theorem \ref{addition1}.
\end{proof}

If we know that the algebraicity condition of Theorem \ref{addition1} is satisfied, then some of its hypotheses can be slightly relaxed.

\begin{prop} \label{example lame'}
Let $S$ be a normal $k$-affinoid space. Let $\pi : C \rightarrow S$ be a proper flat relative analytic curve. Let $x \in S$ be such that $C_x:=\pi^{-1}(x) \neq \emptyset$ and $\mathcal{O}_x$ is a field.

If the fiber $C_x$ over $x$ is a smooth 
geometrically irreducible $\mathcal{H}(x)$-analytic curve and there exists an affinoid 
neighborhood $Z'$ of $x$ in $S$ such that ${\pi_{Z'}: C 
\times_S Z' \rightarrow Z'}$ is algebraic, then the 
conditions of Setting \ref{200} are satisfied. 
\end{prop}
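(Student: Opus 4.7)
The plan is to extract a connected affinoid neighborhood $Z_0 \subseteq Z'$ of $x$ on which the three conditions of Setting \ref{200} — normality, irreducibility and projectivity of the fibers, together with the existence of an algebraic model — all hold simultaneously. Algebraicity and the algebraic model are essentially free from the hypothesis: base changing $C_{\mathcal{O}(Z')}$ along $\mathrm{Spec}\,\mathcal{O}(Z_0) \to \mathrm{Spec}\,\mathcal{O}(Z')$ yields a finite type scheme over $\mathrm{Spec}\,\mathcal{O}(Z_0)$ whose analytification is $C_{Z_0} \to Z_0$. Since $\pi_{Z'}$ is proper and algebraic, each fiber is a proper algebraic curve over the corresponding completed residue field, hence projective, so the ``projective'' half of condition (1) will be automatic on any such $Z_0$.

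Next I would address normality of the fibers. Smoothness is an open condition for a flat morphism between good Berkovich spaces (this is \cite[Theorem 10.7.5 (3)]{famduc}, applied as in the first part of the proof of Theorem~\ref{addition1}); consequently, by shrinking $Z'$ to a smaller connected affinoid neighborhood of $x$, I may assume that every fiber $C_y$, $y \in Z'$, is geometrically normal, and therefore in particular normal, which also ensures reducedness.

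The remaining and most delicate point is the irreducibility of the nearby fibers. Here I would transpose verbatim the Stein factorization argument from the ``fiberwise geometric connectedness'' part of the proof of Theorem~\ref{addition1}. Take the Stein factorization $C_{Z'} \xrightarrow{f} S'' \xrightarrow{g} Z'$, with $f_\star \mathcal{O}_{C_{Z'}} = \mathcal{O}_{S''}$ and $g$ finite surjective, so that $S''$ is $k$-affinoid. Geometric integrality of $C_x$ (which is smooth and geometrically irreducible over $\mathcal{H}(x)$) forces $g^{-1}(x)$ to be a single point $t$ with $\mathcal{H}(t) = \mathcal{H}(x)$; upper semi-continuity of the fiber rank of the finite $\mathcal{O}(Z')$-module $\mathcal{O}(S'')$, together with reducedness of $\mathcal{O}(Z')$ (from normality of $S$), lets me shrink $Z'$ further to a connected affinoid neighborhood $Z_0$ of $x$ on which $g$ becomes an isomorphism. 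On $Z_0$ the pushforward relation $\pi_\star \mathcal{O}_{C_{Z_0}} = \mathcal{O}_{Z_0}$ holds, which by \cite[Proposition 3.3.7]{Ber90} gives connected fibers, and the finite \'etale argument used in Theorem~\ref{addition1} (via \cite[Theorem 3.4.1]{ber93}) upgrades connectedness to geometric connectedness. Combined with the smoothness (hence geometric reducedness) obtained above, this yields geometric integrality of each fiber, from which irreducibility of $C_y$ is immediate.

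The main obstacle is this last step: propagating irreducibility of the fiber $C_x$ to nearby fibers. The hypotheses here are genuinely weaker than in Theorem~\ref{addition1} (no regularity of $S$, no strictness, no assumption on $|k^\times|$), but one notes that regularity and strictness in Theorem~\ref{addition1} were used only for the algebraicity conclusion, which is now assumed. The Stein factorization and \'etale ascent portions use only normality of $S$ and smooth geometric connectedness of $C_x$, so they transfer to the present weaker setting without modification.
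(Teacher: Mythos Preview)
Your proposal is correct but proceeds differently from the paper. The paper exploits the algebraicity hypothesis more directly: it passes to the scheme side, observing that $\pi_{\mathcal{O}(Z')}: C_{\mathcal{O}(Z')} \to \mathrm{Spec}\,\mathcal{O}(Z')$ is proper flat of finite presentation (since $\mathcal{O}(Z')$ is Noetherian), and that the algebraic fiber $C_{\kappa(x')}$ over the image $x'$ of $x$ is smooth and geometrically integral (inherited from $C_x$). Then \cite[Th\'eor\`eme 12.2.4]{ega43} gives a Zariski open $A \subseteq \mathrm{Spec}\,\mathcal{O}(Z')$ where all fibers are smooth and geometrically integral; choosing $Z_0$ inside $\psi^{-1}(A)$ finishes the argument in one stroke, since for $y \in Z_0$ the analytic fiber $C_y$ is the analytification of $C_{\kappa(y')} \times_{\kappa(y')} \mathcal{H}(y)$.

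Your route stays on the analytic side: you invoke \cite[Theorem 10.7.5(3)]{famduc} for fiberwise smoothness and then replay the Stein factorization and \'etale-ascent argument from Theorem~\ref{addition1} for geometric connectedness. You are right that regularity and strictness in Theorem~\ref{addition1} enter only in the algebraicity step (to make the image of the section a Cartier divisor and to apply relative ampleness), so the connectedness half carries over under your weaker hypotheses. The trade-off is that your argument is longer and reproves in the analytic category something the algebraic model hands you for free via EGA; the paper's approach is shorter precisely because it cashes in the algebraicity hypothesis at the level of schemes rather than treating it as a black box needed only at the end.
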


\begin{proof} By Remark \ref{200bis}(1), $C$ is a good $k$-analytic space. We remark that since $\pi$ is a proper morphism, $\pi_{Z'}$ is also proper. Similarly, $C_x$ is a \textit{proper} $\mathcal{H}(x)$-analytic curve. Set ${C_{Z'}=C \times_S Z'}.$ As assumed in the 
statement, there exists a proper scheme $
{\pi_{\mathcal{O}(Z')}: C_{\mathcal{O}(Z')} 
\rightarrow \text{Spec} \ \mathcal{O}(Z')}$ such 
that its analytification is $\pi_{Z'}.$ Since $\pi$ is flat, so is $\pi_{Z'},$ and hence $\pi_{\mathcal{O}(Z')}$ (see \cite[Lemma 4.2.1]{famduc}).

Let $\psi: Z' \rightarrow \text{Spec} \ \mathcal{O}(Z')$ denote the canonical analytification morphism. For any $y \in Z',$ we denote $y'=\psi(y).$ For any $y' \in \text{Spec} \ \mathcal{O}(Z'),$ let $\kappa(y')$ denote the corresponding residue field and $C_{\kappa(y')}$ its fiber with respect to $\pi_{\mathcal{O}(Z')}.$  By the proof of \cite[Proposition ~2.6.2]{ber93}, $C_x$ is isomorphic to the analytification of ${C_{\kappa(x')} \times_{\kappa(x')} \mathcal{H}(x)}=:C_{x}^{\mathrm{alg}}.$ Hence, $C_x^{\mathrm{alg}}$ is a projective $\mathcal{H}(x)$-algebraic curve that is smooth (\textit{i.e.} geometrically normal) and geometrically irreducible, implying the $\kappa(x')$-algebraic curve $C_{\kappa(x')}$ has the same properties. 

Since $\text{Spec} \ \mathcal{O}(Z')$ is Noetherian, the proper morphism $\pi_{\mathcal{O}(Z')}$ is of finite presentation. By \cite[Th\'eor\`eme 12.2.4]{ega43}, the set of points $A$ of $\text{Spec} \ \mathcal{O}(Z')$ such that for any $y' \in A,$ $C_{\kappa(y')}$ is smooth and geometrically integral is a Zariski open. Remark that $x' \in A,$ so $A \neq \emptyset.$ Consequently, $\psi^{-1}(A)$ is a non-empty Zariski open subset of $Z'$ containing $x.$

Let $Z_0 \subseteq Z'$ be a connected affinoid neighborhood of $x$ in $S$ such that $Z_0 \subseteq \psi^{-1}(A).$ As remarked before, for any $y \in Z_0,$  $C_y=(C_{\kappa(y')} \times_{\kappa(y')} \mathcal{H}(y))^{\mathrm{an}},$ where $y'=\psi(y) \in A.$ Since then $C_{\kappa(y')}$ is smooth and geometrically irreducible, the same is true for $C_y.$ Set $C_{\mathcal{O}(Z_0)}=C_{\mathcal{O}(Z')} \times_{\mathcal{O}(Z')} 
\mathcal{O}(Z_0).$ By \cite[Proposition ~2.6.1]{ber93},
$$(C_{\mathcal{O}(Z_0)})^{\mathrm{an}}
=(C_{\mathcal{O}(Z')} \times_{\mathcal{O}(Z')} 
\mathcal{O}(Z_0))^{\mathrm{an}}=C_{Z'} \times_{Z'} 
Z=:C_{Z_0},$$
implying $\pi_{Z_0}: C_{Z_0} \rightarrow Z_0$ is algebraic, and that the conditions of Setting \ref{200} are satisfied. 
\end{proof}

\subsection{Example: realization of an algebraic curve over $\mathcal{O}_x$ as the thickening of a curve over $\mathcal{H}(x)$} \label{lame}
Let $S'$ be a normal good $k$-analytic space. Let $x \in S'$ be such that $\mathcal{O}_x$ is a field. Let $C_{\mathcal{O}_x}$ be a smooth geometrically irreducible projective algebraic curve over ~$\mathcal{O}_x.$ Let us denote by $\pi_x$ the structural morphism $C_{\mathcal{O}_x} \rightarrow \text{Spec} \ \mathcal{O}_x.$ 

\begin{prop}
There exists a connected affinoid neighborhood $S$ of $x$ in $S'$ and an $S$-relative analytic curve $\pi : C \rightarrow S$ satisfying the properties of Setting \ref{200}.
\end{prop}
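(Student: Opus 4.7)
The plan is to spread out the curve $C_{\mathcal{O}_x}$ to an affinoid neighborhood of $x$ by Grothendieck's descent in the filtered limit $\mathcal{O}_x = \varinjlim_Z \mathcal{O}(Z)$, and then analytify. First I would observe that since $S'$ is good, connected affinoid domains form a basis of neighborhoods of $x$, so
\[
\mathcal{O}_x = \varinjlim_{Z} \mathcal{O}(Z),
\]
where the limit runs over connected affinoid neighborhoods $Z$ of $x$ in $S'$ and each $\mathcal{O}(Z)$ is a Noetherian $k$-affinoid algebra. The transition maps are bounded morphisms of $k$-affinoid algebras; in particular the hypotheses of \cite[Th\'eor\`eme 8.8.2]{ega43} are in force, and I would apply them to descend the finitely presented scheme $C_{\mathcal{O}_x}$ to a finitely presented scheme $C_{\mathcal{O}(Z_1)}$ over $\mathrm{Spec}\ \mathcal{O}(Z_1)$ for some such $Z_1$.

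Next, by \cite[Th\'eor\`eme 8.10.5]{ega43}, properness, flatness, projectivity, and having relative dimension one all descend at the cost of further shrinking $Z_1$, so that the structural morphism $\pi_{\mathcal{O}(Z_1)}: C_{\mathcal{O}(Z_1)} \to \mathrm{Spec}\ \mathcal{O}(Z_1)$ can be assumed proper, flat, projective, of relative dimension $1$. Moreover, \cite[Th\'eor\`eme 12.2.4]{ega43} ensures that the locus of $y' \in \mathrm{Spec}\ \mathcal{O}(Z_1)$ at which the fiber $C_{\kappa(y')}$ is smooth and geometrically integral is a Zariski open; it contains the image $x'$ of $x$ since $C_{\mathcal{O}_x}$ is smooth and geometrically irreducible over $\mathcal{O}_x$. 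Let $Z_0 \subseteq Z_1$ be a connected affinoid neighborhood of $x$ whose image in $\mathrm{Spec}\ \mathcal{O}(Z_1)$ lies inside this open, and set $C_{\mathcal{O}(Z_0)} := C_{\mathcal{O}(Z_1)} \times_{\mathrm{Spec}\ \mathcal{O}(Z_1)} \mathrm{Spec}\ \mathcal{O}(Z_0)$. Then $\pi_{\mathcal{O}(Z_0)}: C_{\mathcal{O}(Z_0)} \to \mathrm{Spec}\ \mathcal{O}(Z_0)$ is proper, flat, projective, and all of its fibers are smooth geometrically integral curves.

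I would then set $S := Z_0$ (normal and good as an affinoid domain in the normal good space $S'$) and let $\pi: C \to S$ be the analytification of $\pi_{\mathcal{O}(Z_0)}$ in the sense of \cite[2.6]{ber93}. Condition (2) of Setting \ref{200} holds by construction. For condition (1), I would use \cite[Proposition 2.6.2]{ber93}: for any $y \in S$ with image $y' \in \mathrm{Spec}\ \mathcal{O}(Z_0)$, the fiber $\pi^{-1}(y)$ is the $\mathcal{H}(y)$-analytification of $C_{\kappa(y')} \times_{\kappa(y')} \mathcal{H}(y)$. The latter is a smooth, geometrically integral, projective algebraic curve over $\mathcal{H}(y)$, so by GAGA-type transfer of properties for proper analytifications, its analytification $C_y$ is a normal irreducible projective $\mathcal{H}(y)$-analytic curve. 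Surjectivity of $\pi$ over $S$ (and the non-emptiness of all fibers) follows from the surjectivity of the algebraic structural morphism.

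The main technical point is ensuring the applicability of the EGA limit theorems to the filtered system $(\mathcal{O}(Z))_Z$ of $k$-affinoid algebras, and tracking that each of the properties we need (proper, flat, smooth fibers, geometrically integral fibers, projective) is of finitely presented type and therefore both descends to some $\mathcal{O}(Z_1)$ and spreads out over a neighborhood of $x'$; once these bookkeeping steps are verified, the analytification step is formal.
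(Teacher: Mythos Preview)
Your proposal is correct and follows essentially the same approach as the paper: spread out $C_{\mathcal{O}_x}$ to some $C_{\mathcal{O}(Z_1)}$ via \cite[Th\'eor\`eme 8.8.2]{ega43}, use limit arguments to preserve properness and flatness, invoke \cite[Th\'eor\`eme 12.2.4]{ega43} to find the open locus where fibers are smooth and geometrically integral, restrict to an affinoid neighborhood $Z_0$ inside it, and analytify. The only organizational difference is that the paper packages the last steps (applying 12.2.4 and verifying condition (1) of Setting~\ref{200}) into a separate Proposition~\ref{example lame'}, which it then invokes, whereas you carry out that argument inline; mathematically the two are the same. One minor citation quibble: \cite[Th\'eor\`eme 8.10.5]{ega43} does not handle flatness in limits---the paper uses \cite[Tag 04AI]{stacks-project} for that and \cite[Tag 0EY2]{stacks-project} for the relative-curve condition---but this does not affect the correctness of your argument.
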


\begin{proof}

 Remark that $\mathcal{O}_x= \varinjlim_Z \mathcal{O}(Z),$ where the limit is taken over connected affinoid neighborhoods $Z$ of $x$ in ~$S'$, implying $\text{Spec} \ \mathcal{O}_x=\varprojlim_Z \text{Spec} \ \mathcal{O}(Z)$.  
By \cite[Th\'eor\`eme ~8.8.2]{ega43}, there exists a connected affinoid neighborhood $Z_0$ of $x,$ such that
for any connected affinoid neighborhood $Z \subseteq Z_0$ of $x,$ there exists a finitely presented scheme $C_{\mathcal{O}(Z)}$ over $\text{Spec} \ \mathcal{O}(Z)$ satisfying
${C_{\mathcal{O}(Z)} \times_{\text{Spec} \ \mathcal{O}(Z)} \text{Spec} \ \mathcal{O}_x=C_{\mathcal{O}_x}}.$ Let us denote by $\pi_{\mathcal{O}(Z)}$ the structural morphism $C_{\mathcal{O}(Z)} \rightarrow \text{Spec} \ \mathcal{O}(Z).$

Remark that $\pi_x$ is a proper flat morphism. The affinoid domain $Z_0$ can be chosen so that for any connected affinoid neighborhood $Z \subseteq Z_0$ of $x,$ the morphism $\pi_{\mathcal{O}(Z)}: {C_{\mathcal{O}(Z)} \rightarrow \text{Spec} \ \mathcal{O}(Z)}$ remains proper (by \cite[Th\'eor\`eme 8.10.5]{ega43}) and flat (by \cite[Tag ~04AI]{stacks-project}). Furthermore, by 
\cite[Tag 0EY2]{stacks-project}, we may assume that $C_{\mathcal{O}(Z)}$ is a relative curve over $\mathcal{O}(Z).$ Let $C_Z$ (defined over $Z$) denote the Berkovich analytification of the proper scheme $C_{\mathcal{O}(Z)}$
 over $\text{Spec} \ \mathcal{O}(Z)$ (in the sense of \cite[2.6]{ber93}). We denote by $\pi_Z: C_Z \rightarrow Z$ the analytification of $\pi_{\mathcal{O}(Z)}.$ We remark that by \cite[Proposition ~2.6.1]{ber93},
$$C_Z=(C_{\mathcal{O}(Z)})^{\mathrm{an}}
=(C_{\mathcal{O}(Z_0)} \times_{\mathcal{O}(Z_0)} 
\mathcal{O}(Z))^{\mathrm{an}}=C_{Z_0} \times_{Z_0} 
Z=\pi_{Z_0}^{-1}(Z).$$
Let $C_x$ denote the fiber of $x$ via $\pi_Z$ (which clearly does not depend on $Z$). It is a {\em proper} $\mathcal{H}(x)$-analytic curve.

Let $x'$ denote the image of $x$ via the analytification $Z_0 \rightarrow \text{Spec} \ \mathcal{O}(Z_0)$, $\kappa(x')$ its residue field and $C_{\kappa(x')}$ its fiber via $\pi_{\mathcal{O}(Z_0)}.$ Since $\mathcal{O}_x$ is a field, there exist natural embeddings $ \kappa(x') \hookrightarrow \mathcal{O}_x \hookrightarrow \mathcal{H}(x),$ from where we obtain that ${C_{\kappa(x')} \times_{\kappa(x')}  \mathcal{O}_x=C_{\mathcal{O}_x}}.$ Since $C_{\mathcal{O}_x}$ is smooth (\textit{i.e.} geometrically normal) and geometrically irreducible, so is $C_{\kappa(x')}.$ 
By the proof of \cite[Proposition ~2.6.2]{ber93}, $C_x$ is isomorphic to the analytification of ${C_{\kappa(x')} \times_{\kappa(x')} \mathcal{H}(x)},$ implying $C_x$ is a smooth geometrically irreducible $\mathcal{H}(x)$-analytic curve.

To summarize, we have a proper flat relative analytic curve $\pi_{Z_0}:C_{Z_0} \rightarrow Z_0$, which is algebraic. As $Z_0$ is an affinoid domain of the normal space $S$, by \cite[Th\'eor\`eme 3.4]{dex}, it is also normal.  Finally, $x \in Z_0$, and the corresponding fiber $C_x$ is a smooth geometrically irreducible $\mathcal{H}(x)$-analytic curve. We can now conclude by applying Proposition \ref{example lame'}.
\end{proof}

\subsection{Consequences of Setting \ref{200}}
Recall Setting \ref{200} and Remark~\ref{200bis}. 
\begin{nota} \label{202}
Let $Z \subseteq Z_0$ be any connected affinoid neighborhood  of $x$ in $S$. 
\begin{itemize}
\item  We denote by $\pi_Z$ the structural morphism ${C_Z:=C \times_S Z \rightarrow Z},$ and by $\pi_{\mathcal{O}(Z)}$  the one ${C_{\mathcal{O}(Z)}:=C_{\mathcal{O}(Z_0)} \times_{\text{Spec} \ \mathcal{O}(Z_0)}  \text{Spec} \ \mathcal{O}(Z)\rightarrow  \text{Spec} \ \mathcal{O}(Z)}.$

\item For any $y \in Z,$ the fiber $\pi_Z^{-1}(y)$ can be endowed with the structure of an $\mathcal{H}(y)$-analytic curve $C_y:=C_Z \times_Z \mathcal{H}(y)$. Remark that $C_y$ does not depend on $Z$. 
\item For any $y' \in \text{Spec} \ \mathcal{O}(Z),$ the fiber $\pi_{Z}^{-1}(y')$ can be endowed with the structure of a $\kappa(y')$-algebraic curve $C_{\mathcal{O}(Z),\kappa(y')}:=C_{\mathcal{O}(Z)} \times_{\mathcal{O}(Z)} \kappa(y'),$ where $\kappa(y')$ denotes the residue field of $y'$ in $\text{Spec} \ \mathcal{O}(Z).$ We will use the notation $C_{\kappa(y')}$ whenever there is no risk of ambiguity.
\end{itemize} 
\end{nota}

\begin{rem}
By \cite[Proposition 2.6.1]{ber93}, $C_Z$ is the analytification of $C_{\mathcal{O}(Z)}$:
$$(C_{\mathcal{O}(Z)})^{\mathrm{an}}=(C_{\mathcal{O}(Z_0)} \times_{\mathcal{O}(Z_0)} \mathcal{O}(Z))^{\mathrm{an}}=C_{Z_0} \times_{Z_0} Z=C_Z.$$
\end{rem}

\begin{prop} \label{204} Let $Z \subseteq Z_0$ be a connected affinoid neighborhood of $x.$ 
\begin{enumerate}
\item
The space $C_Z$ is a normal proper flat relative analytic curve over $Z.$ Furthermore, $\pi_Z$ is surjective. The same properties are true for $C_{\mathcal{O}(Z)}$ and $\pi_{\mathcal{O}(Z)}.$
\item Any connected affinoid domain of $C_Z$ is normal and irreducible. 
\end{enumerate}
\end{prop}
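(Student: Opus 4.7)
The plan is that most of the statement is formal, reducing (1) to base change from $\pi : C \to S$, and (2) to (1) via standard descent of normality to affinoid subdomains. The only substantive step is the transfer of normality from fibers and base to the total space $C_Z$.

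For (1), the morphism $\pi_Z : C_Z \to Z$ is obtained by base change from $\pi$, so properness and flatness are inherited. The fibers of $\pi_Z$ coincide with the corresponding fibers of $\pi$ and are therefore analytic curves, so $\pi_Z$ is a relative analytic curve. Surjectivity of $\pi_Z$ follows immediately from the surjectivity of $\pi$ noted in Remark \ref{200bis}. The same base-change arguments give properness, flatness, and the relative curve structure for $\pi_{\mathcal{O}(Z)}$; surjectivity on the scheme side can be deduced from that of $\pi_Z$ using that the image of the analytification map $Z \to \mathrm{Spec} \ \mathcal{O}(Z)$ is dense in the spectrum, while the image of the proper morphism $\pi_{\mathcal{O}(Z)}$ is closed. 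To establish normality of $C_Z$, I would use that $Z$ is a normal affinoid (affinoid domains of a normal space are normal by \cite[Th\'eor\`eme 3.4]{dex}), so that $\mathcal{O}(Z)$ is a normal Noetherian ring, together with the fact that all fibers of $\pi_Z$ are normal by the hypotheses of Setting \ref{200}. Applying the Berkovich-analytic analogue of the EGA~IV criterion for normality of the total space of a flat morphism with normal base and normal fibers, as available in Ducros's work \cite{famduc}, then yields the result. Normality of $C_{\mathcal{O}(Z)}$ can be obtained either by the analogous algebraic statement or by transfer via GAGA.

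For (2), let $U$ be a connected affinoid domain of $C_Z$. Normality of $U$ is inherited from $C_Z$ via \cite[Th\'eor\`eme 3.4]{dex}, so its affinoid algebra $A_U$ is a normal Noetherian ring. For irreducibility, connectedness of $U$ as a $k$-analytic space forces $A_U$ to have no non-trivial idempotents; a normal Noetherian ring with connected spectrum is an integral domain, and consequently $U$ is irreducible.

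The principal obstacle is the normality of $C_Z$: in the algebraic setting this transfer is delicate because it typically requires geometric normality of the fibers, not mere normality, and over non-perfect residue fields these notions are genuinely distinct for curves. The resolution is to invoke the Berkovich-analytic version from \cite{famduc}, which is formulated precisely for the present type of setup, so the key technical step is to identify and apply the correct theorem from that reference.
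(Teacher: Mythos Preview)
Your approach is essentially the same as the paper's: base change for the analytic properties, \cite[Theorem 11.3.3(3)]{famduc} for normality of $C_Z$ from normal base and normal fibers, and \cite[Th\'eor\`eme 3.4]{dex} for normality of affinoid subdomains, with irreducibility in (2) following from normal $+$ connected (the paper cites \cite[Th\'eor\`eme 5.17]{dex} for this, which is the same statement).

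One small imprecision: you say ``the same base-change arguments'' give properness and flatness for $\pi_{\mathcal{O}(Z)}$, but in Setting~\ref{200} the scheme $C_{\mathcal{O}(Z_0)}$ is only assumed of finite type, so there is nothing proper or flat on the algebraic side to base-change from. The paper instead transfers these properties from $\pi_Z$ to $\pi_{\mathcal{O}(Z)}$ directly via comparison theorems: properness by \cite[Proposition 2.6.9]{ber93} and flatness by \cite[Lemma 4.2.1]{famduc}. Your GAGA instinct is right, you just need to apply it here as well rather than invoking base change.
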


\begin{proof}
Since $\pi_Z$ is a base change of $\pi:C \rightarrow S,$ we immediately obtain that $\pi_Z$ is proper, surjective, flat, and of relative dimension 1. Since $S$ is normal, by \cite[Th\'eor\`eme 3.4]{dex}, $Z$ is normal. As the fibers of $\pi_Z$ are assumed normal, by \cite[Theorem 11.3.3(3)]{famduc}, $C_Z$ is normal. Then $C_{\mathcal{O}(Z)}$ is also normal by \cite[Th\'eor\`eme 3.4]{dex}. By \cite[Proposition 2.6.9]{ber93}, $\pi_{\mathcal{O}(Z)}$ is proper. Its flatness is a consequence of \cite[Lemma 4.2.1]{famduc}.  Surjectivity can be obtained from the surjectivity of $\pi_Z$ as in Proposition ~3.4.6(7) of \cite{Ber90}. The relative dimension of $\pi_{\mathcal{O}(Z)}$ is the same as that of $\pi_Z$ by \cite[Proposition 2.7.7]{famduc}.

Any connected affinoid domain of $C_Z$ is normal by \cite[Th\'eor\`eme 3.4]{dex} and irreducible by \cite[Th\'eor\`eme 5.17]{dex}.  
\end{proof}

The object the following lemma deals with will be central for the rest of this paper: set $C_{\mathcal{O}_x}:=C_{\mathcal{O}(Z_0)} \times_{\mathcal{O}(Z_0)} \mathcal{O}_x.$

\begin{lm} \label{jeeeej}
The scheme $C_{\mathcal{O}_x}$ is an irreducible normal projective $\mathcal{O}_x$-algebraic curve.
\end{lm}

\begin{proof}
Let $C_x$ denote the fiber of $\pi_{Z_0}: C_{Z_0} \rightarrow Z_0.$ It is a normal irreducible projective analytic $\mathcal{H}(x)$-curve by definition. 
Let $x'$ denote the image of $x$ via the analytification morphism $\psi: Z_0 \rightarrow \text{Spec} \ \mathcal{O}(Z_0).$ By the proof of \cite[Proposition 2.6.2]{ber93}, $C_x\cong (C_{\kappa(x')} \times_{\kappa(x')} \mathcal{H}(x))^{\mathrm{an}},$ where $\kappa(x')$ denotes the residue field of $x$ in $\mathrm{Spec} \ \mathcal{O}(Z_0),$ and $C_{\kappa(x')}:=C_{\mathcal{O}(Z_0)} \times_{\mathcal{O}(Z_0)} \kappa(x')$ - the algebraic fiber of $x'$ with respect to $C_{\mathcal{O}(Z_0)} \rightarrow \text{Spec} \ \mathcal{O}(Z_0).$

 Set $C_x^{\mathrm{alg}}:=C_{\kappa(x')} \times_{\kappa(x')} \mathcal{H}(x).$ 
Seeing as $\psi(x)=x'$ and $\mathcal{O}_x$ is a field, there exist canonical embeddings $\kappa(x')\hookrightarrow \mathcal{O}_x \hookrightarrow \mathcal{H}(x).$ Consequently, $C_{\mathcal{O}_x}=C_{\kappa(x')} \times_{\kappa(x')} \mathcal{O}_x$ and $C_x=(C_{\mathcal{O}_x} \times_{\mathcal{O}_x} \mathcal{H}(x))^{\mathrm{an}}.$ As $C_x$ is a normal irreducible analytic $\mathcal{H}(x)$-curve, $C_{\mathcal{O}_x} \times_{\mathcal{O}_x} \mathcal{H}(x)$ is  a connected (\cite[Thm. ~3.5.8(iii)]{Ber90})  normal algebraic curve (\cite[Prop. 3.4.3]{Ber90}) over $\mathcal{H}(x).$ Consequently, $C_{\mathcal{O}_x}$ is connected, and by \cite[Corollaire 6.5.4]{groth}, it is normal.
Properness is immediate seeing as $C_{\mathcal{O}_x} \rightarrow \text{Spec} \ \mathcal{O}_x$ is a base change of a proper morphism. 
\end{proof}

Recall Notation \ref{202}. A very important property for the constructions we make is the following:

\begin{lm} \label{208}
For any non-rigid point $\eta$ of $C_x,$ the local ring $\mathcal{O}_{C, \eta}$ is a field. If $\eta \in C_x$ is rigid, then $\mathcal{O}_{C, \eta}$ is a discrete valuation ring.  
\end{lm}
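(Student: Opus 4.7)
The strategy is to reduce the problem to the relative projective line via a finite morphism, and then to invoke (a mild strengthening of) Proposition~\ref{131}.

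\emph{Step 1 (spreading out a finite morphism).} By Lemma~\ref{jeeeej}, $C_{\mathcal{O}_x}$ is a normal irreducible projective algebraic curve over the field $\mathcal{O}_x$, so one may pick a non-constant (and hence automatically finite) morphism $\varphi\colon C_{\mathcal{O}_x}\to \mathbb{P}^1_{\mathcal{O}_x}$. Combining Setting~\ref{200}(2) with Grothendieck's spreading-out results (\cite[Th\'eor\`eme~8.8.2]{ega43}, \cite[Th\'eor\`eme~8.10.5]{ega43}), after replacing $Z_0$ by a smaller connected affinoid neighborhood $Z_1$ of $x$, the morphism $\varphi$ descends to a finite morphism of $\mathcal{O}(Z_1)$-schemes whose Berkovich analytification is a finite morphism $\varphi_{Z_1}\colon C_{Z_1}\to \mathbb{P}^{1,\mathrm{an}}_{Z_1}$. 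Set $\eta':=\varphi_{Z_1}(\eta)$. Since $\varphi_{Z_1}$ restricts to a finite morphism of analytic curves on the fibre of $x$, the point $\eta'$ is non-rigid (resp.\ rigid) in $\mathbb{P}^{1,\mathrm{an}}_{\mathcal{H}(x)}$ if and only if $\eta$ is non-rigid (resp.\ rigid) in $C_x$.

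\emph{Step 2 (the line case).} The next claim is that $\mathcal{O}_{\mathbb{P}^{1,\mathrm{an}}_{Z_1},\eta'}$ is a field when $\eta'$ is non-rigid, and a discrete valuation ring when $\eta'$ is rigid. For a type~3 point of the fibre this is exactly Proposition~\ref{131}. The remaining non-rigid fibre types (type~2 and type~4) and the rigid case are handled by the same strategy: build a fundamental system of affinoid neighborhoods of $\eta'$ in $\mathbb{P}^{1,\mathrm{an}}_{Z_1}$ of the appropriate shape (thickenings as in Section~\ref{4.1} for the non-rigid types, closed-disc neighborhoods $\{|P|\le r\}_Z$ in the rigid case, where $P\in\mathcal{O}_x[T]$ cuts out $\eta'$), use the $\mathcal{O}(Z)$-coefficient descriptions of Section~\ref{4.2}, and apply the hypothesis that $\mathcal{O}_x\hookrightarrow \mathcal{H}(x)$ is injective (which holds because $\mathcal{O}_x$ is a field) to force every element of the maximal ideal to vanish in the stalk. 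In the rigid case the same computation also identifies $P$ as a uniformizer.

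\emph{Step 3 (descent).} Since $\varphi_{Z_1}$ is finite, $\mathcal{O}_{C,\eta}$ inherits the structure of a finite module over $\mathcal{O}_{\mathbb{P}^{1,\mathrm{an}}_{Z_1},\eta'}$. By Proposition~\ref{204}, $\mathcal{O}_{C,\eta}$ is moreover a normal local integral domain. In the non-rigid case it is therefore a finite integral-domain extension of a field, hence a field. In the rigid case it is a finite, normal, local extension of a discrete valuation ring, of Krull dimension one by going-up, and hence itself a discrete valuation ring.

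The main obstacle is Step~2: Proposition~\ref{131} treats only type~3 fibre points on the line, and generalising it to the other non-rigid types and to rigid points is morally identical but requires constructing the right affinoid neighborhoods and tracking the coefficient descriptions separately for each point type. A more technical subtlety is the spreading-out of the finite morphism in Step~1 together with verifying that the analytic $\varphi_{Z_1}$ really induces a finite module structure on the relevant stalks, which is standard but needs to be applied carefully.
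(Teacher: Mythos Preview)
Your route is genuinely different from the paper's, and considerably more roundabout. The paper gives a five-line argument: since $\pi_{Z_0}$ is flat and boundaryless, the dimension formula from \cite[Lemma~4.5.11]{famduc} yields $\dim\mathcal{O}_{C,\eta}=\dim\mathcal{O}_{C_x,\eta}+\dim\mathcal{O}_x$. As $\mathcal{O}_x$ is a field, this collapses to $\dim\mathcal{O}_{C,\eta}=\dim\mathcal{O}_{C_x,\eta}$, and the fibre value is read off from the known structure of local rings on analytic curves (\cite[Lemma~4.4.5]{famduc}): it is $0$ at non-rigid points and $1$ at rigid points. Normality of $C_{Z_0}$ (Proposition~\ref{204}) then forces $\mathcal{O}_{C,\eta}$ to be a field or a dvr accordingly. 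No finite morphism, no spreading out, no case analysis by point type.

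Your Step~2 is where the real gap lies. Proposition~\ref{131} is proved only for type~3 fibre points, and its argument relies on the explicit coefficient description of $\mathcal{O}(X_{|P|=r,Z})$ worked out in Section~\ref{4.2}. Extending this to type~2 points would require handling neighborhoods of the shape $\mathbb{D}(\alpha,R)\setminus\bigsqcup_i\mathbb{D}^\circ(\alpha_i,r_i)$ (cf.\ Example~3 in Section~\ref{4.7}), where the number of excised discs is unbounded as the neighborhood shrinks; the resulting coefficient calculus is qualitatively different and not covered by anything in the paper. Type~4 and non-rigid type~1 points would similarly need their own neighborhood bases and coefficient models. Saying these cases are ``morally identical'' understates the work: each requires its own explicit affinoid description over $\mathcal{O}(Z)$, and none of this machinery exists in the paper outside the type~3 case. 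Your Steps~1 and~3 are fine (Step~1 is essentially Proposition~\ref{207}, and Step~3 is standard), but without Step~2 fully done the argument is a sketch, not a proof.
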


In particular, this implies that for any type 3 point $\eta \in C_x,$ the local ring $\mathcal{O}_{C,\eta}$ is a field. 

\begin{proof}
Seeing as $x \in \text{Int} \ {Z_0},$ for any $\eta \in C_x,$ $\eta \in \text{Int} \ C_{Z_0},$ so $\mathcal{O}_{C,\eta}=\mathcal{O}_{C_{Z_0}, \eta},$ and we can use the two interchangeably. The morphism $\pi_{Z_0}:C_{Z_0} \rightarrow Z_0$ is proper, so boundaryless. As $\pi_{Z_0}$ is flat, by the proof of \cite[Lemma 4.5.11]{famduc}, $\dim{\mathcal{O}_{C, \eta}}=\dim{\mathcal{O}_{C_x, \eta}} + \dim{\mathcal{O}_{x}}.$  Since $\mathcal{O}_x$ is a field, we obtain $\dim{\mathcal{O}_{C, \eta}}=\dim{\mathcal{O}_{C_x, \eta}}.$

By \cite[Lemma 4.4.5]{famduc}, if $\eta \in C_x$ is not rigid, then $\mathcal{O}_{C_x,\eta}$ is a field, implying $\dim{\mathcal{O}_{C, \eta}}=0,$ so $\mathcal{O}_{C, \eta}$ is a field (recall $C_{Z_0}$ is normal). If $\eta \in C_x$ is rigid, by \textit{loc.cit.} $\mathcal{O}_{C_x, \eta}$ is a discrete valuation ring, implying  $\dim{\mathcal{O}_{C, \eta}}=1.$ Hence, $\mathcal{O}_{C, \eta}$
 is a Noetherian normal local ring with Krull dimension 1, meaning a discrete valuation ring.
\end{proof}

\begin{lm} \label{205} Let $Z \subseteq Z_0$ be a connected affinoid neighborhood of $x.$
For any pair of different points $u_1, u_2 \in C_{Z}$, there exist neighborhoods $B_1$ of $u_1$ and $B_2$ of $u_2$ in $C_{Z},$ such that $B_1 \cap B_2=\emptyset.$
\end{lm}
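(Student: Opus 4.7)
The assertion is precisely that $C_Z$ is Hausdorff as a topological space, so the plan is to exploit the separatedness of $\pi_Z$ coming from properness. By Proposition~\ref{204}, $\pi_Z : C_Z \to Z$ is proper, hence in particular separated: the diagonal morphism $\Delta_Z : C_Z \hookrightarrow C_Z \times_Z C_Z$ is a closed immersion, and in particular $\Delta_Z(C_Z)$ is closed in the underlying topological space of the analytic fiber product. Since $Z = \mathcal{M}(\mathcal{O}(Z))$ is a $k$-affinoid space, it is compact and Hausdorff.

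I will then split the argument into two cases. If $\pi_Z(u_1) \neq \pi_Z(u_2)$, I use Hausdorffness of $Z$ to produce disjoint open neighborhoods $V_1, V_2 \subseteq Z$ of the respective images; the preimages $B_i := \pi_Z^{-1}(V_i)$ are then the required disjoint open neighborhoods of the $u_i$ in $C_Z$. If instead $\pi_Z(u_1) = \pi_Z(u_2) = y$, then the pair $(u_1, u_2)$ defines a point of $C_Z \times_Z C_Z$ which, because $u_1 \neq u_2$, lies outside $\Delta_Z(C_Z)$. Since $\Delta_Z(C_Z)$ is closed, there is an open neighborhood of $(u_1, u_2)$ in $C_Z \times_Z C_Z$ disjoint from it, and by the definition of the topology on the analytic fiber product this neighborhood can be shrunk to a basic open subset of the form $B_1 \times_Z B_2$ with $B_i$ an open neighborhood of $u_i$ in $C_Z$. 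The condition $(B_1 \times_Z B_2) \cap \Delta_Z(C_Z) = \emptyset$ is then equivalent to $B_1 \cap B_2 = \emptyset$, since any common point $u \in B_1 \cap B_2$ would yield $\Delta_Z(u) = (u,u) \in (B_1 \times_Z B_2) \cap \Delta_Z(C_Z)$.

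The only delicate point is verifying that these purely topological manipulations are legitimate in the Berkovich setting. The main obstacle will be to justify that (i) $\Delta_Z(C_Z)$ is closed in the underlying topological space of $C_Z \times_Z C_Z$ (which follows from $\Delta_Z$ being a closed immersion of analytic spaces), and (ii) open sets in $C_Z \times_Z C_Z$ can be refined by ``rectangles'' $B_1 \times_Z B_2$ around any point $(u_1, u_2)$; this holds because the canonical continuous map from the analytic fiber product to the topological fiber product is compatible with these basic neighborhoods, so the standard product-topology argument applies. With those justifications in place, the two-case argument above concludes the proof.
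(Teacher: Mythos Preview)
Your overall strategy coincides with the paper's: use that $\pi_Z$ is proper, hence separated, together with the Hausdorffness of $Z$. The paper, however, does not argue by cases. It simply quotes \cite[Proposition~3.1.5]{Ber90}, which says that for a morphism $Y\to X$ with $X$ Hausdorff, $Y$ is Hausdorff iff the morphism is separated. Applying this twice (once to see that $Z\to\mathcal{M}(k)$ is separated, once to conclude for $C_Z\to\mathcal{M}(k)$ after composing) gives the result in three lines.

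Your direct argument has a genuine gap in the second case, precisely at point~(ii). The claim that open sets of the analytic fibre product $C_Z\times_Z C_Z$ can be refined by ``rectangles'' $B_1\times_Z B_2$ around a given point is \emph{not} correct in the Berkovich setting: the underlying topological space $|C_Z\times_Z C_Z|$ is strictly finer than the topological fibre product $|C_Z|\times_{|Z|}|C_Z|$. Concretely, the fibre of $|C_Z\times_Z C_Z|$ over a pair $(u,u)$ with $\pi_Z(u)=y$ is $\mathcal{M}\big(\mathcal{H}(u)\,\widehat{\otimes}_{\mathcal{H}(y)}\,\mathcal{H}(u)\big)$, which typically has many points; so rectangles $B_1\times_Z B_2=\mathrm{pr}_1^{-1}(B_1)\cap\mathrm{pr}_2^{-1}(B_2)$ are pullbacks from the coarser topology and do \emph{not} form a neighbourhood basis. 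Hence from an open $W\subset C_Z\times_Z C_Z$ disjoint from $\Delta_Z(C_Z)$ you cannot in general extract a rectangle $B_1\times_Z B_2\subset W$, and your deduction of $B_1\cap B_2=\emptyset$ breaks down.

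This is exactly the subtlety that \cite[Proposition~3.1.5]{Ber90} handles; once you cite it, the case split becomes unnecessary and your proof collapses to the paper's.
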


\begin{proof}
Seeing as $\pi_Z$ is proper, it is separated. Seeing as $Z$ is Hausdorff, by \cite[Proposition ~3.1.5]{Ber90}, $Z \rightarrow \mathcal{M}(k)$ is separated. Consequently, the canonical morphism $C_Z \rightarrow \mathcal{M}(k)$ is separated, and we can conclude by \textit{loc.cit.} 
\end{proof}

\begin{lm} \label{206}
Let $Z \subseteq Z_0$ be a connected affinoid neighborhood of $x.$ The spaces $C_Z, C_{\mathcal{O}(Z)}$ are irreducible.  
\end{lm}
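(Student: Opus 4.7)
My strategy is to first establish connectedness of $C_Z$ via a fiberwise argument, then upgrade to irreducibility using normality, and finally transport both properties to the algebraic model $C_{\mathcal{O}(Z)}$.

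\emph{Connectedness of $C_Z$.} Suppose $C_Z = A \sqcup B$ is a partition into disjoint open–closed subsets. By assumption in Setting \ref{200}, for every $y \in Z$ the fiber $C_y$ is a normal irreducible projective $\mathcal{H}(y)$-analytic curve, hence connected; therefore $C_y \subseteq A$ or $C_y \subseteq B$, which forces $\pi_Z(A) \cap \pi_Z(B) = \emptyset$. Since $\pi_Z$ is proper (Proposition \ref{204}), both $\pi_Z(A)$ and $\pi_Z(B)$ are closed in $Z$; since $\pi_Z$ is surjective (also Proposition \ref{204}), they cover $Z$. The connectedness of $Z$ then forces one of them to be empty, so $C_Z$ is connected.

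\emph{Irreducibility of $C_Z$.} By Proposition \ref{204}, $C_Z$ is normal, and any point of $C_Z$ admits a basis of connected affinoid neighborhoods, each of which is irreducible by Proposition \ref{204}(2). In a normal Berkovich analytic space two irreducible opens that meet share a common generic point, so the irreducible components of $C_Z$ are pairwise disjoint open subsets and thus coincide with its connected components — this is the content of Ducros's theory in \cite{dex} underlying Proposition \ref{204}(2). Combined with the connectedness just established, this yields irreducibility of $C_Z$.

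\emph{Irreducibility of $C_{\mathcal{O}(Z)}$.} The scheme $C_{\mathcal{O}(Z)}$ is Noetherian (because $\mathcal{O}(Z)$ is a $k$-affinoid algebra, hence Noetherian) and normal (Proposition \ref{204}), so by the same general principle its irreducible components coincide with its connected components, and it suffices to check it is connected. Since $\pi_{\mathcal{O}(Z)}$ is proper, Berkovich's GAGA comparison for proper morphisms gives $H^0(C_{\mathcal{O}(Z)}, \mathcal{O}) \cong H^0(C_Z, \mathcal{O})$ as rings; the primitive idempotents therefore correspond, putting the connected components of $C_{\mathcal{O}(Z)}$ in bijection with those of $C_Z$. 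The already-established connectedness of $C_Z$ then yields the connectedness, and hence the irreducibility, of $C_{\mathcal{O}(Z)}$.

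The principal obstacle is the step that a connected normal Berkovich space is irreducible: this is not as immediate as in scheme theory and must be invoked from Ducros's framework in \cite{dex} (in the same spirit that was used to prove Proposition \ref{204}(2)). Once this is accepted, the remainder is a clean combination of a fiberwise connectedness argument with a standard GAGA transfer between the scheme $C_{\mathcal{O}(Z)}$ and its analytification $C_Z$.
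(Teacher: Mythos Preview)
Your proof is correct and follows essentially the same approach as the paper: a fiberwise argument for connectedness of $C_Z$, then the ``connected normal $\Rightarrow$ irreducible'' principle from \cite{dex} (the paper cites \cite[Proposition~5.14]{dex} directly). The only minor variation is in the last step: the paper transfers irreducibility to $C_{\mathcal{O}(Z)}$ by invoking \cite[Proposition~2.7.16]{famduc} (analytification reflects irreducibility), whereas you go through GAGA on $H^0$ to transfer connectedness and then apply ``normal $+$ connected $\Rightarrow$ irreducible'' again on the scheme side---both routes are valid and essentially equivalent.
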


\begin{proof}
Since all the fibers of $C_Z \rightarrow Z$ are connected, $C_Z$ is connected: if, by contradiction, $C_Z$ can be written as the disjoint union of two closed (hence compact) subsets $U$ and $V$, then $Z=\pi_Z(U) \cup \pi_Z(V).$ Since $\pi_Z(U)$ and $\pi_Z(V)$ are compact, and $Z$ is connected, their intersection is non-empty. Consequently, there exists $y \in Z,$ such that $C_y \cap U \neq \emptyset$ and $C_y \cap V \neq \emptyset.$ Since $C_y$ is connected and covered by the compacts $C_y \cap U$, $C_y \cap V,$ this is a contradiction. Thus, $C_Z$ is a connected normal analytic space. By \cite[Proposition 5.14]{dex}, it is irreducible. Then by \cite[Proposition 2.7.16]{famduc}, $C_{\mathcal{O}(Z)}$ is also irreducible.      
\end{proof}

\begin{prop} \label{207}
There exists a connected affinoid neighborhood $Z_1 \subseteq Z_0$ of $x$ such that for any connected affinoid neighborhood $Z \subseteq Z_1$ of $x,$ there exists a finite surjective morphism $f_Z: C_Z \rightarrow \mathbb{P}_Z^{1, \mathrm{an}},$ satisfying:
\begin{enumerate}
\item $f_Z$ is the analytification of a finite surjective morphism $f_{\mathcal{O}(Z)}: C_{\mathcal{O}(Z)} \rightarrow \mathbb{P}_{\mathcal{O}(Z)}^{1, \mathrm{an}};$
\item for any connected affinoid neighborhood $Z'  \subseteq Z$ of $x$, ${f_{Z} \times_Z Z'=f_{Z'}},$ \textit{i.e.} Diagram (\ref{diagramiipare}) (where the horizontal arrows correspond to the base change ${Z' \hookrightarrow Z}$) is commutative.
\begin{equation} \label{diagramiipare}
\begin{tikzcd}
C_{Z'} \arrow{r} \arrow{d}[swap]{f_{Z'}}& C_{Z} \arrow{d}{f_{Z}} \\
 \mathbb{P}_{Z'}^{1, \mathrm{an}} \arrow{r} &\mathbb{P}_{Z}^{1, \mathrm{an}}
\end{tikzcd}
\end{equation}
\end{enumerate}
\end{prop}

\begin{proof}
Remark that $\mathcal{O}_x=\varinjlim_Z \mathcal{O}(Z),$ where the limit is taken with respect to connected affinoid neighborhoods $Z \subseteq Z_0$ of $x.$ Consequently, $\text{Spec} \ \mathcal{O}_x= \varprojlim_Z \text{Spec} \ \mathcal{O}(Z)$, and $C_{\mathcal{O}_x}=C_{\mathcal{O}(Z_0)} \times_{\mathcal{O}(Z_0)} \mathcal{O}_x =C_{\mathcal{O}(Z_0)} \times_{\mathcal{O}(Z_0)} \varprojlim_Z \mathcal{O}(Z) =\varprojlim_Z C_{\mathcal{O}(Z)}.$ Recall that $C_{\mathcal{O}_x}$ is an irreducible normal projective curve (Lemma \ref{jeeeej}).

Let $f_{\mathcal{O}_x}: C_{\mathcal{O}_x} \rightarrow \mathbb{P}_{\mathcal{O}_x}^{1}$ be any  finite non-constant (hence surjective) morphism. By  \cite[Th\'eor\`eme 8.8.2]{ega43}, we may assume that $Z_0$ is such that for any connected affinoid neighborhood $Z \subseteq Z_0$ of $x$, there exists a morphism $f_{\mathcal{O}(Z)} : C_{\mathcal{O}(Z)} \rightarrow \mathbb{P}_{\mathcal{O}(Z)}^1$ such that Diagram (\ref{doublediag}) (where the horizontal arrows are the corresponding base changes) is commutative for any connected affinoid neighborhood $Z' \subseteq Z$ of $x.$

\begin{equation} \label{doublediag}
\begin{tikzcd} 
C_{\mathcal{O}_x} \arrow{r} \arrow{d}{f_{\mathcal{O}_x}} & C_{\mathcal{O}(Z')} \arrow{r} \arrow{d}{f_{\mathcal{O}(Z')}}& C_{\mathcal{O}(Z)} \arrow{d}{f_{\mathcal{O}(Z)}} \\
\mathbb{P}_{\mathcal{O}_x}^1 \arrow{r} & \mathbb{P}_{\mathcal{O}(Z')}^1 \arrow{r} &\mathbb{P}_{\mathcal{O}(Z)}^1
\end{tikzcd}
\end{equation}
Furthermore, by \cite[Th\'eor\`eme 8.10.5]{ega43}, $Z_0$ can be chosen so that for any connected affinoid neighborhood $Z \subseteq Z_0$ of $x$, the morphism $f_{\mathcal{O}(Z)}$ is finite and surjective.

 Let ${f_Z : C_Z \rightarrow \mathbb{P}_{Z}^{1, \mathrm{an}}}$ denote the Berkovich analytification of $f_{\mathcal{O}(Z)}$ in the sense of \mbox{\cite[2.6]{ber93}}. Then as in \cite[Proposition 3.4.6(7)]{Ber90}, $f_Z$ is surjective; by \cite[Proposition ~2.6.9]{ber93}, it is finite.  

Part (2) is a direct consequence of the commutativity of Diagram (\ref{doublediag}) above.  
\end{proof}

Remark that the finite surjective morphism $f_Z: C_Z \rightarrow \mathbb{P}_Z^{1, \mathrm{an}}$ induces a finite surjective morphism $f_z: C_z \rightarrow \mathbb{P}_{\mathcal{H}(z)}^{1, \mathrm{an}}$ between the fibers of $z \in Z$ in $C_Z$ and $\mathbb{P}_Z^{1, \mathrm{an}},$ respectively (recall Notation \ref{202}). 

\begin{prop} \label{209}
Let $Z \subseteq Z_0$ be a connected affinoid neighborhood of $x.$ Let $y$ be a type ~3 point 
on the fiber $\mathbb{P}_{\mathcal{H}(x)}^{1, \mathrm{an}}$ of $x$ in $\mathbb{P}_Z^{1, \mathrm{an}}.$ Let $\{z_1, z_2, \dots, z_n\}:=f_Z^{-1}(y).$ Then $$\mathscr{M}_{\mathbb{P}_Z^{1, \mathrm{an}}, y} \otimes_{\mathscr{M}(Z)(T)} \mathscr{M}(C_Z) = \prod_{i=1}^n \mathscr{M}_{C_Z, z_i}.$$
\end{prop}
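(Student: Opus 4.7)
The plan is to compute the stalk $(f_{Z*}\mathcal{O}_{C_Z})_y$ in two different ways and match them. First I will verify that all the local rings in sight are fields. Since $y$ is a type $3$ point on the fiber $\mathbb{P}_{\mathcal{H}(x)}^{1,\mathrm{an}}$, it is non-rigid, and the argument of Lemma \ref{208} applies verbatim to the proper flat relative curve $\mathbb{P}_Z^{1,\mathrm{an}}\to Z$, showing that $\mathcal{O}_{\mathbb{P}_Z^{1,\mathrm{an}},y}$ is a field, which I denote $K_y$. Finiteness of $f_Z$ forces $[\mathcal{H}(z_i):\mathcal{H}(y)]<\infty$, hence each $z_i$ is non-rigid on $C_x$ as well, so Lemma \ref{208} gives that $L_{z_i}:=\mathcal{O}_{C_Z,z_i}$ is a field. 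Because the structure sheaf at these points is a field, the meromorphic stalks coincide with the structure stalks: $\mathscr{M}_{\mathbb{P}_Z^{1,\mathrm{an}},y}=K_y$ and $\mathscr{M}_{C_Z,z_i}=L_{z_i}$. I write $K=\mathscr{M}(Z)(T)=\mathscr{M}(\mathbb{P}_Z^{1,\mathrm{an}})$ and $L=\mathscr{M}(C_Z)$; by Theorem \ref{231}, $L$ is the function field $K(C_{\mathcal{O}(Z)})$ of the algebraic curve, a finite extension of $K$.

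For the first computation, since $f_Z$ is finite and $C_Z$ is Hausdorff (Lemma \ref{205}), I can separate the preimages $z_1,\dots,z_n$ of $y$ by choosing an affinoid neighborhood $V$ of $y$ in $\mathbb{P}_Z^{1,\mathrm{an}}$ so that $f_Z^{-1}(V)=\bigsqcup_{i=1}^n V_i$, with each $V_i$ an affinoid neighborhood of $z_i$; consequently $\mathcal{O}(f_Z^{-1}(V))=\prod_i \mathcal{O}(V_i)$. Passing to the direct limit over the cofinal system of such neighborhoods of $y$, and using that finite products commute with filtered colimits, yields
\[
(f_{Z*}\mathcal{O}_{C_Z})_y=\prod_{i=1}^n \mathcal{O}_{C_Z,z_i}=\prod_{i=1}^n L_{z_i}
\]
as $K_y$-algebras.

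For the second computation, after a coordinate change on $\mathbb{P}^1$ if necessary, I assume $y$ lies in the affine chart $\mathbb{A}_Z^{1,\mathrm{an}}\subseteq \mathbb{P}_Z^{1,\mathrm{an}}$. The preimage of $\mathbb{A}_{\mathcal{O}(Z)}^1$ under the algebraic morphism $f_{\mathcal{O}(Z)}$ provided by Proposition \ref{207} is an affine scheme $\operatorname{Spec} B$ with $B$ a finite $\mathcal{O}(Z)[T]$-algebra and $\operatorname{Frac}(B)=L$. Since analytification is compatible with finite morphisms—and because a finite $\mathcal{O}(Z)[T]$-module tensored with the Banach algebra $\mathcal{O}(V)$ over $\mathcal{O}(Z)[T]$ is automatically complete as an $\mathcal{O}(V)$-module—I obtain the base-change identity $\mathcal{O}(f_Z^{-1}(V))=B\otimes_{\mathcal{O}(Z)[T]} \mathcal{O}(V)$ for any affinoid $V\subseteq\mathbb{A}_Z^{1,\mathrm{an}}$ containing $y$. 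Taking direct limits over such $V$ gives $(f_{Z*}\mathcal{O}_{C_Z})_y=B\otimes_{\mathcal{O}(Z)[T]} K_y$. Since $B$ is integral over $\mathcal{O}(Z)[T]$ with fraction field $L$, every non-zero element of $B$ divides a non-zero element of $\mathcal{O}(Z)[T]$, so $B\otimes_{\mathcal{O}(Z)[T]} K=L$ and hence $(f_{Z*}\mathcal{O}_{C_Z})_y = K_y\otimes_K L$.

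Comparing the two computations yields the desired isomorphism $K_y\otimes_K L\cong \prod_{i=1}^n L_{z_i}$. The chief technical point is the base-change identity $\mathcal{O}(f_Z^{-1}(V))=B\otimes_{\mathcal{O}(Z)[T]} \mathcal{O}(V)$ used in the third paragraph; this is where the algebraicity of $f_Z$ around the fiber (coming from Setting \ref{200}(2) combined with Proposition \ref{207}) enters essentially, together with the observation that a tensor of a finite module with a Banach algebra needs no further completion.
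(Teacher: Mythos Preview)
Your proof is correct and follows essentially the same route as the paper's. The paper packages your two computations of $(f_{Z*}\mathcal{O}_{C_Z})_y$ into a single citation of \cite[Proposition 2.6.10]{ber93}, which yields $\prod_i \mathcal{O}_{C_Z,z_i}=\mathcal{O}_{\mathbb{P}_Z^{1,\mathrm{an}},y}\otimes_A B$ directly (with $A$ an affine chart on $\mathbb{P}^1_{\mathcal{O}(Z)}$ rather than your specific $\mathcal{O}(Z)[T]$), and then invokes \cite[Lemma 3.4]{une} for the step $\otimes_A B \rightsquigarrow \otimes_{\operatorname{Frac} A}\operatorname{Frac} B$ that you carry out by hand via the ``every nonzero $b\in B$ divides a nonzero element of $\mathcal{O}(Z)[T]$'' argument; otherwise the identification of the stalks with fields via Lemma~\ref{208} and the use of Theorem~\ref{231} to recognize $L$ as the algebraic function field are exactly as in the paper.
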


\begin{proof}
Let us look at the finite surjective morphism $f_{\mathcal{O}(Z)}: C_{\mathcal{O}(Z)} \rightarrow \mathbb{P}_{\mathcal{O}(Z)}^{1}$ of $\mathcal{O}(Z)$-schemes. 
Let $y'$ be the image of $y$ via the analytification $\psi: \mathbb{P}_Z^{1, \mathrm{an}} \rightarrow \mathbb{P}_{\mathcal{O}(Z)}^1.$ Let ${\mathcal{A}:=\text{Spec} \ A}$ be an open affine neighborhood of $y'$ in $\mathbb{P}_{\mathcal{O}(Z)}^1.$ Its preimage by $\psi$ is a Zariski open $\mathcal{A}'$ of $\mathbb{P}_Z^{1, \mathrm{an}}$ containing $y.$ 

Let $\mathcal{B}:=\text{Spec} \ B$ be the pre-image of $\mathcal{A}$ by $f_{\mathcal{O}(Z)}.$ It is an affine open subset of $C_{\mathcal{O}(Z)},$ and $f_{\mathcal{O}(Z)}$ induces a finite surjective morphism $\mathcal{B} \rightarrow \mathcal{A}.$ By construction, $\mathcal{B}$ contains $f_{\mathcal{O}(Z)}^{-1}(y').$ By the proof of \cite[Proposition 2.6.10]{ber93},  there is an isomorphism ${\prod_{i=1}^n \mathcal{O}_{C_Z, z_i}\cong  \mathcal{O}_{\mathbb{P}_Z^{1, \mathrm{an}}, y}\otimes_A B}$. Since $C_{\mathcal{O}(Z)}$ and $\mathbb{P}_{\mathcal{O}(Z)}^1$ are irreducible, the function field of $C_{\mathcal{O}(Z)}$ is $\text{Frac} \ B,$ and the function field of $\mathbb{P}_{\mathcal{O}(Z)}^1$ is $\text{Frac} \ A.$

By Theorem \ref{231}, we obtain that $\mathscr{M}(C_Z)= \text{Frac} \ B,$ and ${\mathscr{M}(\mathbb{P}_Z^{1, \mathrm{an}})=\text{Frac} \ A}.$ Since $B$ is a finite $A$-module, by the last paragraph of the proof of \cite[Lemma 3.4]{une}, ${\prod_{i=1}^n \mathcal{O}_{C_Z, z_i}=  \mathcal{O}_{\mathbb{P}_Z^{1, \mathrm{an}}, y}\otimes_{\text{Frac} \ A} \text{Frac} \ B},$ so $\prod_{i=1}^n \mathcal{O}_{C_Z, z_i}=  \mathcal{O}_{\mathbb{P}_Z^{1, \mathrm{an}}, y}\otimes_{\mathscr{M}(Z)(T)} \mathscr{M}(C_Z).$
Finally, since $y$ and ~$z_i, i=1,2,\dots, n,$ 
are type ~3 points in $\mathbb{P}_{\mathcal{H}
(x)}^{1, \mathrm{an}}$ and $C_x$, respectively, $\mathcal{O}
_{\mathbb{P}_Z^{1, \mathrm{an}}, y}=
\mathscr{M}_{\mathbb{P}_Z^{1, \mathrm{an}}, 
y}$ and $\mathcal{O}_{C_Z, z_i}= \mathscr{M}
_{C_Z, z_i}$ for all $i,$ concluding the proof of the statement. 
\end{proof}

\begin{prop} \label{210}
For any connected affinoid neighborhoods $Z, Z' \subseteq Z_0$ of $x$ such that $Z' \subseteq Z,$ the base change morphism $\iota_{Z, Z'} : C_{\mathcal{O}(Z')} \rightarrow C_{\mathcal{O}(Z)}$ is dominant. If $\eta_{Z}$ (resp. $\eta_{Z'}$) is the generic point of $C_{\mathcal{O}(Z)}$ (resp. $C_{\mathcal{O}(Z')}$), then $\iota_{Z, Z'}(\eta_{Z'})=\eta_{Z}.$ 
\end{prop}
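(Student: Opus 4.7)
The plan is to reduce the whole statement to showing that $\iota_{Z,Z'}(\eta_{Z'})=\eta_Z$, since dominance is then immediate: the image contains $\eta_Z$, whose closure is all of $C_{\mathcal{O}(Z)}$ by Lemma~\ref{206}.

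To prove this equality, I would work on an affine chart. Choose an affine open $U=\mathrm{Spec}\, B$ of $C_{\mathcal{O}(Z)}$ containing the generic point $\eta_Z$; since $C_{\mathcal{O}(Z)}$ is integral (normal by Proposition~\ref{204} and irreducible by Lemma~\ref{206}), $B$ is an integral domain with generic point corresponding to the prime $(0)$. The preimage of $U$ in $C_{\mathcal{O}(Z')}$ under $\iota_{Z,Z'}$ is $U'=\mathrm{Spec}\,B'$ where $B':=B\otimes_{\mathcal{O}(Z)}\mathcal{O}(Z')$, and $\iota_{Z,Z'}|_{U'}\colon U'\to U$ corresponds to the canonical ring map $\psi\colon B\to B'$, $b\mapsto b\otimes 1$.

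The key input is that $\psi$ is injective. Because $Z'\subseteq Z$ with both affinoid domains of $S$, $Z'$ is an affinoid subdomain of $Z$, so by \cite[Proposition~2.2.4(ii)]{Ber90} the restriction $\mathcal{O}(Z)\hookrightarrow\mathcal{O}(Z')$ is flat; as $\mathcal{O}(Z)$ is a domain (by normality and connectedness of $Z$) and $\mathcal{O}(Z')\neq 0$, a flat module over a domain is torsion-free, so $\mathcal{O}(Z)\to\mathcal{O}(Z')$ is injective. Combining this with the flatness of $B$ over $\mathcal{O}(Z)$ (base change of $\pi_{\mathcal{O}(Z)}$, which is flat by Proposition~\ref{204}), tensoring the injection $\mathcal{O}(Z)\hookrightarrow\mathcal{O}(Z')$ with $B$ yields the injection $B\hookrightarrow B'$, i.e.\ $\psi$ is injective. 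Moreover, $U'$ is a nonempty open of the irreducible scheme $C_{\mathcal{O}(Z')}$ (Lemma~\ref{206}), so it contains $\eta_{Z'}$, and $\eta_{Z'}$ is the generic point of $U'$; it corresponds to the unique minimal prime $\mathfrak{p}$ of $B'$, i.e.\ $\mathfrak{p}=\sqrt{0_{B'}}$.

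Finally, $\iota_{Z,Z'}(\eta_{Z'})\in U$ corresponds to the prime $\psi^{-1}(\mathfrak{p})\subseteq B$. If $b\in\psi^{-1}(\mathfrak{p})$, some power $b^n$ lies in $\ker\psi=0$, so $b^n=0$ in $B$; since $B$ is a domain, $b=0$. Therefore $\psi^{-1}(\mathfrak{p})=(0)$, which is the generic point of $U$, that is $\eta_Z$. The main subtlety — and the only nontrivial step — is the injectivity of $\mathcal{O}(Z)\to\mathcal{O}(Z')$, handled via flatness and the torsion-free argument above; everything else is a direct unwinding of the fiber-product description of $\iota_{Z,Z'}$.
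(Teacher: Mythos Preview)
Your proof is correct and takes a genuinely different route from the paper's. The paper argues analytically: it uses the analytification maps $\phi\colon C_Z\to C_{\mathcal{O}(Z)}$ and $\phi'\colon C_{Z'}\to C_{\mathcal{O}(Z')}$, shows via \cite[Lemma~2.6.5]{ber93} that $\phi^{-1}(\eta_Z)$ is dense in $C_Z$, and then exploits the inclusion $C_{Z'}\subseteq C_Z$ together with the commutative square relating the two analytifications to produce a point of $C_{Z'}$ whose image in $C_{\mathcal{O}(Z)}$ is $\eta_Z$; dominance and the equality $\iota_{Z,Z'}(\eta_{Z'})=\eta_Z$ then follow from integrality. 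Your argument is purely scheme-theoretic: flatness of $\pi_{\mathcal{O}(Z)}$ (Proposition~\ref{204}) plus injectivity of $\mathcal{O}(Z)\to\mathcal{O}(Z')$ give injectivity of $B\to B'$ on an affine chart, and the rest is elementary commutative algebra. Your approach is shorter and avoids any passage through the Berkovich side, which is a nice economy here; the paper's approach, by contrast, does not invoke the flatness of $\pi_{\mathcal{O}(Z)}$ and stays closer to the analytic machinery that drives the rest of the section.
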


\begin{proof}
By Lemma \ref{206}, $C_{\mathcal{O}(Z)}, C_{\mathcal{O}(Z')}$ are irreducible, so it makes sense to speak of their generic points $\eta_Z, \eta_{Z'},$ respectively. It suffices to show that $\eta_Z$ is in the image of $\iota_{Z, Z'}.$ Let ~$\alpha$ be any point of $C_Z.$ Let $\alpha'$ be its image in $C_{\mathcal{O}(Z)}$ via the analytification $\phi: C_Z \rightarrow C_{\mathcal{O}(Z)}.$ Let $U$ be an open affine neighborhood of $\alpha'$ in $C_{\mathcal{O}(Z)}.$ Then $\eta_Z \in U,$ and the closure of $\{\eta_Z\}$ in $U$ is $U,$ meaning it is the generic point of $U$. 

By \cite[Proposition 2.6.8]{ber93}, $\phi^{-1}(U)=U^{\mathrm{an}}$-the analytification of $U$. Remark that  $U^{\mathrm{an}}$ is an open subspace of $C_Z.$ Let $B_{\alpha}$ be any open neighborhood of $\alpha$ in $C_Z.$ Then since $\alpha \in U^{\mathrm{an}},$ $B_{\alpha} \cap U^{\mathrm{an}}$ is an open neighborhood of $\alpha$ in $U^{\mathrm{an}},$ so by \cite[Lemma 2.6.5]{ber93}, there exists a point $\beta \in B_{\alpha} \cap U^{\mathrm{an}} \subseteq B_{\alpha},$ such that $\phi(\beta)=\eta_{Z}.$ Thus, for any point $\alpha \in C_Z$ and any open neighborhood $B_{\alpha}$ of $\alpha$ in $C_Z,$ there exists $\beta \in B_{\alpha},$ such that $\phi(\beta)=\eta_Z.$ In other words, $\overline{\phi^{-1}(\{\eta_Z\})}=C_Z.$
\[
\begin{tikzcd}
C_{Z'} \arrow{r}{\phi'} \arrow{d}[swap]{\theta_{Z, Z'}} & C_{\mathcal{O}(Z')} \arrow{d}{\iota_{Z, Z'}} \\
 C_Z \arrow{r}{\phi} &C_{\mathcal{O}(Z)}
\end{tikzcd}
\]

Let us now look at the commutative diagram above, where the horizontal maps correspond to analytification, and the vertical ones to base change. In particular, remark that since $C_Z=\pi^{-1}(Z)$ and $C_{Z'}=\pi^{-1}(Z'),$ we have $C_{Z'} \subseteq C_Z,$ so $\theta_{Z', Z}$ is an inclusion. Let $\gamma \in \pi^{-1}(\text{Int}(Z'))$ (which is non-empty considering $x \in \text{Int}(Z')$). Let $B_{\gamma}$ be an open neighborhood of $\gamma$ in the open $\pi^{-1}(\text{Int}(Z')).$ Then $B_{\gamma}$ is open in both $C_{Z'}$ and $C_Z.$ By the paragraph above, there exists $\gamma' \in B_{\gamma}$ such that $\phi(\theta_{Z, Z'}(\gamma'))=\phi(\gamma')=\eta_{Z}.$ By the commutativity of the diagram, $\eta_Z$ is in the image of $\iota_{Z, Z'}$, so $\iota_{Z, Z'}$ is dominant. Since $C_{\mathcal{O}(Z)}, C_{\mathcal{O}(Z')}$ are integral schemes, this means $\iota_{Z, Z'}(\eta_{Z'})=\eta_{Z}.$
\end{proof}
Recall that $C_{\mathcal{O}_x}=C_{\mathcal{O}(Z_0)} \times_{\mathcal{O}(Z_0)} \mathcal{O}_x=\varprojlim_{Z} C_{\mathcal{O}(Z)},$ where the limit is taken with respect to the connected affinoid neighborhoods $Z \subseteq Z_0$ of $x.$ By the lemma above, the generic points $\eta_Z$ of $C_{\mathcal{O}(Z)}$ determine a unique point $\eta \in C_{\mathcal{O}_x}.$

\begin{prop} \label{211}
The curve $C_{\mathcal{O}_x}$ is integral with generic point $\eta$. 
\end{prop}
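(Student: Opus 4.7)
The plan is to leverage what we have already gathered about $C_{\mathcal{O}_x}$ and to identify $\eta$ with the generic point. Integrality is essentially free: by Lemma \ref{jeeeej}, $C_{\mathcal{O}_x}$ is irreducible and normal, and normal schemes are reduced, so $C_{\mathcal{O}_x}$ is integral. Hence it admits a unique generic point, which I shall denote $\eta_x$, and the remaining task is to show $\eta_x=\eta$.

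To do so, I would first establish that for every connected affinoid neighborhood $Z \subseteq Z_0$ of $x$, the base change morphism $j_Z: C_{\mathcal{O}_x} \to C_{\mathcal{O}(Z)}$ is dominant. Working on an affine open $\operatorname{Spec}A \subseteq C_{\mathcal{O}(Z)}$, the corresponding open in $C_{\mathcal{O}_x}$ is $\operatorname{Spec}(A\otimes_{\mathcal{O}(Z)}\mathcal{O}_x)$. Since $A$ is a domain (because $C_{\mathcal{O}(Z)}$ is integral by Lemma \ref{206} together with normality from Proposition \ref{204}), and since $\mathcal{O}_x$ is a field containing $\mathcal{O}(Z)$, the inclusion $\mathcal{O}(Z)\hookrightarrow \mathcal{O}_x$ factors through $\mathrm{Frac}\,\mathcal{O}(Z)$, giving
\[
A \;\hookrightarrow\; S^{-1}A \;\hookrightarrow\; S^{-1}A \otimes_{\mathrm{Frac}\,\mathcal{O}(Z)}\mathcal{O}_x \;=\; A\otimes_{\mathcal{O}(Z)}\mathcal{O}_x,
\]
where $S=\mathcal{O}(Z)\setminus\{0\}$; the first arrow is localization of a domain, the second comes from tensoring the free (hence flat) $\mathrm{Frac}\,\mathcal{O}(Z)$-module $S^{-1}A$ with the field extension $\mathrm{Frac}\,\mathcal{O}(Z)\hookrightarrow \mathcal{O}_x$. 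Injectivity of $A \to A\otimes_{\mathcal{O}(Z)}\mathcal{O}_x$ translates to dominance of $\operatorname{Spec}(A\otimes_{\mathcal{O}(Z)}\mathcal{O}_x)\to\operatorname{Spec}A$, and patching over an affine cover yields dominance of $j_Z$.

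Next, since $j_Z$ is a dominant morphism between integral schemes, it sends the generic point to the generic point, so $j_Z(\eta_x)=\eta_Z$. To conclude $\eta_x=\eta$, I would use that $\eta$ was defined as the unique point of $C_{\mathcal{O}_x}=\varprojlim_Z C_{\mathcal{O}(Z)}$ with image $\eta_Z$ in each $C_{\mathcal{O}(Z)}$: working affine-locally, for any affine open $\operatorname{Spec}A\subseteq C_{\mathcal{O}(Z_0)}$, we have $A\otimes_{\mathcal{O}(Z_0)}\mathcal{O}_x = \varinjlim_Z A\otimes_{\mathcal{O}(Z_0)}\mathcal{O}(Z)$, and a prime of this filtered colimit is determined by its family of contractions to the $A\otimes_{\mathcal{O}(Z_0)}\mathcal{O}(Z)$. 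Since $\eta_x$ and $\eta$ have the same image $\eta_Z$ in every $C_{\mathcal{O}(Z)}$, they must coincide.

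The only slightly delicate step is Step~1 (dominance of $j_Z$), but once set up in terms of the localization chain above it becomes routine; the rest is bookkeeping with filtered colimits of affine schemes. No genuine technical obstacle is expected.
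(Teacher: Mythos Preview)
Your argument is correct, and it takes a different route from the paper's. You first invoke Lemma~\ref{jeeeej} to obtain integrality, then establish directly that the canonical map $j_Z:C_{\mathcal{O}_x}\to C_{\mathcal{O}(Z)}$ is dominant via the localization chain $A\hookrightarrow S^{-1}A\hookrightarrow S^{-1}A\otimes_{\mathrm{Frac}\,\mathcal{O}(Z)}\mathcal{O}_x$, and finally identify $\eta_x$ with $\eta$ by the universal property of the inverse limit. The paper instead appeals to \cite[Tag 0CUG]{stacks-project}, which gives $\varprojlim_Z \overline{\{\eta_Z\}}_{\mathrm{red}}=\overline{\{\eta\}}_{\mathrm{red}}$; since each $\overline{\{\eta_Z\}}_{\mathrm{red}}=C_{\mathcal{O}(Z)}$, this yields $\overline{\{\eta\}}_{\mathrm{red}}=C_{\mathcal{O}_x}$ in one stroke, recovering integrality and the identification of the generic point simultaneously. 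Your approach is more elementary and self-contained, avoiding the external reference and making explicit use of the hypothesis that $\mathcal{O}_x$ is a field; the paper's is shorter and more structural, treating the limit formalism as a black box. Both are valid; note that in your dominance step it already suffices to check a single nonempty affine open, since the generic point $\eta_Z$ lies in every one.
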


\begin{proof}
Note that $C_{\mathcal{O}_x}$ was already shown to be integral in Lemma \ref{jeeeej}.

For any connected affinoid neighborhoods $Z, Z' \subseteq Z_0$ of $x$ such that $Z' \subseteq Z,$ the base change $\iota_{Z, Z'}: C_{\mathcal{O}(Z')}=C_{\mathcal{O}(Z)} \times_{\mathcal{O}(Z)} \mathcal{O}(Z') \rightarrow C_{\mathcal{O}(Z)}$ is an affine morphism. Since $C_{\mathcal{O}(Z)}$ is normal, it is reduced. 

By \cite[Tag 0CUG]{stacks-project}, $\varprojlim_Z \overline{\{\eta_Z\}}_{\mathrm{red}}=\overline{\{\eta\}}_{\mathrm{red}}.$ Seeing as $\overline{\{\eta_Z\}}_{\mathrm{red}}=C_{\mathcal{O}(Z)},$ we obtain that $\overline{\{\eta\}}_{\mathrm{red}}=\varprojlim_Z C_{\mathcal{O}(Z)}=C_{\mathcal{O}_x},$ so $C_{\mathcal{O}_x}$ is reduced and irreducible, \textit{i.e.} integral, with generic point $\eta.$
\end{proof}
Let $F_{N}$ denote the function field of the integral scheme $C_{N},$ where ${N \in \{\mathcal{O}_x, \mathcal{O}(Z): Z\subseteq Z_0\}}$ ($Z$ is as usual considered to be a connected affinoid neighborhood of $x$). 

\begin{cor} \label{212} The fields $F_{N}, N \in \{\mathcal{O}_x, \mathcal{O}(Z) : Z \subseteq Z_0\},$ satisfy
$F_{\mathcal{O}_x}=\varinjlim_{Z} F_{\mathcal{O}(Z)},$ where the limit is taken with respect to connected affinoid neighborhoods $Z \subseteq Z_0$ of $x.$
\end{cor}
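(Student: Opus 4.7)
The plan is to interpret each function field as the stalk of the structure sheaf at the generic point and then invoke the standard result that stalks commute with projective limits of schemes having affine transition morphisms.

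First, I would note that every scheme in question is integral: $C_{\mathcal{O}(Z)}$ is irreducible by Lemma \ref{206} and normal (hence reduced) by Proposition \ref{204}, so it admits a unique generic point $\eta_Z$; the scheme $C_{\mathcal{O}_x}$ is integral by Proposition \ref{211} with generic point $\eta$. Consequently $F_{\mathcal{O}(Z)} = \mathcal{O}_{C_{\mathcal{O}(Z)}, \eta_Z}$ and $F_{\mathcal{O}_x} = \mathcal{O}_{C_{\mathcal{O}_x}, \eta}$. For each pair of connected affinoid neighborhoods $Z' \subseteq Z$ of $x$ in $Z_0$, Proposition \ref{210} gives $\iota_{Z,Z'}(\eta_{Z'}) = \eta_Z$, yielding a filtered direct system of field embeddings $F_{\mathcal{O}(Z)} \hookrightarrow F_{\mathcal{O}(Z')}$; the analogous compatibility holds between $\eta$ and each $\eta_Z$ by construction of $\eta$, producing a canonical ring map $\varphi \colon \varinjlim_Z F_{\mathcal{O}(Z)} \to F_{\mathcal{O}_x}$.

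To show $\varphi$ is an isomorphism, I would use the identification
\[
C_{\mathcal{O}_x} \;=\; C_{\mathcal{O}(Z_0)} \times_{\mathrm{Spec}\,\mathcal{O}(Z_0)} \mathrm{Spec}\,\mathcal{O}_x \;=\; \varprojlim_Z C_{\mathcal{O}(Z)}
\]
already recorded just before Proposition \ref{211}, in which the transition maps $\iota_{Z,Z'}$ are affine (they are base changes of the ring maps $\mathcal{O}(Z) \to \mathcal{O}(Z')$) and $(\eta_Z)_Z$ is a compatible family of points corresponding to $\eta$. The classical result on stalks of cofiltered limits of schemes with affine transition maps (EGA IV$_3$, Proposition~8.13.5, or equivalently a result of the type of the Stacks Project tags on limits of schemes already invoked in the proof of Proposition \ref{211}) then yields
\[
\mathcal{O}_{C_{\mathcal{O}_x}, \eta} \;=\; \varinjlim_Z \mathcal{O}_{C_{\mathcal{O}(Z)}, \eta_Z},
\]
which is exactly $F_{\mathcal{O}_x} = \varinjlim_Z F_{\mathcal{O}(Z)}$.

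The only non-automatic point to check is that the system $(\eta_Z)_Z$ really corresponds to $\eta$ under the identification $C_{\mathcal{O}_x} = \varprojlim_Z C_{\mathcal{O}(Z)}$, which is precisely what Proposition \ref{211} provides; once that is in hand, the argument is a direct application of standard scheme-theoretic limit machinery and offers no serious technical obstacle.
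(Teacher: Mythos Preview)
Your proposal is correct and follows essentially the same path as the paper: both arguments rest on the identification $C_{\mathcal{O}_x}=\varprojlim_Z C_{\mathcal{O}(Z)}$, the compatibility of generic points from Proposition~\ref{210} and Proposition~\ref{211}, and the interpretation of function fields as stalks at these generic points. The only difference is packaging: the paper verifies the universal property of the direct limit by hand (translating compatible maps $F_{\mathcal{O}(Z)}\to K$ into a morphism $\mathrm{Spec}\,K\to C_{\mathcal{O}_x}$ via the limit description), whereas you invoke the general EGA result on stalks in cofiltered limits of schemes with affine transition maps directly. Your version is slightly more economical; the paper's is self-contained.
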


\begin{proof} 
The projective system of integral schemes $\{C_{\mathcal{O}(Z)}\}_Z$ gives rise to a direct system of fields $\{F_{\mathcal{O}(Z)}\}_Z.$ For connected affinoid neighborhoods $Z, Z' \subseteq Z_0$ of $x$ such that $Z' \subseteq Z,$ let us denote the corresponding transition morphism $F_{\mathcal{O}(Z)} \rightarrow F_{\mathcal{O}(Z')}$ by $\chi_{Z', Z}.$ Let us denote by $F'$ the field $\varinjlim_Z F_{\mathcal{O}(Z)}.$ Recall the notation $\iota_{Z, Z'}$ from Proposition \ref{210}.

The projections $\iota_Z: C_{\mathcal{O}_x} \rightarrow C_{\mathcal{O}(Z)}$ give rise to maps $\chi_{Z}': F_{\mathcal{O}(Z)} \rightarrow F_{\mathcal{O}_x}.$ Since for any $Z' \subseteq Z,$ $\iota_Z=\iota_{Z,Z'} \circ \iota_{Z'},$ we have that $\chi'_Z=\chi'_{Z'} \circ \chi_{Z', Z}.$ Consequently, there is a map $F' \rightarrow F_{\mathcal{O}_x}.$ To show that this is an equality it suffices to show that for any field $K$ and morphisms           
$\lambda_{Z}: F_{\mathcal{O}(Z)} \rightarrow K$ such that for any $Z' \subseteq Z,$ $\lambda_Z=\lambda_{Z'} \circ \chi_{Z', Z},$ there is a map $\lambda:  F_{\mathcal{O}_x}  \rightarrow K,$ satisfying $\lambda_Z=\lambda \circ \chi'_Z.$ 

The maps $\lambda_Z: F_{\mathcal{O}(Z)} \rightarrow K$ give rise to maps $\lambda_Z': \text{Spec}  \ K \rightarrow \text{Spec} \ F_{\mathcal{O}(Z)} \rightarrow C_{\mathcal{O}(Z)},$ where the image of $\lambda_Z'$ is the generic point $\{\eta_Z\}$ of $C_{\mathcal{O}(Z)}.$ Consequently, by Proposition \ref{210}, for any $Z' \subseteq Z,$ we have $\lambda_Z'=\iota_{Z, Z'} \circ \lambda_{Z'}'$, implying there is a morphism $\lambda':\text{Spec} \ K \rightarrow C_{\mathcal{O}_x}$ that satisfies $\lambda_Z'=\iota_Z \circ \lambda'$ for all $Z.$ In turn, this gives rise to a morphism $\lambda: F_{\mathcal{O}_x} \rightarrow K,$ which satisfies $\lambda_Z=\lambda \circ \chi_Z'.$
\end{proof}

\begin{cor} \label{213} 
\begin{sloppypar}
Let $\mathscr{M}$ denote the sheaf of meromorphic functions on $C.$ Then
${F_{\mathcal{O}_x}=\varinjlim_Z \mathscr{M}(C_Z)},$ where the limit is taken over connected affinoid neighborhoods $Z \subseteq Z_0$ of $x.$
\end{sloppypar}
\end{cor}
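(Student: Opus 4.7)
My plan is to deduce this corollary directly from Corollary \ref{212} together with Theorem \ref{231} (from Appendix I). The strategy is to identify, for each connected affinoid neighborhood $Z \subseteq Z_0$ of $x$, the field $\mathscr{M}(C_Z)$ of meromorphic functions on the analytic curve $C_Z$ with the function field $F_{\mathcal{O}(Z)}$ of the algebraic curve $C_{\mathcal{O}(Z)}$, and then to check that these identifications are compatible with the two directed systems.

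First I would recall the setting: by Setting \ref{200}, $C_Z$ is the Berkovich analytification of the integral proper $\mathcal{O}(Z)$-scheme $C_{\mathcal{O}(Z)}$ (integrality was shown in Lemma \ref{206}). Theorem \ref{231}, which relates the sheaf of meromorphic functions of a Berkovich analytification to the function field of the underlying scheme (and which was already invoked in Proposition \ref{209} precisely in this form), then yields a canonical isomorphism $\mathscr{M}(C_Z) \cong F_{\mathcal{O}(Z)}$ for every such $Z$.

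Next I would verify the compatibility of this identification with transition maps. For connected affinoid neighborhoods $Z' \subseteq Z \subseteq Z_0$ of $x$, Proposition \ref{210} shows that the base change $\iota_{Z,Z'} : C_{\mathcal{O}(Z')} \to C_{\mathcal{O}(Z)}$ is dominant and sends the generic point of $C_{\mathcal{O}(Z')}$ to that of $C_{\mathcal{O}(Z)}$, so it induces the canonical inclusion $F_{\mathcal{O}(Z)} \hookrightarrow F_{\mathcal{O}(Z')}$. Since the analytification of $\iota_{Z,Z'}$ is the inclusion $C_{Z'} \hookrightarrow C_Z$, the diagram
\[
\begin{tikzcd}
\mathscr{M}(C_Z) \arrow{r}{\sim} \arrow{d} & F_{\mathcal{O}(Z)} \arrow{d} \\
\mathscr{M}(C_{Z'}) \arrow{r}{\sim} & F_{\mathcal{O}(Z')}
\end{tikzcd}
\]
commutes (the vertical maps being the natural restriction/transition maps). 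Consequently the two direct systems $\{\mathscr{M}(C_Z)\}_Z$ and $\{F_{\mathcal{O}(Z)}\}_Z$ are canonically isomorphic.

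Passing to the direct limit and invoking Corollary \ref{212} concludes the argument:
\[
F_{\mathcal{O}_x} \;=\; \varinjlim_{Z} F_{\mathcal{O}(Z)} \;=\; \varinjlim_{Z} \mathscr{M}(C_Z).
\]
Since every piece of this has already been set up in the preceding results, there is no genuine obstacle; the only point requiring any care is the compatibility square above, which is essentially a tautology coming from the fact that the identification of Theorem \ref{231} is functorial under the base change $Z' \hookrightarrow Z$.
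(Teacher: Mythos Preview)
Your proposal is correct and follows exactly the paper's approach: the paper's proof is the single sentence ``This is a direct consequence of Corollary \ref{212} and Theorem \ref{231},'' and you have simply unpacked this by making explicit the identification $\mathscr{M}(C_Z)\cong F_{\mathcal{O}(Z)}$ and the compatibility of the two directed systems. Your care with the compatibility square is welcome but not strictly needed for the level of detail the paper adopts.
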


\begin{proof}
This is a direct consequence of Corollary \ref{212} and Theorem \ref{231}. 
\end{proof}

\section{Nice covers of a relative proper curve and patching} \label{4.5}
Throughout this section we work under the hypotheses of Setting \ref{200} and the notations we have introduced along the way. Here is a summary:

\begin{nota} \label{nota}
In addition to Setting \ref{200}, for any connected affinoid neighborhood $Z \subseteq Z_0$ of $x$, let $C_x:=C_Z \times_Z \mathcal{H}(x),$ $C_Z:=C \times_S Z,$ $C_{\mathcal{O}(Z)}:=C_{\mathcal{O}(Z_0)} \times_{\mathcal{O}(Z_0)} \mathcal{O}(Z)$, and $C_{\mathcal{O}_x}:=C_{\mathcal{O}(Z_0)} \times_{\mathcal{O}(Z_0)} \mathcal{O}_x.$ Moreover, we denote by $\pi_Z,$ resp. $\pi_{\mathcal{O}(Z)}$, the structural morphisms $C_Z \rightarrow Z,$ resp. $C_{\mathcal{O}(Z)} \rightarrow \text{Spec} \ \mathcal{O}(Z).$

Finally, let $f_Z: C_Z \rightarrow \mathbb{P}_Z^{1, \mathrm{an}}$, $f_{\mathcal{O}_Z}: C_{\mathcal{O}(Z)} \rightarrow \mathbb{P}_{\mathcal{O}(Z)}^{1, \mathrm{an}}$ be finite surjective morphisms such that $f_{\mathcal{O}(Z)}^{\mathrm{an}}=f_Z,$ and for any connected affinoid neighborhood $Z' \subseteq Z$ of $x,$ $f_Z \times_Z Z'=f_{Z'}.$
\end{nota}

\subsection{Nice covers of a relative proper curve} \label{affreux}
As in the case of $\mathbb{P}^{1, \mathrm{an}},$ in addition to Setting \ref{200}, we assume that $\dim{S} < \dim_{\mathbb{Q}} \mathbb{R}_{>0}/|k^\times| \otimes_{\mathbb{Z}} \mathbb{Q}$.
The reason behind this hypothesis is the same as before: it is sufficient for the existence of type 3 points on the fiber $C_x$ (see Lemma \ref{1000}).

 \
 
\noindent \textbf{Goal:} Let $\mathcal{V}$ be an open cover of $C_{x}$ in $C.$ We construct a refinement of $\mathcal{V}$ and show that it satisfies certain properties which are necessary for patching. 

\

\textbf{(1) The construction of a nice refinement of $\mathcal{V}$.} 
Remark that the finite surjective morphism $f_{Z_0}: C_{Z_0} \rightarrow \mathbb{P}_{Z_0}^{1, \mathrm{an}}$ induces a finite surjective morphism $f_x:C_x \rightarrow \mathbb{P}_{\mathcal{H}(x)}^{1, \mathrm{an}}$ on the corresponding fibers of $x.$

Without loss of generality, we may assume that $\mathcal{V}$ is an affinoid cover of $C_x$ in $C$ such that $\{\text{Int} \ V: V \in \mathcal{V}\}$ is an open cover of $C_x$ in $C.$ Since $C_x$ is compact, we may assume $\mathcal{V}$ is finite. Let $\mathcal{V}_x$ denote the finite affinoid cover that $\mathcal{V}$ induces on $C_x.$ Remark that $\mathcal{V}_x':=\{\text{Int}_{C_x} V: V \in \mathcal{V}_x\}$ remains an open cover of $C_x.$
Since $\mathcal{V}_x$ is an affinoid cover, for any $V \in \mathcal{V}_x,$ the topological boundary $\partial_{C_x}{V}$ of $V$  in $C_x$ is finite. Consequently, for any $V \in \mathcal{V}_x',$ $\partial_{C_x}{V}$ is finite. Set $S'=\bigcup_{V \in \mathcal{V}_x'} \partial_{C_x}{V}.$ This is a finite set of points on ~$C_x.$

Seeing as $C_x$ is a connected curve, for any two points $u,v$ of $S'$, there exist  finitely many arcs $[u,v]_i,$ $i=1,2,\dots, l,$ in $C_x$ connecting them (Proposition \ref{bababa}). Let us take a type 3 point on each $[u,v]_i,$ for any two points $u, v \in S'.$ We denote this set by $S_1$. By construction of $S_1,$ since type 3 points are dense in $C_x$ (\cite[Theorem 2.6]{une}) and $f_x^{-1}(f_x(S'))$ is a finite set, we may assume that $S_1 \cap f_x^{-1}(f_x(S')) =\emptyset.$

Since $S_1$ is a finite set of type 3 points in $C_x,$ $f_{x}(S_1)$ is a finite set of type 3 points in the fiber $\mathbb{P}_{\mathcal{H}(x)}^{1, \mathrm{an}}$ of $x$ in $\mathbb{P}_{Z_0}^{1, \mathrm{an}}.$ By \cite[Lemma 2.14]{une}, there exists a nice cover $\mathcal{D}_x$ of $\mathbb{P}_{\mathcal{H}(x)}^{1, \mathrm{an}}$ such that $f_x(S_1)=S_{\mathcal{D}_x}$ (recall this notation in Definition \ref{parityfunction}).  Let $T_{\mathcal{D}_x}$ be a parity function for $\mathcal{D}_x$ (it exists by \cite[Lemma 2.19]{une}).  

\begin{lm} \label{214}
The connected components of $f_x^{-1}(D), D \in \mathcal{D}_x$, form a cover $\mathcal{U}_x$ of $C_x$ which is nice (see Definition \ref{nice}) and refines $\mathcal{V}_x.$ Furthermore, $S_{\mathcal{U}_x}=f_x^{-1}(S_{\mathcal{D}_x}),$ and the map $T_{\mathcal{U}_x}: \mathcal{U}_x \rightarrow \{0,1\},$ $U \mapsto f_{\mathcal{D}_x}(f_x(U)),$ is a parity function for $\mathcal{U}_x.$
\end{lm}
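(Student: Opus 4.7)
The plan is to verify the four assertions using two principal ingredients: (i) $f_x : C_x \to \mathbb{P}^{1,\mathrm{an}}_{\mathcal{H}(x)}$ is a finite surjective morphism between normal analytic curves, hence flat and in particular open; it preserves the type of each point (since finite extensions preserve the transcendence degree of the residue field and the $\mathbb{Q}$-rank of the value group) and carries affinoid domains to and from affinoid domains; and (ii) the separating property of $S_1$, namely that every arc in $C_x$ joining two distinct points of $S'$ meets $S_1$, so that in particular $|W \cap S'| \leq 1$ for each connected component $W$ of $C_x \setminus f_x^{-1}(f_x(S_1))$.

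\emph{Cover and niceness.} Surjectivity of $f_x$ gives that $\mathcal{U}_x$ covers $C_x$. Each $U \in \mathcal{U}_x$ is a clopen component of the affinoid $f_x^{-1}(D)$, hence itself a connected affinoid. Openness of $f_x$ yields $\partial U = U \cap f_x^{-1}(\partial D)$, a finite set of type~3 points. For distinct $U_1, U_2 \in \mathcal{U}_x$ with $U_j \subseteq f_x^{-1}(D_j)$: if $D_1 = D_2$ they are disjoint components; otherwise the same boundary identification together with $D_1 \cap D_2 = \partial D_1 \cap \partial D_2$ yields $U_1 \cap U_2 = \partial U_1 \cap \partial U_2$, a finite set of type~3 points. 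Discarding redundant elements ensures that no element of $\mathcal{U}_x$ is contained in another.

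\emph{Refinement, intersection set, and parity.} Set $T := f_x^{-1}(S_{\mathcal{D}_x})$; each connected component $W$ of $C_x \setminus T$ lies in a unique $f_x^{-1}(D \setminus \partial D)$, and closure gives a bijection between these components and $\mathcal{U}_x$. To show $W$ sits in some $V' \in \mathcal{V}_x'$, suppose not: then $W$ meets at least two distinct $V_j'$ non-trivially without being contained in either, so by connectedness of $W$ and openness of each $V_j'$, the topological boundary of $W \cap V_j'$ inside $W$ is nonempty and lies in $\partial V_j' \subseteq S'$ for each such $j$. The separating property of $S_1$ bounds $|W \cap S'| \leq 1$, so all these boundary points must coincide at a single $s \in W \cap S'$; a branch analysis at $s$, using that each connected component of $W \setminus \{s\}$ contains no point of $S'$ and hence sits in a single $V_j$, produces the required contradiction. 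Thus $U = \overline{W} \subseteq V$ for the corresponding $V \in \mathcal{V}_x$. The equality $S_{\mathcal{U}_x} = f_x^{-1}(S_{\mathcal{D}_x})$ follows from $\partial U = U \cap f_x^{-1}(\partial D)$: for the $\supseteq$ direction, any $p$ with $f_x(p) \in D_1 \cap D_2$, $D_1 \neq D_2$, lies in distinct components of $f_x^{-1}(D_1)$ and $f_x^{-1}(D_2)$, hence in two distinct elements of $\mathcal{U}_x$. Finally, $U \mapsto T_{\mathcal{D}_x}(f_x(U))$ is well-defined since $U$ is a component of a unique $f_x^{-1}(D)$; for distinct non-disjoint $U_1, U_2$ lying over $D_1 \neq D_2$ with $D_1 \cap D_2 \neq \emptyset$, the parity property of $T_{\mathcal{D}_x}$ transfers directly to $T_{\mathcal{U}_x}$.

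The hardest step is the refinement, specifically the configuration where several $V_j' \in \mathcal{V}_x'$ meet a component $W$ of $C_x \setminus T$ through a single common boundary point $s$: resolving this requires a careful branch analysis at $s$, and may well require augmenting the construction of $S_1$ with additional type~3 points placed along each branch of $C_x$ at each point of $S'$, so that branches of $W$ cannot leak into multiple $V_j$'s without being detected by $T$.
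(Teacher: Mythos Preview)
Your approach is essentially the paper's: the niceness, the equality $S_{\mathcal{U}_x}=f_x^{-1}(S_{\mathcal{D}_x})$, and the parity function are handled in the paper by a citation to \cite[Proposition~2.21]{une}, and your direct sketch of these is fine. The refinement argument is the real content, and your route via the connected components $W$ of $C_x\setminus T$ is exactly the paper's argument in disguise, since $W=\mathrm{Int}\,U$ for the corresponding $U\in\mathcal{U}_x$.

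The gap is in what you call ``the hardest step''. The single-point case $|W\cap S'|=1$ needs no branch analysis and no augmentation of $S_1$: it dies in one line, and this is precisely how the paper disposes of it. If $W$ is contained in no $V'\in\mathcal{V}_x'$, then for every $V_j'\in\mathcal{V}_x'$ with $W\cap V_j'\neq\emptyset$ the set $W\cap V_j'$ is a nonempty proper open subset of the connected set $W$, so its boundary in $W$ is nonempty and lies in $W\cap\partial V_j'\subseteq W\cap S'=\{s\}$. Hence $s\in\partial V_j'$ for \emph{every} such $j$, so $s\notin V_j'$ for every such $j$ (the $V_j'$ are open). But $s\in W\subseteq\bigcup_j V_j'$, a contradiction. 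Your suggestion that one ``may well require augmenting the construction of $S_1$'' is therefore unwarranted; the construction as stated suffices. Note also that by working with $W=\mathrm{Int}\,U$ rather than $U$ you in fact sidestep the paper's preliminary step $S_{\mathcal{U}_x}\cap S'=\emptyset$ (which uses the extra choice $S_1\cap f_x^{-1}(f_x(S'))=\emptyset$); your version does not need it.
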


\begin{proof}
That $\mathcal{U}_x$ is a nice cover of $C_x,$ $S_{\mathcal{U}_x}=f_x^{-1}(S_{\mathcal{D}_x})$, and $T_{\mathcal{U}_x}$ is a parity function for $\mathcal{U}_x$ has been shown in \cite[Proposition 2.21]{une}. It remains to show that $\mathcal{U}_x$ refines $\mathcal{V}_x.$ For that, it suffices to show that $\mathcal{U}_x$ refines the open cover $\mathcal{V}_x'$ of $C_x.$ 

Let us start by proving that $S_{\mathcal{U}_x} \cap S'= \emptyset.$ Suppose, by contradiction, that there exists ${a \in S_{\mathcal{U}_x} \cap S'=f_x^{-1}(f_x(S_1)) \cap S'}.$ Then $f_x(a) \in f_x(S_1) \cap f_x(S'),$ so there exists $b \in S_1$ such that $f_x(a)=f_x(b) \in f_x(S_1) \cap f_x(S').$ Consequently, $b \in f_x^{-1}(f_x(S')) \cap S_1=\emptyset,$ which is impossible, so $S_{\mathcal{U}_x} \cap S'=\emptyset.$ Considering $S_{\mathcal{U}_x}=\bigcup_{U \in \mathcal{U}_x} \partial{U}$ and $S'=\bigcup_{V \in \mathcal{V}_x'} \partial{V},$ for any $U \in \mathcal{U}_x$ and any $V \in \mathcal{V}_x'$, $\partial{U} \cap \partial{V}=\emptyset.$

Let us now show that $\mathcal{U}_x$ refines $\mathcal{V}_x'.$ Suppose, by contradiction, that there exists ${U \in \mathcal{U}_x},$ such that for any $V \in \mathcal{V}_x',$ $U \not \subseteq V.$  Let $V_j, j=1,2,\dots, m,$ be the elements of $\mathcal{V}_x'$ intersecting ~$U$ ($m \neq 0$ seeing as $\mathcal{V}_x'$ is a cover of $C_x$).  Then $U \subseteq \bigcup_{j=1}^m V_j.$
Considering $U \not \subseteq V_j$ and $U$ is connected, $U \cap \partial{V_j}\neq \emptyset$ for all $j$. If $\bigcup_{j=1}^m U \cap \partial{V_j}$ is a single point $\{w\}$, then $w \in U \backslash \bigcup_{j=1}^m V_j$ (because the $V_j$ are open), which is impossible seeing as $U \subseteq \bigcup_{j=1}^m V_j.$ 
Let $x_1,x_2$ be two different points of $\bigcup_{j=1}^m U \cap \partial{V_j}.$ Since $\partial{U} \cap \partial{V}_j=\emptyset$ for all ~$j$ (this was shown in the paragraph above), $x_i \in \text{Int} (U), i=1,2.$

Since $U$ is connected, by Lemma \ref{62}, $\text{Int} \ U$ is connected, so there exists an arc $[x_1, x_2]$ connecting $x_1$ and $x_2$, which is contained entirely in $\text{Int} \ U.$ But then, by the construction of $S_1,$ since $x_1, x_2 \in S',$ there exists $y \in S_1$ such that $y \in [x_1, x_2] \subseteq \text{Int} \ U.$ Considering $y \in S_1 \subseteq f_x^{-1}(S_{\mathcal{D}_x})=S_{\mathcal{U}_x},$ there exists $U' \in \mathcal{U}_x$, such that $y \in \partial{U'}.$ But then, $\partial{U} \cap \partial{U'} \neq U \cap U'$ which is in contradiction with the fact that $\mathcal{U}_x$ is a nice cover of $C_{x}.$

Thus, there must exist $V_U \in \mathcal{V}_x'$ such that $U \subseteq V_U,$ implying $\mathcal{U}_x$ refines the cover $\mathcal{V}_x'.$
\end{proof}
The following result will be used several times in what is to come. 
\begin{lm} \label{215}
Let $Z \subseteq Z_{0}$ be a connected affinoid neighborhood of $x.$ Let $D'$ be a connected affinoid domain of $\mathbb{P}_Z^{1, \mathrm{an}}$, such that $D' \cap F_x$ is non-empty and connected, where $F_x$ is the fiber of $x$ with respect to the morphism $\mathbb{P}_Z^{1, \mathrm{an}}\rightarrow Z.$ Then the connected components of $f_Z^{-1}(D')$ are connected affinoid domains of $C_Z$ that intersect the fiber $C_x$ of $x$. Moreover, if $U$ is a connected component of $f_Z^{-1}(D'),$ then $f_Z(U)=D'.$
\end{lm}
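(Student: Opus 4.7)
The plan is to deduce all three conclusions of the lemma from the finiteness of $f_Z$, together with irreducibility and dimension considerations.

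First, since $D'$ is an affinoid domain of $\mathbb{P}_Z^{1,\mathrm{an}}$ and $f_Z\colon C_Z\to\mathbb{P}_Z^{1,\mathrm{an}}$ is finite, the preimage $f_Z^{-1}(D')$ is an affinoid domain of $C_Z$ (being the base change of a finite morphism to an affinoid). It therefore has finitely many connected components, each of which is a connected affinoid subdomain of $C_Z$; by Proposition~\ref{204}(2), each such component $U$ is normal and irreducible.

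The heart of the argument is to show that $f_Z(U)=D'$ for any such component $U$. The restriction $f_Z|_U\colon U\to D'$ is finite, so $f_Z(U)$ is a Zariski closed analytic subset of $D'$, and since $U$ is irreducible, so is $f_Z(U)$. Because $U$ is a non-empty affinoid subdomain of the irreducible analytic space $C_Z$, one has $\dim U=\dim C_Z=\dim Z+1$; finite morphisms preserve dimension, so $\dim f_Z(U)=\dim Z+1$. On the other hand, $D'$ is a connected affinoid domain of the normal space $\mathbb{P}_Z^{1,\mathrm{an}}$, hence normal, and being connected and normal it is irreducible, with $\dim D'=\dim Z+1$. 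Since an irreducible closed analytic subset of full dimension inside an irreducible space must coincide with the whole space, one concludes $f_Z(U)=D'$.

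The intersection with $C_x$ then follows at once: choosing any $y\in D'\cap F_x$ (non-empty by hypothesis), the equality $f_Z(U)=D'$ provides some $z\in U$ with $f_Z(z)=y$; since $f_Z$ is a morphism over $Z$ and $y$ lies above $x$, so does $z$, giving $z\in U\cap C_x$. The only delicate point is the dimension argument used to obtain $f_Z(U)=D'$, which relies on the preservation of dimension under finite morphisms together with the fact that a proper irreducible closed analytic subset of an irreducible analytic space is of strictly smaller dimension. The connectedness of $D'\cap F_x$ is not actually used in the proof; it is included here because it holds in the intended applications of the lemma.
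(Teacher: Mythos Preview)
Your proof is correct, but it takes a different route from the paper's. The paper first argues (using the same pure-dimensionality and equidimensionality facts you invoke, via \cite[1.4.14(3)]{famduc}) that the finite surjective morphism $f_Z$ is \emph{open} by \cite[Lemma~3.2.4]{Ber90}. Then, since a connected component $U$ of $f_Z^{-1}(D')$ is clopen in $f_Z^{-1}(D')$, its image $f_Z(U)$ is clopen in $D'$; connectedness of $D'$ gives $f_Z(U)=D'$ directly. Your argument instead treats $f_Z(U)$ as an irreducible Zariski closed subset of the irreducible space $D'$ and compares dimensions. Both reach the same conclusion; the paper's open--closed--connected argument is slightly more economical in that once openness is known, no further dimension comparison for $U$, $D'$, and $f_Z(U)$ is needed, whereas your route requires the additional (true but not entirely trivial) facts that non-empty affinoid subdomains of an irreducible space have full dimension and that proper irreducible Zariski closed subsets drop dimension. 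Your observation that the connectedness of $D'\cap F_x$ is not actually used is correct: only non-emptiness is needed, and the paper's proof likewise only invokes $D'\cap F_x\neq\emptyset$ at this step.
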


\begin{proof}
Seeing as $f_Z$ is a finite morphism, $f_Z^{-1}(D')$ is an affinoid domain in $C_Z,$ and thus so are its connected components.

Seeing as $C_Z$ and $\mathbb{P}_Z^{1, \mathrm{an}}$ are irreducible, they are pure-dimensional (see \cite[Corollaire ~4.14]{dex}). Seeing as $f_Z$ is finite, its relative dimension is pure and equal to 0 (\textit{i.e.} all its fibers are of dimension 0). By \cite[1.4.14(3)]{famduc}, the dimension of $C_Z$ is the same as the dimension of $\mathbb{P}_Z^{1, \mathrm{an}}.$ Consequently, by \cite[Lemma 3.2.4]{Ber90}, $f_Z$ is open. 

Let $U$ be any connected component of $f_Z^{-1}(D').$ It is an open and a closed subset of $f_Z^{-1}(D').$ Seeing as $f_Z$ is open and closed, $f_Z(U)$ is an open and closed subset of $D'.$ Considering $D'$ is connected, this implies $D'=f_Z(U).$ Since $D' \cap F_x \neq \emptyset,$ we obtain $U \cap C_x \neq \emptyset.$
\end{proof}

\begin{nota} \label{edhenji}
Let $Z_{\mathcal{D}} \subseteq Z_0$ be a connected affinoid neighborhood of $x,$ such that the $Z_{\mathcal{D}}$-thickening $\mathcal{D}_{Z_{\mathcal{D}}}$ of $\mathcal{D}_x$ exists and is a $Z_{\mathcal{D}}$-relative nice cover for $\mathbb{P}_{Z_{\mathcal{D}}}^{1, \mathrm{an}}$ (see Theorem ~\ref{106}).
 
Let $Z \subseteq Z_{\mathcal{D}}$  be any connected affinoid neighborhood of $x.$ We denote by \textbf{$\mathcal{U}_Z$} the set of connected components of $f_Z^{-1}(D_Z), D \in \mathcal{D}_x.$ By Lemma \ref{215}, $\mathcal{U}_Z$ is a finite affinoid cover of $C_Z.$ Furthermore, for any $U \in \mathcal{U}_Z,$ $U \cap C_x \neq \emptyset$ and $f_Z(U) \in \mathcal{D}_Z.$
Remark that the nice cover $\mathcal{U}_x$ of Lemma \ref{214} is obtained by taking the connected components of $U \cap C_x, U \in 
\mathcal{U}_Z.$
\end{nota}
We now study the properties of the covers $\mathcal{U}_Z.$

\

\textbf{(2) The elements of $\mathcal{U}_Z$ intersect the fiber nicely.}
We show that the connected affinoid neighborhood $Z \subseteq Z_{\mathcal{D}}$ of $x$ can be chosen such that $U \cap C_x$ is connected for any $U \in \mathcal{U}_Z,$ and the same remains true when replacing $Z$ with any connected affinoid neighborhood $Z' \subseteq Z$ of $x.$ Let us start with a couple of auxiliary results. 

\begin{lm} \label{216} Let $Z \subseteq Z_0$ be a connected affinoid neighborhood of $x.$
Let $A_1, A_2$ be two disjoint compact subsets of $C_x.$ Then there exist two open subsets $B_1, B_2$ of $C_{Z}$ such that $A_i \subseteq B_i, i=1,2,$ and $B_1 \cap B_2=\emptyset.$
\end{lm}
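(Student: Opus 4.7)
The statement is a standard topological separation fact: in a Hausdorff space, two disjoint compact subsets can be separated by disjoint open sets. So the proof will reduce to checking that $C_Z$ is Hausdorff and then running the usual finite-cover argument.

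The plan is as follows. First, I will invoke Lemma \ref{205}, which says that any two distinct points of $C_Z$ admit disjoint open neighborhoods; that is exactly the Hausdorff property for $C_Z$. Since $A_1, A_2 \subseteq C_x \subseteq C_Z$ are compact (compactness in $C_x$ implies compactness in $C_Z$, as the inclusion $C_x \hookrightarrow C_Z$ is continuous with $C_x$ compact), they are disjoint compact subsets of a Hausdorff space, and the conclusion is purely topological.

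Next, I will carry out the standard two-step compactness argument. Fix $a_2 \in A_2$. For each $a_1 \in A_1$, Hausdorffness gives disjoint opens $U_{a_1}^{a_2}\ni a_1$ and $V_{a_1}^{a_2}\ni a_2$ in $C_Z$. The collection $\{U_{a_1}^{a_2}\}_{a_1 \in A_1}$ covers the compact set $A_1$, so a finite subcover extracts points $a_1^{(1)},\dots,a_1^{(n)}$; setting
\[
U(a_2) := \bigcup_{i=1}^n U_{a_1^{(i)}}^{a_2}, \qquad V(a_2) := \bigcap_{i=1}^n V_{a_1^{(i)}}^{a_2},
\]
one obtains an open set $U(a_2)\supseteq A_1$ and an open neighborhood $V(a_2)$ of $a_2$ with $U(a_2)\cap V(a_2) = \emptyset$. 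Now $\{V(a_2)\}_{a_2 \in A_2}$ covers the compact set $A_2$, so extract a finite subcover indexed by $a_2^{(1)},\dots,a_2^{(m)}$, and set
\[
B_2 := \bigcup_{j=1}^m V(a_2^{(j)}), \qquad B_1 := \bigcap_{j=1}^m U(a_2^{(j)}).
\]
Then $B_1, B_2$ are open in $C_Z$, $A_1 \subseteq B_1$, $A_2 \subseteq B_2$, and $B_1 \cap B_2 = \emptyset$ by construction, which is the desired conclusion.

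There is no real obstacle here: the only non-trivial ingredient is the Hausdorffness of $C_Z$, already furnished by Lemma \ref{205}. The rest is a classical argument that works in any Hausdorff topological space. In particular, no analytic or Berkovich-specific input beyond Hausdorffness is needed.
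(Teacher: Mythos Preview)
Your proof is correct and follows essentially the same approach as the paper: invoke Lemma~\ref{205} for Hausdorffness of $C_Z$, then run the standard two-step compactness argument to separate the two compact sets. The only cosmetic difference is that the paper fixes a point in $A_1$ and first covers $A_2$, whereas you do the symmetric version; the arguments are otherwise identical.
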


\begin{proof}
Let $a \in A_1.$ By Lemma \ref{205}, for any $b \in A_2,$ there exist an open neighborhood $N_{a, b}$ of $a$ in $C_{Z}$, and an open neighborhood $B_{a, b}$ of $b$ in $C_{Z},$ such that $N_{a, b} \cap B_{a, b}=\emptyset.$  The family $\{B_{a, b}\}_{b \in A_2}$ forms an open cover of $A_2.$ Considering $A_2$ is a compact subset of $C_x,$ it is compact in $C_Z,$ so there exists a finite subcover $\{B_{a, b_i}\}_{i=1}^m$ of $\{B_{a, b}\}_{b \in A_2}.$ Set $N_a=\bigcap_{i=1}^m N_{a, b_i}$ and $B_a=\bigcup_{i=1}^m B_{a, b_i}.$ Then $N_a, B_a$ are open subsets of $C_Z,$ $A_2 \subseteq B_a,$ and $N_a \cap B_a=\emptyset.$

The family $\{N_a\}_{a \in A_1}$ is an open cover of $A_1.$ Since $A_1$ is compact, there exists an open subcover $\{N_{a_j}\}_{j=1}^l.$ Set $B_1=\bigcup_{j=1}^l N_{a_j}$ and $B_2=\bigcap_{j=1}^l B_{a_j}.$ Then $B_1$ and $B_2$ satisfy the statement. 
\end{proof}

\begin{lm} \label{217}
Let $D$ be a connected affinoid domain of $\mathbb{P}_{\mathcal{H}(x)}^{1, \mathrm{an}}$ containing only type 3 points in its boundary. Let $Z \subseteq Z_0$ be a connected affinoid neighborhood of $x$ such that the $Z$-thickening $D_Z$ exists, and for any connected affinoid neighborhood $Z' \subseteq Z$ of $x,$ the $Z'$-thickening $D_{Z'}$ of $D$ is connected. Let $U_{1, Z}, U_{2, Z}, \dots, U_{n,Z}$ be the connected components of $f_{Z}^{-1}(D_Z).$

Then the connected components of $f_{Z'}^{-1}(D_{Z'})$ are the connected components of ${U_{i, Z} \cap C_{Z'}},$ $i=1,2,\dots, n.$ 
\end{lm}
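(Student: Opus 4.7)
The plan is to reduce the statement to a simple topological observation about clopen disjoint decompositions. The crux is establishing the set-theoretic identity
\[
f_{Z'}^{-1}(D_{Z'}) \;=\; f_Z^{-1}(D_Z) \cap C_{Z'}
\]
and then exploiting the fact that each $U_{i,Z}$ is open and closed in $f_Z^{-1}(D_Z)$.

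First, I would verify the identity above. By Definition \ref{100}, writing $D_Z = \{u \in \mathbb{P}_Z^{1,\mathrm{an}} : |P_j|_u \bowtie_j r_j, \ j=1,\dots,m\}$, the thickening $D_{Z'}$ is exactly the pullback $D_Z \cap \mathbb{P}_{Z'}^{1,\mathrm{an}} = D_Z \times_Z Z'$. Combined with the compatibility $f_{Z'} = f_Z \times_Z Z'$ from Notation \ref{nota} and the identification $C_{Z'} = C_Z \times_Z Z'$, the identity follows at once.

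Next, since $f_Z$ is finite and $D_Z$ is an affinoid domain, $f_Z^{-1}(D_Z)$ is an affinoid domain of $C_Z$, hence compact, and therefore has only finitely many connected components, namely $U_{1,Z},\dots,U_{n,Z}$. Each $U_{i,Z}$ is closed in $f_Z^{-1}(D_Z)$ as a connected component, and open in $f_Z^{-1}(D_Z)$ as the complement of the finite union of the other $U_{j,Z}$. Intersecting with the closed subset $C_{Z'}$ of $C_Z$ yields the decomposition
\[
f_{Z'}^{-1}(D_{Z'}) \;=\; \bigsqcup_{i=1}^{n} \bigl(U_{i,Z} \cap C_{Z'}\bigr),
\]
in which each piece $U_{i,Z} \cap C_{Z'}$ is open and closed in $f_{Z'}^{-1}(D_{Z'})$.

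Finally, to conclude, observe that any connected component $W$ of $f_{Z'}^{-1}(D_{Z'})$ is connected and hence, being disjoint from the clopen complement of any $U_{i,Z} \cap C_{Z'}$ it does not meet, must lie inside a single $U_{i_0,Z} \cap C_{Z'}$; since it is also clopen in that piece (as a connected component of the ambient space), it is a connected component of $U_{i_0,Z} \cap C_{Z'}$. Conversely, any connected component of $U_{i,Z} \cap C_{Z'}$ is connected, is clopen in $U_{i,Z} \cap C_{Z'}$, and since $U_{i,Z} \cap C_{Z'}$ is itself clopen in $f_{Z'}^{-1}(D_{Z'})$, is clopen there as well; hence it is a connected component of $f_{Z'}^{-1}(D_{Z'})$. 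There is essentially no obstacle here; the whole argument is formal once the base-change identity is recorded, and the hypothesis that each $D_{Z'}$ is connected plays no role in the proof of this particular statement (it is inherited from the broader context in which the lemma is applied).
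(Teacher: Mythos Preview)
Your proof is correct and follows essentially the same approach as the paper: establish the base-change identity $f_{Z'}^{-1}(D_{Z'}) = f_Z^{-1}(D_Z) \cap C_{Z'}$ (the paper does this via a commutative square, you via the explicit description of thickenings), obtain the clopen decomposition $f_{Z'}^{-1}(D_{Z'}) = \bigsqcup_{i=1}^n (U_{i,Z} \cap C_{Z'})$, and conclude. The paper simply says ``the statement follows immediately'' at that point, whereas you spell out the elementary topology of why the connected components of the whole are exactly the connected components of the clopen pieces; your remark that the connectedness of $D_{Z'}$ is not used here is also accurate.
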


\begin{proof}
By commutativity of the diagram below, $f_Z^{-1}(D_Z) \cap C_{Z'} = f_{Z'}^{-1}(D_Z \cap \mathbb{P}_{Z'}^{1, \mathrm{an}})=f_{Z'}^{-1}(D_{Z'}),$ so $f_{Z'}^{-1}(D_{Z'})=\bigsqcup_{i=1}^n U_{i,Z} \cap C_{Z'}$ for any $i.$ The statement follows immediately. 
\[
\begin{tikzcd}
C_{Z'} \arrow{r} \arrow[swap]{d}{f_{Z'}} & C_{Z} \arrow{d}{f_{Z}} \\
\mathbb{P}_{Z'}^{1, \mathrm{an}} \arrow{r} & \mathbb{P}_{Z}^{1, \mathrm{an}}
\end{tikzcd}
\]
\end{proof}
We can now show the following:
\begin{prop} \label{218}
Let $D$ be a connected affinoid domain of $\mathbb{P}_{\mathcal{H}(x)}^{1, \mathrm{an}}$ containing only type ~3 points in its boundary. Let $Z \subseteq Z_0$ be a connected affinoid neighborhood of $x$ such that the $Z$-thickening $D_Z$ exists, and for any connected affinoid neighborhood $Z' \subseteq Z$ of $x,$ the $Z'$-thickening $D_{Z'}$ of $D$ is connected. 

Let $U_{1, Z}, U_{2, Z}, \dots, U_{n,Z}$ be the connected components of $f_Z^{-1}(D_Z).$ The affinoid neighborhood $Z$ of $x$ can be chosen such that:
\begin{itemize}
\item $U_{i, Z} \cap C_x$ is a non-empty connected affinoid domain of $C_x$ for all $i;$
\item there is a bijection between the connected components of $f_Z^{-1}(D_Z)$ and the connected components of $f_x^{-1}(D)$ given by $U_{i,Z} \mapsto U_{i,Z} \cap C_x;$
\item for any connected affinoid neighborhood $Z' \subseteq Z$ of ~$x,$ 
the connected components of $f_{Z'}^{-1}(Z')$ are $U_{i,Z'}:=U_{i,Z} \cap C_{Z'}, i=1,2,\dots, n.$
\end{itemize}
\end{prop}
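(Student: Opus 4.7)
\medskip

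The plan is to use the topological structure of the fiber $C_x$ and the properness of $\pi_Z$ to separate the connected components of $f_x^{-1}(D)$ by open neighborhoods in $C_Z$, and then shrink $Z$ so that the preimage $f_Z^{-1}(D_Z)$ fits inside this separation. Let $V_1,\dots,V_m$ be the connected components of $f_x^{-1}(D)$; by Lemma~\ref{215} applied to the fiber, each $V_j$ is a connected affinoid domain of $C_x$. Since the $V_j$ are pairwise disjoint compact subsets of $C_x$, by iterating Lemma~\ref{216} we can find pairwise disjoint open subsets $W_1,\dots,W_m$ of $C_Z$ with $V_j\subseteq W_j$. Crucially, since the $W_j$ are pairwise disjoint and $f_x^{-1}(D)=\bigsqcup_j V_j\subseteq\bigsqcup_j W_j$, one has $W_j\cap f_x^{-1}(D)=V_j$ for every $j$.

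Next, I would exploit properness: the set $K:=f_Z^{-1}(D_Z)\setminus\bigsqcup_j W_j$ is compact in $C_Z$ and disjoint from $C_x$, so $\pi_Z(K)$ is closed in $Z$ and does not contain $x$. Replacing $Z$ by a connected affinoid neighborhood of $x$ (still denoted $Z$) disjoint from $\pi_Z(K)$, we achieve $f_Z^{-1}(D_Z)\subseteq\bigsqcup_j W_j$, and the same inclusion then holds automatically for $f_{Z'}^{-1}(D_{Z'})$ for every connected affinoid neighborhood $Z'\subseteq Z$ of $x$. This gives a clopen decomposition
\[
f_Z^{-1}(D_Z)\;=\;\bigsqcup_{j=1}^m\bigl(f_Z^{-1}(D_Z)\cap W_j\bigr),
\]
and I define $U_{j,Z}:=f_Z^{-1}(D_Z)\cap W_j$.

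The main (but mild) obstacle is to show that each $U_{j,Z}$ is actually connected, so that the $U_{j,Z}$ are the connected components. For this, observe that any connected component of $U_{j,Z}$ must meet $C_x$ non-trivially by Lemma~\ref{215}, and since $U_{j,Z}\cap C_x = W_j\cap f_x^{-1}(D)=V_j$, its intersection with $C_x$ is a subset of the connected set $V_j$. The key point is that this intersection is both closed (as the trace of a closed subset of $U_{j,Z}$) and open in $V_j$ (because in the compact affinoid $f_Z^{-1}(D_Z)$ there are only finitely many connected components, and each is therefore open, so intersecting with $C_x$ yields an open subset of $V_j$). Since $V_j$ is connected and the traces of distinct connected components of $U_{j,Z}$ partition $V_j$ into non-empty clopen pieces, there can be only one component, i.e. $U_{j,Z}$ itself. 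Hence $f_Z^{-1}(D_Z)$ has exactly $m$ connected components $U_{1,Z},\dots,U_{m,Z}$, each satisfying $U_{j,Z}\cap C_x=V_j$, which yields the required bijection with the connected components of $f_x^{-1}(D)$.

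Finally, for any connected affinoid neighborhood $Z'\subseteq Z$ of $x$, the inclusion $f_{Z'}^{-1}(D_{Z'})\subseteq\bigsqcup_j W_j$ persists, giving the clopen decomposition $f_{Z'}^{-1}(D_{Z'})=\bigsqcup_j (U_{j,Z}\cap C_{Z'})$; the very same argument (the fiber over $x$ is unchanged, hence the trace with $C_x$ is again $V_j$, and connectedness of $V_j$ forces connectedness of each piece) shows that the $U_{j,Z}\cap C_{Z'}$ are exactly the connected components of $f_{Z'}^{-1}(D_{Z'})$. This establishes all three assertions, with Lemma~\ref{217} ensuring compatibility with the description of Proposition~\ref{218} for arbitrary $Z'\subseteq Z$.
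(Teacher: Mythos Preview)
Your proof is correct and uses the same core ingredients as the paper (Lemma~\ref{216} for separating disjoint compacts, properness of $\pi_Z$ to shrink $Z$, and Lemma~\ref{215} to guarantee that every connected component of $f_Z^{-1}(D_Z)$ meets the fiber). The organization, however, is noticeably cleaner than the paper's. The paper proceeds component by component: it fixes one $U_{i_0,Z}$ whose trace on $C_x$ is disconnected, separates the pieces $L_1,\dots,L_r$ of that trace, shrinks $Z$ to force $U_{i_0,Z}$ into the separating opens, argues a bijection between the resulting components and the $L_t$, and only then repeats for every $i$ and takes a common refinement $Z'\subseteq\bigcap_i Z^i$. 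You instead separate \emph{all} the connected components $V_1,\dots,V_m$ of $f_x^{-1}(D)$ at once, shrink $Z$ a single time, and then use the clopen-partition argument on $V_j$ to conclude connectedness of each $U_{j,Z}$. This one-shot separation avoids the bookkeeping in the paper's iterative argument and makes the stability under further shrinking of $Z$ immediate. One small remark: your invocation of Lemma~\ref{215} ``applied to the fiber'' to say the $V_j$ are affinoid is really just the observation that $f_x$ is finite, so $f_x^{-1}(D)$ is affinoid and its finitely many connected components are affinoid; the substantive use of Lemma~\ref{215} is later, when you need each connected component of $f_Z^{-1}(D_Z)$ to meet $C_x$.
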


\begin{proof}
Recall that the finite morphism $f_Z:C_Z \rightarrow \mathbb{P}_Z^{1, \mathrm{an}}$ induces a finite morphism $f_x: C_x \rightarrow \mathbb{P}_{\mathcal{H}(x)}^{1, \mathrm{an}}$ on the corresponding fibers of $x.$
Let $L_1, L_2, \dots, L_s$ be the connected components of $f_x^{-1}(D).$ They are connected affinoid domains of $C_x.$ 

Seeing as (follow the diagram below) $${\bigsqcup_{t=1}^s L_t =f_x^{-1}(D) = f_Z^{-1}(D_Z) \cap C_x= \bigsqcup_{i=1}^n U_{i,Z} \cap C_x},$$ for any $t,$ $L_t \subseteq \bigsqcup_{i=1}^n U_{i,Z}.$ Since $L_t$ is connected, there exists a unique $i_t$ such that $L_t \subseteq U_{i_t,Z} \cap C_x.$ 
\[
\begin{tikzcd}
C_{x} \arrow{r} \arrow{d}[swap]{f_{x}}& C_{Z} \arrow{d}{f_{Z}} \\
 \mathbb{P}_{\mathcal{H}(x)}^{1, \mathrm{an}} \arrow{r} &\mathbb{P}_{Z}^{1, \mathrm{an}}
\end{tikzcd}
\]

Suppose there exists $i_0$ such that $ U_{i_0, Z} \cap C_x$ is not connected. Suppose, without loss of generality, that $L_1, L_2, \dots, L_r$ are the connected components of $C_x \cap U_{i_0, Z}.$ By Lemma ~\ref{216}, there exist mutually disjoint open subsets $B_t$ of $C_Z$ such that ${L_t \subseteq B_t},$ ${t=1,2,\dots, r}.$ 
The set $U_{i_0,Z} \backslash \bigsqcup_{t=1}^r B_t$ is a compact subset of $C_Z$ that doesn't intersect the fiber $C_x.$ It is a non-empty set: otherwise, $U_{i_0, Z} \subseteq \bigsqcup_{t=1}^r B_t,$ and seeing as ${U_{i_0,Z} \cap B_t \supseteq U_{i_0,Z} \cap L_t \neq \emptyset}$ for all ~$t=1,2,\dots, r$, we obtain that $U_{i_0,Z}$ is not connected, contradiction.

 Since $\pi_Z$ is proper, $\pi_Z(U_{i_0,Z} \backslash \bigsqcup_{t=1}^r B_t)$ is a non-empty compact subset of $Z$ that does not contain $x.$ Thus, there exists a connected affinoid neighborhood $Z_1 \subseteq Z$ of $x$ such that $\pi_{Z}^{-1}(Z_1) \cap (U_{i_0,Z} \backslash \bigsqcup_{t=1}^r B_t) =\emptyset,$ implying $U_{i_0,Z} \cap C_{Z_1} \subseteq \bigsqcup_{t=1}^r B_t.$ 
\begin{sloppypar}
Let $V_{1, Z_1}, V_{2, Z_1}, \dots, V_{e, Z_1}$ be the connected components of $U_{i_0, Z} \cap C_{Z_1}.$ By Lemma \ref{217}, $V_{j, Z_1}$, $j=1,2,\dots, e,$ are connected components of $f_{Z_1}^{-1}(D_{Z_1}),$ so by Lemma \ref{215}, they all intersect the fiber $C_x.$ Moreover, ${\bigsqcup_{j=1}^e V_{j, Z_1} \cap C_x=U_{i_0, Z} \cap C_{x}=\bigsqcup_{t=1}^r L_t}.$ Hence, for any $t$, there exists a unique $e_t$ such that $L_t \subseteq V_{e_t, Z_1} \cap C_x.$ By the paragraph above, for any $j$, there exists a unique $t_j,$ such that $V_{j, Z_1} \subseteq B_{t_j},$ hence a unique $L_{t_j}$ contained in $V_{j, Z_1}.$ Consequently, $r=e$ and ${\{V_{j, Z_1} \cap C_x: j=1,2,\dots, r\}=\{L_t: t=1,2,\dots, r\}}.$ We may assume, without loss of generality, that $V_{j, Z_1} \cap C_x= L_j, j=1,2,\dots, r.$ Clearly, this induces a bijection between the connected components of $U_{i_0, Z} \cap C_{Z_1}$ and the connected components of $U_{i_0,Z} \cap C_x,$ given by $V_{j, Z_1} \mapsto V_{j,Z_1} \cap C_x=L_j, j=1,2,\dots, r.$ 
\end{sloppypar} 
Let us show that for any connected affinoid neighborhood $Z_2 \subseteq Z_1$ of $x,$ $V_{j, Z_1} \cap C_{Z_2}$ remains connected for all $j=1,2,\dots, r.$ By Lemma \ref{217}, the connected components of $V_{j, Z_1} \cap C_{Z_2}$ are connected components of $f_{Z_2}^{-1}(D_{Z_2}),$ so by Lemma \ref{215}, they all intersect the fiber $C_x.$ Seeing as $L_j=V_{j, Z_1} \cap C_x=V_{j, Z_1} \cap C_{Z_2} \cap C_x$ is connected, $V_{j, Z_1} \cap C_{Z_2}$ has to be connected for all $j$. In particular, the bijective correspondence obtained above remains true when replacing $Z_1$ by $Z_2.$

We have shown that for any $i=1,2,\dots, n,$ there exists a connected affinoid neighborhood $Z^i \subseteq Z_0$ of $x,$ such that the connected components $V_{j,i, Z^i}, j=1,2,\dots, r_i,$ of $U_{i,Z} \cap C_{Z^i}$ satisfy: (a) $V_{j,i, Z^i} \cap C_x$ is non-empty and connected for all $j$; (b) there is a bijection between the connected components of $U_{i,Z} \cap C_{Z^i}$ and the connected components of $U_{i,Z} \cap C_x,$ given by $V_{j,i, Z^i} \mapsto V_{j,i,Z^i} \cap C_x;$ (c) for any connected affinoid neighborhood $Z' \subseteq Z^i,$ $V_{j,i, Z^i} \cap C_{Z'}$ remains connected, implying the connected components of $U_{i, Z} \cap C_{Z'}$ are $V_{j,i, Z^i} \cap C_{Z'}, j=1,2,\dots, r_i.$

Let $Z' \subseteq \bigcap_{i=1}^n Z^i$ be a connected affinoid neighborhood of $x.$ Since $Z' \subseteq Z,$ by Lemma ~\ref{217}, the connected components of $f_{Z'}^{-1}(D_{Z'})$ are the connected components of $U_{i,Z} \cap C_{Z'}, i=1,2,\dots, n.$ By the paragraph above, these are $V_{j,i, Z^i} \cap C_{Z'}, j=1,2,\dots, r_i,$ ${i=1,2,\dots, n},$ and they satisfy: \emph{(a')} $V_{j,i,Z^i} \cap C_{Z'} \cap C_x$ is non-empty and connected for all ~$j,i$; \emph{(b')} for any $i,$ there is a bijection between the connected components of $U_{i,Z} \cap C_{Z'}$ and the connected components of $U_{i,Z} \cap C_x,$ given by $V_{i,j,Z^i} \cap C_{Z'} \mapsto V_{i,j,Z^i} \cap C_x,$ implying there is a bijection between the connected components of $U_{i,Z} \cap C_{Z'}, i=1,2,\dots, n$ (\textit{i.e.} of $f_{Z'}^{-1}(D_{Z'})$) and the connected components of $U_{i,Z} \cap C_x, i=1,2,\dots, n$ (\textit{i.e.} of $f_x^{-1}(D)$), given by $V_{j,i,Z^i} \cap C_{Z'} \mapsto V_{j,i,Z^i} \cap C_x, j,i;$ \emph{(c')} for any connected affinoid neighborhood $Z'' \subseteq Z'$ of $x,$ by the paragraph above, the connected components of $f_{Z''}^{-1}(D_{Z''})$ are $V_{j,i,Z^i} \cap C_{Z'} \cap C_{Z''}=V_{j,i, Z^i} \cap C_{Z''},$ ${j=1,2,\dots, r_i,}$ $i=1,2,\dots, n.$ 
\end{proof}
 
We have shown:

\begin{cor} \label{219}
There exists a connected affinoid neighborhood $Z_f \subseteq Z_{\mathcal{D}}$ of $x$ such that for any $U \in \mathcal{U}_{Z_f},$ $U \cap C_x$ is connected, and $\mathcal{U}_x=\{U \cap C_x: U \in \mathcal{U}_{Z_f}\},$ where $\mathcal{U}_x$ is the nice cover of $C_x$ obtained in the statement of Lemma \ref{214}.  Moreover, for any connected affinoid neighborhood $Z' \subseteq Z_f$ of $x,$ $\mathcal{U}_{Z'}=\{U \cap C_{Z'}: U \in \mathcal{U}_{Z_f}\}.$ 
\end{cor}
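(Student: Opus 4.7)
The plan is to reduce the statement to a finite application of Proposition~\ref{218}, since the hard analytic work (the existence of a connected affinoid shrinking that makes components behave well above a single $D\in\mathcal{D}_x$) has already been carried out there. Recall from Notation~\ref{edhenji} that $\mathcal{D}_x$ is a finite nice cover of $\mathbb{P}^{1,\mathrm{an}}_{\mathcal{H}(x)}$ whose $Z_{\mathcal{D}}$-thickening $\mathcal{D}_{Z_{\mathcal{D}}}$ is a relative nice cover of $\mathbb{P}^{1,\mathrm{an}}_{Z_{\mathcal{D}}}$, and that each element of $\mathcal{D}_x$ is a connected affinoid domain with only type~$3$ points in its boundary. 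In particular, by Theorem~\ref{nicecover}, for every $D\in\mathcal{D}_x$ and every connected affinoid neighborhood $Z'\subseteq Z_{\mathcal{D}}$ of $x$, the $Z'$-thickening $D_{Z'}$ exists and is connected; hence the hypothesis of Proposition~\ref{218} is satisfied for every $D\in\mathcal{D}_x$.

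First I would apply Proposition~\ref{218} to each individual $D\in\mathcal{D}_x$: it produces a connected affinoid neighborhood $Z_D\subseteq Z_{\mathcal{D}}$ of $x$ such that the connected components of $f_{Z_D}^{-1}(D_{Z_D})$ are in bijection with those of $f_x^{-1}(D)$ via $U\mapsto U\cap C_x$, each intersection $U\cap C_x$ being a non-empty connected affinoid domain of $C_x$, and such that the same property persists for any connected affinoid neighborhood $Z''\subseteq Z_D$ of $x$. Since $\mathcal{D}_x$ is finite and $S$ is good, one can pick a connected affinoid neighborhood $Z_f$ of $x$ contained in $\bigcap_{D\in\mathcal{D}_x} Z_D$.

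Next I would read off the conclusion. By construction, $\mathcal{U}_{Z_f}$ is the disjoint union, over $D\in\mathcal{D}_x$, of the connected components of $f_{Z_f}^{-1}(D_{Z_f})$. Applying Proposition~\ref{218} at each $D$ with the ambient affinoid $Z_f\subseteq Z_D$, every $U\in\mathcal{U}_{Z_f}$ satisfies that $U\cap C_x$ is a non-empty connected affinoid domain of $C_x$, and the association $U\mapsto U\cap C_x$ gives a bijection between $\mathcal{U}_{Z_f}$ and $\bigsqcup_{D\in\mathcal{D}_x}\pi_0(f_x^{-1}(D))$. But by Lemma~\ref{214} this last set is exactly $\mathcal{U}_x$, so $\mathcal{U}_x=\{U\cap C_x:U\in\mathcal{U}_{Z_f}\}$. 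Finally, for any connected affinoid neighborhood $Z'\subseteq Z_f$ of $x$, the last bullet of Proposition~\ref{218} tells us that the connected components of $f_{Z'}^{-1}(D_{Z'})$ are exactly the $U\cap C_{Z'}$ for $U$ a connected component of $f_{Z_f}^{-1}(D_{Z_f})$; taking the union over $D\in\mathcal{D}_x$ yields $\mathcal{U}_{Z'}=\{U\cap C_{Z'}:U\in\mathcal{U}_{Z_f}\}$.

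There is no genuine obstacle here; the only mild point to check is that shrinking from $Z_D$ down to $Z_f$ preserves the conclusions for each fixed $D$, but this is precisely the ``persistence for any $Z''\subseteq Z_D$'' clause built into Proposition~\ref{218}, which makes the finite intersection harmless.
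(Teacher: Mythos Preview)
Your proposal is correct and matches the paper's own approach: the paper places Corollary~\ref{219} immediately after Proposition~\ref{218} with the words ``We have shown:'', indicating exactly the argument you give---apply Proposition~\ref{218} to each $D$ in the finite nice cover $\mathcal{D}_x$, intersect the resulting neighborhoods, and use the persistence clause to pass to $Z_f$ and any smaller $Z'$.
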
 
 
\begin{rem} \label{220}
By Corollary \ref{219}, for any connected affinoid neighborhood ${Z \subseteq Z_f}$ of $x,$ there is a bijective correspondence between $\mathcal{U}_{Z}$ and $\mathcal{U}_x$ given by $V \mapsto V \cap C_x.$

Consequently, we will from now on sometimes write $U_Z$ for the unique element of $\mathcal{U}_Z$ corresponding to the element $U$ of $\mathcal{U}_x.$ In particular, $\mathcal{U}_Z=\{U_Z: U \in \mathcal{U}_x\}.$
\end{rem}

\textbf{(3) $\mathcal{U}_Z$ refines $\mathcal{V}$.} We now prove that the covers we have just constructed refine the starting open cover $\mathcal{V}.$

\begin{prop} \label{221}
There exists a connected affinoid neighborhood $Z_r \subseteq Z_f$ ($Z_f$ as in Corollary \ref{219}) of $x$ such that for any connected affinoid neighborhood $Z \subseteq Z_r,$ the cover~$\mathcal{U}_Z$ refines $\mathcal{V}.$
\end{prop}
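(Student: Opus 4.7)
The plan is to follow the outline sketched by the author in the paragraph immediately preceding the statement, making the properness-of-$\pi$ argument and the finiteness of $\mathcal{U}_{Z_f}$ do the real work. Concretely, I would start with an arbitrary element $U_{Z_f} \in \mathcal{U}_{Z_f}$ and consider its trace $U := U_{Z_f} \cap C_x$ on the fiber. By Corollary~\ref{219}, $U$ is a connected affinoid domain belonging to the nice cover $\mathcal{U}_x$, and by Lemma~\ref{214}, $\mathcal{U}_x$ refines $\mathcal{V}_x$. Hence there exists some $V \in \mathcal{V}$ with $U \subseteq V \cap C_x$.

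The main step is then to propagate this ``fiberwise'' inclusion to a relative one. I would argue that if $U_{Z_f} \not\subseteq V$, then $U_{Z_f} \setminus V$ is a non-empty compact subset of $C_{Z_f}$ whose intersection with $C_x$ is empty (since $U \subseteq V$). Because $\pi_{Z_f}$ is proper (Proposition~\ref{204}), its image $\pi_{Z_f}(U_{Z_f} \setminus V)$ is a compact subset of $Z_f$ that does not contain $x$. Using that $S$ is good, I would then pick a connected affinoid neighborhood $Z_{U_{Z_f}} \subseteq Z_f$ of $x$ disjoint from $\pi_{Z_f}(U_{Z_f} \setminus V)$, which forces $U_{Z_f} \cap C_{Z_{U_{Z_f}}} \subseteq V$. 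The same inclusion clearly remains valid after further shrinking of the affinoid neighborhood of $x$.

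Finally, since $\mathcal{U}_{Z_f}$ is a finite cover, I would take $Z_r$ to be the intersection of all the neighborhoods $Z_{U_{Z_f}}$ obtained in this way (or any connected affinoid neighborhood of $x$ contained in that intersection; a basis of such neighborhoods exists because $S$ is good). Then for any connected affinoid neighborhood $Z \subseteq Z_r$, each trace $U_{Z_f} \cap C_Z$ is contained in some element of $\mathcal{V}$. Applying Corollary~\ref{219} once more, we have $\mathcal{U}_Z = \{U_{Z_f} \cap C_Z : U_{Z_f} \in \mathcal{U}_{Z_f}\}$, so $\mathcal{U}_Z$ refines $\mathcal{V}$, as required.

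I do not anticipate a serious obstacle here: all the substantive work (the nice-cover refinement on the fiber, the bijective correspondence between elements of $\mathcal{U}_Z$ and $\mathcal{U}_x$, and the compatibility of thickenings with shrinking of $Z$) has already been established in Lemma~\ref{214}, Corollary~\ref{219} and Proposition~\ref{218}. The only point requiring modest care is ensuring that the argument is uniform in $U_{Z_f}$ — handled by the finiteness of $\mathcal{U}_{Z_f}$ — and that the conclusion is stable under further shrinking, which follows immediately from the fact that the inclusions $U_{Z_f} \cap C_Z \subseteq V$ are preserved by base change to smaller affinoid neighborhoods.
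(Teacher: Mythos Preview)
Your proposal is correct and follows essentially the same argument as the paper: trace to the fiber via Corollary~\ref{219}, invoke Lemma~\ref{214} for the fiberwise refinement, use properness of $\pi$ to shrink the base so that each $U_{Z_f}\cap C_Z$ lands in the chosen $V\in\mathcal{V}$, and conclude by finiteness of $\mathcal{U}_{Z_f}$ together with the identification $\mathcal{U}_Z=\{U_{Z_f}\cap C_Z\}$ from Corollary~\ref{219}. The only cosmetic difference is that the paper phrases the shrinking starting from a general $Z\subseteq Z_f$ rather than from $Z_f$ itself.
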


\begin{proof}
Let $Z \subseteq Z_{f}$ be a connected affinoid neighborhood of ~$x.$ Let ${U_Z \in \mathcal{U}_Z}$. Then ${U:=U_Z \cap C_x}$ is a connected affinoid domain of $C_x$ and an element of $\mathcal{U}_x$  (recall Remark \ref{220}). By Lemma \ref{214}, there exists $V \in \mathcal{V},$ such that $U \subseteq V_x,$ where $V_x$ denotes the intersection of $V$  with the fiber $C_x$. Assume $U_Z \not \subseteq V.$ Then $U_Z \backslash V$ is a non-empty compact subset of $C_{Z}$ not intersecting the fiber $C_x.$ Seeing as $\pi_{Z}$ is proper, $\pi_{Z}(U_Z \backslash V)$ is a compact subset of $Z$ not containing $x.$ Thus, there exists a connected affinoid neighborhood $Z_1 \subseteq Z$ of ~$x,$ such that $\pi_{Z}^{-1}(Z_1) \cap (U_Z \backslash V) =\emptyset,$ \textit{i.e.} $C_{Z_1} \cap (U_Z \backslash V) = \emptyset,$ implying $C_{Z_1} \cap U_Z \subseteq V.$ Clearly, the same remains true when replacing $Z_1$ by any connected affinoid neighborhood $Z_2 \subseteq Z_1$ of $x.$ Considering $\mathcal{U}_Z$ is a finite cover, by repeating the same argument for all of its elements, we obtain that there exists a connected affinoid neighborhood $Z' \subseteq Z_f$ such that  $\{U_Z \cap C_{Z'}: U \in \mathcal{U}_{x}\}$ refines $\mathcal{V},$ and the same remains true when replacing $Z'$ with any connected affinoid neighborhood $Z'' \subseteq Z'.$ By Corollary ~\ref{219}, $\mathcal{U}_{Z'}=\{U_Z \cap C_{Z'}: U \in \mathcal{U}_x\},$ implying $\mathcal{U}_{Z'}$ is a refinement of $\mathcal{V}.$ The same remains true for any $Z'' \subseteq Z'$ as above. 
\end{proof}

\textbf{(4) Nature of the pairwise intersections of elements of $\mathcal{U}_Z$.} We now study the nature of the pairwise intersections of the elements of $\mathcal{U}_Z,$ which, as in the one-dimensional case, play a very important role for ``patching purposes". 
\begin{prop} \label{222}
There exists a connected affinoid neighborhood $Z_t \subseteq Z_r$ (with $Z_r$ as in Proposition ~\ref{221}) of $x$ such that for any connected affinoid neighborhood $Z \subseteq Z_t$, for any two non-disjoint elements $D_1, D_2$ of $\mathcal{D}_x$ with $D_1 \cap D_2=:\{y\},$ 
$$f_Z^{-1}(D_{1, Z} \cap D_{2,Z})=\bigsqcup_{s\in f_x^{-1}(y)} W_{s, Z},$$
where $W_{s,Z}$ is a connected affinoid domain of $C_Z,$ and ${W_{s, Z} \cap C_x=\{s\}}$  for any $s \in f_x^{-1}(y)$. Moreover, for any connected affinoid neighborhood $Z' \subseteq Z,$ the connected components of $f_{Z'}^{-1}(D_{1, Z'} \cap D_{2, Z'})$ are $W_{s,Z'}:=W_{s,Z} \cap C_{Z'}, s \in f_x^{-1}(y).$
\end{prop}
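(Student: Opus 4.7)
The idea is to apply Proposition \ref{218} to each pair of non-disjoint elements of the (finite) nice cover $\mathcal{D}_x$ and then intersect the finitely many resulting affinoid neighborhoods of $x$. The only subtlety is to reduce the intersection $D_{1,Z}\cap D_{2,Z}$ of two separate thickenings to the thickening of their intersection $D_1\cap D_2$, so that Proposition \ref{218} is directly applicable; this reduction is exactly what is afforded by the fact that $\mathcal{D}_{Z_{\mathcal{D}}}$ is a $Z_{\mathcal{D}}$-relative nice cover.

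Fix a pair $D_1,D_2\in\mathcal{D}_x$ with $D_1\cap D_2=\{y\}$, a type~$3$ point of $\mathbb{P}_{\mathcal{H}(x)}^{1,\mathrm{an}}$, and set $D:=D_1\cap D_2$. Since $Z_t$ will be contained in $Z_r\subseteq Z_{\mathcal{D}}$, Theorem \ref{nicecover}(2) applied to $\mathcal{D}_{Z_{\mathcal{D}}}$ yields $D_{1,Z}\cap D_{2,Z}=D_Z$ for every connected affinoid neighborhood $Z\subseteq Z_{\mathcal{D}}$ of~$x$, and $D_Z$ is a connected affinoid domain of $\mathbb{P}_Z^{1,\mathrm{an}}$ whose intersection with the fiber $\mathbb{P}_{\mathcal{H}(x)}^{1,\mathrm{an}}$ is the singleton $\{y\}$. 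Thus $D$ satisfies the hypotheses of Proposition \ref{218}: it is a connected affinoid domain of $\mathbb{P}_{\mathcal{H}(x)}^{1,\mathrm{an}}$ with only type~$3$ points in its boundary (namely $y$ itself), and its $Z$-thickening is connected for $Z\subseteq Z_{\mathcal{D}}$.

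By Proposition \ref{218}, there exists a connected affinoid neighborhood $Z_{D_1,D_2}\subseteq Z_r$ of~$x$ such that, writing $\{W_{i,Z_{D_1,D_2}}\}_i$ for the connected components of $f_{Z_{D_1,D_2}}^{-1}(D_{Z_{D_1,D_2}})=f_{Z_{D_1,D_2}}^{-1}(D_{1,Z_{D_1,D_2}}\cap D_{2,Z_{D_1,D_2}})$, the assignment $W_{i,Z_{D_1,D_2}}\mapsto W_{i,Z_{D_1,D_2}}\cap C_x$ defines a bijection onto the connected components of $f_x^{-1}(y)=\{s_1,\ldots,s_m\}$; here, since $y$ is a type~$3$ point and $f_x$ is finite, each preimage point is a singleton, so we may index the components by $s\in f_x^{-1}(y)$ and write $W_{s,Z_{D_1,D_2}}$ for the unique component with $W_{s,Z_{D_1,D_2}}\cap C_x=\{s\}$. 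Moreover, the last bullet of Proposition~\ref{218} ensures that for every connected affinoid neighborhood $Z'\subseteq Z_{D_1,D_2}$ of~$x$, the connected components of $f_{Z'}^{-1}(D_{1,Z'}\cap D_{2,Z'})$ are exactly $W_{s,Z'}:=W_{s,Z_{D_1,D_2}}\cap C_{Z'}$, $s\in f_x^{-1}(y)$.

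Finally, since $\mathcal{D}_x$ is finite, there are only finitely many pairs $(D_1,D_2)$ of non-disjoint elements of $\mathcal{D}_x$. Choose $Z_t$ to be any connected affinoid neighborhood of $x$ contained in the (open and hence nonempty) intersection of all $Z_{D_1,D_2}$; such $Z_t$ exists because $S$ is good. For $Z\subseteq Z_t$ the conclusion of the proposition holds for every non-disjoint pair simultaneously. The main obstacle is the verification that one can reduce from $D_{1,Z}\cap D_{2,Z}$ to $(D_1\cap D_2)_Z$, but this is precisely the reason the relative nice cover $\mathcal{D}_{Z_{\mathcal{D}}}$ was constructed in Theorem \ref{nicecover}; everything else is bookkeeping using finiteness of $\mathcal{D}_x$ together with Proposition \ref{218}.
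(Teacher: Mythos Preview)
Your proposal is correct and follows essentially the same approach as the paper: both reduce $D_{1,Z}\cap D_{2,Z}$ to the thickening $(D_1\cap D_2)_Z$ via Theorem~\ref{nicecover}(2), then apply Proposition~\ref{218} to the single type~3 point $D=\{y\}$, and finally shrink over the finitely many non-disjoint pairs in $\mathcal{D}_x$. One small imprecision: the finite intersection $\bigcap Z_{D_1,D_2}$ of affinoid neighborhoods is not open, but it is a neighborhood of $x$ (each contains $x$ in its interior), so a connected affinoid $Z_t$ inside it exists because $S$ is good; your conclusion stands.
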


\begin{proof}
Let ${Z \subseteq Z_r}$  be a connected affinoid neighborhood of $x.$ Let ${D_{1}, D_{2} \in \mathcal{D}_x}$ such that  $D_1 \cap D_2 \neq \emptyset.$ Set $D_1 \cap D_2=\{y\}.$ Then $f_x^{-1}(y):=\{s_1, s_2, \dots, s_m\}$ is a subset of ~$S_{\mathcal{U}_x}.$ Set $D=D_1 \cap D_2.$ As $Z \subseteq Z_{\mathcal{D}}$ (with $Z_{\mathcal{D}}$ as in part (1)), the $Z$-thickening $D_Z$ of ~$D$ is a connected affinoid domain of $\mathbb{P}_Z^{1, \mathrm{an}}$ intersecting the fiber $\mathbb{P}_{\mathcal{H}(x)}^{1, \mathrm{an}}$ at the single type 3 point ~$y.$ 

Let $W_{i, Z}, i=1,2,\dots, n,$ be the connected components of $f_Z^{-1}(D_Z)$. By Proposition ~\ref{218}, we may assume that: (a) $W_{i,Z} \cap C_x$ is connected for all $i$; (b) there is a bijective correspondence between the connected components of $f_{Z}^{-1}(D_Z)$ and the points of $f_x^{-1}(y),$ given by $W_{i,Z} \mapsto W_{i,Z} \cap C_x, i=1,2,\dots, n;$ (c)
for any connected affinoid neighborhood $Z' \subseteq Z,$ the connected components of $f_{Z'}^{-1}(D_{Z'})$ are $W_{i,Z'}=W_{i, Z} \cap C_{Z'},$ $i=1,2,\dots, n.$

For any $s \in f_x^{-1}(y),$ let us denote by $W_{s,Z}$ the (unique) connected component of $f_Z^{-1}(D_Z)$ containing $s,$ (\textit{i.e.} $W_{s, Z} \cap C_x=\{s\}$), so the connected components of $f_{Z}^{-1}(D_Z)$ are $W_{s,Z}, s \in f_x^{-1}(y).$

Let $U_{j,Z}, j=1,2,\dots, p$ (resp. $V_{l,Z}, l=1,2,\dots, q$),  be the connected components of $f_Z^{-1}(D_{1,Z})$ (resp. $f_Z^{-1}(D_{2,Z})).$ Then $$ \bigsqcup_{j=1}^p \bigsqcup_{l=1}^q U_{j,Z} \cap V_{l,Z} = f_Z^{-1}(D_{1,Z}) \cap f_Z^{-1}(D_{2,Z}) = f_Z^{-1}(D_Z)= \bigsqcup_{s \in f_{x}^{-1}(y)} W_{s,Z}.$$
For some $j,l,$ let $s_{j,l} \in U_j \cap V_l.$ Since $s_{j,l} \in W_{s_{j,l}, Z},$ we obtain that ${W_{s_{j,l}, Z} \subseteq U_{j,Z} \cap V_{l,Z}}.$  Consequently, for any $j,l,$ $U_{j,Z} \cap V_{l,Z}=\bigsqcup_{s\in U_j \cap V_l} W_{s, Z}.$ 

Let $Z' \subseteq Z$ be any connected affinoid neighborhood of $x.$
Considering that the connected components of $f_{Z'}^{-1}(D_{1,Z'})$ (resp. $f_{Z'}^{-1}(D_{2, Z'})$) are ${U_{j,Z} \cap C_{Z'}},$ $j=1,2,\dots, p$ (resp. ${V_{l,Z} \cap C_{Z'},} l=1,2,\dots, q$), the same properties remain true when replacing $Z$ by $Z'.$

The same argument can be repeated for any two non-disjoint elements of the finite cover ~$\mathcal{D}_x$, allowing us to conclude this proof. 
\end{proof}

\begin{cor} \label{223}
Let $Z \subseteq Z_t$ be a connected affinoid neighborhood of $x.$ For any ${U, V \in \mathcal{U}_x},$ $U \cap V \neq \emptyset$ if and only if $U_Z \cap V_Z \neq \emptyset.$
\end{cor}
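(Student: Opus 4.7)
The implication $U \cap V \neq \emptyset \Rightarrow U_Z \cap V_Z \neq \emptyset$ is immediate: by Corollary \ref{219} the bijection $\mathcal{U}_Z \to \mathcal{U}_x$ is given by $W \mapsto W \cap C_x$, so $U \subseteq U_Z$ and $V \subseteq V_Z$, whence $\emptyset \neq U \cap V \subseteq U_Z \cap V_Z$. The substance of the corollary is the converse, and the plan is to argue it by tracing $U$ and $V$ back to the pieces of $\mathcal{D}_x$ that produced them and splitting into cases according to the incidence of those pieces.

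By construction (Notation \ref{edhenji} and Lemma \ref{214}), there exist $D_1, D_2 \in \mathcal{D}_x$ such that $U_Z$ (resp.\ $V_Z$) is a connected component of $f_Z^{-1}(D_{1,Z})$ (resp.\ of $f_Z^{-1}(D_{2,Z})$), and $U=U_Z \cap C_x$ (resp.\ $V = V_Z \cap C_x$) is the corresponding connected component of $f_x^{-1}(D_1)$ (resp.\ of $f_x^{-1}(D_2)$). Suppose $U \cap V = \emptyset$. I will examine the three possibilities for $(D_1, D_2)$. If $D_1 = D_2 =: D$, then Proposition \ref{218} gives a bijection between connected components of $f_Z^{-1}(D_Z)$ and those of $f_x^{-1}(D)$ via intersection with $C_x$; since $U \neq V$, the preimages $U_Z, V_Z$ are \emph{distinct} connected components of the affinoid $f_Z^{-1}(D_Z)$, hence disjoint. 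If $D_1 \cap D_2 = \emptyset$, then since $Z \subseteq Z_{\mathcal{D}}$ and $\mathcal{D}_Z$ is a $Z$-relative nice cover of $\mathbb{P}_Z^{1,\mathrm{an}}$ (Theorem \ref{nicecover}), we have $D_{1,Z} \cap D_{2,Z} = \emptyset$, and therefore $U_Z \cap V_Z \subseteq f_Z^{-1}(D_{1,Z}) \cap f_Z^{-1}(D_{2,Z}) = f_Z^{-1}(D_{1,Z} \cap D_{2,Z}) = \emptyset$.

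The remaining (and only non-formal) case is when $D_1 \cap D_2 = \{y\}$ with $D_1 \neq D_2$. Here I invoke Proposition \ref{222}: for $Z \subseteq Z_t$, we have a disjoint decomposition
$$f_Z^{-1}(D_{1,Z} \cap D_{2,Z}) = \bigsqcup_{s \in f_x^{-1}(y)} W_{s,Z},$$
each $W_{s,Z}$ being connected and satisfying $W_{s,Z} \cap C_x = \{s\}$. The key formula, extracted from the proof of Proposition \ref{222}, is that if $U'_{j,Z}$ (resp.\ $V'_{l,Z}$) runs over the connected components of $f_Z^{-1}(D_{1,Z})$ (resp.\ $f_Z^{-1}(D_{2,Z})$) and $U'_j := U'_{j,Z} \cap C_x$, $V'_l := V'_{l,Z} \cap C_x$, then
$$U'_{j,Z} \cap V'_{l,Z} = \bigsqcup_{s \in U'_j \cap V'_l} W_{s,Z}.$$
Specializing this identity to the components $U_Z, V_Z$ and using the hypothesis $U \cap V = \emptyset$ makes the right-hand side empty, so $U_Z \cap V_Z = \emptyset$.

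I do not expect a real obstacle: the three subcases all reduce to results established just above (Theorem \ref{nicecover}, Propositions \ref{218} and \ref{222}), and the only point that requires a moment of care is ensuring that the combinatorial indexing $U \leftrightarrow U_Z$, $V \leftrightarrow V_Z$ provided by Corollary \ref{219} is truly compatible with the indexings used in Propositions \ref{218} and \ref{222} — which follows from the fact that both indexings are given by intersection with $C_x$, so they agree tautologically.
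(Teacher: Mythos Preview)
Your proof is correct and follows essentially the same route as the paper's. The paper argues the non-trivial implication directly (assuming $U_Z \cap V_Z \neq \emptyset$ and deducing $U \cap V \neq \emptyset$ via Theorem~\ref{nicecover}(2) and Proposition~\ref{222}), whereas you argue the contrapositive with an explicit three-way case split on $D_1, D_2$; the underlying inputs and logic are the same, and your treatment of the case $D_1 = D_2$ is a welcome bit of extra care that the paper leaves implicit.
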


\begin{proof} Set $D_1=f_x(U)$ and $D_2=f_x(V).$ Then $D_1, D_2 \in \mathcal{D}_x,$ and $U_Z,$ resp. $V_Z$, are connected components of $f_Z^{-1}(D_{1,Z})$, resp. $f_Z^{-1}(D_{2,Z}).$
If $U_Z \cap V_Z \neq \emptyset,$ then ${D_{1, Z} \cap D_{2,Z} \neq \emptyset}$, which is equivalent to $D_1 \cap D_2 \neq \emptyset$ (see Theorem \ref{nicecover}(2)). By Proposition ~\ref{222}, $U_Z \cap V_Z \cap C_x \neq \emptyset,$ \textit{i.e.} $U \cap V \neq \emptyset.$ The other direction is immediate.
\end{proof}

In order to invoke more easily the properties we have just shown for $\mathcal{U}_Z,$ we introduce the following:

\begin{defn} \label{224}
Let $\mathcal{D}_x$ be a nice cover of $\mathbb{P}_{\mathcal{H}(x)}^{1, \mathrm{an}}.$ For a connected affinoid neighborhood ~$Z$ of $x,$ a cover $\mathcal{U}_Z$ of $C_Z$ constructed as in (1) and satisfying properties (2) (\textit{i.e.} Corollary ~\ref{219}) and (4) (\textit{i.e.} Proposition \ref{222}), will be called a $Z$-\textit{relative nice cover of $C_Z$ induced by ~$\mathcal{D}_x$.} 
\begin{sloppypar}
Remark that $\mathcal{U}_x:=\{U \cap C_x: U \in \mathcal{U}_Z\}$ is a nice cover of $C_x$ induced by $\mathcal{D}_x$ as in Lemma ~\ref{214}.  Also, for any connected affinoid neighborhood $Z' \subseteq Z$ of $x,$ ${\mathcal{U}_{Z'}=\{U \cap C_{Z'}: U \in \mathcal{U}_Z\}}$ is a $Z'$-relative nice cover of $C_{Z'}$ induced by $\mathcal{D}_x.$
\end{sloppypar}
\end{defn}

\begin{rem} \label{remark}
We have shown that (see Proposition \ref{221}) for any open cover $\mathcal{V}$ of $C_x$ in $C,$ there exists a nice cover $\mathcal{D}_x$ of $\mathbb{P}_{\mathcal{H}(x)}^{1, \mathrm{an}}$ and a connected affinoid neighborhood $Z_t$ of ~$x$ such that the $Z_t$-relative nice cover $\mathcal{U}_{Z_t}$ of $C_{Z_t}$ induced by $\mathcal{D}_x$ refines $\mathcal{V}$. This remains true when replacing $Z_t$ by any connected affinoid neighborhood $Z \subseteq Z_t$ of $x.$   
\end{rem}
\subsection{Patching over relative proper curves}

We now generalize the results of Section ~\ref{4.3} and obtain an application of patching on relative proper curves. 

Throughout this part, let $k$ be a non-trivially valued complete ultrametric field.
We continue working with Setting \ref{200} and Notation \ref{nota}. Moreover, we assume that $\dim{S} < \dim_{\mathbb{Q}} \mathbb{R}_{>0}/|k^\times| \otimes_{\mathbb{Z}} \mathbb{Q},$ so type 3 points exist in $C_x$. 

As in the case of $\mathbb{P}^{1, \mathrm{an}}:$

\begin{nota} \label{225}
Let $G$ be a \textit{connected} rational linear algebraic group defined over $F_{\mathcal{O}_x}.$ Since $F_{\mathcal{O}_x}=\varinjlim_Z \mathscr{M}(C_Z)$ (Corollary \ref{213}), there exists a connected affinoid neighborhood $Z_G \subseteq Z_0$ of $x$ such that $G$ is a connected rational linear algebraic group over $\mathscr{M}(C_{Z_G}).$ 
\end{nota}

\begin{rem}
We will treat the general case of a proper relative curve by using the Weil restriction of scalars to descend to the relative projective line. In order for the Weil restriction of $G$ to satisfy the necessary properties for patching, we need to assume connectedness. Because of \cite[Corollary 6.5]{HHK1} (see also \cite[4.3]{une}), the author conjectures it is a necessary hypothesis when treating the general case (but could be omitted when treating special, well-behaved, ones). 
\end{rem}

The following is an analogue of \cite[Proposition 3.3]{une}. Recall Definition \ref{224}.
\begin{thm} \label{226}
For any open cover $\mathcal{V}$ of $C_x$ in $C,$ there exists a connected affinoid neighborhood $Z \subseteq Z_G$ of $x$ and a nice cover $\mathcal{D}_x$ of $\mathbb{P}_{\mathcal{H}(x)}^{1, \mathrm{an}}$ such that: 
\begin{itemize}
\item the $Z$-relative nice cover $\mathcal{U}_Z$ of $C_Z$ induced by $\mathcal{D}_x$ refines $\mathcal{V}$;
\item for any $(g_s)_{s \in S_{\mathcal{U}_x}} \in \prod_{s \in S_{\mathcal{U}_x}} G(\mathscr{M}_{C,s}) ,$ there exists $(g_U)_{U \in \mathcal{U}_x} \in \prod_{U \in \mathcal{U}_x} G(\mathscr{M}(U_Z)),$ satisfying: for any $s \in S_{\mathcal{U}_x},$ if $U_s, V_s $  are the elements of $\mathcal{U}_x$ containing $s,$ if $W_{s,Z}$ is the connected component of $U_{s,Z} \cap V_{s, Z}$ containing $s$, and $T_{\mathcal{U}_x}(U_s)=0,$ then $g_s \in G(\mathscr{M}(W_{s,Z})),$ and $g_s=g_U \cdot g_V^{-1}$ in $G(\mathscr{M}(W_{s,Z})).$
\end{itemize}
The same remains true when replacing $Z$ by any connected affinoid neighborhood $Z' \subseteq Z$ of $x.$
\end{thm}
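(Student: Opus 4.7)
The plan is to reduce the statement to Theorem \ref{147} on the relative projective line via Weil restriction of scalars along the finite surjective morphism $f_Z : C_Z \to \mathbb{P}_Z^{1,\mathrm{an}}$. First I invoke Remark \ref{remark}: for the given open cover $\mathcal{V}$, I choose a nice cover $\mathcal{D}_x$ of $\mathbb{P}_{\mathcal{H}(x)}^{1,\mathrm{an}}$ and a connected affinoid neighborhood $Z_t \subseteq Z_G$ of $x$ such that the induced relative nice cover $\mathcal{U}_{Z_t}$ of $C_{Z_t}$ refines $\mathcal{V}$, and such that the conclusions of Section \ref{4.5} hold for every connected affinoid neighborhood of $x$ inside $Z_t$. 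I also fix a parity function $T_{\mathcal{D}_x}$ on $\mathcal{D}_x$; by Lemma \ref{214} this induces the parity function $T_{\mathcal{U}_x}$ via $T_{\mathcal{U}_x}(U)=T_{\mathcal{D}_x}(f_x(U))$.

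Next, set $\widetilde{G} := R_{\mathscr{M}(C_{Z_t})/\mathscr{M}(Z_t)(T)} G$, the Weil restriction of $G$ along the finite field extension $\mathscr{M}(Z_t)(T) \hookrightarrow \mathscr{M}(C_{Z_t})$ induced by $f_{Z_t}$. By standard properties of Weil restriction along a finite field extension, $\widetilde{G}$ is a linear algebraic group over $\mathscr{M}(Z_t)(T)$ with $\widetilde{G}(A)=G(A\otimes_{\mathscr{M}(Z_t)(T)}\mathscr{M}(C_{Z_t}))$ for any $\mathscr{M}(Z_t)(T)$-algebra $A$. Connectedness of $\widetilde{G}$ follows from the connectedness of $G$, and rationality can be checked by pulling back a Zariski open $S'\subseteq G$ isomorphic to an open of an affine space: its Weil restriction is an open of the Weil restriction of that affine space, which is itself an affine space over $\mathscr{M}(Z_t)(T)$. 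Thus Theorem \ref{147} applies to $\widetilde{G}$.

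For each $y \in S_{\mathcal{D}_x}$, write $f_x^{-1}(y) = \{s_1, \ldots, s_{n_y}\}\subseteq S_{\mathcal{U}_x}$. Proposition \ref{209} gives the identification $\mathscr{M}_{\mathbb{P}_{Z_t}^{1,\mathrm{an}}, y} \otimes_{\mathscr{M}(Z_t)(T)} \mathscr{M}(C_{Z_t}) = \prod_{i=1}^{n_y} \mathscr{M}_{C, s_i}$, so the given family $(g_{s_i})_{i}$ assembles into an element $\widetilde{g}_y \in \widetilde{G}(\mathscr{M}_{\mathbb{P}_{Z_t}^{1,\mathrm{an}}, y})$. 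Applying Theorem \ref{147} to $\widetilde{G}$ with input $(\widetilde{g}_y)_{y\in S_{\mathcal{D}_x}}$ yields a connected affinoid neighborhood $Z \subseteq Z_t$ of $x$ and a family $(\widetilde{g}_D)_{D \in \mathcal{D}_x} \in \prod_{D} \widetilde{G}(\mathscr{M}(D_Z))$ satisfying, at each $y$, the factorization prescribed by $T_{\mathcal{D}_x}$.

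Finally, I translate back to $G$ on the curve. For each $D \in \mathcal{D}_x$ one has a canonical identification
\[
\mathscr{M}(D_Z)\otimes_{\mathscr{M}(Z)(T)}\mathscr{M}(C_Z)\;\cong\;\prod_{U\in\mathcal{U}_x,\ f_x(U)=D}\mathscr{M}(U_Z),
\]
obtained in the same spirit as Proposition \ref{209}: Proposition \ref{218} exhibits $f_Z^{-1}(D_Z)$ as the disjoint union of the connected affinoid components $U_Z$ corresponding to those $U\in\mathcal{U}_x$ with $f_x(U)=D$, and the algebraic structure provided by Setting \ref{200} together with Theorem \ref{231} lets one match the sheaves of meromorphic functions with the appropriate localization of the algebraic function fields. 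Hence $\widetilde{g}_D$ corresponds to a tuple $(g_U)_{U:f_x(U)=D}\in\prod G(\mathscr{M}(U_Z))$, and these assemble into the desired family $(g_U)_{U\in\mathcal{U}_x}$. At each $s\in S_{\mathcal{U}_x}$ with $f_x(s)=y$, the factorization for $\widetilde{g}_y$ projects, via Proposition \ref{222} (taking $W_{s,Z}$ to be the component of $U_{s,Z}\cap V_{s,Z}$ containing $s$) and the parity compatibility $T_{\mathcal{U}_x}(U_s)=T_{\mathcal{D}_x}(f_x(U_s))$, to the equality $g_s=g_{U_s}\cdot g_{V_s}^{-1}$ in $G(\mathscr{M}(W_{s,Z}))$. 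The main technical obstacle I expect is establishing the tensor/product identification above for arbitrary connected affinoid $D_Z$ rather than just at a type $3$ point; the stability statement for all $Z''\subseteq Z$ then comes for free from the corresponding stability in Theorem \ref{147} and in Section \ref{4.5}.
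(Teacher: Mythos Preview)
Your approach is essentially the paper's: Weil restriction along the finite extension $\mathscr{M}(Z_t)(T)\hookrightarrow\mathscr{M}(C_{Z_t})$, reduction to Theorem~\ref{147}, and translation back via the decomposition of $f_Z^{-1}(D_Z)$ into connected components. Two points of comparison are worth noting.

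First, your anticipated obstacle---the identification $\mathscr{M}(D_Z)\otimes_{\mathscr{M}(Z)(T)}\mathscr{M}(C_Z)\cong\prod_U\mathscr{M}(U_Z)$---is not established in the paper and is not needed. The paper only uses the \emph{natural map} $\mathscr{M}(D_{Z'})\otimes_{\mathscr{M}(\mathbb{P}_Z^{1,\mathrm{an}})}\mathscr{M}(C_Z)\to\prod_i\mathscr{M}(U_{i,Z'})$ (pullback then multiply), and the analogous map at the intersection level $\mathscr{M}(D_{1,Z'}\cap D_{2,Z'})\otimes\mathscr{M}(C_Z)\to\prod_s\mathscr{M}(W_{s,Z'})$. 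What matters is that these maps, together with the restriction maps, sit in a commutative square whose stalk version is the isomorphism of Proposition~\ref{209}. One then pushes $h_D$ through the map to define $(g_U)_U$, and commutativity transports the factorization $h_y=h_{D_1}\cdot h_{D_2}^{-1}$ to $g_s=g_{U_s}\cdot g_{V_s}^{-1}$ at each $s\in f_x^{-1}(y)$.

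Second, you skip a step the paper treats explicitly (Lemma~\ref{227} and Summary~\ref{suma}): before doing any Weil restriction, one must shrink $Z$ so that each germ $g_s\in G(\mathscr{M}_{C,s})$ is actually represented on $W_{s,Z}$, i.e.\ $g_s\in G(\mathscr{M}(W_{s,Z}))$. This is not automatic from Theorem~\ref{147} on the projective line side, and it is needed to identify the image of $h_y$ under the natural map with the original tuple $(g_s)_{s\in f_x^{-1}(y)}$ (the two agree after further restriction to the stalk, and injectivity of $G(\mathscr{M}(W_{s,Z'}))\to G(\mathscr{M}_{C,s})$ then gives equality). You should insert this step.
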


\begin{proof}
Seeing as for any connected affinoid neighborhood $Z$ of $x$, $x \in \text{Int}(Z),$ for any $u \in C_x,$ $u \in \text{Int}(C_Z),$ so $\mathscr{M}_{C_{Z}, u}=\mathscr{M}_{C,u}.$

By Remark \ref{remark}, there exists a connected affinoid neighborhood $Z \subseteq Z_G$ of $x$ and a nice cover $\mathcal{D}_x$ of $\mathbb{P}_{\mathcal{H}(x)}^{1, \mathrm{an}}$ which induce a refinement $\mathcal{U}_Z$ of $\mathcal{V}$ obtained as in construction (1) and satisfying properties (2) and (4) of Subsection \ref{affreux}. Let $\mathcal{U}_x$ denote the corresponding nice cover of $C_x$, $T_{\mathcal{U}_x}$ its associated parity function, and $S_{\mathcal{U}_x}$ the intersection points of the different elements of $\mathcal{U}_x.$ 

The proof is organized in three parts: in (a) we explore some properties of the neighborhoods of $s \in S_{\mathcal{U}_x}$; in (b) we make the descent to $\mathbb{P}^{1, \mathrm{an}}$ where the statement has already been proven; in (c) we conclude by using pull-backs.

\

\textit{(a) The neighborhoods of $s \in S_{\mathcal{U}_x}$.}  We will need the following:
\begin{lm} \label{227}
For $s \in S_{\mathcal{U}_x},$ let $B_s$ be a neighborhood of $s$ in $C.$ There exists a connected affinoid neighborhood $Z_1$ of $x$ such that for any $s \in S_{\mathcal{U}_x},$ if $U_s, V_s$ are the elements of $\mathcal{U}_x$ containing $s$, and $W_{s,Z_1}$ is the connected component of $U_{s,Z_1} \cap V_{s,Z_1}$ containing $s,$ then $W_{s, Z_1} \subseteq B_s.$ The neighborhood $Z_1$ can be chosen such that the statement remains true when replacing $Z_1$ by any connected affinoid neighborhood $Z_2 \subseteq Z_1$ of $x.$
\end{lm}

\begin{proof}
Let $Z \subseteq Z_t$ be a connected affinoid neighborhood of $x,$ where $Z_t$ is as in Proposition ~\ref{222}. By Lemma \ref{205},  we may suppose that $B_s \cap S_{\mathcal{U}_x}=\{s\}$ for any $s \in S_{\mathcal{U}_x}.$

Let  $y\in S_{\mathcal{D}_x}.$ By Lemma \ref{103}, there exists an open neighborhood $A_y$ of $y$ in $\mathbb{P}_{Z}^{1, \mathrm{an}},$ such that $f_{Z}^{-1}(A_y) \subseteq \bigsqcup_{s \in f_x^{-1}(y)} B_s.$  
Let $D_1, D_2$ be the elements of $\mathcal{D}_x$ containing $y.$
By \cite[Lemma I.1.2]{stein}, there exists a connected affinoid neighborhood $Z_1 \subseteq Z_{t}$ of $x$, such that $D_{1, Z_1} \cap D_{2, Z_1}=(D_1 \cap D_2)_{Z_1} \subseteq A_y.$ Then $$f_{Z_1}^{-1}(D_{1, Z_1} \cap D_{2, Z_1}) \subseteq f_{Z_1}^{-1}(A_y) =f_{Z}^{-1}(A_y) \cap C_{Z_1} \subseteq \bigsqcup_{s \in f_x^{-1}(y)} B_s.$$
Let $W_{s, Z_1}, s \in f_x^{-1}(y),$ be the connected components of $f_{Z_1}^{-1}(D_{1, Z_1} \cap D_{2, Z_1})$, where for any $s \in f_x^{-1}(y)$, $s \in W_{s, Z_1}$  (see Proposition \ref{222}). Seeing as $\bigsqcup_{s \in f_x^{-1}(y)} W_{s, Z_1} \subseteq \bigsqcup_{s \in f_x^{-1}(y)} B_s$  and $B_s \cap S_{\mathcal{U}_x}=\{s\}$ for any $s \in f_x^{-1}(y),$ we obtain that $W_{s, Z_1} \subseteq B_s$. 

Let $Z_2 \subseteq Z_1$ be any connected affinoid neighborhood of $x.$ Seeing as the connected components of $f_{Z_2}^{-1}(D_{1, Z_2} \cap D_{2, Z_2})$ are $W_{s, Z_2}=W_{s, Z_1} \cap C_{Z_2}, s\in f_x^{-1}(y)$ (Proposition \ref{222}), all of the above remains true 
when replacing $Z_1$ by $Z_2.$ 

We obtain the statement by applying the above to all points of $S_{\mathcal{D}_x}.$
\end{proof}

\begin{summary} \label{suma} Let $(g_s)_{s \in S_{\mathcal{U}_x}} \in \prod_{s\in S_{\mathcal{U}_x}} G(\mathscr{M}_{C,s}).$ For any $s \in S_{\mathcal{U}_x},$ there exists a neighborhood $B_s$ of ~$s$ in $C,$ such that $g_s \in G(\mathscr{M}(B_s)).$ By Lemma \ref{227}, there exists an affinoid neighborhood $Z \subseteq Z_t$ (with $Z_t$ as in Proposition \ref{222})  of $x$ such that for any $s \in S_{\mathcal{U}_x},$ if $U_s, V_s$ are the elements of $\mathcal{U}_x$ containing $s,$ then $W_{s, Z} \subseteq B_s,$ where $W_{s,Z}$ is the connected component of $U_{s,Z} \cap V_{s,Z}$ containing $s.$ Consequently, $g_s \in G(\mathscr{M}(W_{s,Z})).$ Seeing as for any connected affinoid neighborhood $Z' \subseteq Z$, $W_{s, Z'}=W_{s, Z} \cap C_{Z'},$ the same remains true when replacing ~$Z$ by $Z'$. 	
\end{summary}
\

\begin{sloppypar}
\textit{(b) The descent to $\mathbb{P}^{1, \mathrm{an}}$.} Let $Z$ be as in Summary \ref{suma} above. The finite surjective morphism $f_Z: C_Z \rightarrow \mathbb{P}_Z^{1, \mathrm{an}}$ induces a finite field extension ${\mathscr{M}(C_Z)/\mathscr{M}(\mathbb{P}_Z^{1, \mathrm{an}}).}$ Set $G'=\mathcal{R}_{\mathscr{M}(C_Z)/\mathscr{M}(\mathbb{P}_Z^{1, \mathrm{an}})}(G)$ - the Weil restriction of scalars from $\mathscr{M}(C_Z)$ to $\mathscr{M}(\mathbb{P}_Z^{1, \mathrm{an}})$ of $G.$ This is still a connected rational linear algebraic group (see \cite[7.6]{neronmodels} or \cite[Section 1]{Mil}). For any ${y \in S_{\mathcal{D}_x}},$ by the universal property of $\mathcal{R},$ ${G'(\mathscr{M}_{\mathbb{P}_Z^{1, \mathrm{an}}, y})=G(\mathscr{M}_{\mathbb{P}_Z^{1, \mathrm{an}}, y} \otimes_{\mathscr{M}(\mathbb{P}_Z^{1, \mathrm{an}})} \mathscr{M}(C_Z)).}$ By Proposition \ref{209}, ${G'(\mathscr{M}_{\mathbb{P}_Z^{1, \mathrm{an}}, y})=\prod_{s \in f_x^{-1}(y)} G(\mathscr{M}_{C_Z, s})}$. Let $(g_s)_{s \in S_{\mathcal{U}_x}} \in \prod_{s \in S_{\mathcal{U}_x}} G(\mathscr{M}_{C_Z, s})$. This  uniquely determines an element $(h_y)_{y \in S_{\mathcal{D}_x}} \in \prod_{y \in S_{\mathcal{D}_x}} G'(\mathscr{M}_{\mathbb{P}_{Z}^{1, \mathrm{an}}, y}).$
\end{sloppypar} 
\begin{sloppypar} By Theorem \ref{147}, there exists a connected affinoid neighborhood $Z' \subseteq Z$ of $x,$ and $(h_D)_{D \in \mathcal{D}_x} \in \prod_{D \in \mathcal{D}_x} G'(\mathscr{M}(D_{Z'}))$, satisfying: for any $y \in S_{\mathcal{D}_x},$ there exist exactly two $D_y, D_y' \in \mathcal{D}_{x}$ containing ~$y$, ${h_y \in G'(\mathscr{M}(D_{y,Z'} \cap D'_{y, Z'}))},$ and if $T_{\mathcal{D}_x}(D_y)=0,$ then ${h_y=h_{D_y} \cdot h_{D'_y}^{-1}}$ in ${G'(\mathscr{M}(D_{y, Z'} \cap D'_{y, Z'}))}$. The same expression remains true for any connected affinoid neighborhood $Z'' \subseteq Z'$ of $x.$ 
\end{sloppypar}
\begin{sloppypar}
For $D \in \mathcal{D}_x,$ let $U_{1, Z'}, U_{2, Z'}, \dots, U_{n, Z'},$ be the connected components of $f_{Z'}^{-1}(D_{Z'}).$ The natural map  ${\mathscr{M}(D_{Z'}) \otimes_{\mathscr{M}(\mathbb{P}_{Z}^{1, \mathrm{an}})} \mathscr{M}(C_Z) \rightarrow \prod_{i=1}^n \mathscr{M}(U_{i, Z'})}$ (obtained by pull-backs and multiplication),
induces a map $${G'(\mathscr{M}(D_{Z'}))=G(\mathscr{M}(D_{Z'}) \otimes_{\mathscr{M}(\mathbb{P}_{Z}^{1, \mathrm{an}})} \mathscr{M}(C_Z)) \rightarrow \prod_{i=1}^n G(\mathscr{M}(U_{i,Z'}))}.$$
Let the image of $h_{D} \in G'(\mathscr{M}(D_{Z'}))$ by this map be the element $(g_{U_1}, g_{U_2},\dots, g_{U_n})$ of $\prod_{i=1}^n G(\mathscr{M}(U_{i, Z'})).$ Thus, for any $U_{Z'} \in \mathcal{U}_{Z'},$ we have an element $g_U \in G(\mathscr{M}(U_{Z'})).$ 
\end{sloppypar}

\ 

\textit{(c) The decomposition.} Finally, it remains to show that for any $U_0, U_1 \in \mathcal{U}_x$ such that $T_{\mathcal{U}_x}(U_0)=0,$ and ${s \in U_0 \cap U_1},$ if $W_{s, Z'}$ is the connected component of $U_{0,Z'} \cap U_{1,Z'}$ containing $s,$ then
${g_{s}=g_{U_0} \cdot g_{U_1}^{-1}}$ in $G(\mathscr{M}(W_{s,Z'}))$, and that the same expression remains true when replacing $Z'$ by any connected affinoid neighborhood $Z'' \subseteq Z'$ of $x.$ 

Let $y \in S_{\mathcal{D}_x}.$ Let $D_1, D_2$ be the elements of $\mathcal{D}_x$ containing $y$ such that $T_{\mathcal{D}_x}(D_1)=0.$ For any $s \in f_{x}^{-1}(y),$ let $W_{s, Z'}$ denote the connected component of $f_{Z'}^{-1}(D_{1,Z'} \cap D_{2,Z'})$ containing ~$s.$  
There is a  natural bilinear map $$\mathscr{M}(D_{1,Z'} \cap D_{2,Z'}) \times \mathscr{M}(C_Z) \rightarrow \prod_{s \in f_x^{-1}(y)} \mathscr{M}(W_{s, Z'}), \ (a, b) \mapsto ab,$$ which induces a map $\mathscr{M}(D_{1,Z'} \cap D_{2,Z'}) \otimes_{\mathscr{M}(\mathbb{P}_Z^{1, \mathrm{an}})} \mathscr{M}(C_Z) \rightarrow \prod_{s \in f_x^{-1}(y)} \mathscr{M}(W_{s, Z'})$ (this is ``compatible" with the isomorphism $\mathscr{M}_{\mathbb{P}_Z^{1, \mathrm{an}},y} \otimes_{\mathscr{M}(\mathbb{P}_Z^{1, \mathrm{an}})}  \mathscr{M}(C_Z)\rightarrow \prod_{s\in f_x^{-1}(y)} \mathscr{M}_{C_Z,s}$, \textit{i.e.} they are both induced by multiplication). 
Finally, this gives rise to a morphism $G'(\mathscr{M}(D_{1,Z'} \cap D_{2,Z'}))=G(\mathscr{M}(D_{1,Z'} \cap D_{2,Z'}) \otimes_{\mathscr{M}(\mathbb{P}_Z^{1, \mathrm{an}})} \mathscr{M}(C_Z)) \rightarrow \prod_{s \in f_x^{-1}(y)} G(\mathscr{M}(W_{s, Z'})),$ which sends (the restriction of) $h_y$ to (the restriction of) $(g_s)_{s \in f_x^{-1}(y)}.$ 
\begin{sloppypar}
Let $U_{i}, i=1,2,\dots, n,$ (resp. $V_{j},$ $j=1,2,\dots, m$) be the connected components of $f_{x}^{-1}(D_{1})$ (resp. $f_{x}^{-1}(D_{2})$). For any $i, j,$ set ${U_i \cap V_j=\{s_{\alpha}^{i,j}: \alpha=1,2,\dots, l_{i,j}\}}$ (if $U_i \cap V_j=\emptyset$ for some $i,j,$ then we take $l_{i,j}=0$). Remark that ${f_x^{-1}(y)=\{s_{\alpha}^{i,j}: \alpha=1,\dots, l_{i,j}, i=1,\dots, n, j=1,\dots,m\}}.$ For any $i,j, \alpha,$ let $W_{s_{\alpha}^{i,j}, Z'}$ be the connected component of $U_{i,Z'} \cap V_{j,Z'}$ containing $s_{\alpha}^{i,j}.$
\end{sloppypar}
For any $i$ (resp. $j$), there is a restriction map $\mathscr{M}(U_{i, Z'}) \rightarrow \prod_{j=1}^m \prod_{\alpha=1}^{l_{i,j}} \mathscr{M}(W_{s_{\alpha}^{i,j}, Z'})$ (resp. $\mathscr{M}(V_{j, Z'}) \rightarrow \prod_{i=1}^n \prod_{\alpha=1}^{l_{i,j}} \mathscr{M}(W_{s_{\alpha}^{i,j}, Z'})$). This induces a restriction map $$\prod_{i=1}^n \mathscr{M}(U_{i,Z'}) \rightarrow \prod_{i,j, \alpha} \mathscr{M}(W_{s_{\alpha}^{i,j}, Z'}) \ \left(\text{resp.} \ \prod_{j=1}^m \mathscr{M}(V_{j,Z'}) \rightarrow  \prod_{i,j,\alpha} \mathscr{M}(W_{s_{\alpha}^{i,j}, Z'})\right).$$
The commutative diagram

{\centering
\begin{tikzpicture}[baseline= (a).base]
\node[scale=.82] (a) at (0,0){
\begin{tikzcd}
\mathscr{M}(D_{1, Z'}) \otimes_{\mathscr{M}(\mathbb{P}_{Z}^{1, \mathrm{an}})}  \mathscr{M}(C_Z) \arrow{r} \arrow{d} & \mathscr{M}(D_{1,Z'} \cap D_{2,Z'}) \otimes_{\mathscr{M}(\mathbb{P}_{Z}^{1, \mathrm{an}})}  \arrow{d} \mathscr{M}(C_Z) \arrow{d}  &  \mathscr{M}(D_{2, Z'}) \otimes_{\mathscr{M}(\mathbb{P}_{Z}^{1, \mathrm{an}})}  \mathscr{M}(C_Z) \arrow{l} \arrow{d}  \\
\prod_{i=1}^n \mathscr{M}(U_{i,Z'}) \arrow{r} & \prod_{i,j, \alpha} \mathscr{M}(W_{s_{\alpha}^{ij},Z'})  & \prod_{j=1}^m \mathscr{M}(V_{j,Z'}) \arrow{l}
\end{tikzcd}
};
\end{tikzpicture}
}
gives rise to the following commutative diagram (where $\lambda_1, \lambda_2, \lambda_3$ are isomorphisms):

{\centering
\begin{tikzpicture}[baseline= (a).base]
\node[scale=.78] (a) at (-1,0){
\begin{tikzcd}
G'(\mathscr{M}(D_{1, Z'})) \arrow{r} \arrow{d}{\lambda_1} & G'(\mathscr{M}(D_{1,Z'} \cap D_{2,Z'})) \arrow{d}{\lambda_2} & G'(\mathscr{M}(D_{2, Z'})) \arrow{l} \arrow{d}{\lambda_3}  \\
G(\mathscr{M}(D_{1, Z'}) \otimes_{\mathscr{M}(\mathbb{P}_{Z}^{1, \mathrm{an}})}  \mathscr{M}(C_Z)) \arrow{r} \arrow{d} & G(\mathscr{M}(D_{1,Z'} \cap D_{2,Z'}) \otimes_{\mathscr{M}(\mathbb{P}_{Z}^{1, \mathrm{an}})} \mathscr{M}(C_Z)) \arrow{d}  &  G(\mathscr{M}(D_{2, Z'}) \otimes_{\mathscr{M}(\mathbb{P}_{Z}^{1, \mathrm{an}})}  \mathscr{M}(C_Z)) \arrow{l} \arrow{d}  \\
\prod_{i=1}^n G(\mathscr{M}(U_{i,Z'})) \arrow{r} & \prod_{i,j, \alpha} G(\mathscr{M}(W_{s_{\alpha}^{ij},Z'}))  & \prod_{j=1}^m G(\mathscr{M}(V_{j,Z'})) \arrow{l}
\end{tikzcd}
};
\end{tikzpicture}
}
The factorization result is now a consequence of the analoguous result 
for $(h_y)_{y \in S_{\mathcal{D}_x}}$ and $
(h_D)_{D \in \mathcal{U}_x},$ the relationship 
between $T_{\mathcal{D}_x}$ and 
$T_{\mathcal{U}_x},$ and the commutativity of 
the diagram above. More precisely, 
$h_y=h_{D_1} \cdot h_{D_2}^{-1}$ in 
$G'(\mathscr{M}(D_{1,Z'} \cap D_{2,Z'})),$ and $h_y$ is sent to $(g_s)_{s \in f_{x}^{-1}(y)},$ so for any $s_{\alpha}
^{i,j} \in f_x^{-1}(y),$  ${g_{s_{\alpha}
^{i,j}}=g_{U_i} \cdot g_{V_j}^{-1}}$ in 
$G(\mathscr{M}(W_{s_{\alpha}^{i,j},Z'}))$. 
 
Considering for any connected affinoid neighborhood $Z'' \subseteq Z'$ of $x,$ ${W_{s,Z''} = W_{s, Z'} \cap C_{Z''}}$ for any  $s \in S_{\mathcal{U}_x},$ and $U_{Z''}=U_{Z'} \cap C_{Z''}$ for all $U \in \mathcal{U}_x$, the same expressions remain true when replacing $Z'$ by $Z''.$
\end{proof}

\section{The local-global principles} \label{4.6}

Let $k$ be a complete non-trivially valued ultrametric field. Throughout this entire section,
we keep working with the hypotheses of Setting \ref{200}, and the related notations we have introduced (see Notation \ref{nota}). As before, we also suppose that ${\dim{S}< \dim_{\mathbb{Q}} \mathbb{R}_{>0}/|k^\times| \otimes_{\mathbb{Z}} \mathbb{Q}}.$

\begin{rem} Recall in particular that for $C_{\mathcal{O}_x}=C_{\mathcal{O}(Z_0)} \times_{\mathcal{O}(Z_0)} \mathcal{O}_x,$ its function field was denoted by $F_{\mathcal{O}_x}.$ It was shown in Corollary \ref{213} that $F_{\mathcal{O}_x}=\varinjlim_Z \mathscr{M}(C_Z),$ where $\mathscr{M}$ denotes the sheaf of meromorphic functions on $C$, and the direct limit is taken with respect to connected affinoid neighborhoods $Z$ of $x$ in $S.$
\end{rem}

\begin{defn}
Let $K$ be a field. A linear algebraic group $G$ over $K$ acts \emph{strongly transitively} on a $K$-variety $X$ if $G$ acts on $X$ and for any field extension $E/K,$ either $X(E)=\emptyset$ or the action of $G(E)$ on $X(E)$ is transitive. 
\end{defn}

\subsection{With respect to germs of meromorphic functions}

We show here the relative analogue of \cite[Theorem 3.11]{une}.

Recall that $C_x$ denotes the fiber at $x$ of the relative proper curve $C \rightarrow S$, and that it is a normal irreducible projective $\mathcal{H}(x)$-analytic curve. 

\begin{thm} \label{228}
Let $H/F_{\mathcal{O}_x}$ be a variety and $G/F_{\mathcal{O}_x}$ a connected rational linear algebraic group acting strongly transitively on $H.$ Then
$$H(F_{\mathcal{O}_x}) \neq \emptyset \iff H(\mathscr{M}_{C, u}) \neq \emptyset \ \text{for all} \ u \in C_x.$$
\end{thm}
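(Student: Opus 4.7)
The plan is to follow the strategy of \cite[Theorem 3.11]{une}, replacing its one-dimensional patching ingredient by Theorem \ref{226}. The forward implication should be essentially formal: by Corollary \ref{213} one has $F_{\mathcal{O}_x} = \varinjlim_Z \mathscr{M}(C_Z)$, and for every $u \in C_x$ the restriction maps $\mathscr{M}(C_Z) \to \mathscr{M}_{C,u}$ (well-defined since $u \in \mathrm{Int}(C_Z)$ for all such $Z$) assemble into a morphism $F_{\mathcal{O}_x} \to \mathscr{M}_{C,u}$, so a point of $H(F_{\mathcal{O}_x})$ restricts to a point of each $H(\mathscr{M}_{C,u})$.

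For the converse, I would first spread out each local point. Given $(h_u)_{u \in C_x} \in \prod_u H(\mathscr{M}_{C,u})$, each $h_u$ is defined over some open affinoid neighborhood of $u$ in $C$, which by compactness of $C_x$ yields a finite open cover $\mathcal{V}$ of $C_x$ in $C$ together with local sections $h_V \in H(\mathscr{M}(V))$ for each $V \in \mathcal{V}$. Applying Theorem \ref{226} to this $\mathcal{V}$ produces a connected affinoid neighborhood $Z \subseteq Z_G$ of $x$, a nice cover $\mathcal{D}_x$ of $\mathbb{P}_{\mathcal{H}(x)}^{1,\mathrm{an}}$, and the induced $Z$-relative nice cover $\mathcal{U}_Z$ of $C_Z$ refining $\mathcal{V}$; restricting the $h_V$ then gives $h_U \in H(\mathscr{M}(U_Z))$ for each $U \in \mathcal{U}_x$.

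These local sections must now be reconciled on overlaps. For $s \in S_{\mathcal{U}_x}$, let $U_s, V_s$ be the two elements of $\mathcal{U}_x$ containing $s$, labelled so that $T_{\mathcal{U}_x}(U_s) = 0$. Strong transitivity of the action of $G(\mathscr{M}_{C,s})$ on $H(\mathscr{M}_{C,s})$ produces $g_s \in G(\mathscr{M}_{C,s})$ with $h_{U_s} = g_s \cdot h_{V_s}$ in $H(\mathscr{M}_{C,s})$. Feeding the family $(g_s)_{s \in S_{\mathcal{U}_x}}$ into Theorem \ref{226}, I would shrink $Z$ further so as to obtain elements $g_U \in G(\mathscr{M}(U_Z))$ for each $U \in \mathcal{U}_x$ satisfying $g_s = g_{U_s} \cdot g_{V_s}^{-1}$ in $G(\mathscr{M}(W_{s,Z}))$, where $W_{s,Z}$ is the connected component of $U_{s,Z} \cap V_{s,Z}$ containing $s$. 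Setting $\widetilde{h}_U := g_U^{-1} \cdot h_U \in H(\mathscr{M}(U_Z))$, a direct computation yields $\widetilde{h}_{U_s} = \widetilde{h}_{V_s}$ on $W_{s,Z}$. By Proposition \ref{222}, the intersection $U_{s,Z} \cap V_{s,Z}$ is the disjoint union of the connected affinoid domains $W_{s',Z}$ as $s'$ ranges over $U_s \cap V_s$, so the equality propagates to the whole intersection. Since $\mathcal{U}_Z$ is a finite affinoid cover of $C_Z$, the $\widetilde{h}_U$ glue to a global section $\widetilde{h} \in H(\mathscr{M}(C_Z))$, which via the embedding $\mathscr{M}(C_Z) \hookrightarrow F_{\mathcal{O}_x}$ produces the desired point in $H(F_{\mathcal{O}_x})$.

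The principal technical obstacle, as is typical in the relative setting, will be the bookkeeping of successive shrinkings of $Z$: one must secure a single connected affinoid neighborhood of $x$ over which $\mathcal{U}_Z$ is a well-behaved relative nice cover refining $\mathcal{V}$, the sections $h_U$ are defined, the transition elements $g_s$ extend from $\mathscr{M}_{C,s}$ to sections over $W_{s,Z}$, and the factorization of Theorem \ref{226} is valid. This is exactly the compatibility that Theorem \ref{226} is designed to encode, together with the ``stability under further shrinking'' clauses built into Corollary \ref{219} and Proposition \ref{222}; so the remaining work should reduce to invoking these results in the right order and combining them with the strong-transitivity input.
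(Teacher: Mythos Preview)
Your proposal is correct and follows essentially the same architecture as the paper's proof: spread out local $H$-points to an open cover $\mathcal{V}$, refine to a relative nice cover via Theorem~\ref{226}, produce transition elements $g_s$ by strong transitivity, factor them through Theorem~\ref{226}, twist and glue.

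The one noteworthy difference is in how you obtain the $g_s$. You invoke strong transitivity directly over $\mathscr{M}_{C,s}$, which is legitimate since $s \in S_{\mathcal{U}_x}$ is a type~3 point, so $\mathscr{M}_{C,s}$ is a field by Lemma~\ref{208} and is a field extension of $F_{\mathcal{O}_x}$ via restriction. The paper instead builds auxiliary connected affinoid neighborhoods $B_{s,Z}$ of $s$ (as connected components of $f_Z^{-1}(A_Z)$ for a fixed affinoid $A \ni f_x(s)$), shows that $L := \varinjlim_Z \mathscr{M}(B_{s,Z})$ is a field extension of $F_{\mathcal{O}_x}$, applies transitivity over $L$, and then uses Lemma~\ref{227} to fit $W_{s,Z}$ inside $B_{s,Z}$. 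Your route is more direct: once Theorem~\ref{226} guarantees $g_s \in G(\mathscr{M}(W_{s,Z}))$, the identity $h_{U_s} = g_s \cdot h_{V_s}$, known only in $H(\mathscr{M}_{C,s})$, automatically descends to $H(\mathscr{M}(W_{s,Z}))$ because $\mathscr{M}(W_{s,Z}) \hookrightarrow \mathscr{M}_{C,s}$ is an inclusion of fields and $H$ is separated. The paper's detour through the $B_{s,Z}$ is thus not strictly necessary here, though it makes the intermediate bookkeeping of ``where the equation holds'' more explicit. One small bibliographic correction: the decomposition $U_{s,Z} \cap V_{s,Z} = \bigsqcup_{s' \in U_s \cap V_s} W_{s',Z}$ you attribute to Proposition~\ref{222} is actually the statement derived in the discussion of step~(4) of Subsection~\ref{affreux} just preceding it; Proposition~\ref{222} itself concerns $f_Z^{-1}(D_{1,Z} \cap D_{2,Z})$.
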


\begin{proof}
 By Corollary \ref{213}, $F_{\mathcal{O}_x}=\varinjlim_Z \mathscr{M}(C_Z),$ where the limit is taken over connected affinoid neighborhoods $Z \subseteq Z_0$ of $x.$ Seeing as $x \in \text{Int}(Z),$ we obtain that for any $u \in C_x,$ $u \in \text{Int}(C_Z),$ so $\mathscr{M}_{C_{Z}, u}=\mathscr{M}_{C,u}.$

 As $H$ is defined over $F_{\mathcal{O}_x},$ there exists a connected affinoid neighborhood $Z_H \subseteq Z_0$ such that $H$ is defined over the field $\mathscr{M}(C_{Z_H}).$

($\Rightarrow$):  Let us start by remarking that for any $u \in C_x$, there is a restriction morphism $\mathscr{M}(C_Z) \rightarrow \mathscr{M}_{C,u},$ thus inducing an embedding $F_{\mathcal{O}_x} \rightarrow \mathscr{M}_{C,u},$ and making this implication of the statement immediate. 

($\Leftarrow$): Let us assume $H(\mathscr{M}_{C,u}) \neq \emptyset$ for all $u \in C_x.$ This implies that for any $u \in C_x$, there exists an open neighborhood $N'_u$ of $u$ in $C$ such that $H(\mathscr{M}(N'_u)) \neq \emptyset.$ We may, without loss of generality, assume that $N'_u \subseteq C_{Z_H}$ for all $u \in C_x.$ Let $\mathcal{V}$ denote the open cover $(N'_u)_{u \in C_x}$ of $C_x$ in $C$.

By Remark \ref{remark}, there exists a connected affinoid neighborhood $Z \subseteq Z_G \cap Z_H$ of $x$ ($Z_G$ as in Notation \ref{225}), and a nice cover $\mathcal{D}_x$ of $\mathbb{P}_{\mathcal{H}(x)}^{1, \mathrm{an}}$ such that they induce a refinement $\mathcal{U}_Z$ of $\mathcal{V}$ obtained as in construction (1) and satisfying properties (2) and (4) of Subsection ~\ref{affreux}. Let $\mathcal{U}_x$ denote the corresponding nice cover of $C_x$, $T_{\mathcal{U}_x}$ its associated parity function, and $S_{\mathcal{U}_x}$ the intersection points of the different elements of $\mathcal{U}_x.$ As $\mathcal{U}_Z$ refines $\mathcal{V},$ for any $U \in \mathcal{U}_x$ and any connected affinoid neighborhood $Z' \subseteq Z$ of $x,$ $H(\mathscr{M}(U_{Z'}))\neq \emptyset.$

For any $U \in \mathcal{U}_x,$ let us fix an element $U' \in \mathcal{V}$ for which $U_Z \subseteq U'$ for any connected affinoid neighborhood $Z \subseteq Z_t \cap Z_G \cap Z_H$  of $x$ with $Z_t$ as in Remark \ref{remark} (it exists seeing as $\mathcal{U}_Z$ refines $\mathcal{V}$, and for any ${Z' \subseteq Z''  \subseteq Z_t}$ that are connected affinoid neighborhoods of ~$x$, $U_{Z'}=U_{Z''} \cap C_{Z'}$). 

\

\textit{(a) Finding good neighborhoods of $s \in S_{\mathcal{U}_x}.$} Let $s \in S_{\mathcal{U}_x}.$ Let $U_s, V_s$ be the elements of ~$\mathcal{U}_x$ containing $s.$ Then $s \in U_s \cap V_s \subseteq U_s' \cap V_s'.$
Let $N_s \subseteq U_s' \cap V_s'$ be a neighborhood of ~$s$ in $C_{Z_H}$ such that $N_s \cap S_{\mathcal{U}_x}=\{s\}$ (this is possible considering Lemma \ref{205}). 

Let us fix a connected affinoid neighborhood $Z \subseteq Z_t \cap Z_G \cap Z_H$ of $x.$ Remark that for any $y \in S_{\mathcal{D}_x},$  $\bigsqcup_{s \in f_{x}^{-1}(y)} N_s$ is an open neighborhood of $f_x^{-1}(y)$ in $C_{Z_H},$ hence in $C_Z.$ By \cite[Lemma I.1.2]{stein}, there exists a connected neighborhood $A_y$ of $y$ in $\mathbb{P}_Z^{1, \mathrm{an}}$ such that $f_Z^{-1}(A_y) \subseteq \bigsqcup_{s \in f_{Z}^{-1}(y)} N_s.$ By Lemma \ref{103} (and restricting to a smaller $Z$ if necessary), we may assume that $A_y$ is the $Z$-thickening $A_Z$ of a connected affinoid domain $A$ of $\mathbb{P}_{\mathcal{H}(x)}^{1, \mathrm{an}}$ containing only type 3 points in its boundary.  By Corollary \ref{107}, we may assume that for any connected affinoid neighborhood $Z' \subseteq Z$ of $x$, the $Z'$-thickening $A_{Z'}$ of $A$ is connected. 

Let $B_{i,Z}, i=1,2,\dots, m,$ be the connected components of $f_Z^{-1}(A_Z).$ By Lemma \ref{215}, for any $i,$ $B_{i,Z} \cap C_x \neq \emptyset$ and $f_Z(B_{i,Z})=A_Z$, implying $B_{i,Z} \cap f_x^{-1}(y) \neq \emptyset$ for all $i.$ By Proposition \ref{218}, we may assume that ${B_{i,Z} \cap C_x}$ is connected for all $i$, and for any connected affinoid neighborhood $Z' \subseteq Z$ of $x,$ the connected components of $f_{Z'}^{-1}(A_{Z'})$ are $B_{i,Z'}=B_{i,Z} \cap C_{Z'}, i=1,2,\dots, n.$ 

Seeing as ${\bigsqcup_{i=1}^n B_{i,Z} \subseteq \bigsqcup_{s \in f_x^{-1}(y)} N_s},$ for any $i$, there exists exactly one $s_i \in f_x^{-1}(y)$ such that $B_{i, Z} \subseteq N_{s_i},$ which implies that $B_{i,Z} \cap f_x^{-1}(y)=\{s_i\}.$ As $f_x^{-1}(y) \subseteq \bigsqcup_{i=1}^n B_{i,Z}$ and $B_{i,Z} \cap f_x^{-1}(y) \neq \emptyset,$ there exists a bijective correspondence between the points of $f_x^{-1}(y)$ and the connected components of $f_Z^{-1}(A_Z).$ For $s \in f_x^{-1}(y),$ let $B_{s, Z}$ be the corresponding connected component of $f_Z^{-1}(A_Z)$ containing $s,$ so that $B_{s, Z} \subseteq N_s.$ Since the connected components of $f_{Z'}^{-1}(A_{Z'})$ are $B_{s, Z} \cap C_{Z'}, s\in f_x^{-1}(y),$ the same remains true when replacing $Z$ by $Z',$ meaning there is a bijective correspondence between the points of $f_x^{-1}(y)$ and the connected components of $f_{Z'}^{-1}(A_{Z'})$.

\

\textit{(b) The transitivity of the action.} For $s \in S_{\mathcal{U}_x},$ we denote by $U_s, V_s$ the elements of ~$\mathcal{U}_x$ containing $s$ and suppose $T_{\mathcal{U}_x}(U_s)=0.$ Then $s \in B_{s,Z} \subseteq U_s' \cap V_s'$, with $B_{s,Z}$ constructed as in part (a). Let $h_{U_s} \in H(\mathscr{M}(U_s'))$ and $h_{V_s} \in H(\mathscr{M}(V_s')).$ The restrictions of $h_{U_s}, h_{V_s}$ (which we keep denoting by $h_{U_s}, h_{V_s}$) to $\mathscr{M}(B_{s,Z})$ induce elements of $H(\mathscr{M}(B_{s,Z})),$ and the same remains true for any connected affinoid neighborhood $Z' \subseteq Z.$

\begin{lm} \label{229}
There exists a connected affinoid neighborhood $Z_s \subseteq Z$ of $x$ such that there exists $g_s \in G(\mathscr{M}(B_{s, Z_s}))$ satisfying $h_{U_s}=g_s \cdot h_{V_s}$ in $H(\mathscr{M}(B_{s, Z_s})).$ For any connected affinoid neighborhood $Z' \subseteq Z_s$ of $x,$ $h_{U_s}=g_s \cdot h_{V_s}$ in $H(\mathscr{M}(B_{s, Z'})).$
\end{lm}

\begin{proof}
Set $L=\varinjlim_Z \mathscr{M}(B_{s, Z}),$ where the limit is taken with respect to the connected affinoid neighborhoods $Z \subseteq Z_0$ of $x.$ As shown in Proposition \ref{218}, we may assume that $B_{s, Z}$ is connected for all such $Z \subseteq Z_0,$ so that $\mathscr{M}(B_{s, Z})$ are fields. Consequently, $L$ is a field. 
The restriction morphisms $\mathscr{M}(C_Z) \hookrightarrow \mathscr{M}(B_{s,Z})$ induce an embedding ${F_{\mathcal{O}_x} =\varinjlim_Z \mathscr{M}(C_Z) \hookrightarrow L.}$ Hence, $G(L)$ acts transitively on $H(L).$

As $h_{U_s}, h_{V_s} \in H(L),$ there exists $g_s\in G(L),$ for which $h_{U_s}=g_s \cdot h_{V_s}$ in $H(L).$ Consequently, there exists a connected affinoid neighborhood $Z_s$ of $x$ such that $g_s \in G(\mathscr{M}(B_{s,Z_s}))$ and $h_{U_s}=g_s \cdot h_{V_s}$ in $H(\mathscr{M}(B_{s,Z_s})).$ The same remains true for any connected affinoid neighborhood $Z' \subseteq Z_s$ of $x$ seeing as $B_{s, Z'}=B_{s, Z_s} \cap C_{Z'}$ by Proposition \ref{218}.
\end{proof}

By Lemma \ref{227}, there exists a connected affinoid neighborhood $Z_1 \subseteq Z$ of $x,$ such that for any $s \in S_{\mathcal{U}_x}$, if $W_{s, Z_1}$ is the connected component of $U_{s, Z_1} \cap V_{s, Z_1}$ containing ~$s,$ then $W_{s, Z_1} \subseteq B_{s, Z},$ so $W_{s, Z_1} \subseteq B_{s, Z} \cap C_{Z_1}=B_{s, Z_1}.$ Similarly, for any connected affinoid neighborhood $Z' \subseteq Z_1$, $W_{s, Z'} \subseteq B_{s,Z'}.$ Consequently, for any $s \in S_{\mathcal{U}_x}$, the equality ${h_{U_s}=g_s \cdot h_{V_s}}$ of Lemma \ref{229} is well defined in $H(\mathscr{M}(W_{s, Z'}))$ for any connected affinoid neighborhood ${Z' \subseteq \bigcap_{s \in S_{\mathcal{U}_x}} Z_s \cap Z_1}$ of $x$.

\

\textit{(c) The patching.} Let us fix a connected affinoid neighborhood $Z \subseteq Z_t \cap Z_G \cap Z_H$ of ~$x$. Then  $\mathcal{U}_Z$ is a cover of $C_Z,$ so $\{U' \in \mathcal{V}: U \in \mathcal{U}_x\}$ is an open cover of $C_Z$ in $C.$ For any $U' \in \mathcal{V},$ let us fix an element $h_U \in H(\mathscr{M}(U')).$ This gives rise to an element of $H(\mathscr{M}(U_{Z'}))$ for any connected affinoid neighborhood $Z' \subseteq Z$ of $x,$ which we will keep denoting by $h_U.$ 

By part (b), there exists $(g_s)_{s \in \mathcal{U}_x} \in \prod_{s \in S_{\mathcal{U}_x}} G(\mathscr{M}_{C,s})$ and a connected affinoid neighborhood $Z_2 \subseteq Z$ of $x$,  such that for any $s \in \mathcal{U}_x,$ if $U_s, V_s$ are the elements of $\mathcal{U}_x$ containing ~$s$, and $T_{\mathcal{U}_x}(U_s)=0$, then $g_s \in G(\mathscr{M}(W_{s,Z_2}))$, and $h_{U_s}=g_s \cdot h_{V_s}$ in $H(\mathscr{M}(W_{s,Z_2})),$ where $W_{s, Z_2}$ is the connected component of $U_{s,Z_2} \cap V_{s,Z_2}$ containing $s.$ Moreover, the same remains true when replacing $Z_2$ by any connected affinoid neighborhood $Z' \subseteq Z_2$ of $x.$

By Theorem \ref{226}, we may assume that $Z_2$ is such that there exists an element $(g_U)_{U \in \mathcal{U}_x}$ of $\prod_{U \in \mathcal{U}_x} G(\mathscr{M}(U_{Z_2})),$ such that for any non-disjoint $U,V \in \mathcal{U}_x$ with $T_{\mathcal{U}_x}(U)=0$, and any $s \in U \cap V,$ $g_s=g_U \cdot g_V^{-1}$ in $G(\mathscr{M}(W_{s,Z_2})),$ where $W_{s, Z_2}$ is the connected component of $U_{s,Z_2} \cap V_{s, Z_2}$ containing $s.$ Moreover, the same remains true when replacing $Z_2$ with any connected affinoid neighborhood $Z' \subseteq Z_2$ of $x.$

For any $U \in \mathcal{U}_x$, set $h_U'=g_U^{-1} \cdot h_U \in H(\mathscr{M}(U_{Z_2})).$ If $U, V$ are two non-disjoint elements of $\mathcal{U}_x,$ and $T_{\mathcal{U}_x}(U)=0,$ for any $s \in U \cap V,$ one obtains $h_V'=g_V^{-1} h_V=g_U^{-1} (g_U g_V^{-1}) h_V=g_U^{-1} g_s h_V=g_U^{-1}h_U=h_U'$ in $H(\mathscr{M}(W_{s, Z_2})),$ where $W_{s,Z_2}$ is the connected component of $U_{Z_2} \cap V_{Z_2}$ containing $s.$ Thus, $h'_{U|U_{Z_2} \cap V_{Z_2}}=h'_{V|U_{Z_2} \cap V_{Z_2}}$ in $H(\mathscr{M}(U_{Z_2} \cap V_{Z_2})).$

\

To summarize, we have an affinoid cover $\mathcal{U}_{Z_2}$ of $C_{Z_2},$ and for any $U_{Z_2} \in \mathcal{U}_{Z_2}$, an element $h_U' \in H(\mathscr{M}(U_{Z_2})).$ Moreover, for any $U_{Z_2}, V_{Z_2} \in \mathcal{U}_{Z_2},$ $h'_{U|U_{Z_2} \cap V_{Z_2}}=h'_{V|U_{Z_2} \cap V_{Z_2}}.$ Consequently, there exists $h \in H(\mathscr{M}(C_{Z_2}))$ such that $h_{|U_{Z_2}}=h'_{U}$ for any $U_{Z_2} \in \mathcal{U}_{Z_2}.$ Seeing as there is an embedding $\mathscr{M}(C_{Z_2}) \hookrightarrow F_{\mathcal{O}_x},$ we obtain that $H(F_{\mathcal{O}_x}) \neq \emptyset.$
\end{proof}

\begin{rem}
In fact, what we have shown is that, using the notation of Theorem \ref{228}, there exists a connected affinoid neighborhood $Z \subseteq Z_0$ such that $H$ is defined over $\mathscr{M}(C_Z)$ and $$H(\mathscr{M}(C_Z)) \neq \emptyset \iff H(\mathscr{M}_{C,u}) \neq \emptyset \ \text{for all} \ u \in C_x.$$
\end{rem}

\subsection{With respect to valuations}

Recall the notations mentioned at the beginning of this section. 

Since $\mathcal{O}_x$ is a field, there is an embedding $\mathcal{O}_x \hookrightarrow \mathcal{H}(x)$ which induces a valuation on~$\mathcal{O}_x.$ We will say that this is the valuation induced by $x$ on $\mathcal{O}_x.$

\begin{defn} \label{232}
We denote by $V(F_{\mathcal{O}_x})$ the set of non-trivial rank one valuations $v$ on ~$F_{\mathcal{O}_x}$ such that either $v_{|\mathcal{O}_x}$ is the valuation induced by $x$ on $\mathcal{O}_x$ or $v_{|\mathcal{O}_x}$ is trivial. Set $V'(F_{\mathcal{O}_x})=\{v \in V(F_{\mathcal{O}_x}): v_{|\mathcal{O}_x} \ \text{is the valuation induced by} \ x \ \text{on} \ \mathcal{O}_x\}.$
For any $v \in V(F_{\mathcal{O}_x}),$ we denote by $F_{\mathcal{O}_x, v}$ the completion of $F_{\mathcal{O}_x}$ with respect to $v.$ 
\end{defn}

\begin{rem} \label{233}
For any non-rigid point $y \in C_x,$ $\mathcal{O}_{C_x,y}$ is a field, so by Lemma \ref{208}, $\mathcal{O}_{C,y}$ is a field, and there is an embedding $\mathcal{O}_{C,y}=\mathscr{M}_{C,y} \hookrightarrow \mathcal{H}(y).$ We endow $\mathscr{M}_{C,y}$ with the valuation induced from $\mathcal{H}(y).$

For any rigid point $y \in C_x$, $\mathcal{O}_{C_x, y}$ is a discrete valuation ring, so by Lemma \ref{208}, $\mathcal{O}_{C, y}$ is a discrete valuation ring. We endow $\mathscr{M}_{C,y}$ with the corresponding discrete valuation. 
\end{rem}

\begin{prop}\label{234}
There exists a surjective map $\mathrm{val}: C_x \rightarrow V(F_{\mathcal{O}_x}),$ $y \mapsto v_y,$ such that: if $y \in C_x$ is not rigid, then $v_{y|\mathcal{O}_x}$ induces the norm determined by $x$ on $\mathcal{O}_x$, and $F_{\mathcal{O}_x, v_y}=\widehat{\mathscr{M}_{C,y}};$ if $y \in C_x$ is rigid, then $v_y$ is discrete, $v_{y|\mathcal{O}_x}$ is trivial, and $F_{\mathcal{O}_x, v_y} \hookrightarrow \widehat{\mathscr{M}_{C,y}}.$ 
\begin{sloppypar}
Let $C_{x, \mathrm{nrig}}$ denote the set of non-rigid points on $C_x.$ The restriction ${\mathrm{val}_{|C_{x, \mathrm{nrig}}}: C_{x, \mathrm{nrig}} \rightarrow V'(F_{\mathcal{O}_x})}$ is a bijection.
\end{sloppypar} 
\end{prop}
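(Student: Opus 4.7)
The plan is to define $\mathrm{val}$ by restricting, along the canonical embedding $F_{\mathcal{O}_x} \hookrightarrow \mathscr{M}_{C,y}$ arising from Corollary \ref{213}, the valuation on $\mathscr{M}_{C,y}$ recalled in Remark \ref{233}. I would then verify the claims about $v_y$ case by case on the type of $y$, identify the completions via density arguments extending those of Lemma \ref{den}, and conclude surjectivity from the valuative criterion of properness applied to $\pi_{\mathcal{O}_x}\colon C_{\mathcal{O}_x}\to\mathrm{Spec}(\mathcal{O}_x)$, using Proposition \ref{211} which realises $F_{\mathcal{O}_x}$ as the function field of the proper irreducible normal curve $C_{\mathcal{O}_x}$.

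For the restriction $v_{y|\mathcal{O}_x}$: if $y$ is non-rigid, then the composition $\mathcal{O}_x\hookrightarrow\mathscr{M}_{C,y}\hookrightarrow\mathcal{H}(y)$ factors through the isometric inclusion $\mathcal{H}(x)\hookrightarrow\mathcal{H}(y)$ induced by $\pi(y)=x$, so $v_{y|\mathcal{O}_x}$ is the valuation determined by $x$. If $y$ is rigid, then $\mathcal{O}_x\to\mathcal{O}_{C,y}\to\kappa(y)$ is a non-zero morphism out of a field, hence injective, so every non-zero element of $\mathcal{O}_x$ is a unit in the dvr $\mathcal{O}_{C,y}$ (Lemma \ref{208}), and $v_{y|\mathcal{O}_x}$ is trivial. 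Rank one is inherited from $\mathscr{M}_{C,y}$, and non-triviality on $F_{\mathcal{O}_x}$ is witnessed by a suitable non-constant rational function on $C_{\mathcal{O}_x}$. For the completions, when $y$ is non-rigid one has $\mathscr{M}_{C,y}=\mathcal{O}_{C,y}$ by Lemma \ref{208}, with $\widehat{\mathcal{O}_{C,y}}=\mathcal{H}(y)$; the equality $F_{\mathcal{O}_x,v_y}=\widehat{\mathscr{M}_{C,y}}$ then reduces to density of $F_{\mathcal{O}_x}$ in $\mathcal{H}(y)$ for $|\cdot|_y$. I would prove this by fixing $f\in\mathcal{O}(V)$ for an affinoid $V\subseteq C_Z$ containing $y$, enlarging to an affinoid $V'\subseteq C_Z$ in which $V$ is rational, noting that irreducibility and normality of $C_Z$ (Proposition \ref{204}) together with Theorem \ref{231} force $\mathscr{M}(V')=\mathscr{M}(C_Z)\subseteq F_{\mathcal{O}_x}$, and then invoking Lemma \ref{den} to obtain the required density. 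For rigid $y$, the embedding $F_{\mathcal{O}_x}\hookrightarrow\mathscr{M}_{C,y}$ is $v_y$-continuous and passes to completions, giving $F_{\mathcal{O}_x,v_y}\hookrightarrow\widehat{\mathscr{M}_{C,y}}$.

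The heart of the proof is surjectivity of $\mathrm{val}$. Given $v\in V(F_{\mathcal{O}_x})$ with valuation ring $R_v$, the hypothesis on $v_{|\mathcal{O}_x}$ ensures $\mathcal{O}_x\hookrightarrow R_v$ (in both cases, since $\mathcal{O}_x$ is a field). Proposition \ref{211} identifies $F_{\mathcal{O}_x}$ with the function field of $C_{\mathcal{O}_x}$, so we get a generic point $\mathrm{Spec}(F_{\mathcal{O}_x})\to C_{\mathcal{O}_x}$ and a compatible map $\mathrm{Spec}(R_v)\to\mathrm{Spec}(\mathcal{O}_x)$; by properness of $\pi_{\mathcal{O}_x}$ (Proposition \ref{204}) and the valuative criterion, these glue uniquely to $\mathrm{Spec}(R_v)\to C_{\mathcal{O}_x}$. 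The image of the closed point is a scheme-theoretic point $y^{\mathrm{alg}}$ of $C_{\mathcal{O}_x}$ which, via the identification $C_x\cong (C_{\mathcal{O}_x}\times_{\mathcal{O}_x}\mathcal{H}(x))^{\mathrm{an}}$ in Lemma \ref{jeeeej}, corresponds to a point $y\in C_x$, rigid precisely when $v_{|\mathcal{O}_x}$ is trivial. Comparing the valuation rings of $v$ and $v_y$ through the density of Step~3 yields $v=v_y$. Injectivity of $\mathrm{val}_{|C_{x,\mathrm{nrig}}}$ then follows because, by the identification $F_{\mathcal{O}_x,v_y}=\mathcal{H}(y)$, two distinct non-rigid points of $C_x$ induce distinct seminorms on the function field of the proper curve $C_x$.

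The main obstacle is this surjectivity step. The valuative criterion provides only an algebraic lift of $v$ to $C_{\mathcal{O}_x}$, and one must translate it into an analytic point of $C_x$ in a way that preserves the valuation; this requires careful use of the base-change/analytification relations of Section \ref{4.4}, in particular Corollary \ref{212} and Lemma \ref{jeeeej}, together with the density established in Step 3 to ensure that the analytic completion agrees with the valuation-theoretic one.
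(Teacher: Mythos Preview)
Your construction of $\mathrm{val}$ and the verification of $v_{y|\mathcal{O}_x}$ in the two cases are essentially the paper's. There are, however, two genuine problems.

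\textbf{The valuative-criterion step does not apply to $V'(F_{\mathcal{O}_x})$.} You write that ``the hypothesis on $v_{|\mathcal{O}_x}$ ensures $\mathcal{O}_x\hookrightarrow R_v$ (in both cases, since $\mathcal{O}_x$ is a field)''. This is false precisely in the case you most need. If $v\in V'(F_{\mathcal{O}_x})$, then $v_{|\mathcal{O}_x}=|\cdot|_x$ is non-trivial, so there is $a\in\mathcal{O}_x$ with $|a|_x>1$; hence $a\notin R_v$ and $\mathcal{O}_x\not\subseteq R_v$. There is then no morphism $\mathrm{Spec}(R_v)\to\mathrm{Spec}(\mathcal{O}_x)$ compatible with $\mathrm{Spec}(F_{\mathcal{O}_x})\to C_{\mathcal{O}_x}$, and the valuative criterion for $\pi_{\mathcal{O}_x}$ simply cannot be set up. (For $v\in V(F_{\mathcal{O}_x})\setminus V'(F_{\mathcal{O}_x})$ your argument is fine and produces a closed point of $C_{\mathcal{O}_x}$, hence a rigid point of $C_x$.) Even granting an algebraic point $y^{\mathrm{alg}}\in C_{\mathcal{O}_x}$, note that under $C_x\to C_{\mathcal{O}_x}$ the generic point of $C_{\mathcal{O}_x}$ has all non-rigid points of $C_x$ in its fibre, so passing from $y^{\mathrm{alg}}$ to a specific $y\in C_x$ would still require exactly the analytic input you are trying to avoid.

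The paper handles surjectivity onto $V'(F_{\mathcal{O}_x})$ (and simultaneously injectivity) on the analytic side: since $v_{|\mathcal{O}_x}=|\cdot|_x$, the embedding $\mathcal{O}_x\hookrightarrow F_{\mathcal{O}_x}$ completes to $\mathcal{H}(x)\hookrightarrow F_{\mathcal{O}_x,v}$, giving a map $F_{\mathcal{O}_x}\otimes_{\mathcal{O}_x}\mathcal{H}(x)\to F_{\mathcal{O}_x,v}$. One then identifies $F_{\mathcal{O}_x}\otimes_{\mathcal{O}_x}\mathcal{H}(x)$ with the function field $\mathscr{M}(C_x)$ of the proper $\mathcal{H}(x)$-curve $C_x^{\mathrm{alg}}$ (via $C_x^{\mathrm{alg}}=C_{\mathcal{O}_x}\times_{\mathcal{O}_x}\mathcal{H}(x)$, Lemma~\ref{jeeeej}), and invokes \cite[Proposition~3.15]{une}, which gives a unique non-rigid $y\in C_x$ with $\mathcal{H}(y)=F_{\mathcal{O}_x,v}$.

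\textbf{The density argument has a mis-step.} You claim that for an affinoid $V'\subsetneq C_Z$ one has $\mathscr{M}(V')=\mathscr{M}(C_Z)$. This is false: for instance with $C_Z=\mathbb{P}^{1,\mathrm{an}}_Z$ and $V'$ a closed disc, $\mathscr{M}(V')=\mathrm{Frac}\,A_Z\{r^{-1}T\}$ strictly contains $\mathscr{M}(Z)(T)=\mathscr{M}(C_Z)$. Theorem~\ref{231} only computes $\mathscr{M}$ on all of $C_Z$, not on an affinoid subdomain. The density you need, $\mathscr{M}(C_Z)$ dense in $\mathcal{H}(y)$ for $|\cdot|_y$, comes instead from choosing an affine open $\mathrm{Spec}\,B\subseteq C_{\mathcal{O}(Z)}$ whose analytification contains $y$: then $\mathscr{M}(C_Z)=\mathrm{Frac}\,B$, and $B$ has dense image in $\mathcal{H}(y)$ via any affinoid neighbourhood of $y$ inside $(\mathrm{Spec}\,B)^{\mathrm{an}}$. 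The paper records this as the one-line equality $\widehat{\mathscr{M}(C_Z)}=\widehat{\mathscr{M}_{C,y}}$.
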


\begin{proof}
\noindent \textbf{The construction of the map $\mathrm{val}$:}
Let $y \in C_x$ be a non-rigid point. Then $\mathcal{O}_{C_x, y}$ is a field, and so is $\mathcal{O}_{C, y}.$ Consequently, $\widehat{\mathscr{M}_{C,y}}=\mathcal{H}(y),$ so for any connected affinoid neighborhood $Z$ of $x$, $\widehat{\mathscr{M}(C_Z)}=\widehat{\mathscr{M}_{C,y}}$, where the completion of $\mathscr{M}(C_Z)$ is taken with respect to the norm induced by the embedding $\mathscr{M}(C_Z) \hookrightarrow \mathcal{H}(y).$ Considering $F_{\mathcal{O}_x}=\varinjlim_Z \mathscr{M}(C_Z) \hookrightarrow \mathscr{M}_{C,y}$, and as  $\widehat{\mathscr{M}(C_Z)}=\widehat{\mathscr{M}_{C,y}}$ for any connected affinoid neighborhood $Z \subseteq Z_0$ of $x,$ we obtain that $F_{\mathcal{O}_x, v_y}=\widehat{\mathscr{M}_{C,y}}.$ The fact that $v_{y|\mathcal{O}_x}$ is the norm determined by $x$ on $\mathcal{O}_x$ is a direct consequence of the fact that $y \in C_x$ is non-rigid. 

Let $y \in C_x$ be a rigid point. Then $\mathcal{O}_{C_x, y}$ is a discrete valuation ring, and by Lemma \ref{208}, so is $\mathcal{O}_{C,y}.$ As $\pi(y)=x,$ this induces a morphism of local rings $\mathcal{O}_{x} \rightarrow \mathcal{O}_{C,y}$. Furthermore, since $\mathcal{O}_x$ is a field, $\mathcal{O}_x \hookrightarrow \mathcal{O}_{C,y}^{\times}.$ As seen above, there is an embedding $F_{\mathcal{O}_x} \hookrightarrow \mathscr{M}_{C,y}$. Let us endow $\mathscr{M}_{C,y}$ with the discrete valuation arising from the discrete valuation ring $\mathcal{O}_{C,y}.$ This induces a discrete valuation $v_y$ on $F_{\mathcal{O}_x}.$ That $v_{y|\mathcal{O}_x}$ is trivial is immediate from the embedding $\mathcal{O}_x \hookrightarrow \mathcal{O}_{C,y}^{\times}.$ Clearly, this gives rise to an embedding $F_{\mathcal{O}_x, v_y} \hookrightarrow \widehat{\mathscr{M}_{C,y}}.$
 
\

\noindent \textbf{The map $\mathrm{val}_{|C_{x, \mathrm{nrig}}}$:} It remains to show that the restriction $\mathrm{val}_{|C_{x,\mathrm{nrig}}}: C_{x, \mathrm{nrig}} \rightarrow V'(F_{\mathcal{O}_x})$ is bijective. Let $v \in V'(F_{\mathcal{O}_x})$. Then since $\mathcal{O}_x \hookrightarrow F_{\mathcal{O}_x}$, there is an embedding ${\mathcal{H}(x) \hookrightarrow F_{\mathcal{O}_x, v}}.$ This implies that there is a morphism ${F_{\mathcal{O}_x} \otimes_{\mathcal{O}_x} \mathcal{H}(x) \rightarrow F_{\mathcal{O}_x, v}}.$ Let $C_{x}^{\mathrm{alg}}$ denote the normal irreducible projective algebraic curve over $\mathcal{H}(x)$ whose Berkovich analytification is $C_x.$ Its function field is $\mathscr{M}(C_x)$ by \cite[Proposition 3.6.2]{Ber90}.  
\begin{sloppypar}
Let $x'$ denote the image of $x$ via the morphism $Z_0 \rightarrow \text{Spec} \ \mathcal{O}(Z_0)$, where $Z_0$ is as in Setting \ref{200}. Using Notation \ref{202}, by the proof of \cite[Proposition 2.6.2]{ber93}, ${C_x=(C_{\mathcal{O}(Z_0),\kappa(x')} \times_{\kappa(x')} \mathcal{H}(x))^{\mathrm{an}}},$ so $C_x^{\mathrm{alg}}=C_{\mathcal{O}(Z_0),\kappa(x')} \times_{\kappa(x')} \mathcal{H}(x).$ Seeing as $\mathcal{O}_x$  is a field, we have an embedding $\kappa(x') \hookrightarrow \mathcal{O}_x,$ so $C_x^{\mathrm{alg}}=C_{\mathcal{O}_x} \times_{\mathcal{O}_x} \mathcal{H}(x)$. This means that its function field is $\mathscr{M}(C_x)=F_{\mathcal{O}_x} \otimes_{\mathcal{O}_x} \mathcal{H}(x).$ 

Consequently, there are embeddings $F_{\mathcal{O}_x} \hookrightarrow \mathscr{M}(C_x) \hookrightarrow F_{\mathcal{O}_x, v},$ implying $\widehat{\mathscr{M}(C_x)^v}=F_{\mathcal{O}_x, v},$ where $\widehat{\mathscr{M}(C_x)^v}$ is the completion of $\mathscr{M}(C_x)$ with respect to $v.$ By \cite[Proposition ~3.15]{une}, there exists a unique (implying both injectivity and surjectivity of $\mathrm{val}_{|C_{x, \mathrm{nrig}}}$) non-rigid point $y \in C_x$ such that $\widehat{\mathscr{M}_{C,y}}=\mathcal{H}(y)=\widehat{\mathscr{M}_{C_x, y}}=F_{\mathcal{O}_x, v}.$ Clearly, ~$v=\mathrm{val}(y).$
\end{sloppypar}

\end{proof}

Taking this result into account, as a consequence of Theorem \ref{228}, we obtain:
\begin{cor} \label{235}
 With the notation of Theorem \ref{228}, if $\mathrm{char} \ k=0$ or $H$ is smooth, then: $$H(F_{\mathcal{O}_x}) \neq \emptyset \iff H(F_{\mathcal{O}_x, v}) \neq \emptyset \ \text{for all} \ v \in V(F_{\mathcal{O}_x}).$$
\end{cor}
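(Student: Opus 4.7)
The plan is to deduce this corollary from Theorem~\ref{228} by combining Proposition~\ref{234} with a classical Henselian transfer argument. The implication $H(F_{\mathcal{O}_x}) \neq \emptyset \Rightarrow H(F_{\mathcal{O}_x,v}) \neq \emptyset$ is immediate since $F_{\mathcal{O}_x}$ embeds into each completion $F_{\mathcal{O}_x,v}$, so the content of the proof lies in the converse.

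Assume $H(F_{\mathcal{O}_x,v}) \neq \emptyset$ for every $v \in V(F_{\mathcal{O}_x})$. By Theorem~\ref{228}, it suffices to produce a point of $H$ over the stalk $\mathscr{M}_{C,u}$ for every $u \in C_x$. Fix such a $u$ and set $v := \mathrm{val}(u) \in V(F_{\mathcal{O}_x})$ as in Proposition~\ref{234}. Whether $u$ is rigid or not, the proposition furnishes an embedding $F_{\mathcal{O}_x, v} \hookrightarrow \widehat{\mathscr{M}_{C,u}}$, so the assumption yields $H(\widehat{\mathscr{M}_{C,u}}) \neq \emptyset$. The remaining task is to descend from the completion $\widehat{\mathscr{M}_{C,u}}$ back down to $\mathscr{M}_{C,u}$ itself.

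For this descent I would invoke the classical Henselian approximation principle: if $K$ is a Henselian valued field with completion $\widehat{K}$, and $H/K$ is any variety such that either $\mathrm{char}(K) = 0$ or $H$ is smooth, then $H(K) \neq \emptyset \iff H(\widehat{K}) \neq \emptyset$. This is precisely the tool already used to establish \cite[Corollary~3.18]{une}. To apply it here, I need $\mathscr{M}_{C,u}$ to be a Henselian valued field with respect to the valuation arising from $u$: by Lemma~\ref{208}, $\mathcal{O}_{C,u}$ is either a field (when $u$ is non-rigid) or a discrete valuation ring (when $u$ is rigid), and in both cases it is a Henselian local ring, a standard feature of Berkovich geometry at points lying in the relative interior of the proper morphism $\pi_{Z_0}$ (since $x \in \mathrm{Int}\, Z_0$, one has $C_x \subseteq \mathrm{Int}\, C_{Z_0}$ and $\mathcal{O}_{C,u} = \mathcal{O}_{C_{Z_0},u}$). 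Consequently $\mathscr{M}_{C,u} = \mathrm{Frac}(\mathcal{O}_{C,u})$ is a Henselian valued field whose completion is $\widehat{\mathscr{M}_{C,u}}$, and the transfer applies, giving $H(\mathscr{M}_{C,u}) \neq \emptyset$. Theorem~\ref{228} then concludes.

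The main obstacle is conceptual rather than technical: one must carefully match the completions of the Henselian fields $\mathscr{M}_{C,u}$ with the completions $F_{\mathcal{O}_x,v}$ from Definition~\ref{232}, being especially careful in the rigid case where only an inclusion $F_{\mathcal{O}_x,v_u} \subseteq \widehat{\mathscr{M}_{C,u}}$ is available (which is nevertheless sufficient, since a point over the smaller field induces one over the larger). Once these identifications are in place, the corollary follows formally from Theorem~\ref{228}, Proposition~\ref{234}, and the Henselian-versus-complete transfer, exactly as in the one-dimensional setting of \cite{une}.
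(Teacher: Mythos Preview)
Your proposal is correct and follows essentially the same route as the paper's proof: use Proposition~\ref{234} to pass from $F_{\mathcal{O}_x,v}$ into $\widehat{\mathscr{M}_{C,u}}$, invoke Henselianity of $\mathscr{M}_{C,u}$ (the paper cites \cite[Theorem~2.3.3]{ber93} and \cite[Proposition~2.4.3]{ber93} for this, and \cite[Lemma~3.16]{une} for the descent from completion), and conclude via Theorem~\ref{228}. Your handling of the rigid/non-rigid dichotomy and the inclusion-only relation in the rigid case matches the paper exactly.
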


\begin{proof}
($\Rightarrow$): Seeing as $F_{\mathcal{O}_x}$ embeds in $F_{\mathcal{O}_x, v}$ for all $v \in V(F_{\mathcal{O}_x}),$ this direction is immediate.

($\Leftarrow$): Remark that $F_{\mathcal{O}_x}$ is perfect if and only if $\mathrm{char} \ k=0.$
Suppose $H(F_{\mathcal{O}_x, v}) \neq \emptyset$ for all $v \in V(F_{\mathcal{O}_x}).$ By Proposition \ref{234},  for any $y \in C_x,$ there exists $v \in V(F_{\mathcal{O}_x}),$ such that $F_{\mathcal{O}_x,v} \subseteq \widehat{\mathscr{M}_{C,y}}.$ Hence, ${H(\widehat{\mathscr{M}_{C,y}}) \neq \emptyset}$ for all $y \in C_x.$ If $y$ is a non-rigid point of $C_x$, then $\mathcal{O}_{C,y}=\mathscr{M}_{C,y}$ is a Henselian field by \cite[Theorem ~2.3.3]{ber93}. If $y$ is rigid point, then $\mathcal{O}_{C,y}$ is a discrete valuation ring that is Henselian, so by \cite[Proposition 2.4.3]{ber93}, ${\mathscr{M}_{C,y}=\text{Frac} \ \mathcal{O}_{C,y}}$ is Henselian. By \cite[Lemma 3.16]{une},  $H(\mathscr{M}_{C,y}) \neq \emptyset$ for all $y \in C_x.$ Finally, by Theorem \ref{228}, this implies that $H(F_{\mathcal{O}_x}) \neq \emptyset.$
\end{proof}

\subsection{Summary of results}
Recall that $(k, |\cdot|)$ denotes a complete non-trivially valued ultrametric field. As usual, we denote by $\mathscr{M}$ the sheaf of meromorphic functions.

We also recall that a morphism $f$ of $k$-analytic spaces is said to be \textit{algebraic} if there exists a morphism of schemes $g$ such that $f=g^{\mathrm{an}}.$

Let us summarize the main results we have shown in this section:

\begin{thm} \label{katastrofe}
Let $S,C$ be $k$-analytic spaces such that $S$ is good and normal. Suppose $\dim{S}<\dim_{\mathbb{Q}} \mathbb{R}_{>0}/|k^{\times}| \otimes_{\mathbb{Z}} \mathbb{Q}.$ Suppose there exists a morphism $\pi: C \rightarrow S$ that makes $C$ a proper flat relative $S$-analytic curve. Let $x \in \mathrm{Im}(\pi)$ be such that $\mathcal{O}_x$ is a field. Set $C_x=\pi^{-1}(x).$ 
\sloppypar
Assume there exists a connected affinoid neighborhood $Z_0$ of $x$ such that all the fibers of $\pi$ on $Z_0$ are normal irreducible projective analytic curves. Set $C_{Z_0}:=\pi^{-1}(Z_0)$. Suppose that $C_{Z_0} \rightarrow Z_0$ is algebraic, \textit{i.e.} the analytification of a scheme morphism ${C_{\mathcal{O}(Z_0)} \rightarrow \mathrm{Spec} \ \mathcal{O}(Z_0)}$. Set $C_{\mathcal{O}_x}=C_{\mathcal{O}(Z_0)} \times_{\mathcal{O}(Z_0)} \mathcal{O}_x.$ Let $F_{\mathcal{O}_x}$ be the  function field of ~$C_{\mathcal{O}_x}$.

For any connected affinoid neighborhood $Z \subseteq Z_0$ of $x,$ let us denote by $C_Z$ the analytic space $C \times_S Z.$ Then $F_{\mathcal{O}_x}=\varinjlim_Z \mathscr{M}(C_Z).$

Let $G/F_{\mathcal{O}_x}$ be a connected rational linear algebraic group acting strongly transitively on a variety $H/F_{\mathcal{O}_x}.$ The following local-global principles hold:
\begin{enumerate}
\item $H(F_{\mathcal{O}_x}) \neq \emptyset \iff H(\mathscr{M}_{C,u}) \neq \emptyset \ \text{for all} \ u \in C_x;$
\item if $\mathrm{char} \ k=0$ or $H$ is smooth,
$$H(F_{\mathcal{O}_x}) \neq \emptyset \iff H(F_{\mathcal{O}_x,v}) \neq \emptyset \ \text{for all} \ v \in V(F_{\mathcal{O}_x}),$$
where $V(F_{\mathcal{O}_x})$ is given as in Definition \ref{232}. 
\end{enumerate}\end{thm}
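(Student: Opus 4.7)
The plan is to recognize that Theorem \ref{katastrofe} is a packaging of results that have already been established in the excerpt, so the proof amounts to verifying that its hypotheses fit Setting \ref{200} and then citing the relevant statements.

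First I would check that the framework of the theorem coincides with Setting \ref{200}: the space $S$ is assumed good and normal, $\pi: C \to S$ is a proper flat relative analytic curve, $x \in \operatorname{Im}(\pi)$ has $\mathcal{O}_x$ a field and $\pi^{-1}(x) \neq \emptyset$, and the affinoid neighborhood $Z_0$ satisfies both the fiberwise condition (fibers over $Z_0$ are normal irreducible projective analytic curves) and the algebraicity condition (existence of $C_{\mathcal{O}(Z_0)} \to \operatorname{Spec} \mathcal{O}(Z_0)$ whose analytification is $C_{Z_0} \to Z_0$). The dimension assumption $\dim S < \dim_{\mathbb{Q}} \mathbb{R}_{>0}/|k^\times| \otimes_{\mathbb{Z}} \mathbb{Q}$ coincides with the hypothesis imposed throughout Sections \ref{4.5} and \ref{4.6} to guarantee the existence of type~$3$ points on $C_x$ (Lemma \ref{1000}). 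Once this identification is made, the curve $C_{\mathcal{O}_x} = C_{\mathcal{O}(Z_0)} \times_{\mathcal{O}(Z_0)} \mathcal{O}_x$ is an integral normal projective $\mathcal{O}_x$-curve by Lemma \ref{jeeeej} and Proposition \ref{211}, so it makes sense to speak of its function field $F_{\mathcal{O}_x}$.

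Next, I would obtain the identification $F_{\mathcal{O}_x} = \varinjlim_Z \mathscr{M}(C_Z)$ directly from Corollary \ref{213}, where the limit is taken over connected affinoid neighborhoods $Z \subseteq Z_0$ of $x$; this in turn rests on Corollary \ref{212} (the compatibility of function fields under the projective system $\{C_{\mathcal{O}(Z)}\}_Z$) and Theorem \ref{231} (identifying meromorphic functions on the analytification with the function field of the algebraic model).

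For part (1), the local-global principle with respect to germs of meromorphic functions is exactly the content of Theorem \ref{228}: one uses Notation \ref{225} to descend $G$ and $H$ to some $\mathscr{M}(C_{Z_G})$, constructs a relative nice cover of $C_{Z}$ refining a suitable open cover of $C_x$ in $C$ using Remark \ref{remark}, invokes Theorem \ref{226} (patching on relative nice covers obtained by pullback from $\mathbb{P}^{1,\mathrm{an}}$) to produce the required factorization of the transition cocycle via the strong transitivity of $G$, and glues.

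Finally, for part (2), I would invoke Corollary \ref{235}: the forward direction is trivial from the embedding $F_{\mathcal{O}_x} \hookrightarrow F_{\mathcal{O}_x,v}$; for the converse, Proposition \ref{234} provides a surjection $C_x \to V(F_{\mathcal{O}_x})$ sending each $y \in C_x$ to a valuation $v_y$ with $F_{\mathcal{O}_x,v_y} \hookrightarrow \widehat{\mathscr{M}_{C,y}}$, so $H(F_{\mathcal{O}_x,v}) \neq \emptyset$ for all $v \in V(F_{\mathcal{O}_x})$ yields $H(\widehat{\mathscr{M}_{C,y}}) \neq \emptyset$ for every $y \in C_x$. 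Since $\mathscr{M}_{C,y}$ is Henselian (either as the stalk at a non-rigid point by \cite[Theorem~2.3.3]{ber93}, or as the fraction field of a Henselian dvr at a rigid point by \cite[Proposition~2.4.3]{ber93}), the hypothesis $\operatorname{char} k = 0$ or $H$ smooth together with \cite[Lemma~3.16]{une} lifts these points to $H(\mathscr{M}_{C,y}) \neq \emptyset$, at which point part (1) concludes. There is no real obstacle here beyond carefully matching the hypotheses of the theorem with those of Setting \ref{200}; all the technical work (patching, thickenings, norm comparisons, GAGA for $\mathscr{M}$, classification of valuations) has already been carried out in the body of the paper.
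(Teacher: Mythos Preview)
Your proposal is correct and matches the paper's approach exactly: the paper itself simply states that Theorem \ref{katastrofe}(1) is Theorem \ref{228} and Theorem \ref{katastrofe}(2) is Corollary \ref{235}, with the identification $F_{\mathcal{O}_x}=\varinjlim_Z \mathscr{M}(C_Z)$ coming from Corollary \ref{213}. You have correctly identified that this is a summary theorem packaging the earlier results under the hypotheses of Setting \ref{200}.
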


For a proof of Theorem \ref{katastrofe}(1), see Theorem \ref{228}. For a proof of Theorem \ref{katastrofe}(2), see Corollary \ref{235}.

The theorem above tells us that there is a local-global principle in the neighborhood of certain fibers of  relative proper analytic curves. More generally, we have shown that patching is possible in the neighborhood of said fibers.  Note that the statement of Theorem~\ref{katastrofe} is a local-global principle over the germs of meromorphic functions on a fixed fiber. 

As a consequence of Theorem \ref{addition1}, we obtain:

\begin{cor} \label{ahhh1}
Let $S, C$ be $k$-analytic spaces such that $S$ is strict, good and regular. Suppose $\dim{S}<\dim_{\mathbb{Q}}\mathbb{R}_{>0}/|k^{\times}| \otimes_{\mathbb{Z}} \mathbb{Q}.$ Let $\pi: C \rightarrow S$ be a morphism that makes $C$ a proper flat relative $S$-analytic curve. Let $x \in S$ be such that $\mathcal{O}_x$ is a field and $C_x:=\pi^{-1}(x) \neq \emptyset.$ Suppose that $C_x$ is a smooth geometrically irreducible $\mathcal{H}(x)$-analytic curve. Then the statement of Theorem~\ref{katastrofe} is satisfied. 
\end{cor}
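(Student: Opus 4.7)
The plan is to invoke Theorem \ref{addition1} on an appropriately chosen affinoid neighborhood of $x$, and then verify that its conclusions match, essentially verbatim, the hypotheses of Setting \ref{200} that feed into Theorem \ref{katastrofe}. The only nontrivial bookkeeping is reconciling the adjectives ``smooth geometrically connected'' (output of Theorem \ref{addition1}) with ``normal irreducible projective'' (input of Setting \ref{200}).

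First I would reduce to the strict affinoid setting. Since $S$ is good and strict, I can choose a connected strict affinoid neighborhood $S_0$ of $x$ in $S$. Regularity of $S_0$ follows because affinoid domains of a regular good $k$-analytic space remain regular (at the interior point $x \in \mathrm{Int}(S_0)$ the local ring is unchanged, and in fact the property holds on the entire domain by standard results on Berkovich affinoid algebras). Set $\pi_0 : C_0 := C \times_S S_0 \to S_0$; this is still a proper flat relative analytic curve, the fiber over $x$ is unchanged and hence still smooth geometrically irreducible, and $\mathcal{O}_{S_0,x} = \mathcal{O}_{S,x}$ is still a field. I would then apply Theorem \ref{addition1} to $(S_0,\pi_0,x)$: geometric irreducibility of $C_x$ forces geometric connectedness, which is exactly the input needed. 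The output is a connected affinoid neighborhood $Z_0 \subseteq S_0$ of $x$ such that every fiber $\pi_0^{-1}(z)$ over $z \in Z_0$ is a proper smooth geometrically connected $\mathcal{H}(z)$-curve, and such that $\pi_0|_{Z_0} : C_{Z_0} \to Z_0$ is algebraic in the sense required.

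Next I would translate these two conclusions into the two conditions of Setting \ref{200}. Smoothness implies normality of each fiber; smoothness together with geometric connectedness gives geometric integrality (hence in particular irreducibility) of each fiber; and a smooth proper algebraic curve over a field is automatically projective, so after analytification via the algebraicity provided by Theorem \ref{addition1}(2) the fibers acquire the structure of normal irreducible projective $\mathcal{H}(z)$-analytic curves. This is exactly condition (1) of Setting \ref{200}. Condition (2) of Setting \ref{200} is the algebraicity statement already supplied by Theorem \ref{addition1}(2). The dimension bound $\dim S < \dim_{\mathbb{Q}} \mathbb{R}_{>0}/|k^\times| \otimes_{\mathbb{Z}} \mathbb{Q}$ is inherited by $S_0$ (indeed $\dim S_0 \leqslant \dim S$), and the remaining hypotheses on $x$ and on $G, H$ are taken directly from the statement. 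At this point all the hypotheses of Theorem \ref{katastrofe} are in place, and applying that theorem produces both local-global principles of the corollary.

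The main obstacle I anticipate is the careful verification that regularity of $S$ transfers to our strict affinoid neighborhood $S_0$ (or at least in a neighborhood of $x$, which is all that is ultimately used), together with ensuring that ``projective'' in the fiberwise sense of Setting \ref{200} genuinely follows from ``smooth proper algebraic'' without sneaking in extra hypotheses. Both verifications are standard, so once they are dispatched the proof reduces to chaining Theorem \ref{addition1} and Theorem \ref{katastrofe}.
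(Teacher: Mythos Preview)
Your proposal is correct and follows exactly the approach the paper intends: the paper simply states that Corollary~\ref{ahhh1} is ``a consequence of Theorem~\ref{addition1}'' without further elaboration, and your argument correctly unpacks this by reducing to a strict affinoid neighborhood, applying Theorem~\ref{addition1} to land in Setting~\ref{200}, and then invoking Theorem~\ref{katastrofe}. The bookkeeping you flag (regularity of the affinoid neighborhood, and translating ``smooth proper geometrically connected'' into ``normal irreducible projective'') is exactly what the remark following Theorem~\ref{addition1} absorbs.
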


As a consequence of Proposition \ref{example lame'}, we also have:

\begin{cor} \label{yll}
Let $S, C$ be $k$-analytic spaces such that $S$ is good and normal. Suppose $\dim{S}<\dim_{\mathbb{Q}} \mathbb{R}_{>0}/|k^{\times}| \otimes_{\mathbb{Z}} \mathbb{Q}.$ Let $\pi: C \rightarrow S$ be a morphism that makes $C$ a proper flat relative $S$-analytic curve. Let $x \in S$ be such that $\mathcal{O}_x$ is a field and ${C_x:=\pi^{-1}(x) \neq \emptyset}.$ Suppose that $C_x$ is a smooth geometrically irreducible $\mathcal{H}(x)$-analytic curve. Suppose also that there exists an affinoid neighborhood $Z_0$ of $x$ in $S$ such that the base change ${C \times_S Z_0 \rightarrow Z_0}$ is algebraic. Then the statement of Theorem~\ref{katastrofe} is satisfied. 
\end{cor}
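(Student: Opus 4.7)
The plan is to reduce Corollary \ref{yll} to a direct application of Theorem \ref{katastrofe} via Proposition \ref{example lame'}. The only reason the hypotheses of Corollary \ref{yll} are not literally those of Proposition \ref{example lame'} is that the base $S$ is here merely assumed to be good and normal, whereas Proposition \ref{example lame'} asks for a normal $k$-affinoid space. Since $S$ is good, there exists an affinoid neighborhood $Z_1$ of $x$ in $S$, which we may take to be contained in the given $Z_0$; by \cite[Th\'eor\`eme 3.4]{dex}, $Z_1$ is itself normal. The dimension condition $\dim S < \dim_{\mathbb{Q}} \mathbb{R}_{>0}/|k^{\times}| \otimes_{\mathbb{Z}} \mathbb{Q}$ transfers to $Z_1$ since $\dim Z_1 \leq \dim S$.

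Next, I would verify that replacing $S$ by $Z_1$ preserves all the relevant structure. The morphism $\pi_{Z_1} : C \times_S Z_1 \to Z_1$ is still a proper flat relative analytic curve (as a base change of $\pi$), the fiber over $x$ is unchanged and hence still a smooth geometrically irreducible $\mathcal{H}(x)$-analytic curve, and the stalk $\mathcal{O}_x$ (computed in $Z_1$ or in $S$, since $x \in \mathrm{Int}(Z_1)$) remains a field. Moreover, the algebraicity hypothesis passes to $Z_1$: writing $C \times_S Z_0 = (C_{\mathcal{O}(Z_0)})^{\mathrm{an}}$ and setting $C_{\mathcal{O}(Z_1)} := C_{\mathcal{O}(Z_0)} \times_{\mathcal{O}(Z_0)} \mathcal{O}(Z_1)$, Proposition~2.6.1 of \cite{ber93} gives $(C_{\mathcal{O}(Z_1)})^{\mathrm{an}} = C \times_S Z_1$, so $\pi_{Z_1}$ is algebraic as well.

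With these observations in place, Proposition \ref{example lame'} applies to $\pi_{Z_1} : C \times_S Z_1 \to Z_1$ and produces a connected affinoid neighborhood of $x$ (contained in $Z_1$) over which all conditions of Setting \ref{200} hold. Since Theorem \ref{katastrofe} is stated precisely under Setting \ref{200} together with the dimension hypothesis on the base, both conclusions (1) and (2) of Theorem~\ref{katastrofe} now follow immediately for $\pi_{Z_1}$, and hence also for the original $\pi$ since all the relevant local data (the fiber $C_x$, the stalks $\mathscr{M}_{C,u}$ for $u \in C_x$, and the direct limit $F_{\mathcal{O}_x} = \varinjlim_Z \mathscr{M}(C_Z)$ over connected affinoid neighborhoods of $x$) depend only on arbitrarily small neighborhoods of $x$ and of $C_x$. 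There is no substantive obstacle here beyond carefully checking that each hypothesis of Setting~\ref{200} survives the restriction $S \rightsquigarrow Z_1$, which is routine.
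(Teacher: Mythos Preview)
Your proposal is correct and follows exactly the paper's approach: the paper presents Corollary~\ref{yll} simply ``as a consequence of Proposition~\ref{example lame'}'' without further argument, and you have spelled out the one routine reduction needed (passing from the good normal base $S$ to a normal affinoid neighborhood $Z_1 \subseteq Z_0$ of $x$) so that Proposition~\ref{example lame'} applies verbatim. Your verification that algebraicity, properness, flatness, the fiber, and the stalk all survive this restriction is accurate and more detailed than what the paper itself records.
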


Considering the example of Setting \ref{200} given in Subsection \ref{lame}, we also obtain the following theorem, which is a generalization of \cite[Corollary 3.18]{une} and of the main result of \cite{HHKP}. 

\begin{thm} \label{val2}
Let $S$ be a  $k$-analytic space that is good and normal. Suppose ${\dim{S}<\dim_{\mathbb{Q}} \mathbb{R}_{>0}/|k^{\times}| \otimes_{\mathbb{Z}} \mathbb{Q}}$. Let $x \in S$ be such that $\mathcal{O}_x$ is a field. Let $C_{\mathcal{O}_x}$ be a smooth geometrically irreducible projective algebraic curve over $\mathcal{O}_x.$ Let $F_{\mathcal{O}_x}$ denote the function field of $C_{\mathcal{O}_x}.$ 

Let $G/F_{\mathcal{O}_x}$ be a connected rational linear algebraic group acting strongly transitively on a variety $H/F_{\mathcal{O}_x}.$ Then if $\mathrm{char} \ k=0$ or $H$ is smooth: 
$$H(F_{\mathcal{O}_x}) \neq \emptyset \iff H(F_{\mathcal{O}_x, v}) \neq \emptyset \ \text{for all} \ v \in V(F_{\mathcal{O}_x}),$$
where $V(F_{\mathcal{O}_x})$ is given in Definition \ref{232}. 
\end{thm}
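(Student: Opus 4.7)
The plan is to reduce Theorem \ref{val2} directly to Theorem \ref{katastrofe}(2) (equivalently, Corollary \ref{235}) by using the construction of subsection \ref{lame} to realize the algebraic curve $C_{\mathcal{O}_x}$ as arising from a relative proper analytic curve around the fiber over $x$. More precisely, the input data is a smooth geometrically irreducible projective curve $C_{\mathcal{O}_x}/\mathcal{O}_x$, and the output we need is a good normal $S'$, an affinoid neighborhood $Z_0$ of $x$ in $S'$, and a proper flat relative analytic curve $\pi : C \to S'$ such that the hypotheses of Setting \ref{200} hold and such that $C_{\mathcal{O}(Z_0)} \times_{\mathcal{O}(Z_0)} \mathcal{O}_x$ is canonically isomorphic (as an $\mathcal{O}_x$-scheme) to the given $C_{\mathcal{O}_x}$.

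The first step is geometric: since $\mathcal{O}_x = \varinjlim_Z \mathcal{O}(Z)$ with $Z$ ranging over connected affinoid neighborhoods of $x$, Grothendieck's descent of finitely presented schemes along filtered limits (\cite[Th\'eor\`eme 8.8.2]{ega43}), combined with descent of properness, flatness, and the ``being a relative curve'' property \cite[Th\'eor\`eme 8.10.5]{ega43}, \cite[Tag 04AI]{stacks-project}, \cite[Tag 0EY2]{stacks-project}, produces $Z_0$ and a scheme $C_{\mathcal{O}(Z_0)} \to \operatorname{Spec} \mathcal{O}(Z_0)$ whose base change to $\mathcal{O}_x$ recovers $C_{\mathcal{O}_x}$ up to isomorphism. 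Analytifying in the sense of \cite[2.6]{ber93} produces a proper flat relative analytic curve $\pi : C \to S$, where we take $S := Z_0$ (which is itself good and normal by \cite[Th\'eor\`eme 3.4]{dex}). This is precisely the construction carried out in subsection \ref{lame}. Proposition \ref{example lame'} applied at $x$ (whose fiber $C_x$ is smooth and geometrically irreducible because $C_{\mathcal{O}_x}$ is and these properties are preserved under the base change $\mathcal{O}_x \hookrightarrow \mathcal{H}(x)$ and analytification) then confirms that, possibly after shrinking $Z_0$, all fibers over $Z_0$ are normal irreducible projective analytic curves, so the full hypotheses of Setting \ref{200} are satisfied.

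The second step is to identify the function fields. By Lemma \ref{jeeeej} and Proposition \ref{211}, $C_{\mathcal{O}(Z_0)} \times_{\mathcal{O}(Z_0)} \mathcal{O}_x$ is an integral projective normal $\mathcal{O}_x$-curve, hence coincides with the given $C_{\mathcal{O}_x}$ (both are smooth, irreducible, projective over $\mathcal{O}_x$, and isomorphic after faithfully flat base change by construction), so their function fields agree: the $F_{\mathcal{O}_x}$ of the theorem equals the $F_{\mathcal{O}_x}$ produced by the relative analytic construction. Moreover, by Corollary \ref{213}, $F_{\mathcal{O}_x} = \varinjlim_Z \mathscr{M}(C_Z)$, and the valuation set $V(F_{\mathcal{O}_x})$ of Definition \ref{232} depends only on $F_{\mathcal{O}_x}$ and the valuation on $\mathcal{O}_x$, so it is the same set appearing in Theorem \ref{katastrofe}(2). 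The rational connected linear algebraic group $G$ and variety $H$ are given over $F_{\mathcal{O}_x}$, hence descend to some $\mathscr{M}(C_Z)$ for a small enough $Z$ exactly as in Notation \ref{225}.

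The third step is the invocation: apply Theorem \ref{katastrofe}(2) (proved as Corollary \ref{235}) directly. The forward direction is immediate from the embeddings $F_{\mathcal{O}_x} \hookrightarrow F_{\mathcal{O}_x,v}$; the reverse direction uses Proposition \ref{234} to pass from information at every $v \in V(F_{\mathcal{O}_x})$ to information at every completed stalk $\widehat{\mathscr{M}_{C,y}}$ as $y$ ranges over $C_x$, then Henselianity of $\mathscr{M}_{C,y}$ (via \cite[Theorem 2.3.3]{ber93} in the non-rigid case and \cite[Proposition 2.4.3]{ber93} in the rigid case) together with \cite[Lemma 3.16]{une} to descend from the completion to $\mathscr{M}_{C,y}$ itself under the hypothesis $\operatorname{char} k = 0$ or $H$ smooth, and finally Theorem \ref{katastrofe}(1) to patch these local points into a global one in $H(F_{\mathcal{O}_x})$. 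The only nontrivial checkpoint is the identification in the second step, and this is essentially formal once the construction of subsection \ref{lame} has been set up; so no genuinely new difficulty appears here beyond bookkeeping the compatibility of the algebraic input with the analytic framework of Setting \ref{200}.
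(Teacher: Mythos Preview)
Your proposal is correct and follows essentially the same approach as the paper: the paper simply remarks that Theorem~\ref{val2} follows by combining the construction of subsection~\ref{lame} (realizing $C_{\mathcal{O}_x}$ as arising from a relative proper analytic curve satisfying Setting~\ref{200}) with Theorem~\ref{katastrofe}(2). You have spelled out the bookkeeping in more detail than the paper does, but the route is identical.
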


As in \cite{une}, the local-global principles we have obtained can be applied to quadratic forms. The following is a consequence of Theorem \ref{katastrofe}.

\begin{cor} \label{ahhh'} Suppose that $\mathrm{char} \ k \neq 2.$
Let $S,C$ be $k$-analytic spaces such that $S$ is good and normal. Suppose $\dim{S}<\dim_{\mathbb{Q}} \mathbb{R}_{>0}/|k^{\times}| \otimes_{\mathbb{Z}} \mathbb{Q}.$ Suppose there exists a morphism $\pi: C \rightarrow S$ that makes $C$ a proper flat relative $S$-analytic curve. Let $x \in \mathrm{Im}(\pi)$ be such that $\mathcal{O}_x$ is a field. Set $C_x=\pi^{-1}(x).$ 

Assume there exists a connected affinoid neighborhood $Z_0$ of $x$ such that all the fibers of $\pi$ on $Z_0$ are normal irreducible projective analytic curves. Set $C_{Z_0}:=\pi^{-1}(Z_0)$ and suppose that $C_{Z_0} \rightarrow Z_0$ is algebraic, \textit{i.e.} the analytification of an algebraic morphism $C_{\mathcal{O}(Z_0)} \rightarrow \text{Spec} \ \mathcal{O}(Z_0).$ Set $C_{\mathcal{O}_x}=C_{\mathcal{O}(Z_0)} \times_{\mathcal{O}(Z_0)} \mathcal{O}_x.$ Let $F_{\mathcal{O}_x}$ be the  function field of $C_{\mathcal{O}_x}$.

For any connected affinoid neighborhood $Z \subseteq Z_0$ of $x,$ let us denote by $C_Z$ the analytic space $C \times_S Z.$ Then $F_{\mathcal{O}_x}=\varinjlim_Z \mathscr{M}(C_Z).$

Let $q/F_{\mathcal{O}_x}$ be a quadratic form of dimension $\neq 2$. The following local-global principles hold:
\begin{itemize}
\item $q$ is isotropic over $F_{\mathcal{O}_x}$ if and only if it is isotropic over $\mathscr{M}_{C,u}$ for all $u \in C_x;$
\item $q$ is isotropic over $F_{\mathcal{O}_x}$ if and only if it is isotropic over $F_{\mathcal{O}_x,v}$ for all $v \in V(F_{\mathcal{O}_x}),$
where $V(F_{\mathcal{O}_x})$ is given in Definition \ref{232}. 
\end{itemize}
\end{cor}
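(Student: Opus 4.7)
The plan is to deduce Corollary \ref{ahhh'} directly from the two parts of Theorem \ref{katastrofe} by choosing appropriate $H$ and $G$. The standard device, already used in \cite[Corollary 3.18]{une} and \cite{HHK}, is to encode isotropy of $q$ by the existence of a rational point on the associated projective quadric, and to witness strong transitivity by an action of the special orthogonal group.

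First I would reduce to the nontrivial case. If $q$ is degenerate, it is isotropic over every extension and both statements hold for tautological reasons; hence one may assume $q$ non-degenerate. If $\dim q = 1$, $q$ is anisotropic over every extension (again tautological); hence one may also assume $\dim q \geq 3$, using the hypothesis $\dim q \neq 2$. Under these assumptions, set $n=\dim q$ and let $H := X_q \subset \mathbb{P}^{n-1}_{F_{\mathcal{O}_x}}$ be the projective quadric cut out by $q=0$. For any field extension $L/F_{\mathcal{O}_x}$ one has $X_q(L)\neq\emptyset$ iff $q$ is isotropic over $L$, so it suffices to prove the local-global principle for $H$. Since $q$ is non-degenerate, $X_q$ is smooth projective.

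Next I would take $G = SO(q)$. This is a connected linear algebraic group over $F_{\mathcal{O}_x}$, and it is rational in the sense of Definition \ref{12aaa} (via the Cayley parametrization $g \mapsto (g-\mathrm{Id})(g+\mathrm{Id})^{-1}$, which gives a birational equivalence of $SO(q)$ with a Zariski open of the affine space of $q$-alternating matrices); this is the same rationality statement invoked in \cite{HHK} and \cite{une}. Strong transitivity of the action of $SO(q)$ on $X_q$ follows from Witt's extension theorem: over any extension $L/F_{\mathcal{O}_x}$, two isotropic lines can be related by an element of $O(q)(L)$, and for $n \geq 3$ one can post-compose with a reflection through the hyperplane orthogonal to some anisotropic vector in the orthogonal complement of the target line (such a vector exists because the restriction of $q$ to the $(n-2)$-dimensional quotient $w^\perp/\langle w\rangle$ is non-degenerate of positive dimension) to force the determinant to be $+1$. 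Hence $SO(q)(L)$ acts transitively on $X_q(L)$ whenever the latter is non-empty.

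Finally, I would apply Theorem \ref{katastrofe}(1) to obtain the first bullet, and Theorem \ref{katastrofe}(2) to obtain the second: the smoothness of $X_q$ removes the characteristic hypothesis on $k$ in part (2). I do not anticipate a serious obstacle beyond carefully verifying the hypotheses of Theorem \ref{katastrofe} for the pair $(SO(q), X_q)$; the only non-formal ingredient is the rationality of $SO(q)$ and the Witt-style strong transitivity, both of which are classical and identical to what is already used in the one-dimensional setting of \cite{une}. A minor point worth explicit mention in the proof is that $G$ and $H$ are visibly defined over $F_{\mathcal{O}_x}$ (the field to which Theorem \ref{katastrofe} applies), since $q$ is, by hypothesis, defined over that field.
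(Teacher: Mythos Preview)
Your proposal is correct and is exactly the approach the paper intends: the paper does not give a detailed proof but states the corollary as a consequence of Theorem \ref{katastrofe} ``as in \cite{une}'', where precisely the pair $(SO(q), X_q)$ together with Witt's theorem and the Cayley rationality of $SO(q)$ (all originating in \cite{HHK}) are used. Your handling of the trivial cases and the observation that smoothness of $X_q$ dispenses with the characteristic hypothesis in part (2) are also in line with the paper's conventions.
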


Corollaries \ref{ahhh1} and \ref{yll} are also applicable to Corollary \ref{ahhh'}, meaning their statements remain true when replacing ``Theorem \ref{katastrofe}" with ``Corollary \ref{ahhh'}".

The next result is an application of Theorem \ref{val2}.

\begin{cor} \label{ahhh2} Suppose $\mathrm{char} \ k \neq 2.$
Let $S$ be a good normal $k$-analytic space such that $\dim{S}<\dim_{\mathbb{Q}}\mathbb{R}_{>0}/|k^{\times}| \otimes_{\mathbb{Z}} \mathbb{Q}$. Let $x \in S$ be such that $\mathcal{O}_x$ is a field. Let $C_{\mathcal{O}_x}$ be a smooth geometrically irreducible projective algebraic curve over $\mathcal{O}_x.$ Let $F_{\mathcal{O}_x}$ denote the function field of $C_{\mathcal{O}_x}.$ 

Let $q/F_{\mathcal{O}_x}$ be a quadratic form of dimension $\neq 2.$ Then $q$ is isotropic over $F_{\mathcal{O}_x}$ if and only if it is isotropic over the completions $F_{\mathcal{O}_x, v}$ for all $v \in V(F_{\mathcal{O}_x}),$
where $V(F_{\mathcal{O}_x})$ is given as in Definition \ref{232}. 
\end{cor}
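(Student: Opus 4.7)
The plan is to deduce Corollary \ref{ahhh2} directly from Theorem \ref{val2} by taking $H$ to be the projective quadric cut out by $q$ and $G$ to be the special orthogonal group $\mathrm{SO}(q).$ The dimension $1$ case is vacuous: a rank one form $aT^2$ is isotropic iff $a=0,$ and this condition is preserved and detected by any field embedding, so the local-global principle holds trivially. We therefore assume $\dim q \geqslant 3.$

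Let $H \subseteq \mathbb{P}_{F_{\mathcal{O}_x}}^{\dim q -1}$ be the projective variety defined by $q=0.$ Since $\mathrm{char}\ k \neq 2,$ the form $q$ is non-degenerate (or can be split off from a hyperbolic part, and we may restrict to the non-degenerate case since splitting a hyperbolic plane preserves isotropy on both sides), so $H$ is a \emph{smooth} projective variety over $F_{\mathcal{O}_x}.$ By construction, for any field extension $E/F_{\mathcal{O}_x},$ isotropy of $q$ over $E$ is equivalent to $H(E)\neq \emptyset.$ Let $G=\mathrm{SO}(q)$ be the special orthogonal group; this is a connected reductive linear algebraic group over $F_{\mathcal{O}_x}.$

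It remains to check that $G$ is rational in the sense of Definition \ref{12aaa} and that it acts strongly transitively on $H.$ Rationality follows from the classical Cayley parametrization: over any field of characteristic different from $2,$ the map $A\mapsto (I-A)(I+A)^{-1}$ induces a birational isomorphism between $\mathrm{SO}(q)$ and the affine space of $q$-skew matrices, so in particular $G$ contains a Zariski open subset isomorphic to an open of an affine space over $F_{\mathcal{O}_x}.$ Strong transitivity is Witt's extension theorem: for any field extension $E/F_{\mathcal{O}_x}$ and any non-zero isotropic vectors $v,w$ of $q_E,$ there exists an element of $O(q)(E)$ sending $v$ to $w,$ and by multiplying by a reflection if necessary, one can arrange this to lie in $\mathrm{SO}(q)(E)=G(E);$ passing to projective coordinates this shows that $G(E)$ acts transitively on $H(E)$ whenever the latter is non-empty.

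All hypotheses of Theorem \ref{val2} are therefore satisfied, and since $H$ is smooth we obtain the equivalence
\[
H(F_{\mathcal{O}_x}) \neq \emptyset \iff H(F_{\mathcal{O}_x,v}) \neq \emptyset \ \text{for all} \ v \in V(F_{\mathcal{O}_x}),
\]
which is precisely the claim in terms of isotropy of $q.$ The only non-routine step is verifying strong transitivity via Witt's theorem together with the $\mathrm{SO}$ vs $O$ reduction, but both are standard; rationality of $\mathrm{SO}(q)$ via Cayley is likewise well-known. Thus the corollary reduces cleanly to Theorem \ref{val2} without further geometric input.
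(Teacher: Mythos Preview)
Your proposal is correct and follows exactly the approach the paper intends: Corollary~\ref{ahhh2} is stated as a direct application of Theorem~\ref{val2}, taking $H$ to be the projective quadric and $G=\mathrm{SO}(q)$, with rationality via the Cayley transform and strong transitivity via Witt's theorem, exactly as in \cite{HHK} and \cite{une}. One minor imprecision: your reduction to the non-degenerate case should invoke the radical (a degenerate form in characteristic $\neq 2$ is automatically isotropic) rather than a hyperbolic summand, but this does not affect the argument.
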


\begin{rem}
The field $\mathcal{O}_x$ appearing in the statements of this subsection is Henselian, but in general not complete. 
Taking this into account, Theorem \ref{val2} and Corollary \ref{ahhh2} generalize \mbox{\cite[Corollary 3.18]{une}} and 
\cite[Corollary 3.19]{une}, respectively.
\end{rem}

\section{Examples of fields $\mathcal{O}_x$} \label{4.7} To illustrate on which types of fields our local-global principles can be applied, we calculate a few examples of local rings $\mathcal{O}_x$ that are fields. To do this, the key is to find a ``good" basis of neighborhoods of the point $x.$ 
\begin{sloppypar} 
We denote by $(k, |\cdot|)$ a complete ultrametric field such that ${\dim_{\mathbb{Q}} \mathbb{R}_{>0}/|k^\times| \otimes_{\mathbb{Z}} \mathbb{Q} =\infty}$ (this condition is sufficient to guarantee the existence of type 3 points on the fiber of $x$). In all of the following examples, $x$ is chosen such that $\mathcal{O}_x$ is a field. 
\end{sloppypar}

\begin{ex} Suppose $S=\mathcal{M}(k),$ where $\mathcal{M}( \ \cdot \ )$ denotes the Berkovich spectrum. Then, if $S=\{x\},$ we obtain that $\mathcal{O}_x=k,$ so a special case of Theorem~\ref{228} is \cite[Theorem 3.10]{une}. 
\end{ex}

\begin{ex} \label{exi2}
Let $\eta_{T,r} \in \mathbb{A}_k^{1, \mathrm{an}}$ be a type 3 point, meaning $r \not \in \sqrt{|k|}.$ We can deduce from \cite[3.4.19.3]{Duc}, that the family of sets  $L_{r_1, r_2}:=\{y \in \mathbb{A}_k^{1, \mathrm{an}}: r_1 \leqslant |T|_y \leqslant r_2\}, {0<r_1<r<r_2,}$ forms a basis of neighborhoods of $\eta_{T,r}$ in $\mathbb{A}_k^{1, \mathrm{an}}.$ Considering $\mathcal{O}(L_{r_1, r_2})=\{\sum_{n \in \mathbb{Z}} a_n T^n: a_n \in k,  \lim_{n \rightarrow +\infty}|a_n|r_2^n=0, \lim_{n \rightarrow -\infty} |a_n|r_1^n=0\}$, we obtain that $$\mathcal{O}_x= \left\lbrace\sum_{n \in \mathbb{Z}} a_nT^n : a_n \in k, \exists r_1, r_2 \in \mathbb{R}_{>0},\ \text{s.t.} \ r_1<r<r_2, \lim_{n \rightarrow +\infty} |a_n|r_2^n=0, \lim_{n \rightarrow -\infty} |a_n|r_1^n=0 \right\rbrace$$
The norm that $x$ induces on $\mathcal{O}_x$ is the following: $|\sum_{n \in \mathbb{Z}} a_nT^n|_x=\max_{n \in \mathbb{Z}} |a_n|r^n.$
\end{ex}

\begin{nota}
For $\alpha \in k$ and $r \in \mathbb{R}_{>0} 0,$ let us denote by $B_k(\alpha,r)$ the closed disc in $k$ centered at $a$ and of radius $r.$ Also, for $P \in k[T]$ irreducible, we denote $\mathbb{D}_k(P,r):=\{y \in \mathbb{A}_k^{1, \mathrm{an}}: |P|_{y} \leqslant r\}$ (resp. $\mathbb{D}_k^\circ (P,r):=\{y \in \mathbb{A}_k^{1, \mathrm{an}}: |P|_{y} < r\}$) the closed (resp. open) virtual disc centered at $\eta_{P,0}$ and of radius $r.$ In particular, if there exists $\alpha \in k$ such that  $P(T)=T-\alpha,$ we will simply write $\mathbb{D}_k(\alpha, r)$ (resp. $\mathbb{D}_k^\circ(\alpha,r)$). When there is no risk of ambiguity, we will forget the index $k.$
\end{nota}

\begin{ex} Suppose $k$ is algebraically closed.
Let $x=\eta_{T-\alpha,r} \in \mathbb{A}_{k}^{1, \mathrm{an}}$ be a type 2 point, meaning $r \in |k^\times|.$ By \cite[3.4.19.2]{Duc}, $x$ has a basis of neighborhoods of the form $A_{R,\alpha_i, r_i, I} :=\mathbb{D}(\alpha, R) \backslash \bigsqcup_{i \in I} \mathbb{D}^\circ(\alpha_i,r_i),$ where $I$ is a finite set, $0<r_i<r$ for all $i \in I,$ $R>r,$ $\alpha_i \in B(\alpha,r),$ and for any $i, j \in I, i\neq j,$ we have $|\alpha_i-\alpha_j|=r.$ The subset $A_{R,\alpha_i, r_i, I}$ is an affinoid domain in $\mathbb{A}_{k}^{1, \mathrm{an}}$. By \cite[Proposition 2.2.6]{frevan}, 
\begin{align*}
\mathcal{O}(A_{R,\alpha_i, r_i, I})=&\big\lbrace\sum_{n>0}\sum_{i \in I} \frac{a_{n,i}}{(T-\alpha_i)^n}+\sum_{n \geqslant 0} a_n (T-\alpha)^n: \\& a_{n,i},a_n \in k, \lim_{n \rightarrow +\infty} |a_{n,i}|r_i^{-n}=0, i \in I, \lim_{n \rightarrow +\infty}|a_n|R^n=0\big\rbrace. \end{align*} Consequently, $f \in \mathcal{O}_x$ if and only if there exist a finite set $I \subseteq \mathbb{N},$ positive real numbers $R, r_i, i\in I,$ such that $r_i < r<R$, and elements $\alpha_i \in B(\alpha,r),$ such that $|\alpha_i-\alpha_j|=r$ for any $i,j \in I, i \neq j,$ satisfying $f \in \mathcal{O}(A_{R, \alpha_i, r_i, I}).$ The norm induced by $x$ is $$\left|\sum_{n>0}\sum_{i \in I} \frac{a_{n,i}}{(T-\alpha_i)^n}+\sum_{n \geqslant 0}a_n(T-\alpha)^n\right|_x=\max_{n> 0, i \in I}(|a_0|, |a_{n,i}|r^{-n}, |a_n|r^n).$$
\end{ex}

\begin{ex} Suppose $k$ is algebraically closed.
Let $x \in \mathbb{A}_{k}^{1, \mathrm{an}}$ be a type 4 point, meaning it is determined by a strictly decreasing family of closed discs $\mathscr{D}:=(B(a_i, r_i))_{i \in \mathbb{N}}$
in $k$ such that $\bigcap_{i \in \mathbb{N}} B(a_i,r_i)=\emptyset.$ Then for any $Q(T) \in k[T],$ $|Q|_x=\inf_{i} |Q|_{\eta_{a_i, r_i}}.$
Let us remark that for any $i \in \mathbb{N},$ $x \in \mathbb{D}(a_i, r_i).$ Moreover, $x \in \mathbb{D}^\circ(a_i, r_i).$ To see the last part, assume, by contradiction, that there exists $j \in \mathbb{N}$ such that $|T-a_j|_x=r_j$. Then for any $i > j,$ $\max(|a_i-a_j|, r_i) =|T-a_j|_{\eta_{a_i, r_i}} \geqslant r_j,$ which is impossible seeing as $\mathscr{D}$ is strictly decreasing.

By \cite[3.4.19.1]{Duc}, the elements of $\mathscr{D}':=(\mathbb{D}(a_i, r_i))_{i \in \mathbb{N}}$ form a basis of neighborhoods of ~$x$. Finally, for any $f \in \mathcal{O}_x,$ there exists $i' \in \mathbb{N}$ such that $f \in \mathcal{O}(\mathbb{D}(a_{i'}, r_{i'}))$, meaning ${f=\sum_{n \in \mathbb{N}} b_n (T-a_{i'})^n},$ where $b_n \in k$ for all $n,$ and $\lim_{n \rightarrow +\infty} |b_n|r_{i'}^n=0.$
Then for any ${i \geqslant i'},$  $f \in \mathcal{O}(\mathbb{D}(a_i, r_i))$. Finally, the norm induced by $x$ is ${|f|_x=\inf_{i \geqslant i'} |f|_{\eta_{a_i, r_i}}}.$
\end{ex}

\begin{ex} Let us fix an algebraic closure $\overline{k}$ of $k.$
Let $x \in \mathbb{A}_{k}^{1, \mathrm{an}}$ be a non-rigid type 1 point. 
This means that there exists an element $\alpha \in \widehat{\overline{k}} \backslash 
\overline{k}$, such that the image of $\eta_{\alpha,0}$ with 
respect to the open surjective morphism $\varphi: \mathbb{A}
_{\widehat{\overline{k}}}^{1, \mathrm{an}} \rightarrow \mathbb{A}_{k}
^{1, \mathrm{an}}$ is $x.$ There exists a sequence $(\alpha_i)_{i 
\in \mathbb{N}}$ in $\overline{k}$ 
such that $\lim_{i \rightarrow +\infty} \alpha_i =\alpha.$ Set $r_i=|\alpha-\alpha_i|.$ Then, in $\widehat{\overline{k}},$ the point $\eta_{\alpha,0}$ is determined by the strictly 
decreasing family of closed discs $(B_{\widehat{\overline{k}}}(\alpha_i, r_i))_{i \in \mathbb{N}},$ meaning for any $Q \in \widehat{\overline{k}}[T],$ $|Q|_{\eta_{\alpha,0}}=\inf_{i} |Q|_{\eta_{\alpha_i, r_i}}.$ As in Example $4$, by \cite[3.4.19.1]{Duc}, the family $(\mathbb{D}_{\widehat{\overline{k}}}(\alpha_i, r_i))_{i \in \mathbb{N}}$ forms a family of neighborhoods of $\eta_{\alpha,0}$ in $\mathbb{A}_{\widehat{\overline{k}}}^{1, \mathrm{an}}.$ 

Seeing as $\varphi$ is an open morphism, $(\varphi(\mathbb{D}_{\widehat{\overline{k}}}(\alpha_i, r_i)))_{i \in \mathbb{N}}$ forms a basis of neighborhoods of the point $x$ in $\mathbb{A}_{k}^{1, \mathrm{an}}.$ For any $i$, let $P_i \in \mathbb{Q}_p[T]$ denote the minimal polynomial of $\alpha_i$ over ~$k$. Then $\varphi(\mathbb{D}_{\widehat{\overline{k}}}(\alpha_i, r_i))=\mathbb{D}_{k}(P_i, s_i)$, where $s_i= \prod_{P_i(\beta)=0} \max(|\alpha_i-\beta|, r_i)$.

Finally, for any $f \in \mathcal{O}_x,$ there exists $i_f \in \mathbb{N},$ such that $f \in \mathcal{O}(\mathbb{D}_{k}(P_{i_f}, s_{i_f})).$ As seen in Lemma \ref{113}, $\mathcal{O}(\mathbb{D}_{k}(P_{i_f}, s_{i_f}))$ is isomorphic to $\mathcal{O}(\mathbb{D}_{k}(0, s_{i_f}))[S]/(P_{i_f}(S)-T),$ where $\mathcal{O}(\mathbb{D}_{k}(0, s_{i_f}))=\{\sum_{n \in \mathbb{N}} b_nT^n : b_n \in k, \lim_{n \rightarrow +\infty} |b_n|s_{i_f}^n=0\}.$

Remark that for any $i \geqslant i_f,$ $f \in \mathcal{O}(\mathbb{D}_{k}(P_i, s_i)).$ The norm induced by $x$ on $\mathcal{O}_x$ is given as follows: $|f|_x=\inf_{i \geqslant i_f}|f|_{\eta_{P_i, s_i}}.$
\end{ex}

\begin{ex} Let $S,T$ denote the coordinates of $\mathbb{A}_k^{2, \mathrm{an}},$ and $\varphi: \mathbb{A}_k^{2, \mathrm{an}} \rightarrow \mathbb{A}_k^{1, \mathrm{an}}$ the projection to $\mathbb{A}_k^{1, \mathrm{an}}$ with coordinate $T.$ Let $s, t \in \mathbb{R}_{>0}$ be such that $t \not \in \sqrt{|k^\times|}$ and $s \not \in \sqrt{|\mathcal{H}(\eta_{T,t})^\times|}.$  Let $x \in \mathbb{A}_k^{2, \mathrm{an}}$ denote a point such that $|T|_x=t, |S|_x=s.$ Then $x \in \varphi^{-1}(\eta_{T,t}),$ and considering the condition on $s,$ $x$ is a type 3 point on the fiber of $\eta_{T,t}$. In particular, $x$ is the only point of $\mathbb{A}_{k}^{2, \mathrm{an}}$ that satisfies $|T|_x=t, |S|_x=s.$ 

By Lemma \ref{103} and Example 2, a basis of neighborhoods of $x$ is given by $\{y \in \mathbb{A}_k^{1, \mathrm{an}}: t_1 \leqslant |T|_y \leqslant t_2, s_1 \leqslant |S|_y \leqslant s_2\}$, where $0<t_1<t<t_2,$ $0<s_1<s<s_2.$ Consequently,
\begin{align*}
\mathcal{O}_x=\big\lbrace\sum_{m,n \in \mathbb{Z}}a_{m,n}T^mS^n : \ & a_{m,n} \in k, \exists t_1, t_2, s_1, s_2 \in \mathbb{R}_{>0}, \ \text{s.t.} \ t_1<t<t_2, s_1<s<s_2, \\ &
\lim_{m+n \rightarrow +\infty} |a_{m,n}|t_2^m s_2^n=0, \lim_{m+n \rightarrow -\infty} |a_{m,n}|t_1^ms_1^n=0\big\rbrace.
\end{align*} 
The norm on $\mathcal{O}_x$ is given by: $|\sum_{m,n \in \mathbb{Z}} a_{m,n} T^m S^n|_x=\max_{m,n \in \mathbb{Z}} |a_{m,n}|t^ms^n.$ 

By iterating the above, we can calculate the local ring of any point $x \in \mathbb{A}_k^{l, \mathrm{an}}, l \in \mathbb{N},$ satisfying similar properties.
\end{ex}

\section*{Appendices}   

The author believes that most of the  results in the Appendices are known to the mathematical community working with Berkovich spaces, but she did not manage to find references for them. 

\subsection*{Appendix I: The sheaf of meromorphic functions}\label{1.7} 
As in the complex setting, a sheaf of meromorphic functions can be defined satisfying similar properties. Moreover, its definition resembles heavily that of the sheaf of meromorphic functions for schemes (including the subtleties of the latter, see \cite{meromis}). See \mbox{\cite[7.1.1]{liulibri}} for a treatment of meromorphic functions in the algebraic setting.

Let $k$ denote a complete ultrametric field.

\begin{defn} \label{0134}
Let $X$ be a good $k$-analytic space. Let $\mathcal{S}_X$ be the presheaf of functions on $X,$ which associates to any analytic domain $U$ the set of analytic functions on $U$ whose restriction to any affinoid domain in it is not a zero-divisor.  
Let $\mathscr{M}_{-}$ be the presheaf on $X$ that associates to any analytic domain $U$ the ring $\mathcal{S}_X(U)^{-1}\mathcal{O}_X(U).$ The sheafification $\mathscr{M}_X$ of the presheaf $\mathscr{M}_{-}$ is said to be the \textit{sheaf of meromorphic functions on~$X.$}
\end{defn}

It is immediate from the definition that for any analytic domain $U$ of $X$, $\mathcal{S}_X(U)$ contains no zero-divisors of $\mathcal{O}_X(U).$

\begin{prop}
Let $X$ be a good $k$-analytic space. Let $U$ be an analytic domain of $X.$ 

\noindent (1) $\mathcal{S}_X(U)=\{f \in \mathcal{O}_X(U): f \ \text{is a non-zero-divisor in} \ \mathcal{O}_{U,x} \ \text{for all} \ x\in U\}.$

\noindent (2) $\mathcal{S}_X(U)=\{f \in \mathcal{O}_X(U): f \ \text{is a non-zero-divisor in} \ \mathcal{O}_U(G) \ \text{for any open subset} \ G \ \text{of} \ U\}.$
\end{prop}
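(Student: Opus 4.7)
The strategy is to show that both characterizations in (1) and (2) coincide with the definition of $\mathcal{S}_X(U)$, by introducing a common intermediate condition (non-zero-divisorhood at every stalk) and playing the sheaf property against the goodness of $X$. The two structural ingredients I will use are: first, since $X$ is good, every point $x \in U$ has a fundamental system of affinoid neighborhoods in $U$, so each element of $\mathcal{O}_{U,x}$ is the germ of a section on an affinoid neighborhood; second, for any analytic subdomain $V$ of $U$ and any point $x \in V$, the canonical map $\mathcal{O}_{U,x} \to \mathcal{O}_{V,x}$ is an isomorphism, since both rings are filtered colimits over the cofinal system of affinoid neighborhoods of $x$ contained in $V$.

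For (1), the forward inclusion is the easier one: given $f \in \mathcal{S}_X(U)$ and a relation $f g = 0$ in $\mathcal{O}_{U,x}$, I lift this equality to an affinoid neighborhood $V$ of $x$ (using goodness), so that $f_{|V}\, \tilde g = 0$ in $\mathcal{O}(V)$; the hypothesis then forces $\tilde g = 0$ on $V$, and hence $g=0$ in $\mathcal{O}_{U,x}$. For the reverse inclusion, suppose $f$ is a non-zero-divisor in every stalk $\mathcal{O}_{U,x}$, let $V \subseteq U$ be an affinoid domain, and let $g \in \mathcal{O}(V)$ satisfy $fg = 0$. At each $x \in V$, the identification $\mathcal{O}_{V,x} = \mathcal{O}_{U,x}$ reduces this to $f g_x = 0$ in $\mathcal{O}_{U,x}$, forcing $g_x = 0$; then $g = 0$ on $V$ by the sheaf axiom for $\mathcal{O}_V$.

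For (2) I reduce to the stalkwise condition established in (1). If $f$ is a non-zero-divisor in every stalk and $G \subseteq U$ is open with $fg = 0$ in $\mathcal{O}_U(G)$, then at each $x \in G$ we have $\mathcal{O}_{G,x} = \mathcal{O}_{U,x}$ and hence $g_x = 0$; the sheaf property of $\mathcal{O}$ on $G$ then yields $g=0$. Conversely, if $f$ is a non-zero-divisor on every open subset of $U$ and $fg = 0$ in $\mathcal{O}_{U,x}$, I lift the equation to an open neighborhood $G$ of $x$ (for instance, the interior of an affinoid neighborhood of $x$ in $U$), apply the hypothesis to conclude $g = 0$ on $G$, and pass to the stalk.

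The only points that require any care are the identification $\mathcal{O}_{V,x} = \mathcal{O}_{U,x}$ for subdomains, and the systematic use of goodness to represent stalk elements on honest neighborhoods; neither is a real obstacle, but both must be invoked explicitly to make the arguments above rigorous.
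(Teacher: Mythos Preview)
Your overall strategy is sound, and parts of the argument are correct, but there is a genuine gap in the reverse inclusion of (1). You claim that for any analytic subdomain $V$ of $U$ and any $x \in V$, the canonical map $\mathcal{O}_{U,x} \to \mathcal{O}_{V,x}$ is an isomorphism because ``both rings are filtered colimits over the cofinal system of affinoid neighborhoods of $x$ contained in $V$.'' This fails precisely when $x$ lies on the topological boundary of $V$ in $U$. In that situation, an affinoid neighborhood of $x$ \emph{in $V$} (for the subspace topology on $V$) need not be a neighborhood of $x$ in $U$ at all, so such domains are not cofinal among the neighborhoods used to compute $\mathcal{O}_{U,x}$. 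Concretely, take $U = \mathbb{A}^{1,\mathrm{an}}_k$, $V$ the closed unit disc, and $x = \eta_{0,1}$ the Gauss point: then $\mathcal{O}_{V,x}$ is a colimit over closed annuli $\{r \le |T| \le 1\}$, whereas $\mathcal{O}_{U,x}$ requires genuine two-sided annuli around $\eta_{0,1}$, and the two rings differ. Consequently, knowing that $f$ is a non-zero-divisor in $\mathcal{O}_{U,x}$ does not automatically yield the same in $\mathcal{O}_{V,x}$.

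The paper repairs exactly this step by invoking a non-trivial input: the map $\mathcal{O}_{U,x} \to \mathcal{O}_{V,x}$ is \emph{flat} (\cite[4.1.11]{famduc}). Flatness is enough, since injectivity of multiplication by $f$ on $\mathcal{O}_{U,x}$ then base-changes to injectivity on $\mathcal{O}_{V,x}$. Once this is in place, the remainder of your argument (passing from stalks back to $\mathcal{O}(V)$ via the sheaf axiom) goes through unchanged. Your treatment of the forward inclusion in (1), and of both directions in (2), is correct as written; in (2) the identification $\mathcal{O}_{G,x} = \mathcal{O}_{U,x}$ is legitimate precisely because $G$ is open.
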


\begin{proof}
(1) By a direct application of the definition, the elements of $\mathcal{S}_X(U)$ are non-zero-divisors on $\mathcal{O}_{U,x}$ for all $x \in U.$

Let $f \in \mathcal{O}_X(U)$ be such that $f$ is a non-zero-divisor in $\mathcal{O}_{U,x}$ for all $x\in U.$ This means that $\mathcal{O}_{U,x} \rightarrow \mathcal{O}_{U,x}, a \mapsto f \cdot a$, is an injective map for $x \in U.$ 

Let $V$ be any affinoid domain in $U.$ By \cite[4.1.11]{famduc}, for any $x \in V,$ the morphism ${\mathcal{O}_{U,x} \rightarrow \mathcal{O}_{V,x}}$ is flat. Consequently, the map $\mathcal{O}_{V,x} \rightarrow \mathcal{O}_{V,x}, b \mapsto f\cdot b,$ is injective, or equivalently, $f$ is a non-zero-divisor in $\mathcal{O}_{V,x}.$ Suppose there exists $c \in \mathcal{O}_U(V)$ such that $f \cdot c=0.$ Then $c=0$ in $\mathcal{O}_{V,x}$ for all $x \in V,$ implying $c=0$ in $\mathcal{O}_U(V).$ As a consequence, $f$ is a non-zero-divisor in $\mathcal{O}_U(V).$ We have shown that $f \in \mathcal{S}_X(U),$ concluding the proof of the first part of the statement. 

Finally, (2) is a direct consequence of (1). 
\end{proof}

\begin{lm} \label{meroaff}
Let $X$ be a good $k$-analytic space. Let $U$ be an affinoid domain in~$X.$ Then $\mathcal{S}_X(U)$ is the set of non-zero divisors of $\mathcal{O}_X(U).$ 
\end{lm}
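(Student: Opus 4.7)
The strategy is to reduce the statement to a single flatness input, after which everything is formal. We first handle the easy inclusion and then exploit the fact that restriction between affinoid algebras is flat.

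For the inclusion $\mathcal{S}_X(U) \subseteq \{\text{non-zero-divisors of } \mathcal{O}_X(U)\}$, the key observation is that $U$ is itself an affinoid domain of $U$. Thus any $f \in \mathcal{S}_X(U)$ must, by the very definition of $\mathcal{S}_X(U)$, have its restriction to $U$ (i.e. itself) be a non-zero-divisor in $\mathcal{O}_X(U)$. This direction is essentially tautological.

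For the reverse inclusion, suppose $f \in \mathcal{O}_X(U)$ is a non-zero-divisor; we must show that for every affinoid domain $V \subseteq U$, the restriction $f_{|V}$ is a non-zero-divisor in $\mathcal{O}_X(V)$. By assumption, the multiplication map $\mu_f : \mathcal{O}_X(U) \to \mathcal{O}_X(U)$, $a \mapsto fa$, is injective. Since $V$ is an affinoid domain of the affinoid space $U$, the corresponding restriction map of affinoid algebras $\mathcal{O}_X(U) \to \mathcal{O}_X(V)$ is flat (this is a standard property of affinoid domains; see e.g.\ \cite[4.1.11]{famduc}, which is already invoked in the proof of the preceding proposition). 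Tensoring $\mu_f$ over $\mathcal{O}_X(U)$ with the flat module $\mathcal{O}_X(V)$ yields the multiplication map by $f_{|V}$ on $\mathcal{O}_X(V)$, and flatness preserves injectivity. Hence $f_{|V}$ is a non-zero-divisor in $\mathcal{O}_X(V)$, as required.

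The only potential obstacle is the flatness of the restriction map for affinoid subdomains, but this is a classical fact in Berkovich geometry that has already been implicitly used in the preceding proposition of this appendix, so no new difficulty arises. One could alternatively deduce the reverse inclusion from part (1) of the preceding proposition, by noting that for each $x \in U$ the map $\mathcal{O}_X(U) \to \mathcal{O}_{U,x}$ is flat and applying the same tensoring argument to conclude that $f$ is a non-zero-divisor in every stalk.
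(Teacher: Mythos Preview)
Your proof is correct and follows essentially the same approach as the paper: both directions are handled identically, with the reverse inclusion reduced to flatness of the restriction map between affinoid algebras so that multiplication by $f$ stays injective after base change. The only cosmetic difference is the citation for this flatness---the paper invokes \cite[Proposition~2.2.4(ii)]{Ber90} directly for the algebra-level map $\mathcal{O}_X(U)\to\mathcal{O}_X(V)$, whereas you point to \cite[4.1.11]{famduc}, which in the preceding proposition was used for the stalk-level flatness $\mathcal{O}_{U,x}\to\mathcal{O}_{V,x}$; either way the needed flatness is standard.
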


\begin{proof}
By definition, the elements of $\mathcal{S}_X(U)$ are not zero-divisors in $\mathcal{O}_X(U).$

Let $f$ be an element of  $A_U:=\mathcal{O}_X(U)$ that is a non-zero-divisor, \textit{i.e.} such that the map $A_U \rightarrow A_U,$ $a \mapsto f \cdot a$, is injective. Let $V \subseteq U$ be any affinoid domain. Set $A_V:=\mathcal{O}_X(V).$ Then, by \cite[Proposition 2.2.4(ii)]{Ber90}, the restriction map $A_U \rightarrow A_V$ is flat. Consequently, the map $A_V \rightarrow A_V, b \mapsto f \cdot b,$ remains injective, meaning $f$ is not a zero divisor in $A_V.$ This implies that $f \in \mathcal{S}_X(U),$ proving the statement.   
\end{proof}

The proof of the following statement resembles the proof of its algebraic analogue. 

\begin{cor} \label{meroaff1}
Let $X$ be a good $k$-analytic space. 
Then for any $x \in X,$ $\mathcal{S}_{X,x}$ is the set of elements of $\mathcal{O}_{X,x}$ that are non-zero-divisors. 
\end{cor}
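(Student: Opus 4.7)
The plan is to prove both inclusions separately, using goodness of $X$ to work with a basis of affinoid neighborhoods of $x$, Lemma \ref{meroaff} to describe $\mathcal{S}_X$ on affinoids, and crucially the flatness of the various morphisms $\mathcal{O}_X(U) \to \mathcal{O}_X(V)$ (for $V \subseteq U$ affinoid, by \cite[Proposition 2.2.4(ii)]{Ber90}) and $\mathcal{O}_X(U) \to \mathcal{O}_{X,x}$ (a filtered colimit of the former, hence also flat). Noetherianity of $k$-affinoid algebras will play a role analogous to that of Noetherianity in the algebraic setting.

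For the easy inclusion, let $f \in \mathcal{S}_{X,x}$. Since $X$ is good and stalks of the presheaf $\mathcal{S}_X$ agree with stalks of its sheafification, $f$ is represented by some $\tilde{f} \in \mathcal{S}_X(U)$ for $U$ an affinoid neighborhood of $x$. By Lemma \ref{meroaff}, $\tilde{f}$ is a non-zero-divisor in $A_U := \mathcal{O}_X(U)$. Flatness of $A_U \to \mathcal{O}_{X,x}$ preserves injectivity of the map ``multiplication by $\tilde{f}$'', so $f$ is a non-zero-divisor in $\mathcal{O}_{X,x}$.

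For the reverse inclusion, take $f \in \mathcal{O}_{X,x}$ a non-zero-divisor. Pick an affinoid neighborhood $U_0$ of $x$ and a representative $\tilde{f} \in A_{U_0}$. Let $I := \mathrm{Ann}_{A_{U_0}}(\tilde{f})$, which is a finitely generated ideal since $A_{U_0}$ is Noetherian; choose generators $i_1, \ldots, i_n$. By flatness of $A_{U_0} \to \mathcal{O}_{X,x}$, the image $I \cdot \mathcal{O}_{X,x}$ is exactly the annihilator of $f$ in $\mathcal{O}_{X,x}$, which vanishes by hypothesis. Hence each $i_j$ maps to $0$ in $\mathcal{O}_{X,x}$, so there is an affinoid neighborhood $V_j \subseteq U_0$ of $x$ on which $i_j$ restricts to $0$. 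Since affinoid neighborhoods of $x$ form a basis and are stable under suitable intersection, choose an affinoid neighborhood $V \subseteq \bigcap_{j=1}^{n} V_j$ of $x$. Then every $i_j$ restricts to $0$ in $A_V$, so by flatness of $A_{U_0} \to A_V$ the kernel of multiplication by $\tilde{f}|_V$ on $A_V$ equals $I \otimes_{A_{U_0}} A_V = 0$. Thus $\tilde{f}|_V$ is a non-zero-divisor of $A_V$, i.e.\ $\tilde{f}|_V \in \mathcal{S}_X(V)$ by Lemma \ref{meroaff}, which shows $f \in \mathcal{S}_{X,x}$.

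The main (small) obstacle is the second direction: one has to pass from the vanishing of $\mathrm{Ann}(f)$ in the stalk to its vanishing on some affinoid neighborhood. This is precisely where Noetherianity of $A_{U_0}$ — giving finitely many generators of $I$ that can be simultaneously trivialized on a single affinoid neighborhood — is essential, in complete parallel with the algebraic case.
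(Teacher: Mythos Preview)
Your proof is correct and follows essentially the same approach as the paper: both arguments reduce to an affinoid neighborhood, use Noetherianity of the affinoid algebra to pick finitely many generators of the annihilator ideal $I$, kill those generators on a smaller affinoid neighborhood $V$, and then invoke flatness of the restriction $A_{U_0}\to A_V$ (via \cite[Proposition 2.2.4(ii)]{Ber90}) together with Lemma~\ref{meroaff} to conclude. The only cosmetic difference is that you spell out the easy inclusion via flatness of $A_U\to\mathcal{O}_{X,x}$, whereas the paper dismisses it as clear.
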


\begin{proof}
Let $x \in X.$ Clearly, the elements of $\mathcal{S}_{X,x}$ are not zero divisors in $\mathcal{O}_{X,x}.$

Let $f \in \mathcal{O}_{X,x}$ be a non-zero-divisor.
By restricting to an affinoid neighborhood of $x$ if necessary, we may assume, without loss of generality, that $X$ is an affinoid space and $f \in \mathcal{O}_X(X).$ Set $A=\mathcal{O}_X(X).$ Set $I=\{a \in A: f \cdot a=0\}.$ This is an ideal of $A$, and 
gives rise to the following short exact sequence $$0 \rightarrow I \rightarrow A \rightarrow A,$$ where $A \rightarrow A$ is given by $a \mapsto f \cdot a.$
Seeing as $f$ is a non-zero-divisor in $\mathcal{O}_{X,x}$, we obtain that $I\mathcal{O}_{X,x}=0.$ 

The ring $A$ is an affinoid algebra, and hence Noetherian (\textit{cf.} \cite[Proposition 2.1.3]{Ber90}). Consequently, $I$ is finitely generated. Let $a_1, a_2, \dots, a_n \in A$ be such that $I=(a_1, a_2, \dots, a_n).$ By the above, the germs $a_{i,x} \in \mathcal{O}_{X,x}$ of $a_i$ at $x$  are zero for all $i \in \{1,2,\dots, n\}.$ Consequently, there exists an affinoid neighborhood $V$ of $x$ in $X$ such that $a_{i|V}=0$ for all $i,$ implying $I\mathcal{O}_X(V)=0.$

Set $A_V:=\mathcal{O}_X(V).$ By \cite[Proposition 2.2.4(ii)]{Ber90}, the restriction morphism $A \rightarrow A_V$ is flat, so the short exact sequence above induces the following short exact sequence:
$$0 \rightarrow I \otimes_A A_V \rightarrow A_V \rightarrow A_V,$$ 
where $A_V \rightarrow A_V$ is given by $b \mapsto f_{|V} \cdot b.$ Seeing as $A_V$ is a flat $A$-module, $I \otimes_A A_V$ is isomorphic to $IA_V=0.$ Consequently, multiplication by $f_{|V}$ is injective in $A_V,$ or equivalently, $f_{|V}$ is a non-zero-divisor in $A_V.$ By Lemma \ref{meroaff}, this implies that ${f_{|V} \in \mathcal{S}_X(V)},$ and finally that $f \in \mathcal{S}_{X,x}.$
\end{proof}

By Corollary \ref{meroaff1}, if $X$ is a good $k$-analytic space, then for any $x \in X,$ $\mathscr{M}_{X,x}$ is the total ring of fractions of $\mathcal{O}_{X,x}.$ In particular, if $\mathcal{O}_{X,x}$ is a domain, then $\mathscr{M}_{X,x}=\text{Frac} \ \mathcal{O}_{X,x}.$ When there is no risk of confusion, we will simply denote $\mathcal{O}$, resp.~$\mathscr{M}$, for the sheaf of analytic, resp. meromorphic functions on $X.$ We recall (see \cite[Lemma 1.2]{une} for a proof):
\begin{lm} \label{1.2}
Let $X$ be an \textit{integral} $k$-affinoid space. Then $\mathscr{M}(X)=\mathrm{Frac} \ \mathcal{O}(X).$  
\end{lm}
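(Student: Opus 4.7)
The plan is to verify that the natural map $\mathrm{Frac}\ \mathcal{O}(X) \to \mathscr{M}(X)$ induced by sheafification is an isomorphism. First, by Lemma \ref{meroaff} applied to $X$ itself, $\mathcal{S}_X(X)$ is the set of non-zero-divisors of $\mathcal{O}(X)$. Since $X$ is integral, $\mathcal{O}(X)$ is a domain, so $\mathcal{S}_X(X) = \mathcal{O}(X) \setminus \{0\}$. Hence at the presheaf level $\mathscr{M}_{-}(X) = \mathcal{S}_X(X)^{-1}\mathcal{O}(X) = \mathrm{Frac}\ \mathcal{O}(X)$, and there is a canonical map into $\mathscr{M}(X)$.

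For injectivity, I would invoke Corollary \ref{meroaff1}: if $a/b \in \mathrm{Frac}\ \mathcal{O}(X)$ maps to zero, then its germ at every $x \in X$ is zero in $\mathscr{M}_{X,x} = \mathcal{S}_{X,x}^{-1}\mathcal{O}_{X,x}$. Since $b_x$ is a non-zero-divisor in $\mathcal{O}_{X,x}$, this forces $a_x = 0$ for every $x$; as $\mathcal{O}_X$ is a sheaf, $a = 0$ in $\mathcal{O}(X)$, whence $a/b = 0$.

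For surjectivity, take $\sigma \in \mathscr{M}(X)$. By the definition of sheafification and the quasi-compactness of $X$, we may find a finite affinoid cover $\{U_i\}_{i=1}^n$ of $X$ and elements $f_i = a_i/b_i \in \mathcal{S}_X(U_i)^{-1}\mathcal{O}(U_i)$ such that $\sigma_{|U_i}$ is represented by $f_i$, with $f_i$ and $f_j$ agreeing in $\mathscr{M}_{-}(U_i \cap U_j)$ for all $i,j$. The key point is that because $X$ is integral, the restriction morphism $\mathcal{O}(X) \to \mathcal{O}(U_i)$ is injective (by flatness, using \cite[Proposition 2.2.4]{Ber90}, together with the analytic continuation principle for irreducible spaces), so $K := \mathrm{Frac}\ \mathcal{O}(X)$ embeds naturally into $\mathscr{M}_{-}(U_i)$ for every~$i$. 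The cocycle identity $a_i b_j = a_j b_i$ on overlaps (interpreted in the appropriate total rings of fractions) then forces all the $f_i$ to descend to a single element $f \in K$, which by construction represents $\sigma$.

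The main obstacle will be the surjectivity step, specifically controlling how the local fractions $a_i/b_i$ embed compatibly into $K$. This requires that on an integral affinoid, restriction maps to non-empty affinoid subdomains be injective, so that $K$ is realized as a subring of each $\mathscr{M}_{-}(U_i)$; this in turn rests on irreducibility-type arguments that are not immediately present in the purely presheaf-theoretic definition of $\mathscr{M}$.
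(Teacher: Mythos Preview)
The paper does not actually prove this lemma here: it is recalled with a pointer to \cite[Lemma~1.2]{une} for the proof, so there is no argument in the present text to compare against.

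On its own merits, your setup and injectivity step are fine, but the surjectivity step has a real gap beyond the one you flag. You assert that the cocycle identity ``forces all the $f_i$ to descend to a single element $f \in K$,'' yet you never explain how such an $f$ is produced. Even granting the embeddings $K \hookrightarrow \mathscr{M}_{-}(U_i)$ (which do follow from flatness plus integrality, as you indicate), there is no reason the given $f_i \in \mathscr{M}_{-}(U_i)$ should lie in the image of $K$: in general $\mathrm{Frac}\,\mathcal{O}(U_i)$ is strictly larger than $K$ (take $X$ a closed disc and $U_i$ a smaller closed disc). The local data $a_i, b_i$ live only in $\mathcal{O}(U_i)$, and the overlap relations $a_i b_j = a_j b_i$ do not by themselves manufacture a global numerator and denominator in $\mathcal{O}(X)$.

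The clean fix is the denominator-ideal argument, parallel to what the paper does in the proof of Theorem~\ref{231}: for $\sigma \in \mathscr{M}(X)$, the sheaf $D_\sigma$ given by $D_\sigma(U) = \{s \in \mathcal{O}(U) : s\cdot\sigma_{|U} \in \mathcal{O}_X(U)\}$ is a coherent ideal sheaf; since $X$ is affinoid it corresponds to a finitely generated ideal $I \subseteq \mathcal{O}(X)$, and $I \neq 0$ because each stalk $D_{\sigma,x}$ contains a non-zero-divisor. Any non-zero $b \in I$ then gives $b\sigma \in \mathcal{O}(X)$, whence $\sigma = (b\sigma)/b \in \mathrm{Frac}\,\mathcal{O}(X)$.
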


We now show that the meromorphic functions of the analytification of a proper scheme defined over an affinoid algebra are algebraic. It is a non-trivial result for which GAGA-type theorems (\textit{cf.} \cite{kopf}, \cite[Annexe A]{poi1}) are crucial.
The arguments to prove the following result were given in a Mathoverflow thread (see \cite{overflow}). In the case of curves, this is shown in {\cite[Prop. 3.6.2]{Ber90}}.

Let us first mention some brief reminders on the notion of depth. Let $R$ be a ring, $I$ an ideal of $R,$ and $M$ a finitely generated $R$-module. An $M$-\textit{regular sequence of length $d$ over} ~$I$ is a sequence $r_1, r_2, \dots, r_d \in I$ such that $r_i$ is not a zero divisor in $M/(r_1, \dots, r_{i-1})M$ for $i=1,2,\dots, d.$ The \textit{depth} of $M$ over $I$, denoted $\text{depth}_R(I,M)$ in \cite[Section 1]{10bour}, is 
\begin{itemize}
\item $\infty$ if $IM=M,$
\item the supremum of the length of $M$-regular sequences over $I,$ otherwise.
\end{itemize}
In what follows, when $M=R,$ we will denote $\text{depth}_R(I,R)$ by $\text{depth}_{I} R.$ Remark that $\text{depth}_{I} R>0$ if and only if $I$ contains a non-zero divisor of $R.$

\begin{thm} \label{231}
Let $k$ be a complete ultrametric field. Let $A$ be a $k$-affinoid algebra. Let $X$ be a proper scheme over $\text{Spec} \ A.$ Let $X^{\mathrm{an}}/\mathcal{M}(A)$ denote the Berkovich analytification of~$X.$ Then $\mathscr{M}_{X^{\mathrm{an}}}(X^{an})=\mathscr{M}_X(X)$, where $\mathscr{M}_{X^{\mathrm{an}}}$ (resp. $\mathscr{M}_X$) denotes the sheaf of meromorphic functions on $X^{\mathrm{an}}$ (resp. $X$).
\end{thm}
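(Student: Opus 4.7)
The plan is to express the sheaf of meromorphic functions, on both the algebraic and analytic sides, as a filtered colimit of $\mathscr{H}om$-sheaves out of coherent ideals containing ``enough'' non-zero-divisors, and then to invoke GAGA. The natural comparison map $\mathscr{M}_X(X) \to \mathscr{M}_{X^{\mathrm{an}}}(X^{\mathrm{an}})$ (coming from pullback along the canonical morphism $X^{\mathrm{an}} \to X$) will be shown to be bijective by producing inverse-compatible presentations on both sides.

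First I would introduce, on any good locally Noetherian object $Y \in \{X, X^{\mathrm{an}}\}$, the filtered poset $\mathcal{J}(Y)$ of coherent ideal sheaves $\mathcal{I} \subseteq \mathcal{O}_Y$ such that for every point $y \in Y$ the stalk $\mathcal{I}_y$ contains a non-zero-divisor of $\mathcal{O}_{Y,y}$ (equivalently, $\mathcal{I}_y$ is not contained in any associated prime of $\mathcal{O}_{Y,y}$). For such $\mathcal{I}$ there is a well-defined injection
\[
\mathscr{H}om_{\mathcal{O}_Y}(\mathcal{I}, \mathcal{O}_Y) \hookrightarrow \mathscr{M}_Y, \qquad \varphi \longmapsto \varphi(s)/s,
\]
independent of the choice of a local regular section $s \in \mathcal{I}$. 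I would then prove the presentation $\mathscr{M}_Y = \varinjlim_{\mathcal{I} \in \mathcal{J}(Y)} \mathscr{H}om_{\mathcal{O}_Y}(\mathcal{I}, \mathcal{O}_Y)$, together with the analogous statement on global sections. On the algebraic side this is Kleiman's presentation of the sheaf of total quotients (valid because $X$ is Noetherian). On the analytic side the same argument applies, using that $\mathcal{O}_{X^{\mathrm{an}},y}$ is Noetherian, that $\mathscr{M}_{X^{\mathrm{an}},y}$ coincides with the total ring of fractions of $\mathcal{O}_{X^{\mathrm{an}},y}$ by Corollary~\ref{meroaff1}, and that coherent ideals form a cofinal system among ideals containing a given germ (by the coherence of $\mathcal{O}_{X^{\mathrm{an}}}$).

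Next, I would invoke the GAGA theorem for proper schemes over an affinoid base (Kopf, together with Berkovich's and Poineau's extensions): analytification induces an exact equivalence between coherent $\mathcal{O}_X$-modules and coherent $\mathcal{O}_{X^{\mathrm{an}}}$-modules, preserving global $\mathrm{Hom}$-groups, i.e.\ $\mathrm{Hom}_X(\mathcal{F}, \mathcal{G}) \xrightarrow{\sim} \mathrm{Hom}_{X^{\mathrm{an}}}(\mathcal{F}^{\mathrm{an}}, \mathcal{G}^{\mathrm{an}})$ for coherent $\mathcal{F}, \mathcal{G}$. In particular, for any $\mathcal{I} \in \mathcal{J}(X)$ the analytification $\mathcal{I}^{\mathrm{an}}$ is a coherent ideal of $\mathcal{O}_{X^{\mathrm{an}}}$, and
\[
\mathrm{Hom}_{\mathcal{O}_X}(\mathcal{I}, \mathcal{O}_X) \;\cong\; \mathrm{Hom}_{\mathcal{O}_{X^{\mathrm{an}}}}(\mathcal{I}^{\mathrm{an}}, \mathcal{O}_{X^{\mathrm{an}}}).
\]

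Finally, I would show that analytification induces a cofinal map $\mathcal{J}(X) \to \mathcal{J}(X^{\mathrm{an}})$. Cofinality is the key step and is where I expect the main obstacle: given an arbitrary coherent ideal $\mathcal{J} \in \mathcal{J}(X^{\mathrm{an}})$, one must produce a coherent ideal $\mathcal{I}$ on $X$ with $\mathcal{I}^{\mathrm{an}} \subseteq \mathcal{J}$ and $\mathcal{I} \in \mathcal{J}(X)$. For this I would use, on one hand, GAGA (every coherent $\mathcal{O}_{X^{\mathrm{an}}}$-module is the analytification of a coherent $\mathcal{O}_X$-module) to algebraize $\mathcal{J}$, and on the other hand the faithful flatness of the local map $\mathcal{O}_{X,x^{\mathrm{alg}}} \to \mathcal{O}_{X^{\mathrm{an}},x}$ (proved by Berkovich in \cite{ber93}) to transfer the non-zero-divisor condition from $X^{\mathrm{an}}$ to $X$: if an element of $\mathcal{O}_{X,x^{\mathrm{alg}}}$ becomes a zero divisor in $\mathcal{O}_{X^{\mathrm{an}},x}$, then it is already a zero divisor in $\mathcal{O}_{X,x^{\mathrm{alg}}}$ by faithful flatness, and conversely. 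The same faithful flatness shows that the associated primes correspond. Combining cofinality with the GAGA-preserved Hom-groups and the presentations of $\mathscr{M}_X(X)$ and $\mathscr{M}_{X^{\mathrm{an}}}(X^{\mathrm{an}})$ as filtered colimits yields the desired equality.
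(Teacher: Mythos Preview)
Your proposal is correct and follows essentially the same route as the paper: both sides are presented as $\varinjlim_{\mathcal{I}}\mathrm{Hom}(\mathcal{I},\mathcal{O})$ over coherent ideals whose stalks contain a non-zero-divisor (the paper phrases this as $\mathrm{depth}_{\mathcal{I}_x}\mathcal{O}_x>0$, which is the same condition), and then GAGA together with faithful flatness of $\mathcal{O}_{X,\phi(y)}\to\mathcal{O}_{X^{\mathrm{an}},y}$ identifies the two colimits. One small simplification over your outline: you do not need a cofinality argument, because GAGA gives an honest \emph{bijection} between the two indexing posets (every coherent ideal on $X^{\mathrm{an}}$ is $\mathcal{I}^{\mathrm{an}}$ for a unique coherent ideal $\mathcal{I}$ on $X$, and the paper checks separately that analytification preserves and reflects the property of being an ideal subsheaf of the structure sheaf), so the colimits match term by term.
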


When there is no risk of ambiguity and the ambient space is clear from context, we will simply write $\mathscr{M}$ for the sheaf of meromorphic functions.
 
\begin{proof}
As in Definition \ref{0134}, let $\mathcal{S}_{X^{\mathrm{an}}}$ denote the presheaf of analytic functions on~$X^{\mathrm{an}}$, which associates to any analytic domain $U$ the set of analytic functions on $U$ whose restriction to any affinoid domain in it is not a zero divisor. By Corollary \ref{meroaff1}, for any $x \in X^{\mathrm{an}},$ $\mathcal{S}_{X^{\mathrm{an}},x}$ is the set of non-zero-divisors of $\mathcal{O}_{X^{\mathrm{an}},x}.$

Let $\mathcal{I}$ be a coherent ideal sheaf on 
$X^{\mathrm{an}}$ that locally on $X^{\mathrm{an}}$ contains a section of $\mathcal{S}
_{X^{\mathrm{an}}}.$ This means that for any $x 
\in X^{\mathrm{an}},$ $\mathcal{S}
_{X^{\mathrm{an}},x} \cap \mathcal{I}_{x} \neq 
\emptyset.$ Let $s \in \mathcal{S}
_{X^{\mathrm{an}},x} \cap \mathcal{I}_{x}.$ Then~$s$ is a non-zero divisor in $\mathcal{O}
_{X^{\mathrm{an}},x},$ which implies $
\mathrm{depth}_{\mathcal{I}_{x}} \mathcal{O}
_{X^{\mathrm{an}},x}>0$. Suppose, on the other 
hand, that $\mathcal{I}$ is a coherent ideal sheaf 
on $X^{\mathrm{an}}$ such that $\mathrm{depth}
_{\mathcal{I}_{x}} \mathcal{O}_{X^{\mathrm{an}},x}
>0$ for all $x \in X^{\mathrm{an}}.$ Then there 
exists at least one element $s \in \mathcal{I}_{x}
$ which is a non-zero-divisor in $\mathcal{O}
_{X^{\mathrm{an}},x},$ implying $s \in \mathcal{S}
_{X^{\mathrm{an}},x}.$ To summarize, a coherent 
ideal sheaf $\mathcal{I}$ on $X^{\mathrm{an}}$ 
contains locally on $X^{\mathrm{an}}$ a section of~$\mathcal{S}_{X^{\mathrm{an}}}$ if and only if $
\mathrm{depth}_{\mathcal{I}_{x}}(\mathcal{O}
_{X^{\mathrm{an}},x})>0$ for all $x \in 
X^{\mathrm{an}}.$

Let us show that for any coherent ideal sheaf $\mathcal{I}$ on $X^{\mathrm{an}}$ containing locally on $X^{\mathrm{an}}$ a section of $\mathcal{S}_{X^{\mathrm{an}}},$ there is an embedding $\text{Hom}_{X^{\mathrm{an}}}(\mathcal{I}, \mathcal{O}_{X^{\mathrm{an}}}) \subseteq \mathscr{M}_{X^{\mathrm{an}}}(X^{\mathrm{an}}),$ where $\text{Hom}_{X^{\mathrm{an}}}(\mathcal{I}, \mathcal{O}_{X^{\mathrm{an}}})$ denotes the global sections on $X^{\mathrm{an}}$ of the hom sheaf $\mathscr{H}om(\mathcal{I}, \mathcal{O}_{X^{\mathrm{an}}}).$ Let $\varphi \in \text{Hom}_{X^{\mathrm{an}}}(\mathcal{I}, \mathcal{O}_{X^{\mathrm{an}}}).$ For any $x \in X^{\mathrm{an}},$ $\varphi$ induces a morphism $\varphi_x: \mathcal{I}_{x} \rightarrow \mathcal{O}_{X^{\mathrm{an}},x}.$ Let $s_x \in \mathcal{S}_{X^{\mathrm{an}},x} \cap \mathcal{I}_{x},$ and set ${a_x=\varphi_x(s_x)}.$  There exists a neighborhood $U_x$ of $x,$ such that $s_x \in \mathcal{I}(U_x) \cap \mathcal{S}_{X^{\mathrm{an}}}(U_x), {a_x \in \mathcal{O}_{X^{\mathrm{an}}}(U_x)}$, and $\varphi(U_x)(s_x)=a_x.$ Set $f_x=\frac{a_x}{s_x} \in \mathcal{S}_{X^{\mathrm{an}}}(U_x)^{-1} \mathcal{O}_{X^{\mathrm{an}}}(U_x) \subseteq \mathscr{M}_{X^{\mathrm{an}}}(U_x)$ (the presheaf $\mathcal{S}_{X^{\mathrm{an}}}^{-1} \mathcal{O}_{X^{\mathrm{an}}}$ is separated, so $\mathcal{S}_{X^{\mathrm{an}}}^{-1}\mathcal{O}_{X^{\mathrm{an}}} \subseteq \mathscr{M}_{X^{\mathrm{an}}}$). 

Let $U_y, U_z$ be any non-disjoint elements of the cover $(U_x)_{x \in X^{\mathrm{an}}}$ of $X^{\mathrm{an}}.$ Then considering $\varphi$ is a morphism of sheaves of $\mathcal{O}_{X^{\mathrm{an}}}$-modules, $\varphi(U_y \cap U_z)(s_y \cdot s_z)=s_y \cdot a_z=a_y \cdot s_z$ in $\mathcal{O}_{X^{\mathrm{an}}}(U_y \cap U_z)$. Consequently, $f_{y|U_y \cap U_z}=f_{z|U_y \cap U_z}$ in $\mathscr{M}_{X^{\mathrm{an}}}(U_y \cap U_z),$ implying there exists $f \in \mathscr{M}_{X^{\mathrm{an}}}(X^{\mathrm{an}})$ such that $f_{|U_x}=f_x$ in $\mathscr{M}_{X^{\mathrm{an}}}(U_x)$ for all $x \in X^{\mathrm{an}}.$ 

We associate to $\varphi$ the meromorphic function $f.$ Remark that if $f=0,$ then $a_x=0$ for all ~$x.$ This implies that for any $\alpha \in \mathcal{I}_x,$ $\varphi_x(s_x \cdot\alpha)=s_x \cdot \varphi_x(\alpha)=a_x \cdot \varphi_x(\alpha)=0,$ which, taking into account $s_x \in \mathcal{S}_{X^{\mathrm{an}},x}$, means that $\varphi_x(\alpha)=0.$ Consequently, $\varphi_x=0$ for all $x \in X^{\mathrm{an}},$ so $\varphi=0.$ Thus, the map $\psi_{\mathcal{I}}:\text{Hom}_{X^{\mathrm{an}}}(\mathcal{I}, \mathcal{O}_{X^{\mathrm{an}}}) \rightarrow \mathscr{M}_{X^{\mathrm{an}}}(X^{\mathrm{an}})$ we have constructed is an embedding.  

Remark that the set of coherent ideal sheaves on $X^{\mathrm{an}}$ containing locally on $X^{\mathrm{an}}$ a section of $\mathcal{S}_{X^{\mathrm{an}}}$ forms a directed set with respect to reverse inclusion (\textit{i.e.} if $\mathcal{I}, \mathcal{J}$ satisfy these properties, then so does $\mathcal{I} \cdot \mathcal{J} \subseteq \mathcal{I}, \mathcal{J}$). Thus, by the paragraph above, there is an embedding $\varinjlim_{\mathcal{I}} \text{Hom}_{X^{\mathrm{an}}}(\mathcal{I}, \mathcal{O}_{X^{\mathrm{an}}}) \hookrightarrow \mathscr{M}_{X^{\mathrm{an}}}(X^{\mathrm{an}}),$ where the direct limit is taken with respect to the same kind of coherent ideal sheaves $\mathcal{I}$ as above. Let us show that this embedding is an isomorphism. 

For any $f \in \mathscr{M}_{X^{\mathrm{an}}}(X^{\mathrm{an}}),$ define the ideal sheaf $D_f$ as follows: for any analytic domain $U$ of $X^{\mathrm{an}},$ set $D_f(U)=\{s \in \mathcal{O}(U): s \cdot f \in \mathcal{O}_{X^{\mathrm{an}}}(U) \subseteq \mathscr{M}_{X^{\mathrm{an}}}(U)\}.$ 
This is a coherent ideal sheaf on $X^{\mathrm{an}}$. Since $\mathscr{M}_{X^{\mathrm{an}},x}=\mathcal{S}_{X^{\mathrm{an}},x}^{-1} \mathcal{O}_{X^{\mathrm{an}},x}$ for any $x \in X^{\mathrm{an}},$ there exist $s_x \in \mathcal{S}_{X^{\mathrm{an}},x}$ and $a_x \in \mathcal{O}_{X^{\mathrm{an}},x}$ such that $f_x=\frac{a_x}{s_x}$ in $\mathscr{M}_{X^{\mathrm{an}},x}.$ Considering $D_{f,x}=\{s \in \mathcal{O}_{X^{\mathrm{an}},x}: s \cdot f_x \in \mathcal{O}_{X^{\mathrm{an}},x}\},$ we obtain that $s_x \in D_{f,x},$ so $D_f$ contains locally on $X^{\mathrm{an}}$ a section of $\mathcal{S}_{X^{\mathrm{an}}}.$ To $f \in \mathscr{M}_{X^{\mathrm{an}}}(X^{\mathrm{an}})$ we associate the morphism $\varphi_f: D_f \rightarrow \mathcal{O}_{X^{\mathrm{an}}}$ which corresponds to multiplication by $f$ (\textit{i.e.} for any open subset $U$ of $X^{\mathrm{an}}$, $D_f(U) \rightarrow \mathcal{O}_{X^{\mathrm{an}}}(U), s \mapsto f \cdot s$). Clearly, $\psi_{D_f}(\varphi_f)=f,$ implying the embedding $\varinjlim_{\mathcal{I}} \text{Hom}_{X^{\mathrm{an}}}(\mathcal{I}, \mathcal{O}_{X^{\mathrm{an}}}) \hookrightarrow \mathscr{M}_{X^{\mathrm{an}}}(X^{\mathrm{an}})$ is surjective, so an isomorphism. 

Let $\mathcal{S}_X$ denote the presheaf on $X$ through which $\mathscr{M}_X$ is defined (see \cite[Section 7.1.1]{liulibri}). Remark that since $A$ is Noetherian (\cite[Proposition 2.1.3]{Ber90}), the scheme $X$ is locally Noetherian. Under this assumption, for any $x \in X,$ $\mathcal{S}_{X,x}$ is the set of all non-zero-divisors of $\mathcal{O}_{X,x}$ (see \cite[7.1.1, Lemma 1.12(c)]{liulibri}). Taking this into account, all the reasoning above does not make use of the fact that $X^{\mathrm{an}}$ is an analytic space, and can be applied \textit{mutatis mutandis} to the scheme $X$ and its sheaf of meromorphic functions $\mathscr{M}_X$. Thus, $\mathscr{M}_X(X) \cong\varinjlim_{\mathcal{J}} \text{Hom}_X(\mathcal{J}, \mathcal{O}_X),$ where the direct limit is taken with respect to coherent ideal sheaves $\mathcal{J}$ on $X,$ for which $\text{depth}_{\mathcal{J}_{X,x}} \mathcal{O}_{X,x}>0$  for all $x \in X.$

Consequently, to show the statement, we need to show that $\varinjlim_{\mathcal{J}} \text{Hom}_X(\mathcal{J}, \mathcal{O}_X)=\varinjlim_{\mathcal{I}} \text{Hom}_{X^{\mathrm{an}}}(\mathcal{I}, \mathcal{O}_{X^{\mathrm{an}}}),$ where the direct limits are taken as above. 
\begin{sloppypar}
By \cite[Annexe A]{poi1} (which was proven in \cite{kopf} in the case of rigid geometry), there is an equivalence of categories between the coherent sheaves on $X$ and those on $X^{\mathrm{an}}$.
Let us show that this induces an equivalence of categories between the coherent ideal sheaves on $X$ and those on $X^{\mathrm{an}}.$ To see this, we only need to show that if $\mathcal{F}$ is a coherent sheaf on $X$ such that $\mathcal{F}^{\mathrm{an}}$ is an ideal sheaf on~$X^{\mathrm{an}},$ then $\mathcal{F}$ is an ideal sheaf on $X.$ By \cite[A.1.3]{poi1}, we have a sheaf isomorphism $\mathscr{H}om(\mathcal{F}, \mathcal{O})^{\mathrm{an}} \cong \mathscr{H}om(\mathcal{F}^{\mathrm{an}}, \mathcal{O}_{X^{\mathrm{an}}}),$ so $\mathscr{H}om(\mathcal{F}, \mathcal{O})^{\mathrm{an}}$ has a non-zero global section $\iota$ corresponding to the injection $\mathcal{F}^{\mathrm{an}} \subseteq \mathcal{O}_{X^{\mathrm{an}}}.$ By \cite[Th\'eor\`eme~A.1(i)]{poi1}, $\mathscr{H}om(\mathcal{F}, \mathcal{O})^{\mathrm{an}}(X^{\mathrm{an}}) \cong \mathscr{H}om(\mathcal{F}, \mathcal{O})(X).$ Let $\iota' \in \mathscr{H}om(\mathcal{F}, \mathcal{O})(X)$ denote the element corresponding to $\iota.$ Then the analytification of $\iota': \mathcal{F} \rightarrow \mathcal{O}_X$ is the morphism $\iota: \mathcal{F}^{\mathrm{an}} \hookrightarrow \mathcal{O}_{X^{\mathrm{an}}}.$ By flatness of $X^{\mathrm{an}} \rightarrow X,$ we obtain that $(\mathrm{ker} \ \iota')^{\mathrm{an}}=\mathrm{ker} \ \iota'^{\mathrm{an}}=\mathrm{ker} \ \iota,$ so $(\mathrm{ker} \ \iota')^{\mathrm{an}}=0,$ implying $\mathrm{ker} \ \iota'=0.$ Consequently, there exists an embedding $\mathcal{F} \hookrightarrow \mathcal{O}_X,$ implying $\mathcal{F}$ is an ideal sheaf on $X.$
\end{sloppypar}
If to a coherent ideal sheaf $\mathcal{J}$ on $X$ we associate the coherent ideal sheaf $\mathcal{J}^{\mathrm{an}}$ on $X^{\mathrm{an}},$ then as seen above: $\text{Hom}_{X}(\mathcal{J}, \mathcal{O}_X)\cong \text{Hom}_{X^{\mathrm{an}}}(\mathcal{J}^{\mathrm{an}}, \mathcal{O}_{X^{\mathrm{an}}})$. 
\begin{sloppypar}
Let us also show that a coherent ideal sheaf $\mathcal{J}$ on $X$ satisfies ${\text{depth}_{\mathcal{J}_{x}} \mathcal{O}_{X,x}>0}$ for all $x \in X$ if and only if $\text{depth}_{\mathcal{J}^{\mathrm{an}}_y} \mathcal{O}_{X^{\mathrm{an}}, y}>0$ for all $y \in X^{\mathrm{an}}.$ To see this, recall that by \cite[Proposition 2.6.2]{ber93}, the morphism $\phi: X^{\mathrm{an}} \rightarrow X$ is surjective and for any $y \in X^{\mathrm{an}},$ the induced morphism of local rings $\mathcal{O}_{X, x} \rightarrow \mathcal{O}_{X^{\mathrm{an}},y}$ is faithfully flat, where $x:=\phi(y).$ By \cite[1.3, Proposition 6]{10bour}, ${\text{depth}_{\mathcal{J}_x} \mathcal{O}_{X,x}=\text{depth}_{\mathcal{J}_x \mathcal{O}_{X^\mathrm{an},y}} \mathcal{O}_{X^{\mathrm{an}},y} \otimes_{\mathcal{O}_{X,x}} \mathcal{O}_{X,x}}.$ At the same time, seeing as the morphism $\mathcal{O}_{X, x} \rightarrow \mathcal{O}_{X^{\mathrm{an}},y}$ 
is flat, ${\mathcal{J}^{\mathrm{an}}_y=\mathcal{J}_x \otimes_{\mathcal{O}_{X,x}} \mathcal{O}_{X^\mathrm{an},y}=\mathcal{J}_x\mathcal{O}_{X^{\mathrm{an}},y}},$ so ${\text{depth}_{\mathcal{J}_x} \mathcal{O}_{X,x}=\text{depth}_{\mathcal{J}_y^{\mathrm{an}}} \mathcal{O}_{X^{\mathrm{an}}, y}}.$   
\end{sloppypar}
From the above, $\varinjlim_{\mathcal{J}} \text{Hom}_X(\mathcal{J}, \mathcal{O}_X)=\varinjlim_{\mathcal{I}} \text{Hom}_{X^{\mathrm{an}}}(\mathcal{I}, \mathcal{O}_{X^{\mathrm{an}}}),$ where the direct limits are taken with respect to coherent ideal sheaves $\mathcal{J}$ on $X$ (resp. $\mathcal{I}$ on $X^{\mathrm{an}}$), for which 
$\text{depth}_{\mathcal{J}_x} \mathcal{O}_{X, x}>0$ for all $x \in X$ (resp. $\text{depth}_{\mathcal{I}_x} \mathcal{O}_{X^{\mathrm{an}}, x}>0$ for all $x \in X^{\mathrm{an}}$). Finally, this implies that $\mathscr{M}_{X}(X)=\mathscr{M}_{X^{\mathrm{an}}}(X^{\mathrm{an}}).$
\end{proof}
 
As an immediate consequence of the theorem above, we obtain that for any integral $k$-affinoid space $Z,$ $\mathscr{M}(\mathbb{P}_Z^{1, \mathrm{an}})=\mathscr{M}(Z)(T).$

\subsection*{Appendix II: Some results on analytic curves}

\begin{lm}\label{62}
Let $C$ be a normal irreducible projective $k$-analytic curve. Let $U$ be a connected affinoid domain of $C$ such that its boundary contains only type 3 points. Then for any $S \subseteq \partial{U},$ $U \backslash S$ is connected.
\end{lm}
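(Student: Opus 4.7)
The plan is to establish this by combining the local description of $U$ near each type 3 boundary point with a standard connectedness gluing argument. Since $U$ is an affinoid domain of an analytic curve, $\partial U$ is finite, so $S$ is automatically a finite subset of type 3 points.

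First I would analyze the local structure at each $s \in S$. A type 3 point $\eta$ in a normal $k$-analytic curve has a neighborhood basis in $C$ consisting of virtual open annuli, and for any such neighborhood $V$, the set $V \setminus \{\eta\}$ has exactly two connected components (the two ``branches'' at $\eta$). Since $s \in \partial U$, the point $s$ lies in $U$ but every neighborhood meets $C \setminus U$, which forces $U$ to occupy exactly one of the two branches at $s$: more precisely, there is a neighborhood basis $\{V_s^{(n)}\}$ of $s$ in $C$ such that $V_s^{(n)} \cap U$ is a ``half-open annulus'' with $s$ as its unique endpoint, and consequently
\[
V_s^{(n)} \cap (U \setminus \{s\}) = (V_s^{(n)} \cap U) \setminus \{s\}
\]
is connected. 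This step can be justified using Ducros' local description of curves at type 3 points, or equivalently by invoking the structural results used in \cite{une} (compare Lemma \ref{15} and the nearby material on neighborhoods of type 3 points in \cite[Ch.~3]{Duc}).

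Then I would run the following gluing argument. Suppose for contradiction that $U \setminus S = A \sqcup B$ with $A, B$ disjoint, non-empty, and closed in $U \setminus S$. For each $s \in S$, choose $V_s := V_s^{(n)}$ small enough that $V_s \cap S = \{s\}$; then $V_s \cap (U \setminus S) = (V_s \cap U) \setminus \{s\}$ is connected by Step~1. Hence $V_s \cap (U \setminus S)$ lies entirely in $A$ or entirely in $B$, giving a partition $S = S_A \sqcup S_B$. Set $\widetilde A := A \cup S_A$ and $\widetilde B := B \cup S_B$, viewed inside $U$. By construction, for $s \in S_A$ we have $V_s \cap U \subseteq \widetilde A$ (so no net in $\widetilde B$ converges to $s$), and symmetrically for $S_B$; together with the fact that $A, B$ are already closed in $U \setminus S$, this makes $\widetilde A$ and $\widetilde B$ closed in $U$. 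They are disjoint and cover $U$, so by connectedness of $U$ one of them is empty, contradicting the non-emptiness of $A$ and $B$.

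The main obstacle is Step~1: pinning down the local picture at a type 3 point of $\partial U$ and, in particular, the fact that $U$ locally sits on only one of the two branches at such a point. Once this is in hand, the rest is a soft topological argument that does not use anything specific to the Berkovich setting.
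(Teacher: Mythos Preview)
Your argument is correct and takes a genuinely different route from the paper's. The paper works with the arc structure: for $x,y\in\mathrm{Int}(U)$ it takes an arc $[x,y]\subseteq U$ and shows no $z\in S$ can lie on it. At such a $z$ it invokes \cite[Th\'eor\`eme~4.5.4]{Duc} to produce a closed virtual annulus $V\subseteq U$ which is a neighborhood of $z$ in $U$ with Berkovich boundary $\{z,u\}$; then $\partial_U V=\{u\}$, and $z\in[x,y]$ would force both subarcs $[x,z]$ and $[z,y]$ to cross $\partial_U V$, putting $u$ on each and contradicting injectivity. Your proof isolates essentially the same local fact (at a type~3 boundary point, $U$ fills exactly one of the two branches) but replaces the arc argument by a soft clopen-partition gluing. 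The paper's version is shorter because once arcs between interior points avoid $S$ you are done; your version has the virtue of not using the real-graph structure at all in the global step, only in the local one.

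One point you skate over that the paper treats explicitly: the virtual annulus description at a type~3 point in \cite[Th\'eor\`eme~4.5.4]{Duc} is stated under a quasi-smoothness hypothesis, and a normal projective $k$-curve need not be quasi-smooth in positive characteristic. The paper handles this by factoring a finite map $C\to\mathbb{P}^1_k$ through the curve $X$ with function field the separable closure of $k(T)$ in $k(C)$; then $X$ is generically smooth and $C\to X$ is purely inseparable, hence a homeomorphism, so the topological claim for $C$ follows from the already-proved case of $X$. Your Step~1 needs this same reduction (or some other justification that the two-branch picture holds at type~3 points of an arbitrary normal curve); the bare citation of Ducros does not quite cover it.
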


\begin{proof}
Suppose that $C$ is generically quasi-smooth. Since $\partial{U}$ contains only type 3 points, all of the points of $S$ are quasi-smooth in $C.$

Let $x,y \in \text{Int} \ U.$ Since $U$ is connected, there exists an arc $[x, y] \subseteq U$ connecting $x$ and~$y$. Let $z \in S.$ We aim to show that $z \not \in [x,y],$ implying $[x,y] \subseteq U \backslash S,$ and thus the connectedness of $U \backslash S.$ 
 
By \cite[Th\'eor\`eme 4.5.4]{Duc}, there exists an affinoid neighborhood $V$ of $z$ in $U$ such that it is a closed virtual annulus, and its Berkovich boundary is $\partial_B(V)=\{z,u\}$ for some $u \in U.$ We may assume that $x,y \not \in V.$ Since $V$ is an affinoid domain in $U,$ by \cite[Proposition ~1.5.5]{ber93}, the topological boundary $\partial_U{V}$ of $V$ in $U$ is a subset of $\partial_B(V)=\{z,u\}.$ Since $V$ is a neighborhood of $z,$ $\partial_U{V}=\{u\}.$

Suppose  $z \in [x,y].$ Then  we could decompose $[x,y]=[x,z] \cup [z,y].$ Since $x,y \not \in V$, and $z \in V,$ the sets $[x,z] \cap \partial_U{V},$ $[z,y] \cap \partial_U{V}$ are non-empty, thus implying $u$ is contained in both $[x,z]$ and $[z,y],$ which contradicts the injectivity of $[x,y].$ Consequently, $U \backslash S$ is connected.
 
Let us get back to the general case. Let $C^{\mathrm{alg}}$ denote the algebraization of $C$ (\textit{i.e.} the normal irreducible projective algebraic curve over $k$ whose analytification is $C$). Since it is normal, there exists a finite surjective morphism $C^{\mathrm{alg}} \rightarrow \mathbb{P}_k^{1}.$ This induces a finite field extension $k(T) \hookrightarrow k(C^{\mathrm{an}})=\mathscr{M}(C)$ of their function fields. Let $F$ denote the separable closure of $k(T)$ in $k(C).$ Then there exists an irreducible normal algebraic curve $X$ over $k$ such that $k(X)=F.$ Seeing as $k(T) \hookrightarrow k(C)$ is separable, the induced morphism $X \rightarrow \mathbb{P}_k^1$ is generically \'etale, so $X$ is generically smooth. 
On the other hand, the finite field extension $k(C)/F$ is purely inseparable, implying the corresponding finite morphism $C^{\mathrm{alg}} \rightarrow X$ is a homeomorphism. 

Finally, the analytification $X^{\mathrm{an}}$ is a normal irreducible projective $k$-analytic curve that is generically quasi-smooth, and there is a finite morphism $f: C \rightarrow X^{\mathrm{an}}$ that is a homeomorphism. By \cite[Proposition 4.2.14]{Duc}, $f(U)$ is a connected proper closed analytic domain of $X^{\mathrm{an}}$. By \cite[Th\'eor\`eme 6.1.3]{Duc}, $f(U)$ is an affinoid domain of $X^{\mathrm{an}}.$ Clearly, $\partial{f(U)}=f(\partial{U}).$ Let $S \subseteq \partial{U},$ and set $S'=f(S).$ As shown above, $f(U) \backslash S'$ is connected. Consequently, $U \backslash S$ is connected.
\end{proof}

\begin{cor} \label{inte}
Let $C$ be a normal irreducible $k$-analytic curve. Let $U$ be an affinoid domain in $C$ containing only type 3 points in its boundary. If $\text{Int}(U) \neq \emptyset$, then $(\text{Int}\ U)^c$ is an affinoid domain in $C$ containing only type 3 points in its boundary. 
\end{cor}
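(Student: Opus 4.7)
First I would unpack the set-theoretic identity
$$(\mathrm{Int}\, U)^c = (C \backslash U) \cup \partial U,$$
and record that $\partial U$ is a finite set $\{z_1,\ldots,z_n\}$ of type $3$ points (standard for affinoid domains in curves, and given as an assumption here). Since the topological boundary of $(\mathrm{Int}\, U)^c$ in $C$ is contained in $\partial U$, once $(\mathrm{Int}\, U)^c$ is shown to be an affinoid domain the ``type $3$ boundary'' conclusion is automatic; the substance of the statement is the affinoid property.

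Next I would establish the local affinoid structure of $(\mathrm{Int}\, U)^c$ at each $z_i$ by mimicking the opening move of the proof of Lemma \ref{62}. By \cite[Théorème 4.5.4]{Duc}, one can choose a closed virtual annulus affinoid neighborhood $V_i$ of $z_i$ in $C$ whose Berkovich boundary is $\{z_i, u_i\}$ with $u_i \in \mathrm{Int}\, U$. Shrinking so that $V_i \cap \partial U = \{z_i\}$, the point $z_i$ splits $V_i$ into two closed virtual annulus affinoids $V_i^-$ and $V_i^+$ with $V_i^- \subseteq U$ and $V_i^+ \cap U = \{z_i\}$. The decomposition
$$(\mathrm{Int}\, U)^c = (C \backslash U) \cup \bigcup_{i=1}^n V_i^+$$
then exhibits $(\mathrm{Int}\, U)^c$ as a closed analytic domain of $C$.

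To conclude affinoidness globally, I would first note that each connected component of $(\mathrm{Int}\, U)^c$ meets $\partial U$: otherwise such a component would be a nonempty closed subset of $C$ contained in the open set $C \backslash U$, hence clopen and equal to $C$ by irreducibility of $C$, contradicting $\mathrm{Int}\, U \neq \emptyset$. Consequently $(\mathrm{Int}\, U)^c$ has at most $n$ connected components. I would then reduce to the generically quasi-smooth projective case exactly as in the second half of the proof of Lemma \ref{62}, via the finite purely inseparable homeomorphic morphism $C^{\mathrm{alg}} \to X$ (after replacing $C$ by a projective compactification of a quasi-compact neighborhood of $U$, which is possible since $U$ is compact with finite boundary). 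Finally, \cite[Théorème 6.1.3]{Duc} applied componentwise identifies each connected component of $(\mathrm{Int}\, U)^c$ with a connected affinoid domain, and a finite disjoint union of affinoid domains is affinoid.

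The main obstacle is the last step: the transition from ``closed analytic domain with finite type $3$ boundary locally affinoid at each $z_i$'' to ``globally affinoid''. This is precisely where Ducros's \cite[Théorème 6.1.3]{Duc} is indispensable, and it requires that we place ourselves in the projective generically quasi-smooth setting; the rest is either topological bookkeeping or a direct invocation of the local structure theorem for type $3$ points used already in Lemma \ref{62}.
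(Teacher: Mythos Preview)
Your approach is sound but considerably more laborious than the paper's. The paper dispatches this in three sentences: the connected components of $U$ are affinoid with type~3 boundary, so Lemma~\ref{62} (applied to each) gives that $\mathrm{Int}(U)$ has only finitely many connected components; then \cite[Proposition~4.2.14]{Duc} says directly that the complement of such an open is a proper closed analytic domain, and \cite[Th\'eor\`eme~6.1.3]{Duc} upgrades this to affinoid. Your explicit local decomposition $(\mathrm{Int}\,U)^c = (C\setminus U)\cup\bigcup_i V_i^+$ via virtual half-annuli, together with your separate component-count argument, effectively rederives by hand what the paper harvests from Lemma~\ref{62} and \cite[Proposition~4.2.14]{Duc}; both routes then converge at \cite[Th\'eor\`eme~6.1.3]{Duc}. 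Two small wrinkles in your write-up: you invoke \cite[Th\'eor\`eme~4.5.4]{Duc} for the annulus neighborhood \emph{before} reducing to the generically quasi-smooth case, whereas in Lemma~\ref{62} that reduction is performed first precisely to justify this invocation; and your ``projective compactification of a quasi-compact neighborhood'' step is either unnecessary (the statement tacitly needs $C$ projective for $(\mathrm{Int}\,U)^c$ to be compact at all, as the paper's use of Lemma~\ref{62} confirms) or changes the ambient complement, so it is best replaced by simply assuming $C$ projective from the outset.
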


\begin{proof}
Seeing as $U$ is an affinoid domain, it has a finite number of connected components, and by \cite[Corollary 2.2.7(i)]{Ber90}, they  are all affinoid domains in $C.$ Furthermore, each of the connected components of $U$ contains only type 3 points in its boundary. Consequently, by Lemma \ref{62}, $\text{Int}(U)$ has only finitely many connected components. Thus, by \cite[Proposition ~4.2.14]{Duc}, $(\text{Int} \ U)^c$ is a closed proper analytic domain of $C.$ By \cite[Th\'eor\`eme~6.1.3]{Duc},  it is an affinoid domain in $C.$	
\end{proof}

\begin{prop} \label{bababa}
Let $C$ be a compact $k$-analytic curve. For any $x, y \in C,$ there exist only finitely many arcs in $C$ connecting $x$ and $y$.
\end{prop}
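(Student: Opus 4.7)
The plan is to reduce the question to a finiteness statement about paths in a finite graph, using the topological structure theorem for compact Berkovich curves. First I would reduce to the case where $C$ is connected, since any arc from $x$ to $y$ is contained in the connected component of both endpoints (and if $x,y$ lie in different components, there are no arcs at all). Next I would invoke the existence of a (finite) triangulation of $C$: there is a finite set of type 2 and type 3 points whose associated skeleton $\Gamma \subseteq C$ is a finite topological graph, together with a deformation retraction $r : C \to \Gamma$ whose fibers over points of $\Gamma$ are trees (uniquely arcwise connected). More concretely, $C \setminus \Gamma$ is a disjoint union of virtual open discs, each attached to $\Gamma$ along a single point.

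Once this structural setup is in place, the next step is to show a uniqueness statement: for any $z \in C$, the arc in $C$ from $z$ to $r(z)$ is unique. If $z \in \Gamma$ this is trivial ($r(z)=z$); otherwise $z$ lies in a virtual open disc $D$ attached to $\Gamma$ at $r(z)$, and $D \cup \{r(z)\}$ is uniquely arcwise connected because virtual discs (and their one-point compactifications at the attaching vertex) are trees in the Berkovich sense. This relies on the fact that in a tree-like space, any two arcs between the same endpoints coincide as sets; I would cite the tree structure of discs already used implicitly elsewhere in the paper (e.g.\ in the arguments of Lemma \ref{62}).

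Then I would prove the decomposition: any arc $\gamma \subseteq C$ from $x$ to $y$ can be written as the concatenation of the unique arc from $x$ to $r(x)$, an injective path in $\Gamma$ from $r(x)$ to $r(y)$, and the unique arc from $r(y)$ to $y$. The justification is that, if $x \notin \Gamma$, then $x$ lies in a virtual disc $D_x$ whose only point of contact with the rest of $C$ is $r(x)$; hence any arc leaving $x$ and reaching $y \notin D_x$ must pass through $r(x)$, and the portion of $\gamma$ inside $\overline{D_x}$ is forced to be the unique arc from $x$ to $r(x)$ by the previous step. The same argument applies at the $y$-end. The middle portion of $\gamma$ is then an injective path in $\Gamma$ from $r(x)$ to $r(y)$.

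Finally, since $\Gamma$ is a finite graph, there are only finitely many injective (simple) paths between any two of its points: such a path traverses each edge of $\Gamma$ at most once, and $\Gamma$ has only finitely many edges. Combined with the uniqueness of the endpoint arcs this yields the desired finiteness. The main obstacle I anticipate is making the decomposition step fully rigorous, in particular checking that an arc cannot ``re-enter'' the dendrite $D_x$ after leaving it; this is where the tree structure of virtual discs and the fact that $r(x)$ is the unique separating point between $D_x$ and $C \setminus D_x$ are essential. Everything else is either standard graph-theoretic finiteness or a direct consequence of the Berkovich--Ducros structure theorem for analytic curves.
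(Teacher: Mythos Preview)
Your approach is correct and genuinely different from the paper's. The paper does not invoke a global skeleton at all: instead it uses only that $C$ is a compact real graph (Ducros, Th\'eor\`eme 3.5.1) and that every point admits an open neighborhood which, together with its closure, is uniquely arcwise connected and has finite boundary (Ducros, 1.3.13). Compactness yields a finite subcover $\{U_1,\dots,U_n\}$, and the finite set $S=\bigcup_i \partial U_i$ plays the role your $\Gamma$ plays: the paper shows that an arc $\gamma$ is uniquely determined by the \emph{ordered finite sequence} of points of $S$ it traverses, since between consecutive such points the arc lies in a single $\overline{U_i}$ and is therefore forced by unique arcwise connectedness. Finiteness of $S$ then gives finitely many such sequences.

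Your route via the triangulation/skeleton theorem is cleaner conceptually and yields the same three-part decomposition (tree--graph--tree), at the cost of invoking a heavier structural result than the purely local one the paper uses; the paper's argument works for any compact real graph with no further input. One point you should make explicit: the reason the middle portion of $\gamma$ stays in $\Gamma$ is exactly the same injectivity argument you flag for $D_x$, applied to \emph{every} disc $D$ in $C\setminus\Gamma$ --- entering any such $D$ and leaving it again forces $\gamma$ through the unique attaching point twice. Once that is said, your anticipated obstacle disappears and the proof is complete.
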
 
 
\begin{proof}
By \cite[Th\'eor\`eme 3.5.1]{Duc}, $C$ is a real graph. By \cite[1.3.13]{Duc}, for any $z \in C,$ there exists an open neighborhood $U_z$ of $z$ such that: (1) $U_z$ is uniquely arcwise-connected; (2) ~the closure $\overline{U_z}$ of $U_z$ in $C$ is uniquely arcwise-connected; (3) the boundary $\partial{U_z}$ is finite, implying in particular $\partial{U_z}=\partial{\overline{U_z}}.$ Seeing as $C$ is compact, the finite open cover $\{U_z\}_{z \in C}$ admits a finite subcover $\mathcal{U}:=\{U_1, U_2, \dots, U_n\}.$ Set $S:=\bigcup_{i=1}^n \partial{U_i}.$ This is a finite subset of $C.$ 

Let $x,y$ be any two points of $C.$ Let $\gamma: [0,1] \rightarrow C$ be any arc in $C$ connecting $x$ and ~$y.$ Set $S_{\gamma}:=S \cap \gamma([0,1]) \backslash \{x,y\}.$ It is a finite (possibly empty) subset of $C$. For any $\alpha \in S_{\gamma},$ there exists a unique $a \in [0,1]$ such that $\gamma(a)=\alpha.$ This gives rise to an ordering of the points of $S_{\gamma}.$ Set $S_{\gamma}=\{\alpha_1, \alpha_2, \dots, \alpha_m\}$ such that the order of the points is the following: ${\alpha_1< \alpha_2 < \cdots < \alpha_{m}}$ (meaning $\gamma^{-1}(\alpha_1)< \gamma^{-1}(\alpha_2) < \cdots < \gamma^{-1}(\alpha_m)$). To the arc $\gamma$ we associate the finite sequence $\overline{\gamma}:=(\alpha_1, \alpha_2, \dots, \alpha_m)$ of points of $S_{\gamma}.$ Set $\alpha_0=x$ and $\alpha_{m+1}=y.$

For any $i \in \{0,1,\dots, m+1\},$ set $\gamma_i:=\gamma([\gamma^{-1}(\alpha_i), \gamma^{-1}(\alpha_{i+1})])$. This is an arc in $C$ connecting $\alpha_i$ and $\alpha_{i+1}.$ By construction, for any $i,$  $\gamma_i \cap S \subseteq \{\alpha_i, \alpha_{i+1}\}.$  Remark that $\gamma([0,1])=\bigcup_{i=0}^{m+1} \gamma_i.$

Let us show that for any $i \in \{0,1,\dots, m\}$, there exists a unique arc $[\alpha_i, \alpha_{i+1}]_0$ in ~$C$ connecting $\alpha_i$ and $\alpha_{i+1}$ such that $[\alpha_i, \alpha_{i+1}]_0 \cap S \subseteq \{\alpha_i, \alpha_{i+1}\}.$ Let $[\alpha_i, \alpha_{i+1}]$ be any such arc (the existence is guaranteed by the paragraphs above). Let $j \in \{1,2,\dots, n\}$ be such that $[\alpha_i, \alpha_{i+1}] \cap U_j \neq \emptyset.$ Let $z \in [\alpha_i, \alpha_{i+1}] \cap U_j;$ since $[\alpha_i, \alpha_{i+1}] \cap U_j$ is open in $[\alpha_i, \alpha_{i+1}],$ we may choose $z$ such that $z \not \in \{\alpha_i, \alpha_{i+1}\}$. Let us denote by $[\alpha_i,z]$, resp. $[z, \alpha_{i+1}]$ the arc in $C$ induced by $[\alpha_i, \alpha_{i+1}]$ connecting $\alpha_i$ and $z$, resp. $z$ and $\alpha_{i+1}.$ Clearly, $[\alpha_i, \alpha_{i+1}]=[\alpha_i, z] \cup [z, \alpha_{i+1}].$ 

Suppose there exists $u \in [\alpha_i, \alpha_{i+1}] \backslash \overline{U_j}.$ Again, as $[\alpha_i, \alpha_{i+1}] \backslash \overline{U_j}$ is open in $[\alpha_i, \alpha_{i+1}],$ we may assume that $u \not \in \{\alpha_i, \alpha_{i+1}\}.$  Without loss of generality, let us suppose that $u \in [\alpha_i, z]$. Let $[\alpha_i, u]$, resp. $[u, z]$, be the induced arcs connecting $\alpha_i$ and $u$, resp. $u$ and $z.$ Seeing as $z \in U_j$ and $u \not \in U_j,$ $[z,u] \cap \partial{U_j} \neq \emptyset.$ At the same time, $\emptyset \neq [z, u] \cap \partial{U_j} \subseteq [\alpha_i, \alpha_{i+1}] \cap \partial{U_j} \subseteq [\alpha_i, \alpha_{i+1}] \cap S \subseteq \{\alpha_i, \alpha_{i+1}\},$ which contradicts the injectivity of $[\alpha_i, \alpha_{i+1}].$

Consequently, $[\alpha_i, \alpha_{i+1}] \subseteq \overline{U_j}.$ Seeing as $\overline{U_j}$ is uniquely arcwise-connected, we obtain that the arc  $[\alpha_i, \alpha_{i+1}]$ in $C$ connecting $\alpha_i$ and $\alpha_{i+1},$ and satisfying the property $[\alpha_i, \alpha_{i+1}] \cap S \subseteq \{\alpha_i, \alpha_{i+1}\},$ is unique. Thus, $\gamma_i=[\alpha_i, \alpha_{i+1}],$ and the arc $\gamma$ is uniquely determined by its associated ordered sequence $\overline{\gamma}.$

Seeing as $S$ is finite, the set of all finite sequences $(\beta_l)_l$ over $S$ such that $\beta_{l'} \neq \beta_{l''}$ whenever $l' \neq l'',$ is also finite. Consequently, the set of arcs in $C$ connecting $x$ and $y$ is finite.
\end{proof}

\subsection*{Appendix III: Some results on the analytic projective line}
We show here some auxiliary results on the analytic projective line. We recall that there is a classification of points of this analytic curve (see \textit{e.g.} \cite[1.4.4]{Ber90} or \cite[1.1.2.3]{poilibri}). See also \cite[Definition ~2.2, Proposition 2.3]{une} for an exposition on the nature of points of $\mathbb{P}^{1, \mathrm{an}}.$ 

Recall that for a complete ultrametric field $K$, $\mathbb{P}_K^{1, \mathrm{an}}$ is uniquely arcwise-connected. For any $x,y \in \mathbb{P}_K^{1,\mathrm{an}},$ we denote by $[x,y]$ the unique arc in $\mathbb{P}_K^{1, \mathrm{an}}$ connecting $x$ and $y.$ 

Finally, for $a \in K$ and $r \in \mathbb{R}_{>0},$  recall the notation $\eta_{a,r}$ for a point of $\mathbb{P}_K^{1, \mathrm{an}}$ in \mbox{\cite[1.1.2.3]{poilibri}}.

\begin{prop}\label{projectivedom} Let $K$ be a complete ultrametric field. Let $U$ be a connected affinoid domain of $\mathbb{P}_K^{1, \mathrm{an}}$ with only type 3 points in its boundary. Suppose $U$ is not a point. Let us fix a copy of $\mathbb{A}_K^{1, \mathrm{an}}$ and a coordinate $T$ on it.  Let $\partial{U}=\{\eta_{R_i, r_i}:i=1,2,\dots, n\},$ where ${R_i \in K[T]}$ are irreducible polynomials and $r_i \in \mathbb{R}_{>0} \backslash \sqrt{|K^\times|}.$
Then ${U=\bigcap_i \{x : |R_i|_x \bowtie_i r_i\}},$ where $\bowtie_i \in \{\leqslant, \geqslant\}, i=1,2,\dots, n.$  
\end{prop}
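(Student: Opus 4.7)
The strategy is to exploit the tree structure of $\mathbb{P}_K^{1,\mathrm{an}}$ together with the branching behaviour at type 3 points. Since $r_i\notin\sqrt{|K^\times|}$, the level set $\{|R_i|=r_i\}$ reduces to the single point $\eta_{R_i,r_i}$ (this uses the splitting description of $\eta_{R_i,r_i}$ from Lemma~\ref{para} and the fact that no rigid, type 2, or type 4 point can take a value outside $\sqrt{|K^\times|}$), and removing this type 3 point disconnects $\mathbb{P}_K^{1,\mathrm{an}}$ into exactly two open components, namely $\{|R_i|<r_i\}$ and $\{|R_i|>r_i\}$.

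For the forward inclusion, I would invoke Lemma~\ref{15} applied to the connected affinoid $U$ with type 3 boundary point $\eta_{R_i,r_i}$; equivalently, Lemma~\ref{62} provides that $U\setminus\{\eta_{R_i,r_i}\}$ is connected, hence contained in exactly one of the two open components just described, and then closing up gives the conclusion. Either way, one obtains for each $i$ an inclusion $U\subseteq\{|R_i|\bowtie_i r_i\}$ with $\bowtie_i\in\{\leqslant,\geqslant\}$, whence $U\subseteq\bigcap_i\{|R_i|\bowtie_i r_i\}=:V$.

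For the reverse inclusion, suppose for contradiction that $y\in V\setminus U$. Since $U$ is not a point and its topological boundary is the finite set $\{\eta_{R_i,r_i}\}_i$ of type 3 points, the interior $\mathrm{Int}(U)$ is nonempty; fix $u\in\mathrm{Int}(U)$. The unique arc $[y,u]$ in $\mathbb{P}_K^{1,\mathrm{an}}$ (which is uniquely arcwise-connected) must cross $\partial U$, so some $\eta_{R_{i_0},r_{i_0}}$ lies in $[y,u]$; moreover it lies strictly interior to the arc since $y\notin U$ while $u\in\mathrm{Int}(U)$. Its removal then separates $y$ from $u$ in the two components of $\mathbb{P}_K^{1,\mathrm{an}}\setminus\{\eta_{R_{i_0},r_{i_0}}\}$. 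On the other hand, $u\in U\subseteq\{|R_{i_0}|\bowtie_{i_0} r_{i_0}\}$ with $u\neq\eta_{R_{i_0},r_{i_0}}$, and by the uniqueness above $u$ necessarily satisfies the \emph{strict} version of $\bowtie_{i_0}$; hence $u$ lies in that strict open half, forcing $y$ into the opposite strict open half, which contradicts $y\in\{|R_{i_0}|\bowtie_{i_0} r_{i_0}\}$.

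The hard part is the opening claim: the identification of the two components of $\mathbb{P}_K^{1,\mathrm{an}}\setminus\{\eta_{R_i,r_i}\}$ with the strict half-spaces and the uniqueness $\{|R_i|=r_i\}=\{\eta_{R_i,r_i}\}$. This is exactly where the hypothesis $r_i\notin\sqrt{|K^\times|}$ is indispensable, and for irreducible $R_i$ of degree $>1$ it requires Lemma~\ref{para} to reduce, via a finite splitting extension, to the linear case where the description is standard. Modulo this ingredient, the rest of the argument is a direct tree-theoretic chase on the $\mathbb{R}$-tree $\mathbb{P}_K^{1,\mathrm{an}}$.
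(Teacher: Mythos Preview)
Your argument is correct and close in spirit to the paper's, but your reverse inclusion is more direct. Both proofs obtain $U\subseteq V:=\bigcap_i\{|R_i|\bowtie_i r_i\}$ via Lemma~\ref{15}, and both finish by taking a point $y\in V\setminus U$, a point $u\in\mathrm{Int}(U)$, and examining the arc $[y,u]$. The paper, however, first proves an auxiliary Lemma~\ref{pfff} computing $\partial V=\bigcup_i\partial\{|R_i|\bowtie_i r_i\}=\partial U$, then uses connectedness of $\mathrm{Int}(V)$ (Lemma~\ref{62} applied to $V$) to force the arc $[y,u]\subseteq\mathrm{Int}(V)$ while simultaneously meeting $\partial U=\partial V$. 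You bypass Lemma~\ref{pfff} and the connectedness of $V$ entirely: once the arc meets some $\eta_{R_{i_0},r_{i_0}}\in\partial U$ in its interior, unique arcwise-connectedness of $\mathbb{P}_K^{1,\mathrm{an}}$ places $y$ and $u$ in distinct components of $\mathbb{P}_K^{1,\mathrm{an}}\setminus\{\eta_{R_{i_0},r_{i_0}}\}=\{|R_{i_0}|<r_{i_0}\}\sqcup\{|R_{i_0}|>r_{i_0}\}$, contradicting $y\in V$. This is a genuine streamlining. One small remark: your justification that $\{|R_i|=r_i\}=\{\eta_{R_i,r_i}\}$ via Lemma~\ref{para} and a splitting extension is fine, but the paper simply cites \cite[Proposition~2.3(2)]{une} for this fact.
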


\begin{proof}
We need the following two auxiliary results:
\begin{lm} \label{15}
For any $i \in \{1, 2, \dots, n\},$ either $U \subseteq \{x: |R_i|_x \leqslant r_i\}$ or ${U \subseteq \{x: |R_i|_x \geqslant r_i \}}.$ 
\end{lm}
\begin{proof}
To see this, assume that the open subsets $V_1:=U  \cap \{x: |R_i|_x < r_i\}$ and ${V_2:=U \cap \{x: |R_i|_x >r_i\}}$ of $U$ are non-empty. As intersections of two connected subsets of $\mathbb{P}_K^{1, \mathrm{an}}$, both $V_1$ and $V_2$ are connected. Assume $V_j \cap \text{Int}(U) =\emptyset, j=1,2.$ This implies $V_j \subseteq \partial{U},$ and since $V_j$ is connected, it is a single type 3 point $\{\eta_j\}.$ But then, this would be an isolated point of $U,$ which is  in contradiction with the connectedness of $U.$ Consequently, there exist $x_j \in V_j \cap \text{Int}(U), j=1,2.$ By Lemma ~\ref{62}, $\text{Int}(U)$ is a connected set, so there exists a unique arc $[x_1, x_2]$ connecting $x_1, x_2$ that is entirely contained in $\text{Int}(U).$ Since $|R_i|_{x_1}<r_i$, $|R_i|_{x_2}>r_i,$ there exists $x_0 \in [x_1, x_2]$ such that $|R_i|_{x_0}=r_i.$ Since there is a unique point satisfying this condition (\cite[Proposition 2.3(2)]{une}), and it is $\eta_{R_i, r_i},$ we obtain that $\eta_{R_i, r_i} \in [x_1, x_2] \subseteq \text{Int}(U),$ which is in contradiction with the fact that $\eta_{R_i, r_i} \in \partial{U}.$  Thus, there exists $j \in \{1,2\}$ such that $V_j =\emptyset,$ implying the statement.
\end{proof}

\begin{lm} \label{pfff}
For $n \in \mathbb{N},$ let $W_i:=\{x \in \mathbb{P}_K^{1, \mathrm{an}}: |P_i| \bowtie_i r_i\}$, where $P_i \in K[T]$ is irreducible, $r_i \in \mathbb{R}_{>0} \backslash \sqrt{|k^\times|},$ $\bowtie_i \in \{\leqslant, \geqslant\},$ $i \in \{1,2,\dots, n\}.$ Suppose for all $i \neq j,$ $W_i \not \subseteq \mathrm{Int}(W_j).$ Then, for $V:=\bigcap_{i=1}^n W_i,$ $\partial{V}=\bigcup_{i=1}^n \partial{W_i}.$
\end{lm}

\begin{proof}
Since $\text{Int}(V)=\bigcap_{j=1}^n \text{Int}(W_j),$ we obtain that $\partial{V}=\left(\bigcap_{j=1}^n W_j\right)\backslash \left(\bigcap_{i=1}^n \text{Int}(W_i)\right)= \bigcup_{i=1}^n \bigcap_{j=1}^n (W_i \backslash \text{Int}(W_j)).$ Suppose there exist $i, j \in \{1,2,\dots, n\}$ such that $W_i \backslash \text{Int}(W_j)=\emptyset.$ Then $W_i \subseteq \text{Int}(W_j),$ contradicting the hypothesis of the statement. 

Hence, for any $i,j$, $W_i \backslash \text{Int}(W_j) \neq \emptyset.$ In particular, this means that $W_i \cap \text{Int}(W_j)$ is a strict open subset of $W_i,$ so it is contained in $\text{Int}(W_i).$ Consequently, $\{\eta_{P_i, r_i}\}=W_i \backslash \text{Int}(W_i) \subseteq W_i \backslash (W_i \cap \text{Int}(W_j)) \subseteq W_i \backslash \text{Int}(W_j).$ This implies that for any $i$, ${\bigcap_{j=1}^n (W_i \backslash \text{Int}(W_j))=\{\eta_{P_i, r_i}\}}.$ 

Finally, $\partial{V}=\{\eta_{P_i, r_i}: i=1,2,\dots, n\},$ proving the statement.  
\end{proof}

If $ U \subseteq \{x : |R_i|_x \leqslant r_i\}$ (resp. $U \subseteq \{x : |R_i|_x \geqslant r_i\}$), set $U_i=\{x : |R_i|_x \leqslant r_i\} $ (resp. $U_i=\{x : |R_i|_x \geqslant r_i\} $). Remark that for all $i,$ $U_i$ is connected and contains $U.$ Set $V=\bigcap_{i=1}^n U_i.$ Let us show that $\partial{V}=\partial{U}.$  
Assume there exist $i,j$ such that $U_i \subseteq \text{Int}(U_j).$ Then $\eta_{R_j, r_j} \not \in U_i$, so $\eta_{R_j, r_j} \not \in U$, contradiction. Thus, Lemma \ref{pfff} is applicable, and so $\partial{V}=\{\eta_{R_i, r_i}\}_{i=1}^n=\partial{U}.$ 

Remark that $V$ is a connected affinoid domain (as an intersection of connected affinoid domains) of $\mathbb{P}_K^{1, \mathrm{an}}$. Also, $U \subseteq V$ and $\partial{U}=\partial{V}.$ Let us show that $U=V.$ Suppose there exists some $x \in V \backslash U.$ Then $x \in \text{Int}(V).$ Let $y \in \text{Int}(U) \subseteq \text{Int}(V).$ The unique arc $[x,y]$  in $\mathbb{P}_K^{1, \mathrm{an}}$ connecting $x$ and $y$ is contained  in $\text{Int}(V)$ (by connectedness of the latter, see Lemma \ref{62}). At the same time, since $x \not \in U$  and $y \in U,$ the arc $[x,y]$ intersects $\partial{U}=\partial{V},$ contradiction. 
Thus, $U=V=\bigcap_{i=1}^n U_i,$ concluding the proof of Proposition \ref{projectivedom}.
\end{proof}

In particular, the result above implies that every connected affinoid domain of $\mathbb{P}_K^{1, \mathrm{an}}$ with only type 3 points in its boundary is a rational domain.

\begin{lm} \label{16aaa}
Let $K$ be a complete ultrametric field. Let $U, V$ be connected affinoid domains of $\mathbb{P}_K^{1, \mathrm{an}}$ containing only type 3 points in their boundaries, such that 
$U \cap V=\partial{U} \cap \partial{V}$ is a single type 3 point $\{\eta_{R,r}\}$ (\textit{i.e.} $R$ is an irreducible polynomial over $K$ and $r \in \mathbb{R}_{>0} \backslash \sqrt{|K^\times|}$). 
\begin{enumerate}
\item
If $U \subseteq \{x \in \mathbb{P}_K^{1, \mathrm{an}}: |R|_x \leqslant r\}$ (resp. $U \subseteq \{x \in \mathbb{P}_K^{1, \mathrm{an}}: |R|_x \geqslant r\}$), then $V \subseteq \{x \in \mathbb{P}_K^{1, \mathrm{an}}: |R|_x \geqslant r\}$ (resp. $V \subseteq \{x \in \mathbb{P}_K^{1, \mathrm{an}}: |R|_x \leqslant r\}$).
\item Suppose $U \subseteq \{x \in \mathbb{P}_K^{1, \mathrm{an}}: |R|_x \leqslant r\}.$ Set $\partial{U}=\{\eta_{R,r}, \eta_{P_i, r_i}\}_{i=1}^n$ and ${\partial{V}=\{\eta_{R,r}, \eta_{P_j', r_j'}\}_{j=1}^m},$ so that $U=\{x \in \mathbb{P}_K^{1, \mathrm{an}}: |R|_x \leqslant r, |P_i|_x \bowtie_i r_i, i\}$ and $V=\{x \in \mathbb{P}_K^{1, \mathrm{an}}: |R|_x \geqslant r, |P_j'|_x \bowtie'_j r_j', j\}$, where $\bowtie_i, \bowtie_j' \in \{\leqslant, \geqslant\},$ $P_i, P_j' \in K[T]$ are irreducible, and $ r_i, r_j' \in \mathbb{R}_{>0} \backslash \sqrt{|K^{\times}|}$ for all $i,j.$
  
Then $U \cup V=\{x \in \mathbb{P}_K^{1, \mathrm{an}}:|P_i|_x \bowtie_i r_i, |P_j'|_x \bowtie_j' r_j', i=1,\dots, n,j=1,\dots, m\}.$ If $n=m=0,$ this means that $U \cup V=\mathbb{P}_K^{1, \mathrm{an}}.$
\end{enumerate}
\end{lm}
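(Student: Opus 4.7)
Applying Lemma~\ref{15} to $V$ at its type 3 boundary point $\eta_{R,r}$ gives $V \subseteq \{|R|_x \leqslant r\}$ or $V \subseteq \{|R|_x \geqslant r\}$; I will rule out the first possibility under the hypothesis $U \subseteq \{|R|_x \leqslant r\}$. The strategy is to exploit the local structure at the type 3 point $\eta_{R,r}$: by \cite[Th\'eor\`eme~4.5.4]{Duc}, a neighborhood of $\eta_{R,r}$ in $U$ is a closed virtual annulus, and since $U \subseteq \{|R|_x \leqslant r\}$ the axis of this annulus must extend into the branch $\{|R|_x < r\}$. Concretely, this should yield $r_U < r$ with $\{\eta_{R,\rho} : r_U \leqslant \rho \leqslant r\} \subseteq U$; the same argument applied to $V$ then furnishes $r_V < r$ with $\{\eta_{R,\rho} : r_V \leqslant \rho \leqslant r\} \subseteq V$. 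Setting $\tilde r := \max(r_U, r_V) < r$, the non-degenerate arc $\{\eta_{R,\rho} : \tilde r \leqslant \rho \leqslant r\}$ will be contained in both $U$ and $V$, contradicting $U \cap V = \{\eta_{R,r}\}$.

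\textbf{Part (2): description of the union.} Let
$W := \{x \in \mathbb{P}_K^{1,\mathrm{an}} : |P_i|_x \bowtie_i r_i \text{ and } |P_j'|_x \bowtie_j' r_j' \text{ for all } i,j\}$.
The inclusion $W \subseteq U \cup V$ is immediate: for $x \in W$, either $|R|_x \leqslant r$, in which case $x \in U$, or $|R|_x \geqslant r$, in which case $x \in V$. For the converse $U \cup V \subseteq W$, I need to show $U \subseteq \{|P_j'|_x \bowtie_j' r_j'\}$ for each $j$ (and, symmetrically, $V \subseteq \{|P_i|_x \bowtie_i r_i\}$ for each $i$). Fix $j$. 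I first claim $\eta_{P_j', r_j'} \notin U$: it cannot lie in $\partial U$, since $\partial U \cap \partial V = \{\eta_{R,r}\}$ and $\eta_{P_j', r_j'} \neq \eta_{R,r}$; and it cannot lie in $\mathrm{Int}(U)$, because an open neighborhood in $U$ of $\eta_{P_j', r_j'}$ would meet the radial annular segment of $V$ approaching $\eta_{P_j', r_j'}$ (produced exactly as in Part~(1) using \cite[Th\'eor\`eme~4.5.4]{Duc}), thus enlarging $U \cap V$ beyond $\{\eta_{R,r}\}$. Hence $U$ is a connected subset of $\mathbb{P}_K^{1,\mathrm{an}} \setminus \{\eta_{P_j', r_j'}\}$, and this complement has exactly two connected components $\{|P_j'|_x < r_j'\}$ and $\{|P_j'|_x > r_j'\}$ (by Proposition~\ref{projectivedom} and Lemma~\ref{62}), so $U$ lies entirely in one of them. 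Since $\eta_{R,r} \in U \cap V$ and $V \subseteq \{|P_j'|_x \bowtie_j' r_j'\}$, and since $\eta_{R,r} \neq \eta_{P_j', r_j'}$ places $\eta_{R,r}$ strictly on the $\bowtie_j'$-side, $U$ is forced to lie entirely on that same side.

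\textbf{Main obstacle.} The key technical point underlying both parts is the same: identifying, at each type 3 boundary point $\eta_{Q,\rho}$ of $U$ or $V$, the virtual annular neighborhood supplied by \cite[Th\'eor\`eme~4.5.4]{Duc} with a genuine radial segment $\{\eta_{Q,\sigma}\}_\sigma$ aligned with the polynomial $Q$ defining the boundary point. This alignment of the annulus axis with the two-branch structure of $\mathbb{P}_K^{1,\mathrm{an}}$ at a type 3 point is what makes the rest of the proof go through; once it is established, both parts reduce to routine applications of Lemma~\ref{15}, the connectedness of $U$ and $V$, and the hypothesis $U \cap V = \{\eta_{R,r}\}$.
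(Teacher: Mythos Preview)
Your overall strategy is sound, but it is more technical than the paper's and the obstacle you flag is largely self-imposed.

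For Part~(1), the paper bypasses the virtual-annulus machinery entirely. Assuming neither of $U,V$ is contained in the other (else trivial) and that both lie in $\{|R|\leqslant r\}$, pick $u\in U\setminus V$ and $v\in V\setminus U$; then $u,v\in\{|R|<r\}$, which is connected by Lemma~\ref{62}, so the unique arc $[u,v]$ stays in $\{|R|<r\}$. On the other hand $[u,\eta_{R,r}]\subseteq U$, $[\eta_{R,r},v]\subseteq V$, and their intersection is $\{\eta_{R,r}\}$, so $[u,v]=[u,\eta_{R,r}]\cup[\eta_{R,r},v]$ passes through $\eta_{R,r}$, a contradiction. Your ``main obstacle'' of aligning the annulus axis with a segment $\{\eta_{R,\rho}\}$ is not only unnecessary but technically fragile: for $\deg R>1$ and arbitrary $\rho$, the symbol $\eta_{R,\rho}$ need not denote a single point. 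What you actually need is just that any two arcs approaching $\eta_{R,r}$ from within $\{|R|<r\}$ share a terminal segment, and this is immediate from the fact that a type~3 point has valence~$2$ in $\mathbb{P}_K^{1,\mathrm{an}}$.

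For Part~(2), your double-inclusion argument is correct and in some ways more transparent than the paper's, which instead computes $\partial(U\cup V)$ directly (showing $\partial U\setminus V$ and $\partial V\setminus U$ survive while $\eta_{R,r}$ becomes interior) and then invokes the description of Proposition~\ref{projectivedom}. One simplification in your argument: to see $\eta_{P_j',r_j'}\notin U$ you do not need the annular segment at all. Since $\eta_{P_j',r_j'}\in\partial V\subseteq V$ and $\eta_{P_j',r_j'}\neq\eta_{R,r}$, the hypothesis $U\cap V=\{\eta_{R,r}\}$ gives $\eta_{P_j',r_j'}\notin U$ immediately.
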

\begin{proof} 
(1) Remark that if $U \subseteq V,$ then $U=\{\eta_{R,r}\},$ so the statement is trivially satisfied. The same is true if $V \subseteq U.$ Let us suppose that neither of $U, V$ is contained in the other.
\begin{sloppypar}
Suppose $U \subseteq \{x \in \mathbb{P}_K^{1, \mathrm{an}}: |R|_x \leqslant r\}$ and $V \subseteq \{x \in \mathbb{P}_K^{1, \mathrm{an}}: |R|_x \leqslant r\}.$ Let ${u \in U \backslash V}$ and $v \in V \backslash U.$ Since $u, v \in \{x:|R|_x < r\}$ - which is a connected set (Lemma \ref{62}), ${[u,v] \subseteq \{x: |R|_x<r\}}.$ At the same time, since $[u, \eta_{R,r}] \subseteq U$ and $[\eta_{R,r}, v] \subseteq V,$ ${[u, \eta_{R,r}] \cap [\eta_{R,r}, v]=\{\eta_{R,r}\}},$ so the arc $[u,v]=[u, \eta_{R,r}] \cup [\eta_{R,r}, v]$ contains the point ~$\eta_{R,r}.$ This is in contradiction with the fact that $[u,v] \subseteq \{x:|R|_x < r\}.$

The case $U, V \subseteq \{x \in \mathbb{P}_K^{1, \mathrm{an}}: |R|_x \geqslant r\}$ is shown to be impossible in the same way.  (Property (1) is in fact true regardless of whether $\partial{U} \backslash \{\eta_{R,r}\}$ and $\partial{V} \backslash \{\eta_{R,r}\}$ contain only type~3 points or not.)
\end{sloppypar}
(2) The statement is clearly true if $m=n=0,$ so we may assume that is not the case.

Remark that $\partial(U \cup V) \subseteq \partial{U} \cup \partial{V}.$ Let $\eta \in \partial{U} \backslash V.$ Let $G$ be any neighborhood of $\eta$ in $\mathbb{P}_K^{1, \mathrm{an}}.$ Since $V$ is closed, there exists a neighborhood $G' \subseteq G$ of $\eta$ such that $G' \cap V=\emptyset.$ Since $\eta \in \partial{U},$ $G'$ contains points of both $U$ and $U^C.$ Consequently, $G'$, and thus $G$, contain points of both $U \cup V$ and $U^C \cap V^C=(U \cup V)^C.$ Seeing as this is true for any neighborhood $G$ of $\eta,$ we obtain that $\eta \in \partial(U \cup V),$ implying $\partial{U} \backslash V \subseteq \partial{(U \cup V)}.$ Similarly, $\partial{V} \backslash U \subseteq \partial(U \cup V).$ It only remains to check for the point $\eta_{R,r}.$ 

Let $x \in \text{Int}(U) \subseteq \text{Int}(U \cup V)$ and $y \in \text{Int}(V) \subseteq \text{Int}(U \cup V).$ Remark that $x \not \in V$ and $y \not \in U.$ Furthermore, $|R|_x<r$ and $|R|_y>r.$ Consequently, $\eta_{R,r} \in [x,y].$ Since $U \cup V$ is a connected affinoid domain containing only type 3 points in its boundary, its interior is connected (see Lemma \ref{62}). Consequently, $[x, y] \subseteq \text{Int}(U \cup V),$ and hence $\eta_{R,r} \in \text{Int}(U\cup V).$ 

We have shown that ${\partial(U \cup V)=\{\eta_{P_i, r_i}, \eta_{P_j', r_j'}: i,j\}}.$ Since ${U \subseteq \{x:|P_i|_x \bowtie_i r_i\}}$ and $V \subseteq \{x: |P_j'|_x \bowtie_j' r_j'\}$ for all $i,j,$ we obtain that $$U \cup V=\{x: |P_i|_{x} \bowtie_i  r_i, |P_j'|_x \bowtie_j' r_j', i,j\}.$$
\end{proof}

The next result describes certain affinoid domains of the analytic projective line. 

\begin{lm}\label{para}
Let $K$ be a complete ultrametric field. Let $R(T)$ be a split unitary polynomial over ~$K.$ Let $r \in \mathbb{R}_{>0}.$ Then for any root $\alpha$ of $R(T)$ there exists a unique positive real number $s_\alpha$ such that $\{y \in \mathbb{P}_K^{1,\mathrm{an}}: |R(T)|_y=r\}=\bigcup_{R(\alpha)=0}\{y \in \mathbb{P}_K^{1,\mathrm{an}}: |T-\alpha|_y=s_{\alpha}\}.$ The point $\eta_{\alpha, s_{\alpha}}$ is the only point $y$ of the arc $[\eta_{\alpha, 0}, \infty]$ in $\mathbb{P}_K^{1, \mathrm{an}}$ for which $|R(T)|_y=r.$ Furthermore, $r=s_{\alpha} \cdot \prod_{R(\beta) = 0, \alpha \neq \beta} \max(s_{\alpha}, |\alpha-\beta|).$
\end{lm}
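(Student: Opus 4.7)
\medskip
\noindent\textbf{Proof proposal.} The plan is to reduce everything to an analysis of the function $t\mapsto |R(T)|_{\eta_{\alpha,t}}$ along the arc $[\eta_{\alpha,0},\infty]$, and then use the ultrametric dichotomy to transport information from this arc to an arbitrary point of $\{|R|=r\}$. Write $R(T)=\prod_{i=1}^{d}(T-\alpha_i)$ with $\alpha_i\in K$ (using that $R$ is split and unitary).

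\smallskip
\noindent\emph{Step 1: the ``furthermore'' formula and uniqueness on the arc.} Fix a root $\alpha$ and recall that on the arc $[\eta_{\alpha,0},\infty]\subseteq\mathbb{P}_K^{1,\mathrm{an}}$ the points are exactly the $\eta_{\alpha,t}$ for $t\in[0,\infty]$, and $|T-\beta|_{\eta_{\alpha,t}}=\max(t,|\alpha-\beta|)$ for any $\beta\in K$. Define
\[
f_\alpha\colon[0,\infty]\longrightarrow[0,\infty],\qquad f_\alpha(t)\;=\;|R|_{\eta_{\alpha,t}}\;=\;t\prod_{\beta\neq\alpha,\,R(\beta)=0}\max(t,|\alpha-\beta|).
\]
It is continuous, $f_\alpha(0)=0$, $f_\alpha(\infty)=\infty$, and it is strictly increasing, because the factor $t$ is strictly increasing and positive for $t>0$ while every $\max(t,|\alpha-\beta|)$ is non-decreasing and positive. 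By the intermediate value theorem and strict monotonicity, there exists a unique $s_\alpha\in\mathbb{R}_{>0}$ with $f_\alpha(s_\alpha)=r$. This yields the displayed product formula and the uniqueness of $\eta_{\alpha,s_\alpha}$ as the only point of $[\eta_{\alpha,0},\infty]$ with $|R|=r$.

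\smallskip
\noindent\emph{Step 2: the set equality, one inclusion.} Let $y\in\mathbb{P}_K^{1,\mathrm{an}}$ satisfy $|R|_y=r>0$. Then $y\neq\infty$ and $|T-\beta|_y>0$ for every root $\beta$ (otherwise $|R|_y=0$). Choose a root $\alpha$ minimizing $|T-\alpha|_y$ and set $s:=|T-\alpha|_y$. For every other root $\beta$, write $\alpha-\beta=(T-\beta)-(T-\alpha)$: the ultrametric inequality combined with the minimality of $s$ gives the dichotomy
\[
|T-\beta|_y\;=\;\max(s,|\alpha-\beta|),
\]
since if $|\alpha-\beta|>s$ the strong triangle inequality with distinct summands forces equality, and if $|\alpha-\beta|\leqslant s$ then $|T-\beta|_y\leqslant s$ by ultrametricity and $|T-\beta|_y\geqslant s$ by minimality. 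Multiplying over $\beta$ yields $|R|_y=f_\alpha(s)$; by Step 1 we conclude $s=s_\alpha$, so $y\in\{|T-\alpha|=s_\alpha\}$.

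\smallskip
\noindent\emph{Step 3: the reverse inclusion.} Conversely, suppose $|T-\alpha|_y=s_\alpha$ for some root $\alpha$. For every other root $\beta$, one has $|T-\beta|_y\leqslant\max(s_\alpha,|\alpha-\beta|)$, with equality as soon as $s_\alpha\neq|\alpha-\beta|$. Multiplying gives $|R|_y\leqslant f_\alpha(s_\alpha)=r$, and equality holds provided $s_\alpha\notin\{|\alpha-\beta|:\beta\neq\alpha\}$. This last condition is the delicate point: it does not hold for arbitrary $r\in\mathbb{R}_{>0}$ (one can cook up a counterexample at $r=1$ in the split polynomial $T(T-1)$), but it holds automatically whenever $r\notin\sqrt{|K^\times|}$, since then $s_\alpha\notin\sqrt{|K^\times|}$ by the product formula while all $|\alpha-\beta|$ lie in $|K^\times|$. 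This is exactly the regime in which the lemma is invoked (see the proof of Theorem~\ref{ajune}), and under this hypothesis the equality $|R|_y=r$ follows.

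\smallskip
\noindent\emph{Expected main obstacle.} The genuine content is the forward inclusion via the ultrametric dichotomy in Step 2, where one must argue that \emph{any} point of $\{|R|=r\}$ is forced onto one of the spheres $\{|T-\alpha|=s_\alpha\}$, not just those lying on some arc $[\eta_{\alpha,0},\infty]$; this is what allows the description of the level set by finitely many spheres. The monotonicity of $f_\alpha$ in Step 1 is then what pins down the radius $s_\alpha$ uniquely and yields the explicit formula.
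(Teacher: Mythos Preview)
Your approach is more elementary than the paper's, which invokes the retraction $d\colon\mathbb{P}_K^{1,\mathrm{an}}\to\Gamma$ onto the skeleton $\Gamma=\bigcup_\alpha[\eta_{\alpha,0},\infty]$ and the compatibility $|R|_y=|R|_{d(y)}$ (citing \cite{Duc}). Your Step~2, choosing $\alpha$ to minimize $|T-\alpha|_y$ and using the ultrametric dichotomy to force $|T-\beta|_y=\max(s,|\alpha-\beta|)$, replaces that machinery with a direct computation and yields the forward inclusion cleanly, with no hypothesis on $r$.

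More importantly, you are right that the reverse inclusion fails for general $r$. Your counterexample $R(T)=T(T-1)$ at $r=1$ is genuine: over $\mathbb{Q}_p$ the point $\eta_{1,1/p}$ satisfies $|T|=1$ (so lies in the right-hand side) but $|R|=1/p\neq 1$. The paper's own argument has the same issue at the step ``$d^{-1}(\eta_{\alpha,s_\alpha})=\{|T-\alpha|=s_\alpha\}$'', which is only correct when $\eta_{\alpha,s_\alpha}$ is not a branching vertex of $\Gamma$, i.e.\ when $s_\alpha\neq|\alpha-\beta|$ for every root $\beta\neq\alpha$. Your observation that $r\notin\sqrt{|K^\times|}$ forces this (since then $s_\alpha\notin\sqrt{|K^\times|}$ by the product formula, whereas each $|\alpha-\beta|\in|K|$) is correct, and this is indeed the only regime in which the lemma is invoked in the paper. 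Under that hypothesis your Step~3 completes the proof.
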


\begin{proof} 
Remark that if  $y \in \mathbb{P}_K^{1,\mathrm{an}}$ is such that ${|R(T)|_y=0},$  then ${\prod_{R(\alpha)=0} |T-\alpha|_y=0}$, meaning there exists a root $\alpha_0$ of $R(T)$ such that $|T-\alpha_0|_{y}=0$ (notice that we haven't assumed $R(T)$ to be separable, \textit{i.e.} there could be roots with multiplicities).  This determines the unique point $\eta_{{\alpha}_0, 0}$ in $\mathbb{P}_K^{1,\mathrm{an}}.$ Thus, the zeros of $R(T)$ in $\mathbb{P}_K^{1,\mathrm{an}}$ are $\eta_{\alpha, 0}, R(\alpha)=0.$ Remark also that $R$ has only one pole in $\mathbb{P}_K^{1, \mathrm{an}}$ and that is the point $\infty.$ 

By \cite[3.4.23.1]{Duc}, the analytic function $R(T)$ on $\mathbb{P}_K^{1,\mathrm{an}}$ is locally constant everywhere outside of the finite graph $\Gamma:=\bigcup_{R(\alpha)=0} [\eta_{\alpha,0}, \infty].$ Furthermore, its variation is compatible with the canonical retraction $d:\mathbb{P}_K^{1, \mathrm{an}} \rightarrow \Gamma$ in the sense that $|R(T)|_y=|R(T)|_{d(y)}$ for any $y \in \mathbb{P}_K^{1, \mathrm{an}}$ (\textit{cf.} \cite[3.4.23.8]{Duc}). By \cite[3.4.24.3]{Duc}, $R(T)$ is continuously strictly increasing in all the arcs $[\eta_{\alpha, 0}, \infty], R(\alpha)=0,$ where $|R(T)|_{\eta_{\alpha, 0}}=0$ and $|R(T)|_{\infty}=+\infty.$ Consequently, $|R(T)|$ attains the value $r$ exactly one time on each arc $[\eta_{\alpha, 0}, \infty].$ Suppose $s_{\alpha}$ is the unique positive real number for which $|R(T)|_{\eta_{\alpha, s_{\alpha}}}=r.$ Then $\prod_{R(\beta)=0}|T-\beta|_{\eta_{\alpha, s_{\alpha}}}=s_\alpha \cdot \prod_{R(\beta)=0, \alpha\neq \beta} \max(s_{\alpha}, |\alpha-\beta|)=r.$ 

We have shown that there exist positive real numbers $s_{\alpha}$ such that ${\{y \in \Gamma: |R|_y=r\}=}$ $\{\eta_{\alpha, s_{\alpha}}: R(\alpha)=0\}.$ As mentioned before, the variation of $R$ is compatible with the canonical retraction $d$ of $\mathbb{P}_K^{1, \mathrm{an}}$ to $\Gamma.$ Since $d^{-1}(\eta_{\alpha,s_{\alpha}})=\{y \in \mathbb{P}_K^{1, \mathrm{an}}: |T-\alpha|_y=s_{\alpha}\},$ we finally obtain that $\{y \in \mathbb{P}_K^{1, \mathrm{an}}: |R|_y=r\}=\bigcup_{R(\alpha)=0} \{y \in \mathbb{P}_K^{1, \mathrm{an}}: |T-\alpha|_y=s_{\alpha}\}$ with $s_{\alpha}$ as above.  
\end{proof}

\bigskip

{\footnotesize%
 \textsc{Vler\"e Mehmeti}, Laboratoire de math\'ematiques d'Orsay, Universit\'e Paris-Saclay, 91405 ~Orsay Cedex, France \par
  \textit{E-mail address}: \texttt{vlere.mehmeti@u-psud.fr} 
}

\end{document}